\newcommand\da{\!\downarrow\!}
\newcommand\la{\leftarrow}
\newcommand\id{\mathrm{id}}
\newcommand\ten{\otimes}
\newcommand\hten{\hat{\otimes}}
\newcommand\DD{\mathrm{D}}
\renewcommand\H{\mathrm{H}}
\newcommand\z{\mathrm{Z}}
\newcommand\N{\mathbb{N}}
\newcommand\Z{\mathbb{Z}}
\newcommand\Q{\mathbb{Q}}
\newcommand\Ql{\mathbb{Q}_{\ell}}
\newcommand\Zl{\mathbb{Z}_{\ell}}
\newcommand\bFl{\mathbb{F}_{\ell}}
\newcommand\bA{\mathbb{A}}
\newcommand\bD{\mathbb{D}}
\newcommand\bE{\mathbb{E}}
\newcommand\bG{\mathbb{G}}
\newcommand\bH{\mathbb{H}}
\newcommand\bL{\mathbb{L}}
\newcommand\bT{\mathbb{T}}
\newcommand\C{\mathcal{C}}
\newcommand\cC{\mathcal{C}}
\newcommand\cF{\mathcal{F}}
\newcommand\cN{\mathcal{N}}
\newcommand\cO{\mathcal{O}}
\newcommand\cP{\mathcal{P}}
\newcommand\cV{\mathcal{V}}
\newcommand\cW{\mathcal{W}}
\newcommand\cZ{\mathcal{Z}}
\newcommand\D{\mathcal{D}}
\renewcommand\O{\mathscr{O}}
\newcommand\sD{\mathscr{D}}
\newcommand\sH{\mathscr{H}}
\newcommand\sL{\mathscr{L}}
\newcommand\sO{\mathscr{O}}
\newcommand\sP{\mathscr{P}}
\newcommand\sT{\mathscr{T}}
\newcommand\fX{\mathfrak{X}}
\renewcommand\L{\Lambda}
\newcommand\m{\mathfrak{m}}
\newcommand\g{\mathfrak{g}}
\renewcommand\hom{\mathscr{H}\!\mathit{om}}
\newcommand\sHom{\mathscr{H}\!\mathit{om}}
\newcommand\cHom{\mathcal{H}\!\mathit{om}}
\newcommand\Ho{\mathrm{Ho}}
\newcommand\Alg{\mathrm{Alg}}
\newcommand\CAlg{\mathrm{CAlg}}
\newcommand\Mod{\mathrm{Mod}}
\newcommand\Hom{\mathrm{Hom}}
\newcommand\map{\mathrm{map}}
\newcommand\HHom{\underline{\mathrm{Hom}}}
\newcommand\Ext{\mathrm{Ext}}
\newcommand\EExt{\mathbb{E}\mathrm{xt}}
\newcommand\Iso{\mathrm{Iso}}
\newcommand\cone{\mathrm{cone}}
\newcommand{\llb}{\llbracket}
\newcommand{\rrb}{\rrbracket}
\newcommand\Gal{\mathrm{Gal}}
\newcommand\coker{\mathrm{coker\,}}
\newcommand\Co{\mathrm{Co}}
\newcommand\CoS{\mathrm{CoS}}
\newcommand\Ab{\mathrm{Ab}}
\newcommand\loc{\mathrm{loc}}
\newcommand\Spec{\mathrm{Spec}\,}
\newcommand\Set{\mathrm{Set}}
\newcommand\aff{\mathrm{aff}}
\newcommand\Affd{\mathrm{Affd}}
\newcommand\Sp{\mathrm{Sp}}
\newcommand\PreSp{\mathrm{PreSp}}
\newcommand\Pol{\mathrm{Pol}}
\newcommand\Lag{\mathrm{Lag}}
\newcommand\nondeg{\mathrm{nondeg}}
\newcommand\<{\langle}
\renewcommand\>{\rangle}
\newcommand\Lim{\varprojlim}
\newcommand\LLim{\varinjlim}
\newcommand\ho{\mathrm{ho}\!}
\newcommand\into{\hookrightarrow}
\newcommand\onto{\twoheadrightarrow}
\newcommand\xra{\xrightarrow}
\newcommand\xla{\xleftarrow}
\newcommand\alg{\mathrm{alg}}
\newcommand\bt{\bullet}
\newcommand\by{\times}
\newcommand\mc{\mathrm{MC}}
\newcommand\mmc{\underline{\mathrm{MC}}}
\newcommand\Gg{\mathrm{Gg}}
\newcommand\Symm{\mathrm{Symm}}
\newcommand\GL{\mathrm{GL}}
\newcommand\et{\acute{\mathrm{e}}\mathrm{t}}
\newcommand\an{\mathrm{an}}
\newcommand\Tot{\mathrm{Tot}\,}
\newcommand\tr{\mathrm{tr}}
\newcommand\ind{\mathrm{ind}}
\newcommand\pro{\mathrm{pro}}
\newcommand\pd{\partial}
\newcommand\dc{d^{\mathrm{c}}}
\newcommand\half{\frac{1}{2}}
\newcommand\Coh{\cC\mathit{oh}}
\newcommand\cris{\mathrm{cris}}
\newcommand\rig{\mathrm{rig}}
\newcommand\cts{\mathrm{cts}}
\newcommand\Fil{\mathrm{Fil}}
\renewcommand\alg{\mathrm{alg}}
\newcommand\dR{\mathrm{dR}}
\newcommand\DR{\mathrm{DR}}
\newcommand\Hod{\mathrm{Hod}}
\newcommand\co{\colon\thinspace}
\newcommand\oR{\mathbf{R}}
\newcommand\oL{\mathbf{L}}
\newcommand\oSpec{\mathbf{Spec}\,}
\newcommand\uleft\underleftarrow
\newcommand\uline\underline
\newcommand\uright\underrightarrow
\newcommand{\tps}{\texorpdfstring}
\newtheorem{theorem}{Theorem}[section]
\newtheorem{proposition}[theorem]{Proposition}
\newtheorem{corollary}[theorem]{Corollary}
\newtheorem{lemma}[theorem]{Lemma}
\newtheorem*{theorem*}{Theorem}
\newtheorem*{proposition*}{Proposition}
\newtheorem*{corollary*}{Corollary}
\newtheorem*{lemma*}{Lemma}
\newtheorem*{conjecture*}{Conjecture}
\theoremstyle{definition}
\newtheorem{definition}[theorem]{Definition}
\newtheorem*{definition*}{Definition}
\newtheorem*{notation*}{Notation}
\theoremstyle{remark}
\newtheorem{example}[theorem]{Example}
\newtheorem{examples}[theorem]{Examples}
\newtheorem{remark}[theorem]{Remark}
\newtheorem{remarks}[theorem]{Remarks}
\newtheorem*{example*}{Example}
\newtheorem*{examples*}{Examples}
\newtheorem*{remark*}{Remark}
\newtheorem*{remarks*}{Remarks}
\newtheorem*{exercise*}{Exercise}
\newtheorem*{property*}{Property}
\newtheorem*{properties*}{Properties}
\begin{document}

\begin{abstract}
We develop a characterisation of non-Archimedean derived analytic geometry based on dg enhancements of dagger algebras. This allows us to formulate derived analytic moduli functors for many types of pro-\'etale sheaves, and to construct shifted symplectic structures on them by transgression using arithmetic duality theorems. In order to handle  duality functors involving Tate twists, we introduce  weighted shifted symplectic structures on formal  weighted moduli stacks, with the usual moduli stacks given by taking $\bG_m$-invariants.

In particular,   this establishes the existence of shifted symplectic and Lagrangian structures on derived moduli stacks of $\ell$-adic constructible complexes on smooth  varieties via Poincar\'e duality, and on weighted derived moduli stacks of $\ell$-adic Galois representations via Tate and Poitou--Tate duality; the latter proves a conjecture of Minhyong Kim.
Unweighted shifted symplectic and Lagrangian structures are also established for $\ell$-adic Galois representations of  cyclotomic fields $K(\mu_{\ell^{\infty}})$, subject to additional constraints related to Iwasawa theory; these derived moduli stacks yield a non-abelian analogue of Selmer complexes, with $0$-shifted symplectic structures related to generalised Cassels--Tate pairings.
\end{abstract}

\title[Shifted symplectic structures on $\ell$-adic derived analytic moduli stacks]{Shifted symplectic structures on derived analytic moduli of $\ell$-adic local systems and Galois representations}
\author{J.P.Pridham}


\maketitle

\section*{Introduction}

In \cite[\S 10]{KimArithGauge}, Kim outlined an approach to interpreting Selmer groups as   Lagrangian intersections of suitable moduli spaces, and proposed various generalisations. The main purpose of this paper is to provide the necessary foundations to make  constructions of this sort precise in a derived  geometric setting. This involves two substantial new pieces of theory:  a characterisation of  non-Archimedean derived analytic geometry well-suited to the study of $\ell$-adic sheaves, and a theory of shifted symplectic structures which can handle the Tate twists featuring in arithmetic duality theorems.

Section \ref{daggersn} develops a theory of derived non-Archimedean analytic geometry built from differential graded enhancements of the dagger algebras of \cite{GrosseKloenne}, i.e. rings of overconvergent functions. These give rise to the same theory as that proposed in \cite{DStein} using rings with entire functional calculus. By Corollary \ref{globalEFCdaggercor}, this theory is equivalent to the dagger-analytic analogue of the non-Archimedean  derived analytic spaces  from \cite{PortaYuNonArch,lurieDAG5},  and agrees with that theory on spaces without boundary, despite involving much less data; it satisfies  similar representability theorems. In \S \ref{proetdaggersn},  pro-\'etale sheaves are  associated to derived dagger algebras using their natural topology, a construction which is hard to mimic in other models of analytic geometry. Section \ref{sympsn} then translates the theory of derived analytic symplectic geometry from \cite{DStein} to this dagger analytic context, where it takes a very natural form.

The principle underpinning  Section \ref{proetsn} is that given a derived algebraic moduli functor $F$ over $\Ql$ or a similar field,  and any scheme $X$, we can naturally  form a derived analytic moduli functor of $F$-valued (hyper)sheaves on the pro-\'etale site of $X$. For instance, when $F=B\GL_r$ this gives us a derived analytic moduli stack of locally free $\Ql$-sheaves of rank $r$. Moreover, if $F$ is $n$-shifted symplectic\footnote{for instance $B\GL_r$ is $2$-shifted symplectic} and $X$ is smooth of dimension $m$ over an algebraically closed field, then Poincar\'e duality endows  the derived moduli stack of suitable $F$-valued sheaves on $X$ with an $(n-2m)$-shifted symplectic structure when $X$ is proper (Examples \ref{Spex}), or an $(n+1-2m)$-shifted Lagrangian structure in general (Examples \ref{SpexLag}). For derived moduli of local systems, this gives a derived generalisation of the symplectic structures established on the non-singular locus in \cite{pappasVolSymplAdicLocSys}.

Following Mazur's analogy under which arithmetic objects can be thought of as manifolds of the same cohomological dimension, we also have shifted symplectic structures over more general bases. We would like to think of  a finite field as being like a curve, a local field like a surface, and a number field like a $3$-fold, but because of the Tate twists arising in arithmetic duality, we should not think of the latter two as being orientable. We give two different approaches to this problem: attaching roots of unity or twisting with weights. The introduction of roots of unity might loosely be thought of as passing to the orientable cover, but because that cover is now infinite ($\Zl$ containing many more units than $\Z$), duality drops a dimension and becomes more subtle (\S \ref{cycdualitysn}), with non-degeneracy related to Iwasawa theory (Remark \ref{Iwasawarmk}). Introducing weights instead leads to twisted shifted symplectic structures, with no drop in dimension, and the derived moduli stacks carrying $\bG_m$-actions rather than $\Gal(k(\mu_{\ell^{\infty}})/k)$-actions. The dimensional discrepancy can then be understood as $\bG_m$ having cohomological dimension $0$ (being reductive) and $\Zl$ cohomological dimension $1$. 

Applications of the first approach  include a more subtle family of shifted symplectic structures for schemes $X$ over cyclotomic fields $k(\mu_{\ell^{\infty}})$ when $k$ is a local field with finite residue field. In this case, Example \ref{Spex}.(\ref{Iwasawa2}) 
uses the duality theory of \S \ref{cycdualitysn} to give an $(n-1-2m)$-shifted symplectic structure on  
the derived moduli stack of suitable $F$-valued sheaves on $X$, when $X$ is proper and smooth of dimension $m$ and $F$ carries an $n$-shifted symplectic structure. In particular, this gives a $1$-shifted symplectic structure on the derived moduli stack of suitable Galois representations of $k(\mu_{\ell^{\infty}})$, with unramified representations forming a shifted Lagrangian if the residue characteristic is prime to $\ell$, by   Example \ref{SpexLag}.(\ref{Iwlocalex}).  

When $k$ is a number field, these duality theories also give rise to an $(n-1-2m)$-shifted isotropic structure on the derived moduli stack of  $F$-valued sheaves when $X$ is proper and smooth of dimension $m$ over $\cO_{k(\mu_{\ell^{\infty}}),S}$, as in Example \ref{SpexLag}.(\ref{IwglobalZex}). 
Imposing constraints on ramification at the places in $S$ not dividing $\ell$, together with  Lagrangian conditions analogous to Greenberg's local conditions at the places in $S$ dividing $\ell$, then leads to a derived isotropic intersection  as in Example 
\ref{IwintersectGLCex}
providing an $(n-2-2m)$-shifted symplectic structure on suitable substacks. When $m=0$, this derived intersection can be thought of as a non-abelian generalisation of a Selmer complex (Remark \ref{selmer1rmk}). 

For these constructions, duality over  cyclotomic extensions in some respects  generalises  the technique of putting structures on derived moduli stacks over the algebraic closure of a finite field. In our case, the derived moduli stack over $k$ or $\cO_{k,S}$ would be recovered as the homotopy fixed points of a $\Zl$-action, rather than a $\hat{\Z}$-action. We can still regard this as the mapping torus of  Frobenius, so it might open up the possibility of extending arithmetic quantum field theory constructions such as \cite[Remark D.4.4]{BenZviSakellaridisVenkatesh} or  \cite{ArinkinGaitsgoryKazhdanRaskinRozenblyumYarshavsky} to mixed characteristic and number fields.

Section \ref{weightedsn}  develops the second approach,  a generalisation of the theory of shifted symplectic structures to address cases where the dualising bundle is non-trivial. The considerations introduced here work equally well in algebraic and analytic settings, applying to moduli of maps from spaces which are not Calabi--Yau but have a dualising line bundle, and   this weighted theory  can be thought of as a special case of the theory of $\sP$-shifted symplectic structures from the seminal manuscript \cite{BouazizGrojnowski}. The idea is to characterise the  moduli space as the $\bG_m$-invariant locus of a natural $\bG_m$-equivariant formal thickening, with that thickening carrying a shifted symplectic structure of non-zero weight with respect to the $\bG_m$-action. The resulting constructions  have a similar flavour to Iwasawa theory, but with $\bG_m$-actions rather than $\Zl$-actions. 
 Remark \ref{locformrmk} describes local forms for weighted shifted symplectic spaces in terms of twisted shifted cotangent bundles, and \S \ref{wrepsn} establishes weighted representability results. 

Section \ref{wproetsn} contains the remaining major applications of the paper, constructing weighted shifted symplectic (Examples \ref{wSpex}) and Lagrangian (Examples \ref{wSpexLag}) structures on a wide range of derived analytic moduli stacks of pro-\'etale sheaves. Poincar\'e duality for smooth proper schemes and local Tate duality for local fields 
tends to give rise to symplectic structures for moduli of sheaves, while Poincar\'e duality for smooth schemes and Poitou--Tate duality for number fields give rise to Lagrangian structures. 

In Section \ref{crissn}, crystalline constructions are introduced, with derived moduli of filtered $\phi$-objects giving a weighted Lagrangian in the  derived moduli stack of  Galois representations of an $\ell$-adic local field. By taking Lagrangian intersections with respect to the previous constructions, we  construct shifted symplectic structures on derived moduli stacks of structures over number fields with local constraints (\S \ref{wSpexLagcrissn}). In particular, this includes a $(-1)$-shifted symplectic structure on the weighted derived stack of $\ell$-adic Galois representations which are unramified away from $\ell$ and crystalline at $\ell$, a Selmer-type construction similar to those  envisaged by Kim. 

In Section \ref{poisssn}, we set up the theory of shifted Poisson structures in the weighted setting. The main result is Theorem \ref{compatthmLie} and its generalisation in \S \ref{Artinpoisssn}, extending the equivalence between  shifted symplectic and non-degenerate shifted Poisson structures to incorporate weights   (and even to handle $\sP$-shifted structures in Remark \ref{PshiftedPoissonrmk}).  Quantisation results in the weighted setting are summarised in  \S \ref{quantsn}; the only modification needed to enable them is to give the formal parameter $\hbar$ the same weight as the Poisson structure. As a consequence, the examples of Section \ref{wproetsn} all give rise to weighted shifted Poisson structures, many of which (depending on shift and non-degeneracy) admit quantisations of various flavours. 


\tableofcontents

\smallskip\paragraph{\it Notation and terminology}

Throughout the paper, we will usually denote chain differentials by $\delta$. The graded vector space underlying a chain (resp. cochain) complex $V$ is denoted by $V_{\#}$ (resp. $V^{\#}$). When we have to work with chain and cochain structures separately, we denote shifts as subscripts and superscripts, respectively, so $(V_{[i]})_n:= V_{i+n}$ and $(V^{[i]})^n:= V^{i+n}$. 

When we need to compare chain and cochain complexes, we  silently make use of the equivalence  $u$ from chain complexes to cochain complexes given by $(uV)^i := V_{-i}$. On suspensions, this has the effect that $u(V_{[n]}) = (uV)^{[-n]}$; accordingly, we often write $V[n]:=V^{[n]}$ for cochain complexes and $V[n]:= V_{[-n]}$ for chain complexes when no confusion is likely.

Given an associative algebra $A$ in chain complexes, and  $A$-modules $M,N$ in chain complexes, we write $\HHom_A(M,N)$ for the cochain complex given by
\[
 \HHom_A(M,N)^i= \Hom_{A_{\#}}(M_{\#[i]},N_{\#}),
\]
with differential $ f\mapsto \delta_N \circ f \pm f \circ \delta_M$.

We refer to  commutative algebras in chain complexes as CDGAs (i.e. commutative differential graded algebras); these are assumed unital unless stated otherwise.

Here and elsewhere, we use the symbol $\pm$ to denote the  sign in the total complex of a double complex, or in induced constructions such as tensor powers of, and monomial operations on, chain complexes, noting that internal tensor products are total complexes of external tensor products.  The sign is determined by the property that $\pm$ takes the value $+$ when all inputs have degree $0$.   The symbol $\mp$ then denotes the opposite sign. 


\section{Affinoid dagger dg spaces}\label{daggersn}
Let $K$ be a field of characteristic $0$, 
complete with respect to a non-Archimedean valuation. The non-Archimedean hypothesis is not strictly necessary, and in particular our statements  all have complex analogues, which however  mostly rely on  different references.

We will be working with affinoid dagger spaces in the sense of \cite{GrosseKloenne}. 

\begin{definition}
 Given strictly positive real numbers $r_1,\ldots, r_n$, recall that the Tate algebra 
 $
K\<\frac{x_1}{r_1}, \ldots ,\frac{x_n}{r_n}\>
 $
is the ring of power series $\sum_{\nu\in \N_0^n} a_{\nu}x^{\nu}$ for $a_{\nu} \in K$ such that the set
\[
 \{|a_{\nu}|r_1^{\nu_1}\ldots r_n^{\nu_n}\}_{\nu}
\]
is bounded above.

This is a Banach algebra, with norm $\| \sum_{\nu} a_{\nu}x^{\nu}\|=\sup_{\nu}|a_{\nu}|r_1^{\nu_1}\ldots r_n^{\nu_n}$.
\end{definition}
The Tate algebra is thought of as analytic functions on the closed polydisc of radii $(r_1, \ldots,r_n)$.

An affinoid algebra is then  a quotient algebra $A=K\<\frac{x_1}{r_1}, \ldots ,\frac{x_n}{r_n}\>/I$ for some  $(r_1, r_2, \ldots, r_n)$  and some ideal $I$. The associated affinoid  space $\Sp(A)$ is then  the set of maximal ideals of $A$, equipped with the obvious structure sheaf on open  affinoid subdomains.
 
As in \cite{GrosseKloenne}, this idea is adapted as follows to study  overconvergent analytic functions on the closed polydisc.
\begin{definition}
 Given strictly positive real numbers $r_1,\ldots, r_n$, recall that the Washnitzer algebra 
 $
K\<\frac{x_1}{r_1}, \ldots ,\frac{x_n}{r_n}\>^{\dagger}
 $
is the nested union
\[
 \bigcup_{\rho_i>r_i} K\<\frac{x_1}{\rho_1}, \ldots ,\frac{x_1}{\rho_n}\> = \bigcup_{\rho_i>r_i} T(\rho_1, \ldots, \rho_n)
\]
of Tate algebras.

Explicitly, elements are power series $\sum_{\nu} a_{\nu}x^{\nu}$ for $a_{\nu} \in K$, such that 
\[
 |a_{\nu}|\rho_1^{\nu_1}\ldots \rho_n^{\nu_n} \xra{|\nu| \to \infty} 0
\]
for some $\rho_i>r_i$.
\end{definition}

A dagger algebra is then defined in \cite{GrosseKloenne} to be a quotient algebra $A=K\<\frac{x_1}{r_1}, \ldots ,\frac{x_n}{r_n}\>^{\dagger}/I$ for some $r_i > 0$ and some ideal $I$. The associated affinoid dagger space $\Sp(A)$ is then  the set of maximal ideals of $A$, equipped with the obvious structure sheaf on open  affinoid subdomains.

\subsection{Definitions}

\subsubsection{Dagger dg affinoids}

The following is the natural generalisation to the dagger affinoid setting of the dg schemes of \cite{Kon,Quot}:

\begin{definition}\label{dgdaggerdef}
 Define an affinoid dagger dg space $X$ over $K$ to consist of an affinoid dagger space $X^0$ over $K$  together with an $\sO_{X^0}$-CDGA $\sO_{X,\ge 0}$ in coherent sheaves on $X^0$, with $\sO_{X,0}=\sO_{X^0}$.
 
 Explicitly, this means that we have 
 coherent sheaves $\{\sO_{X,i}\}_{i\ge 0}$, an associative graded-commutative product  $\sO_{X,i} \ten_{\sO_{X^0}} \sO_{X,j} \xra{\cdot} \sO_{X,i+j}$, and a derivation $\delta: \O_{X,i}\to \O_{X,i-1}$ with $\delta \circ \delta =0$.
 
 A morphism $f \co X \to Y$ of  affinoid dagger dg spaces consists of a morphism $f^0 \co X^0 \to Y^0$ of affinoid dagger spaces, together with a morphism $f^{\sharp} \co (f^0)^*\sO_Y\to \sO_X$ of CDGAs in coherent sheaves on $X^0$. 
 
We define a  dagger dg algebra $A$ to be a $K$-algebra of the form $\Gamma(X^0,\sO_X)$ for $X$ an affinoid dagger dg space. Equivalently, this is a $K$-CDGA $(A_{\ge 0},\delta)$ such that $A_0$ is a dagger algebra and the $A_0$-modules $A_m$ are all finite. 
 \end{definition}

 Note that the functor $X \mapsto \Gamma(X^0,\sO_X)$ gives a contravariant equivalence between affinoid dagger dg spaces and  dagger dg algebras.
 
 \begin{definition}
  Given an affinoid dagger dg space $X$, we define the underived truncation $\pi^0X \subset X^0$ to be the closed affinoid subspace defined by the ideal sheaf $\delta \sO_{X,1} \subset \sO_{X,0}$.
   \end{definition}

 \begin{remark}
  Although the corresponding construction for dg schemes is denoted by $\pi_0X$ in \cite{Quot}, we use the notation $\pi^0X$ to avoid confusion with path components of the simplicial constructions we will see later, and  because superscripts are more appropriate than subscripts to denote equalisers.
 \end{remark}

 \begin{definition}
  We say that a morphism $f \co X \to Y$ of  affinoid dagger dg spaces is a quasi-isomorphism if it induces an isomorphism $\pi^0f\co \pi^0X \to \pi^0Y$ on underived truncations together with isomorphisms $f^{-1}\sH_n(\sO_Y) \cong \sH_n(\sO_X)$ of sheaves on $X^0$.  
 \end{definition}

 The category of coherent sheaves on a dagger affinoid $\Sp(B)$ is  equivalent to the category of finite $B$-modules via  the global sections functor, by \cite[Theorem 2.18 and Proposition 3.1]{GrosseKloenne}. We may therefore rephrase Definition \ref{dgdaggerdef} as saying that an affinoid dagger dg space consists of a $K$-CDGA $A_{\ge 0}$ with  $A_0$ a dagger algebra (i.e. a quotient of some Washnitzer algebra $K\<x_1, \ldots,x_n\>^{\dagger}$ of overconvergent functions on a closed polydisc). Morphisms are then just CDGA morphisms, 
 and quasi-isomorphisms of affinoid dagger dg spaces just correspond to quasi-isomorphisms of these $K$-CDGAs. 
 

%
\begin{definition}\label{qufreedef}
  Say that a dagger dg algebra $A$ is quasi-free if $A_0$ is isomorphic to a Washnitzer algebra $K\<x_1, \ldots,x_n\>^{\dagger}$ for some $n$, and $A$ is freely generated as a graded-commutative $A_0$-algebra.
 
 Say that a morphism $f \co R \to A$ of  dagger dg algebras is quasi-free if $A_0$ is isomorphic over $R_0$  to a relative Washnitzer algebra $R_0\<x_1, \ldots,x_n\>^{\dagger}$, and if $A$ is freely generated as a  graded-commutative algebra over $A_0\ten_{R_0}R$.
 
 Here, $ R_0\<x_1, \ldots,x_n\>^{\dagger}$ is the ring of overconvergent functions on the relative polydisc $\Sp(R_0) \by  \Sp(K\<x_1, \ldots,x_n\>^{\dagger})$, given by $ K\<y_1,\ldots, y_m, x_1, \ldots,x_n\>^{\dagger}/(I)$ when $R_0= K\<y_1,\ldots, y_m\>^{\dagger}/I$.
   \end{definition}

 \begin{lemma}\label{coffactnlemma}
  Every morphism $ f \co A \to B$ of  dagger dg algebras admits a factorisation  $A \xra{p} \breve{B} \xra{r} B $ with $p$ quasi-free and $r$ a surjective quasi-isomorphism, and a factorisation  $A \xra{q} \tilde{B} \xra{s} B $ with $q$ a quasi-free quasi-isomorphism and $s$ surjective in strictly positive degrees.
 \end{lemma}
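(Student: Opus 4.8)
The plan is to mimic the standard small-object-argument / path-object factorisations from dg algebra, adapted to the dagger setting, where the only subtlety is that the degree-zero part must stay a \emph{dagger} algebra rather than an arbitrary commutative ring.

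First I would build the factorisation $A \xra{q} \tilde B \xra{s} B$. Here I want $q$ to be a quasi-free quasi-isomorphism and $s$ surjective in strictly positive degrees. Since $B_0$ is a dagger algebra, by definition it is a quotient $K\<x_1,\ldots,x_n\>^{\dagger}/I$ of a Washnitzer algebra. Working relative to $A_0$, I can choose a surjection $A_0\<x_1,\ldots,x_m\>^{\dagger}\onto B_0$ from a relative Washnitzer algebra (pick generators of $B_0$ over $A_0$, bounded by suitable radii so that overconvergence is preserved); call its source $\tilde B_0$. Then I adjoin polynomial generators in degree $1$ to kill the kernel of $\tilde B_0\onto B_0$, further generators in degree $1$ to surject onto $\sH_0$ corrections and onto $Z_0(B\ten_{A_0}\tilde B_0)$ as needed, and proceed inductively up the degrees, at each stage freely adjoining graded-commutative generators over $\tilde B_0\ten_{A_0}A$ to (i) hit all cycles of $B$ in that degree that are not yet hit, and (ii) kill homology classes that should die. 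This is the usual inductive construction of a cofibrant resolution; the point specific to the dagger context is that only degree $0$ involves Washnitzer completions, and in all higher degrees we adjoin honest polynomial variables, so graded-commutative freeness over $\tilde B_0\ten_{A_0}A$ is exactly the quasi-freeness of Definition \ref{qufreedef}. Surjectivity of $s$ in strictly positive degrees is automatic because we have chosen generators mapping onto the relevant cycles; on $\sH_n$ and on $\pi^0$ the map is an isomorphism by construction, so $q$ is a quasi-isomorphism.

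For the first factorisation $A\xra{p}\breve B\xra{r} B$ with $r$ a \emph{surjective} quasi-isomorphism, I would run essentially the same construction but arrange $r$ to be surjective in all degrees including degree $0$. Concretely, instead of mapping $\tilde B_0=A_0\<x_1,\ldots,x_m\>^{\dagger}$ onto $B_0$, I take $\breve B_0$ to be a relative Washnitzer algebra $A_0\<x_1,\ldots,x_m\>^{\dagger}$ that surjects onto $B_0$ (this is possible: $B_0$ is a quotient of a Washnitzer algebra over $K$, hence — adjoining the coordinates of $A_0$ — over $A_0$). Then $r$ is surjective in degree $0$; adjoining generators in higher degrees as above to fix cycles and kill homology makes $r$ a surjective quasi-isomorphism while keeping $p$ quasi-free. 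One must check that freely adjoining polynomial generators in positive degrees over a ring whose degree-zero part is a dagger algebra preserves the property that the degree-zero part is a dagger algebra — but that is trivial since the degree-zero part is unchanged — and that the $A_0$-modules $\breve B_m$ remain finite in each degree, which holds because at each stage we adjoin only finitely many generators (we only need to hit the finitely many module generators of the relevant cycle/homology groups, using coherence on the dagger affinoid via \cite[Theorem 2.18 and Proposition 3.1]{GrosseKloenne}).

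The main obstacle, and the only place where the dagger setting demands care beyond the classical dg-algebra argument, is \emph{finiteness and overconvergence in degree zero}: I must ensure that the chosen generators of $B_0$ (resp. $\ker$) over $A_0$ can be taken to be finite in number and that the resulting relative Washnitzer algebra genuinely surjects, i.e. that no completion issues obstruct lifting. This is handled by the structure theory of dagger algebras from \cite{GrosseKloenne}: $B_0$ is by definition a quotient of a Washnitzer algebra, its ideals are finitely generated, and finite modules over dagger affinoids are coherent, so all the inductive choices can be made finitely. Everything else — the inductive bookkeeping of cycles and boundaries, the verification that $\delta\circ\delta=0$ is preserved, and that the comparison maps are quasi-isomorphisms — is formal and identical to the dg scheme case of \cite{Quot}. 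I would therefore organise the write-up as: (1) state the one-step extension lemma (adjoin finitely many generators in degree $n$ to a dagger dg algebra, over the appropriate base, to realise a prescribed set of cycles or to kill a prescribed finite set of homology classes), (2) deduce both factorisations by transfinite (in fact countable) iteration, and (3) remark that the dagger-specific input is only the degree-zero surjection, supplied by \cite{GrosseKloenne}.
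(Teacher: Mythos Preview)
Your overall strategy---standard CDGA factorisations with Washnitzer algebras replacing polynomial rings in degree~$0$, and Noetherianity of dagger algebras guaranteeing finitely many generators in each degree---is exactly the paper's approach, and your treatment of the first factorisation ($p$ quasi-free, $r$ a surjective quasi-isomorphism) is correct and matches the paper.

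However, you have confused the two factorisations in your write-up of the second one. The procedure you describe for $A \xra{q} \tilde B \xra{s} B$---surject from a relative Washnitzer algebra onto $B_0$, then inductively ``hit all cycles of $B$'' and ``kill homology classes that should die,'' which you yourself call ``the usual inductive construction of a cofibrant resolution''---is precisely the cofibration--trivial-fibration factorisation, i.e.\ the \emph{first} factorisation again. It makes $s$ a quasi-isomorphism, not $q$. Your concluding sentence (``on $\sH_n$ and on $\pi^0$ the map is an isomorphism by construction, so $q$ is a quasi-isomorphism'') is a non sequitur: the map whose homology you have controlled is $s$, and if $f$ is not already a quasi-isomorphism then $q$ cannot be one by this route.

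The second factorisation requires a genuinely different step: one adjoins \emph{acyclic} cells (pairs consisting of a generator $y$ in degree $n$ together with a fresh generator $z=\delta y$ in degree $n-1$, for $n\ge 1$), with $y$ mapping to a chosen element of $B_n$ and $z$ to its boundary. Adding such contractible cells does not change the quasi-isomorphism type of the source, so $q$ remains a quasi-isomorphism, while the images of the $y$'s force $s$ to be surjective in strictly positive degrees. In degree~$0$ the new elements $z$ accumulate, and this is where one must use a Washnitzer algebra $A_0\<z_1,\dots,z_r\>^{\dagger}$ rather than a polynomial ring to stay within dagger dg algebras---which is what the paper means by ``the standard construction of a trivial cofibration--fibration factorisation \dots\ but with free dagger algebras rather than polynomials in degree~$0$.'' Once you make this correction, the rest of your argument (finiteness via Noetherianity, coherence from \cite{GrosseKloenne}) goes through unchanged.
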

\begin{proof}
 Since $B_0$ is a dagger algebra, it admits a surjection $K\<x_1,\ldots,x_n\>^{\dagger} \onto B_0$ for some $n$. Combined with $f_0$, this gives a morphism $r_0\co  A_0\<x_1,\ldots,x_n\>^{\dagger} \onto B_0$, which is automatically also surjective, so we set $\breve{B}_0:= A_0\<x_1,\ldots,x_n\>^{\dagger}$, with the obvious map $p_0\co A_0 \to \breve{B}_0$.
 
 We thus have a morphism $A\ten_{A_0}\breve{B}_0  \to B$ of  CDGAs in coherent $\breve{B}_0$-modules, which is surjective in degree $0$. The standard  construction of a cofibration-trivial fibration factorisation of CDGAs then allows us to factorise this as  
 \[
 A\ten_{A_0}\breve{B}_0  \xra{p'} \breve{B} \xra{r} B
 \]
 for some CDGA $\breve{B}$ extending $\breve{B}_0$, such that $p'$ is quasi-free and $r$ a surjective quasi-isomorphism. 
 
 Noetherianity of $\breve{B}_0$ ensures that we can (non-functorially) choose finitely many generators in each degree for $\breve{B}$ over $A\ten_{A_0}\breve{B}_0  $, since every module arising in the inductive construction of a resolution is finite over  $\breve{B}_0$. This choice ensures that  the $\breve{B}_0$-modules $\breve{B}_m$ are finite, so correspond to coherent sheaves, meaning that $\breve{B}$ is indeed a  dagger dg algebra. The construction has ensured that $p$ and $r$ are of the desired form.

 For the other factorisation, just take the standard  construction of a trivial cofibration-fibration factorisation of CDGAs $A \to \tilde{B} \to B$, but with free dagger algebras rather than polynomials in degree $0$.
 \end{proof}

 

\subsubsection{Localised dg dagger affinoids}

Now, in common with dg schemes, a problem with affinoid dagger dg spaces $(X^0, \sO_X)$ is that the space $X^0$ has no geometric significance and tends to get in the way. For instance, for any open affinoid subspace $U \subset X^0$ containing $\pi^0X$, the map $(U,\sO_X|_U) \to  (X^0, \sO_X)$ is a quasi-isomorphism. 
As a consequence, the category of affinoid dagger dg spaces has too few morphisms, even after localising at quasi-isomorphisms. 
Accordingly, we now introduce a localised version.

\begin{definition}\label{genWashdef}
 Given non-negative real numbers $r_1,\ldots, r_n$, define the quasi-Washnitzer algebra 
 $
K\<\frac{x_1}{r_1}, \ldots ,\frac{x_n}{r_n}\>^{\dagger}
 $
to be the nested union
\[
 \bigcup_{\rho_i>r_i} K\<\frac{x_1}{\rho_1}, \ldots ,\frac{x_1}{\rho_n}\> = \bigcup_{\rho_i>r_i} T(\rho_1, \ldots, \rho_n)
\]
of Tate algebras.

Explicitly, elements are power series $\sum_{\nu} a_{\nu}x^{\nu}$ for $a_{\nu} \in K$, such that 
\[
 |a_{\nu}|\rho_1^{\nu_1}\ldots \rho_n^{\nu_n} \xra{|\nu| \to \infty} 0
\]
for some $\rho_i>r_i$.
\end{definition}
Note that we are allowing the numbers $r_i$ to be $0$. In particular, we allow the algebra $K\<x_1, \ldots , x_m, \frac{x_{m+1}}{0}, \ldots ,\frac{x_n}{0}\>^{\dagger}$, which can be thought of as $\Gamma(\bD^m, i^{-1}\sO_{\bD^n})$ for the inclusion $i \co \bD^m \into \bD^n$ of polydiscs, so we are looking at germs of overconvergent functions on  the dagger space $\bD^n$ restricted to $\bD^m$. 

Although only stated for the Washnitzer algebra (the case $r_i=1$ for all $i$), the proofs of \cite[\S 2]{guentzer} (which adapt R\"uckert's Basis Theorem and reduce to Tate algebras) apply for any non-negative set of radii, so   $K\<\frac{x_1}{r_1}, \ldots ,\frac{x_1}{r_n}\>^{\dagger}$ is a Noetherian factorial Jacobson ring, even when some of the $r_i$ are $0$. 

\begin{definition}
 Define a  quasi-dagger algebra to be a quotient algebra $A=K\<\frac{x_1}{r_1}, \ldots ,\frac{x_n}{r_n}\>^{\dagger}/I$ for some $r_i \ge 0$. Define the associated affinoid quasi-dagger space $\Sp(A)$ to be the set of maximal ideals of $A$, equipped with the obvious structure sheaf on open localised affinoid subdomains. 
 
 The category of  quasi-dagger algebras is then defined by letting morphisms be all $K$-algebra homomorphisms between   quasi-dagger algebras, and the category of affinoid quasi-dagger spaces is it opposite.
 \end{definition}
 
 In other words, affinoid quasi-dagger spaces $(X,\sO_X)$ are ringed spaces of  the form $(\bar{X},i^{-1}\sO_Y)$ for closed immersions $i \co \bar{X} \to Y$  of affinoid dagger spaces.

 \begin{definition}\label{completiondef}
  Given a  quasi-dagger algebra $A=K\<\frac{x_1}{r_1}, \ldots ,\frac{x_n}{r_n}\>^{\dagger}/I$, we define the affinoid algebra $\bar{A}$ to be the completion of $A$ with respect to the quotient semi-norm on $A$ induced by the norm 
  \[
  \|\sum a_{j_1,\ldots,j_n} x_1^{j_1}\ldots x_n^{j_n}\|_{\uline{r}}:= \sup_{j_1, \ldots,j_n} |a_{j_1,\ldots,j_n}|r_1^{j_1}\ldots r_n^{j_n}
 \]
 on $K\<\frac{x_1}{r_1}, \ldots ,\frac{x_n}{r_n}\>$.
 \end{definition}
Note that in contrast to the situation for dagger algebras in \cite[\S 1]{GrosseKloenne}, the completion map $A \to \bar{A}$ will not be injective if any of the radii $r_i$ are $0$.

\begin{proposition}\label{indaffinoidprop}
 There is a natural fully faithful functor from the category of quasi-dagger algebras to the ind-category of affinoid algebras.
 \end{proposition}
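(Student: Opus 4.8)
The plan is to exhibit the functor concretely, by writing each quasi-dagger algebra as a filtered union of affinoid algebras, and then to reduce full faithfulness to a single factorisation lemma about homomorphisms out of affinoid algebras. Write $A=K\<\tfrac{x_1}{r_1},\ldots,\tfrac{x_n}{r_n}\>^{\dagger}/I$ and, for each multiradius $\rho$ with $\rho_i>r_i$, set $A_{\rho}:=T(\rho)/(I\cap T(\rho))$. Since $T(\rho)$ is Noetherian the ideal $I\cap T(\rho)$ is closed, so $A_{\rho}$ is a genuine affinoid $K$-algebra (with the quotient norm), and the restrictions $T(\rho)\into T(\rho')$ for $\rho\ge\rho'$ induce a filtered system of affinoid subalgebras $A_{\rho}\subseteq A$ with $A=\bigcup_{\rho>r}A_{\rho}$. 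I define the functor $\iota$ to send $A$ to the corresponding object of the ind-category $\Ind(\Aff)$ of affinoid $K$-algebras, and a morphism $\phi\co A\to A'$ to the morphism of ind-objects obtained by restricting $\phi$ to the stages $A_{\rho}$.

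For this to make sense I need the key \emph{factorisation lemma}: any $K$-algebra homomorphism $\psi\co B\to A'$ from an affinoid algebra $B$ into a quasi-dagger algebra $A'=\bigcup_{\sigma}A'_{\sigma}$ factors through a single stage $A'_{\sigma}\into A'$ --- necessarily uniquely, since $A'_{\sigma}\into A'$ is injective, and then continuously by automatic continuity of affinoid homomorphisms. I would prove this by choosing a Tate presentation $T\onto B$, observing that $B$ is then the union of the $K$-subalgebras $\psi^{-1}(A'_{\sigma})$, passing to a countable cofinal chain $\sigma^{(1)}\succeq\sigma^{(2)}\succeq\cdots$ of radii, and running a Baire category argument: the $\psi^{-1}(A'_{\sigma^{(k)}})$ are closed (closed graph theorem), $B$ is their countable union, so one of them has non-empty interior and hence, being a $K$-subalgebra of $B$ containing a neighbourhood of $0$, equals $B$. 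The inputs here are that $K$-algebra homomorphisms of affinoid algebras are automatically continuous (classical, and valid for arbitrary radii by the methods of \cite{guentzer}), together with this elementary fact about subalgebras; alternatively one can simply invoke automatic boundedness of homomorphisms into dagger algebras, so that $\psi$ is bounded and its image, being a bounded subset, lies in one stage of the natural bornology of $A'$.

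Granting the lemma, everything else is bookkeeping with filtered colimits. Automatic continuity gives $\Hom_{K\text{-alg}}(A_{\rho},A'_{\sigma})=\Hom_{\Aff}(A_{\rho},A'_{\sigma})$, so $\Hom_{\Ind(\Aff)}(\iota A,\iota A')=\varprojlim_{\rho}\varinjlim_{\sigma}\Hom_{K\text{-alg}}(A_{\rho},A'_{\sigma})$; restricting a $K$-algebra map $A\to A'$ to the stages $A_{\rho}$ (each of which lands in a stage of $A'$ by the lemma) produces a compatible system, i.e.\ an element of this limit, functorially in $A$ and $A'$. Conversely a compatible system of maps $A_{\rho}\to A'_{\sigma(\rho)}$ composes with $A'_{\sigma(\rho)}\into A'$ and has a colimit $A=\bigcup_{\rho}A_{\rho}\to A'$; uniqueness of the factorisations shows these two assignments are mutually inverse, which is full faithfulness, and independence of $\iota A$ from the presentation (hence naturality of $\iota$) follows by applying the construction to identity morphisms. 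The only real obstacle is the factorisation lemma --- in particular arranging the automatic continuity/boundedness uniformly over all radii, including the degenerate case $r_i=0$ where the completion map $A'\to\bar{A'}$ is not injective and so cannot be used as a substitute; once that is in place the proposition is essentially formal.
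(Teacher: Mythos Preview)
Your overall architecture is exactly the paper's: present $A$ as an ind-affinoid via a filtered system of stages, and reduce full faithfulness to the \emph{factorisation lemma} that any $K$-algebra map $\psi\co C\to A'$ from an affinoid $C$ into a quasi-dagger algebra $A'=\bigcup_{\sigma}A'_{\sigma}$ lands in a single stage. Once that lemma holds, the rest is indeed bookkeeping.

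The gap is in your proof of the factorisation lemma. Your Baire argument requires that each $\psi^{-1}(A'_{\sigma})$ be closed in $C$, and you invoke the closed graph theorem for this. But the closed graph theorem produces continuity \emph{from} a closed-graph hypothesis, not the other way round; to know $\psi^{-1}(A'_{\sigma})$ is closed you would need $\psi$ (or its composite with some map out of $A'$) to already be continuous, which is exactly what is in question. Your fallback, ``automatic boundedness of homomorphisms into dagger algebras'', is stated without proof, and in the quasi-dagger case (some $r_i=0$) it is precisely the non-obvious content of the proposition.

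In fact the completion route you set aside is what the paper uses, and it \emph{does} work even when the map $A'\to\bar{A'}$ fails to be injective. The point is that one never needs injectivity: the composite $\bar{\psi}\co C\to\bar{A'}$ is a map of affinoid algebras and hence automatically continuous, which gives $\|\bar{\psi}(t)\|_{\uline{s}}\le 1$ and $\|\bar{\psi}(t)^N\|_{\uline{s}}<1$ for a finite set $T$ of topologically nilpotent generators of $C$. Independently, since $T$ is finite and $A'=\bigcup_{\sigma}A'_{\sigma}$, the elements $\psi(t)$ lift to some fixed stage $A'_{\sigma'}$. Now use that for any $b\in A'_{\sigma'}$ one has $\|b\|_{\uline{s}}=\lim_{\sigma\to s}\|b\|_{\sigma}$: this lets you transport the norm inequalities from $\bar{A'}$ back to some smaller stage $A'_{\sigma''}$, so the lifts $\widetilde{\psi(t)}$ are topologically nilpotent there and $t\mapsto\widetilde{\psi(t)}$ extends to $K\<T\>\to A'_{\sigma''}$. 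Finally, the kernel of $K\<T\>\onto C$ is finitely generated, and its generators map to elements vanishing in $A'$, hence already in some $A'_{\sigma'''}$; this yields the factorisation $C\to A'_{\sigma'''}$. Injectivity of $A'\to\bar{A'}$ is never used --- only the norm-limit identity on each fixed stage. If you replace your Baire step with this argument (adapting \cite[Lemma~1.8]{GrosseKloenne}), your proof goes through.
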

\begin{proof}
%
Since the  algebras $K\<\frac{x_1}{r_1}, \ldots ,\frac{x_n}{r_n}\>^{\dagger}$ are all Noetherian, any   quasi-dagger algebra $A$ can be expressed  as a quotient of the form $ K\<\frac{x_1}{r_1}, \ldots ,\frac{x_n}{r_n}\>^{\dagger}/(f_1, \ldots, f_m)$. The elements $f_i$ must all lie in $ K\<\frac{x_1}{\rho_1}, \ldots ,\frac{x_n}{\rho_n}\>$ for some $\uline{\rho}>\uline{r}$, giving rise to  a direct system
\[
 \{K\<\frac{x_1}{\rho_1'}, \ldots ,\frac{x_n}{\rho_n'}\>/(f_1, \ldots, f_m) \}_{\uline{\rho}\ge \uline{\rho}' >\uline{r}};
\]
this is the ind-affinoid algebra to which our functor $G$ sends $A$.

We now need to establish natural isomorphisms $\Hom_{\Alg_K}(A,B) \cong \Hom_{\ind(\Alg_K)}(GA,GB)$; this will make $G$ a fully faithful functor, and ensure that the choices in the previous paragraph did not matter up to isomorphism. Writing $A(\uline{\rho}'):=K\<\frac{x_1}{\rho'_1}, \ldots ,\frac{x_n}{\rho'_n}\>/(f_1, \ldots, f_m)$ and similarly for $B$, this means that
 we need an natural isomorphism
\[
 \Hom_{\Alg_K}(A,B) \cong\Lim_{\uline{\rho}\ge \uline{\rho}' >\uline{r}}\LLim_{\uline{\sigma}\ge \uline{\sigma}' >\uline{s}} \Hom_{\Alg_K}(A(\uline{\rho}'), B(\uline{\sigma}')).
\]

Since $A \cong \LLim_{\uline{\rho}\ge \uline{\rho}' >\uline{r}}A(\uline{\rho}')$, 
we already have
\[
 \Hom_{\Alg_K}(A,B) \cong\Lim_{\uline{\rho}\ge \uline{\rho}' >\uline{r}}\Hom_{\Alg_K}(A(\uline{\rho}'), B),
\]
so it suffices to prove that for any affinoid algebra $C$, the natural map
\[
\LLim_{\uline{\sigma}\ge \uline{\sigma}' >\uline{s}} \Hom_{\Alg_K}(C,  B(\uline{\sigma}')) \to  \Hom_{\Alg_K}(C, B)
\]
is an isomorphism.

We now adapt the proof of \cite[Lemma 1.8]{GrosseKloenne}. 
As in the proof of \cite[6.1.5/4]{BoschGuentzerRemmertNonArchanalysis}, $C$ is topologically generated by a finite set $T$ of topological nilpotents. Given a 
$K$-algebra homomorphism $\phi \co C \to B$, we can look at the induced homomorphism $\bar{\phi} \co C \to \bar{B}$, for the completion $\bar{B}$ of Definition \ref{completiondef}; this is necessarily continuous, since it is a homomorphism of affinoid algebras.
The elements $\bar{\phi}(t)$ for $t \in T$ are topologically nilpotent and hence power-bounded (in the sense that $\{t^n~:~ n \in \N\}$ is a bounded set); since $\bar{B}$ is affinoid, \cite[6.2.3/1]{BoschGuentzerRemmertNonArchanalysis} then implies that $\|\bar{\phi}(t)\|_{\uline{s}}\le 1$ for all $t \in T$, with topological nilpotence implying that for some $n$ we have $\|\bar{\phi}(t)^n\|_{\uline{s}}< 1$ for all $t \in T$. 

Since $B=\LLim_{\uline{\sigma}'>\uline{s}}$, there exists some $\uline{\sigma}'$ for which the elements  $\phi(t) \in B$ all lift to elements in $\widetilde{\phi(t)} \in B(\uline{\sigma}')$. Now,  since $\|b\|_{\uline{s}}=\lim_{\uline{\sigma}\to \uline{s}}\|b\|_{\uline{\sigma}}$ for all $b \in B(\uline{\sigma}')$, we deduce that for some $\uline{\sigma}' \ge \uline{\sigma}''> \uline{s}$, we have $\|\widetilde{\phi(t)}^i\|_{\sigma''}<1$ for all $t \in T$ and $n \le i <2n$. The images in $B(\uline{\sigma}'')$ of the elements 
$\widetilde{\phi(t)}$ are thus topologically nilpotent for all $t \in T$. In particular, they are power-bounded so must  have $\|\widetilde{\phi(t)}\|_{\sigma''}\le 1$, by \cite[6.2.3/1]{BoschGuentzerRemmertNonArchanalysis}. We have thus constructed a commutative diagram
\[
 \xymatrix{K\<T\> \ar[r]^{\tilde{\phi}} \ar@{->>}[d] & B(\uline{\sigma}'') \ar[d]\\ C \ar[r]^{\phi} & B}
\]

Since $K\<T\>$ is Noetherian, the kernel of $K\<T \> \to C$ is finitely generated, by a set $U$, say. The image of $\tilde{\phi}(U)$ in $B$ is $0$; since $U$ is finite, this means that the image must be $0$ in  $B(\uline{\sigma}''')$ for some $ \uline{\sigma}'' \ge \uline{\sigma}'''> \uline{s}$. That gives us a homomorphism $C \to B(\uline{\sigma}''')$ lifting $\phi$, thus establishing surjectivity.

To see that the map is injective, consider another choice $\widehat{\phi(t)} \in B(\uline{\sigma}')$ of lift of $\phi(t)$ for each $t$. Since these have the same image in $B$, there must be some $\uline{\sigma}' \ge \uline{\sigma}^*> \uline{s}$ for which $\widehat{\phi(t)}$ and $\widetilde{\phi(t)} $ have the same image in  $B(\uline{\sigma}^*)$; the set $T$ being finite, we can choose the same $ \uline{\sigma}^*$ for all $T$. Our element of $\LLim_{\uline{\sigma}\ge \uline{\sigma}' >\uline{s}} \Hom_{\Alg_K}(C,  B(\uline{\sigma}')) $ is thus unchanged by this choice, establishing injectivity.
\end{proof}

\begin{definition}\label{dgqdaggerdef}
 Define an affinoid quasi-dagger dg space $X$ over $K$ to consist of an affinoid quasi-dagger space $X^0$ over $K$  together with an $\sO_{X^0}$-CDGA $\sO_{X,\ge 0}$ in coherent sheaves on $X^0$, with $\sO_{X,0}=\sO_{X^0}$. We then define an  quasi-dagger dg algebra $A$ over $K$ to be a dg $K$-algebra of the form $\Gamma(X^0,\sO_X)$ for an affinoid quasi-dagger dg space $X$; this is equivalent to saying that $A_0$ is a quasi-dagger algebra and the $A_0$-modules $A_m$ are all finite.
 
 We  say that an affinoid quasi-dagger dg space $X$ is a localised affinoid dagger dg space if  
 the vanishing locus $\pi^0X$  of $\delta$ is dagger affinoid and 
  the closed immersion $i \co \pi^0X \to X^0$ 
  gives an isomorphism on the underlying sets of points. We then define a localised  dagger dg algebra $A$ over $K$ to be a dg $K$-algebra of the form $\Gamma(X^0,\sO_X)$ for a localised affinoid dagger dg space $X$.

A morphism $f \co X \to Y$ of affinoid quasi-dagger dg spaces 
consists of a morphism $f^0 \co X^0 \to Y^0$ of affinoid quasi-dagger spaces, together with a morphism $f^{\sharp} \co (f^0)^*\sO_Y\to \sO_X$ of CDGAs in coherent sheaves on $X^0$. Equivalently a morphism $A \to B$ of  quasi-dagger dg algebras 
is just a homomorphism of dg $K$-algebras.

We denote the category of localised dagger dg algebras by $ dg_+\Affd\Alg^{\loc,\dagger}_K$.
 \end{definition}
 
 \begin{definition}\label{Xlocdef}
  Given an affinoid dagger dg space $X$, we define the associated localised affinoid dagger dg space $X_{\loc}$ as follows. The affinoid quasi-dagger space $X_{\loc}^0$ consists of the topological space underlying $\pi^0X$ equipped with the sheaf $i^{-1}\sO_{X^0}$, for the closed immersion $i \co \pi^0X \to X^0$. The sheaf $\sO_{X_{\loc}}$ of CDGAs on $X^0_{\loc}$ is then given by $i^{-1}\sO_X$. If $A=\Gamma(X^0,\sO_X)$, we then write $A^{\loc}$ for the localised  dagger dg algebra $\Gamma(\pi^0X,i^{-1}\sO_X)$.
  \end{definition}
 
Note that Noetherianity ensures that every localised  affinoid dagger dg space is isomorphic to $X_{\loc}$ for some affinoid dagger dg space $X$, since the ideal defining $\pi^0X \subset X^0$ must have finitely many generators, which converge on some larger polydisc. 
  

 \begin{definition}
  We say that a morphism $f \co X \to Y$ of affinoid quasi-dagger dg spaces is a quasi-isomorphism if it induces an isomorphism $\pi^0f\co \pi^0X \to \pi^0Y$ on underived truncations together with isomorphisms $f^{-1}\sH_n(\sO_Y) \cong \sH_n(\sO_X)$ of sheaves on $X^0$.  
 \end{definition}

The following is an immediate consequence of the sheaves $\sH_n(\sO_X)$ being supported on $\pi^0X$ 
 \begin{lemma}
  For any affinoid dagger dg space $X$, the canonical map $X_{\loc} \to X$ is a quasi-isomorphism.
 \end{lemma}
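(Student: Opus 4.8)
The plan is to unwind the definitions and check directly the two conditions in the definition of a quasi-isomorphism of affinoid quasi-dagger dg spaces. First I would make the canonical map $f \co X_{\loc} \to X$ explicit: by Definition \ref{Xlocdef} it is given on underlying spaces by the closed immersion $i \co \pi^0 X \to X^0$, and $f^{\sharp}$ is the \emph{identity}, because $\sO_{X_{\loc}^0} = i^{-1}\sO_{X^0}$ and $\sO_{X_{\loc}} = i^{-1}\sO_X$, so that $(f^0)^*\sO_X = i^{-1}\sO_X$ is literally $\sO_{X_{\loc}}$. After this identification the whole statement rests on two elementary facts: that the inverse-image functor $i^{-1}$ is exact and commutes with $\delta$, and that $i$ is a topological embedding, so that the counit $i^{-1}i_* \to \id$ is an isomorphism of sheaves.

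For the first condition ($\pi^0 f$ an isomorphism), I would note that $\pi^0 X_{\loc}$ is cut out by $\delta\sO_{X_{\loc},1} = i^{-1}(\delta\sO_{X,1})$ inside $i^{-1}\sO_{X^0}$ (using exactness of $i^{-1}$), so its structure sheaf is $i^{-1}(\sO_{X^0}/\delta\sO_{X,1}) = i^{-1}i_*\sO_{\pi^0 X} \cong \sO_{\pi^0 X}$; hence $\pi^0 X_{\loc} = \pi^0 X$ and $\pi^0 f$ is the identity. For the second condition, exactness of $i^{-1}$ gives $\sH_n(\sO_{X_{\loc}}) = \sH_n(i^{-1}\sO_X) = i^{-1}\sH_n(\sO_X) = f^{-1}\sH_n(\sO_X)$ as sheaves on $X_{\loc}^0 = \pi^0 X$, with the comparison maps being identities, hence isomorphisms. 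This already proves the lemma; the ``support'' phrasing in the sentence preceding the statement is simply the remark that each $\sH_n(\sO_X)$ is annihilated by $\delta\sO_{X,1}$ — since $\delta(a)z = \delta(az)$ whenever $\delta z = 0$ — so that it is already a coherent sheaf on $\pi^0 X$ and nothing is lost under $i^{-1}$.

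I do not expect any real obstacle: the only points needing a moment's care are (i) checking that $f^{\sharp}$ is the identity rather than merely an isomorphism, which is immediate from Definition \ref{Xlocdef}, and (ii) the standard sheaf-theoretic fact $i^{-1}i_*\sF \cong \sF$ for $\sF$ supported on the closed subspace, which I would apply to $\sF = i_*\sO_{\pi^0 X}$. For use in later sections I would also record the contravariantly equivalent formulation: for any dagger dg algebra $A = \Gamma(X^0,\sO_X)$, the natural map $A \to A^{\loc}$ of Definition \ref{Xlocdef} is a quasi-isomorphism of (localised) dagger dg algebras.
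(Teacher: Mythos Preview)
Your proposal is correct and takes essentially the same approach as the paper: the paper's entire justification is the single sentence preceding the lemma, namely that the sheaves $\sH_n(\sO_X)$ are supported on $\pi^0 X$, and your argument is a careful unwinding of precisely this observation (including your explicit verification via $\delta(a)z = \delta(az)$ that each $\sH_n(\sO_X)$ is annihilated by $\delta\sO_{X,1}$). The only thing you add beyond the paper is the explicit check that $\pi^0 f$ is an isomorphism and that $f^\sharp$ is literally the identity, which the paper leaves implicit.
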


 The definitions of quasi-free morphisms and strictly closed immersions now adapt to affinoid quasi-dagger dg spaces in the obvious way, and the analogue of Lemma \ref{coffactnlemma} also holds, since  quasi-dagger algebras are Noetherian. 

%

 \subsection{Derived mapping spaces and comparison with dg EFC algebras}\label{cfEFCsn}
 
 We now recall some definitions from \cite{DStein}, based on \cite{CarchediRoytenbergHomological}.
 
 \begin{definition}\label{EFCnonArchdef}
Define a  $K$-algebra $A$ with entire functional calculus (or EFC $K$-algebra for short)  to be a product-preserving set-valued  functor $\bA_K^n \mapsto A^n$  on the full subcategory of rigid analytic varieties with objects the affine spaces $\{\bA_K^n\}_{n \ge 0}$. 
\end{definition}
Thus  an EFC $K$-algebra $A$  is a commutative $K$-algebra equipped with a systematic and consistent way of evaluating expressions of the form 
\[
 \sum_{m_1, \ldots, m_n=0}^{\infty} \lambda_{m_1, \ldots, m_n} a_1^{m_1}\cdots a_n^{m_n}
\]
 in $A$ whenever the coefficients $\lambda_{m_1, \ldots, m_n} \in K$ satisfy 
 \[
 \lim_{\sum m_i \to \infty } |\lambda_{m_1, \ldots, m_n}|^{1/\sum m_i}= 0.
 \]

Examples of EFC $K$-algebras include rings of functions on rigid $K$-analytic spaces, and any $K$-algebra colimits of such rings (in particular, rings of functions on dagger $K$-analytic spaces arise in this way). By \cite[Proposition \ref{DStein-fpalgprop2}]{DStein}, taking rings of functions gives  a contravariant equivalence of categories between globally finitely presented Stein spaces over $K$ and finitely presented EFC $K$-algebras. Since the EFC monad preserves filtered colimits, this also gives a contravariant equivalence between EFC $K$-algebras and the pro-category of globally finitely presented Stein spaces.

 \begin{definition}\label{EFCDGAdef}
Define a non-negatively graded EFC-differential graded $K$-algebra (EFC-DGA for short)  to be a chain complex $A_{\bt}=(A_{\ge 0}, \delta)$ of $K$-vector spaces equipped with:
\begin{itemize}
 \item an associative graded multiplication, graded-commutative in the sense that $ab= (-1)^{\bar{a}\bar{b}}ba$ for all $a,b \in A$, where $\bar{a}$ is the parity of $a$ (i.e. the degree modulo $2$), and 
\item an enhancement of the $K$-algebra structure on $A_0$ to an EFC $K$-algebra structure,
\end{itemize}
such that $\delta$ is a graded derivation in the sense that $\delta(ab)= \delta(a)b + (-1)^{\bar{a}}a\delta(b)$ for all $a,b \in A$.
\end{definition}
 
\begin{proposition}\label{daggertoEFCprop}
 There is a natural fully faithful functor from  quasi-dagger  algebras $A$ to  EFC algebras. 
\end{proposition}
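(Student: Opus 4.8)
The plan is to realise the desired functor as the composite of the embedding $G$ of Proposition \ref{indaffinoidprop} with a colimit functor from ind-affinoid algebras to EFC algebras. Recall that $G$ sends a quasi-dagger algebra $A = K\<\frac{x_1}{r_1},\ldots,\frac{x_n}{r_n}\>^{\dagger}/(f_1,\ldots,f_m)$ to the filtered system $\{A(\underline\rho)\}_{\underline\rho>\underline r}$ of affinoid quotients $K\<\frac{x_1}{\rho_1},\ldots,\frac{x_n}{\rho_n}\>/(f_1,\ldots,f_m)$, whose colimit of underlying $K$-algebras is $A$. Affinoid algebras are rings of functions on rigid analytic spaces, hence EFC $K$-algebras, and I would first check that the inclusion $\iota\co \Alg_K^{\mathrm{aff}}\to\EFC_K$ is a fully faithful functor: every $K$-algebra homomorphism of affinoid algebras is continuous (\cite{BoschGuentzerRemmertNonArchanalysis}), and a continuous homomorphism automatically respects the EFC operations, since each of those is the evaluation of a series $\sum_{\underline m}\lambda_{\underline m}a^{\underline m}$ with $|\lambda_{\underline m}|^{1/|\underline m|}\to 0$, which converges in any affinoid algebra because its elements are bounded; conversely every EFC morphism is a $K$-algebra homomorphism, and is in fact determined by one. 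Since the EFC monad preserves filtered colimits, $\EFC_K$ admits filtered colimits computed at the level of underlying $K$-algebras, so $\iota$ extends to a filtered-colimit-preserving $\bar\iota\co\ind(\Alg_K^{\mathrm{aff}})\to\EFC_K$. I then set $H:=\bar\iota\circ G$; concretely, $H(A)$ has underlying algebra $A$, equipped with the EFC structure in which $\sum_{\underline m}\lambda_{\underline m}a^{\underline m}$ is computed inside any affinoid $A(\underline\rho)$ containing all the $a_i$, and independence of the chosen presentation will follow once full faithfulness is established (apply it to $\id_A$).

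For full faithfulness I cannot simply invoke the universal property of the Ind-completion, because affinoid algebras are not compact objects of $\EFC_K$, so $\bar\iota$ need not be fully faithful on all of $\ind(\Alg_K^{\mathrm{aff}})$; this is the main obstacle, and it is overcome by re-running the analytic argument behind Proposition \ref{indaffinoidprop}. Fix an affinoid algebra $C$ and a quasi-dagger algebra $B$ with system $\{B(\underline\sigma)\}$. An EFC morphism $\iota C\to H(B)$ is, on underlying algebras, a $K$-algebra homomorphism $C\to B$, and conversely; by the key step in the proof of Proposition \ref{indaffinoidprop} (power-boundedness of the images of a finite topological generating set of $C$, together with Noetherianity of $K\<T\>$) every such homomorphism factors through some stage $B(\underline\sigma)$, the factoring map $C\to B(\underline\sigma)$ being a homomorphism of affinoid algebras and hence continuous, hence EFC. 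The same finiteness argument shows that two such factorisations agreeing in $B$ already agree in some $B(\underline\sigma)$. Using $\Hom_{\EFC}(\iota C,\iota D)=\Hom_{\Alg_K}(C,D)$ for affinoids, this gives
\[
 \Hom_{\EFC}(\iota C, H(B)) \;=\; \LLim_{\underline\sigma}\Hom_{\Alg_K}(C, B(\underline\sigma)).
\]

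Finally I would assemble the computation. Writing $G(A)=\{A(\underline\rho)\}$ and using that $H(A)$ is a filtered colimit in the source variable,
\[
 \Hom_{\EFC}(H(A),H(B)) \;=\; \Lim_{\underline\rho}\Hom_{\EFC}(\iota A(\underline\rho), H(B)) \;=\; \Lim_{\underline\rho}\LLim_{\underline\sigma}\Hom_{\Alg_K}(A(\underline\rho), B(\underline\sigma)),
\]
and the right-hand side is precisely $\Hom_{\ind(\Alg_K^{\mathrm{aff}})}(G(A),G(B))$, which equals $\Hom_{\Alg_K}(A,B)$ by Proposition \ref{indaffinoidprop}, i.e.\ the set of quasi-dagger algebra morphisms. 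Hence $H$ is fully faithful, as required. Apart from the non-compactness issue flagged above, everything reduces to transporting the factorisation lemma of Proposition \ref{indaffinoidprop} across the fully faithful inclusion $\iota$ and then manipulating (co)limits of Hom-sets formally.
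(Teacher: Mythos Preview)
Your proof is correct and takes a genuinely different route from the paper's. Both proofs reach the same reduction: one must show that for an affinoid algebra $C$ and a quasi-dagger algebra $B=\LLim_{\uline\sigma}B(\uline\sigma)$, the map $\LLim_{\uline\sigma}\Hom_{\EFC}(C,B(\uline\sigma))\to\Hom_{\EFC}(C,B)$ is a bijection, the obstacle being that affinoid algebras are not visibly compact in $\EFC_K$. You resolve this by invoking the factorisation established in Proposition~\ref{indaffinoidprop} itself (every $K$-algebra map $C\to B$ lands in some $B(\uline\sigma)$, where it is automatically continuous and hence EFC), so that $\Hom_{\EFC}(\iota C,H(B))=\Hom_{\Alg_K}(C,B)=\LLim_{\uline\sigma}\Hom_{\Alg_K}(C,B(\uline\sigma))$. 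The paper instead replaces the ind-system of Tate quotients $\{A(\uline\rho)\}$ by an equivalent ind-system of Stein quotients $\{A(\uline\rho)'\}$ (rings of functions on open polydiscs), and then appeals to the fact from \cite{DStein} that globally finitely presented Stein algebras are finitely presented as EFC algebras, hence compact. Your argument is more self-contained and avoids importing the Stein compactness result, at the cost of reusing the analytic estimate from Proposition~\ref{indaffinoidprop} rather than a clean categorical compactness statement; the paper's version isolates the compactness phenomenon more cleanly and would generalise more readily to other sources of compact objects in $\EFC_K$. One small cosmetic point: you do not actually need to ``re-run'' the argument of Proposition~\ref{indaffinoidprop}---its statement already gives the factorisation you use, and the only new observation is that the resulting factor $C\to B(\uline\sigma)$ is EFC because morphisms of affinoid algebras are continuous.
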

 \begin{proof}
  By Proposition \ref{indaffinoidprop}, $A$ naturally has the structure of an ind-affinoid algebra, and hence an ind-Banach algebra. Since every Banach algebra carries an entire functional calculus, this gives $A_0$ an EFC-algebra structure.
  Morphisms automatically preserve the EFC algebra structure, since morphisms of affinoid algebras are automatically continuous.
  
  In other words, Proposition \ref{indaffinoidprop} gives us a fully faithful functor from  quasi-dagger  algebras $A$ to ind-affinoid  algebras $\{A(\uline{\rho})\}_{\uline{\rho}>\uline{r}}$, and there is a natural fully faithful functor from affinoid algebras to EFC algebras, so we have a fully faithful  composite functor from  quasi-dagger  algebras to ind-EFC algebras, and hence a functor to EFC algebras by taking colimits. We therefore need to show that for such algebras $A,B$, the natural map
  \[
   \Hom_{\ind(EFC)}(\{A(\uline{\rho})\}_{\uline{\rho}>\uline{r}},\{B(\uline{\sigma})\}_{\uline{\sigma}>\uline{s}})\to \Hom_{EFC}(\LLim_{\uline{\rho}>\uline{r}}A(\uline{\rho}),\LLim_{\uline{\sigma}>\uline{s}}B(\uline{\sigma})),
  \]
is an isomorphism, where the left-hand side expands out as
\[
 \Lim_{\uline{\rho}>\uline{r}}\LLim_{\uline{\sigma}>\uline{s}}\Hom_{EFC}(A(\uline{\rho}),B(\uline{\sigma})).
\]

Since affinoid algebras are not obviously finitely presented as EFC algebras, we proceed by reducing to Stein algebras. Instead of regarding the Washnitzer algebra    
 $ K\<\frac{x_1}{r_1}, \ldots ,\frac{x_n}{r_n}\>^{\dagger}
 $
as the nested union
$
 \bigcup_{\rho_i>r_i} K\<\frac{x_1}{\rho_1}, \ldots ,\frac{x_1}{\rho_n}\>$ of Tate algebras, we can look at it as the nested union $ \bigcup_{\rho_i>r_i} O(\Delta(\rho_1,\ldots,\rho_n))$ of Stein algebras, where $O(\Delta(\rho_1,\ldots,\rho_n)):=\Lim_{\varrho_i<\rho_i} K\<\frac{x_1}{\varrho_1}, \ldots ,\frac{x_1}{\varrho_n}\>$ is the ring of analytic functions on the open polydisc $ \Delta(\rho_1,\ldots,\rho_n)$ with radii $\rho_i$. Note that for $\uline{\varrho}<\uline{\rho}$ we have natural maps 
 \[
 K\<\frac{x_1}{\rho_1}, \ldots ,\frac{x_1}{\rho_n}\>\to O(\Delta(\rho_1,\ldots,\rho_n))\to K\<\frac{x_1}{\varrho_1}, \ldots ,\frac{x_1}{\varrho_n}\>,
 \]
 so the direct systems $\{K\<\frac{x_1}{\rho_1}, \ldots ,\frac{x_1}{\rho_n}\>  \}_{\uline{\rho}>\uline{r}}$
 and $\{O(\Delta(\rho_1,\ldots,\rho_n))\}_{\uline{\rho}>\uline{r}}$ are isomorphic as ind-algebras.
 
 Setting $A(\uline{\rho}'):=A(\uline{\rho})\ten_{K\<\frac{x_1}{\rho_1}, \ldots ,\frac{x_1}{\rho_n}\>}O(\Delta(\rho_1,\ldots,\rho_n))$, we then have an ind-EFC algebra isomorphism $\{A(\uline{\rho})'\}_{\uline{\rho}>\uline{r}}\cong \{A(\uline{\rho})\}_{\uline{\rho}>\uline{r}}$. Since $A(\uline{\rho})'$ is a globally finitely presented Stein algebra, it is of finite presentation as an EFC-algebra by \cite[Proposition \ref{DStein-fpalgprop2}]{DStein}, so
 \[
  \Hom_{EFC}(A(\uline{\rho}'), \LLim_{\uline{\sigma}>\uline{s}}B(\uline{\sigma}))\cong \LLim_{\uline{\sigma}>\uline{s}}\Hom_{EFC}(A(\uline{\rho}'), B(\uline{\sigma})),
 \]
and passing to limits completes the proof.
   \end{proof}

   Applying Proposition \ref{daggertoEFCprop} in degree $0$ gives:
\begin{corollary}\label{daggertoEFCcor}
 There is a natural fully faithful functor from  quasi-dagger dg algebras  to  EFC-DGAs. 
\end{corollary}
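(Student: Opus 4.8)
The plan is to bootstrap Corollary \ref{daggertoEFCcor} directly from Proposition \ref{daggertoEFCprop} by observing that both the source and target categories are built by the same kind of construction from their degree-zero parts. Concretely, a quasi-dagger dg algebra is, by Definition \ref{dgqdaggerdef}, a chain complex $(A_{\ge 0},\delta)$ together with a graded-commutative associative multiplication and a compatible derivation, such that $A_0$ is a quasi-dagger algebra and each $A_m$ is a finite $A_0$-module; an EFC-DGA is, by Definition \ref{EFCDGAdef}, the same chain-level data except that $A_0$ is required to carry an EFC $K$-algebra structure (and no finiteness is imposed). So the functor we want is the obvious one: keep the chain complex $(A_{\ge 0},\delta)$ and the graded multiplication unchanged, and upgrade the $K$-algebra structure on $A_0$ to the EFC $K$-algebra structure supplied by Proposition \ref{daggertoEFCprop}.

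First I would check that this assignment is well defined, i.e. that the EFC structure on $A_0$ together with the unchanged multiplication and $\delta$ on $A_{\ge 0}$ genuinely constitutes an EFC-DGA. The only conditions in Definition \ref{EFCDGAdef} beyond "$A_0$ is an EFC algebra" are that the multiplication be graded-commutative and associative and that $\delta$ be a graded derivation; these hold by hypothesis on $A$ and involve no interaction with the EFC structure, so there is nothing to verify beyond transcription. Next I would define the functor on morphisms: a morphism of quasi-dagger dg algebras is just a homomorphism of dg $K$-algebras (Definition \ref{dgqdaggerdef}), and such a homomorphism restricts in degree $0$ to a $K$-algebra homomorphism of quasi-dagger algebras, which Proposition \ref{daggertoEFCprop} tells us automatically preserves the EFC structure; in higher degrees there is no extra structure to preserve. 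Hence every dg $K$-algebra morphism is automatically a morphism of EFC-DGAs, and functoriality is immediate.

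It then remains to prove full faithfulness. Fix quasi-dagger dg algebras $A,B$. A morphism of EFC-DGAs $A\to B$ is by definition a morphism of the underlying graded $K$-algebras commuting with $\delta$ and compatible with the EFC structures in degree $0$; forgetting the EFC compatibility, it is in particular a homomorphism of dg $K$-algebras $A\to B$. Conversely, given any dg $K$-algebra homomorphism $A\to B$, its degree-zero component $A_0\to B_0$ is a $K$-algebra homomorphism of quasi-dagger algebras, hence preserves the EFC structures by the full faithfulness clause of Proposition \ref{daggertoEFCprop}. Thus the set of EFC-DGA morphisms $A\to B$ coincides with the set of dg $K$-algebra morphisms $A\to B$, which is by definition the set of morphisms of quasi-dagger dg algebras; the functor is therefore fully faithful.

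The construction is essentially formal, so I do not expect a genuine obstacle; the one point that warrants a sentence of care is making explicit that the EFC-algebra enhancement produced by Proposition \ref{daggertoEFCprop} is natural in $A_0$ and that a dg morphism of quasi-dagger dg algebras really does restrict to a morphism of quasi-dagger algebras in degree zero (which it does, since degree-zero elements and the degree-zero multiplication are preserved by any graded homomorphism), so that the diagram identifying morphism sets commutes on the nose. Everything else is bookkeeping, and the proof as given above can be stated in two or three sentences.
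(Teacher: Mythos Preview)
Your proposal is correct and is exactly the paper's approach: the paper's entire proof is the one-line remark ``Applying Proposition \ref{daggertoEFCprop} in degree $0$ gives:'', and you have simply unwound what that sentence means. The only difference is that you have spelled out the bookkeeping (well-definedness, functoriality, full faithfulness) that the paper leaves implicit.
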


We now set about establishing similar statements for the corresponding $\infty$-categories given by inverting quasi-isomorphisms.
The first subtlety we encounter is that for the standard model structure of \cite[Proposition \ref{DStein-stdmodelprop}]{DStein}, none of the objects with which we work is cofibrant.  
However, there is a Quillen-equivalent model structure  which resolves this, constructed as follows.

\begin{definition}\label{locdef}
 Given a morphism $ f \co A \to B$ of EFC-algebras, as a special case of \cite[Definition \ref{DStein-locdef}]{DStein} define the localisation $(A/B)^{\loc}$ of $A$ along $B$ as follows. If $A$ and $B$ are finitely presented, then $f$ takes the form 
 $O(V) \to O(U)$
 for a morphism $g \co U \to V$ of globally finitely presented Stein spaces, and we set
\[
 (A/B)^{\loc}:= \Gamma(U,g^{-1}\sO_V).
\]

For the general case, we write the morphism as a filtered colimit of morphisms $f(i) \co A(i) \to B(i)$ of finitely presented EFC-algebras,  indexed by some poset $I$, and then set $(A/B)^{\loc}:=\LLim_{i \in I} (A(i)/B(i))^{\loc} $.
\end{definition}

\begin{remark}
For a localised  dagger dg algebra  
$A$, observe that by definition we have $A_0\cong (A_0/\H_0A)^{\loc}$, identifying $A$ with the underlying EFC-DGA given by Corollary \ref{daggertoEFCcor}.
\end{remark}

The following is  \cite[Proposition \ref{DStein-locmodelprop}]{DStein} specialised to our setting; the final statements follow because localised dagger affinoid spaces are constructed as inverse limits of open Stein subspaces of affine space, regarding the closed dagger polydisc $\bD^n$ as a limit of open discs.
\begin{proposition}\label{locmodelprop}
 There is a cofibrantly generated model structure (the local model structure) on the category of those non-negatively graded EFC-DGAs $A_{\bt}$ with $A_0\cong (A_0/\H_0A)^{\loc}$, in which  weak equivalence are quasi-isomorphisms and
 fibrations are surjective in strictly positive chain degrees.  The inclusion  functor to the category of all  non-negatively graded EFC-DGAs is then a right Quillen equivalence.

 For any open immersion $U \to V$ of Stein spaces, the corresponding morphism of  Stein algebras is a cofibration in this model structure, as are transfinite compositions of such, and compositions of these with quasi-free morphisms of EFC-DGAs. In particular,  any localisation of a  quasi-free  dagger dg algebra is cofibrant in this model structure.
\end{proposition}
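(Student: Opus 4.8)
The plan is to derive essentially everything by specialising \cite[Proposition \ref{DStein-locmodelprop}]{DStein} to the case of EFC $K$-algebras, and then to supply a short extra argument for the concluding assertion.

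First I would appeal to \cite[Proposition \ref{DStein-locmodelprop}]{DStein}: that result establishes, in the generality of EFC-type algebraic theories (and in particular for EFC $K$-algebras, whose defining hypotheses are trivially satisfied), a cofibrantly generated \emph{local} model structure on the category of non-negatively graded EFC-DGAs $A$ with $A_0\cong (A_0/\H_0A)^{\loc}$ --- the localisation being that of Definition \ref{locdef} --- whose weak equivalences are the quasi-isomorphisms and whose fibrations are the morphisms surjective in strictly positive chain degrees, and it identifies the inclusion into all non-negatively graded EFC-DGAs (equipped with the standard model structure \cite[Proposition \ref{DStein-stdmodelprop}]{DStein}) as a right Quillen equivalence, with left adjoint a suitable localisation functor extending that of Definition \ref{Xlocdef}. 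The assertion that open immersions of Stein spaces, their transfinite compositions, and their composites with quasi-free morphisms of EFC-DGAs are cofibrations is likewise part of the cited result: among the generating cofibrations of the local model structure one has the Stein localisation maps, which include the open immersions of Stein spaces, together with the maps freely adjoining generators in strictly positive degrees; cofibrations being closed under transfinite composition and cobase change then gives the rest. In particular $O(\bA_K^n)$, the free EFC $K$-algebra on $n$ generators placed in degree $0$, is cofibrant.

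It then remains to verify the final sentence. Let $A$ be a quasi-free dagger dg algebra (Definition \ref{qufreedef}), so that $A_0=K\<x_1,\ldots,x_n\>^{\dagger}$ and $A$ is free as a graded-commutative $A_0$-algebra; I would show $K\to A^{\loc}$ is a cofibration in two steps. In degree zero, $A_0^{\loc}=(A_0/\H_0A)^{\loc}$ is, by the construction of the localisation in Definition \ref{locdef} and its compatibility with filtered colimits (from \cite{DStein}), the filtered colimit, along open immersions, of the rings of analytic functions on the \emph{open Stein} neighbourhoods of $\pi^0X$ inside the open polydiscs $\Delta(\rho_1,\ldots,\rho_n)\subset\bA_K^n$ (with $\rho_i>1$) --- here using, exactly as in the proof of Proposition \ref{daggertoEFCprop}, that $K\<x_1,\ldots,x_n\>^{\dagger}=\bigcup_{\rho_i>1}O(\Delta(\rho_1,\ldots,\rho_n))$, and that the finitely many generators of the ideal defining $\pi^0X$ converge on $\Delta(\rho)$ for $\rho$ close to $1$, cutting out a closed analytic subspace there. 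Choosing a cofinal countable chain among these open Stein subspaces exhibits $K\to A_0^{\loc}$ as $K\to O(\bA_K^n)$ followed by a transfinite composition of open-immersion cofibrations, hence a cofibration, so $A_0^{\loc}$ is cofibrant. Finally, since $A$ is free over $A_0$ as a graded-commutative algebra, $A^{\loc}$ is obtained from $A_0^{\loc}$ by freely adjoining the corresponding generators in strictly positive degrees --- a quasi-free morphism of EFC-DGAs, hence a cofibration --- so $K\to A_0^{\loc}\to A^{\loc}$ is a cofibration and $A^{\loc}$ is cofibrant.

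The only step that is not purely formal is the identification in the previous paragraph of the localisation (germ) algebra along $\pi^0X$ with a filtered colimit, along open immersions, of rings of functions on \emph{Stein} open subspaces of affine space --- equivalently, the fact that $\pi^0X$ admits a cofinal system of Stein open neighbourhoods in affine space. This is precisely what underlies the slogan that localised dagger affinoid spaces are inverse limits of open Stein subspaces of affine space (regarding $\bD^n$ as $\varprojlim_{\rho\to 1^+}\Delta(\rho)$), and I would take it from the analysis of the localisation functor in \cite{DStein}; with that in hand, everything else reduces to the closure of cofibrations under cobase change and transfinite composition.
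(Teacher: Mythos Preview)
Your proposal is correct and follows essentially the same approach as the paper. The paper does not give a proof at all --- the proposition is stated as a specialisation of \cite[Proposition \ref{DStein-locmodelprop}]{DStein}, with the final assertion justified by the single remark that ``localised dagger affinoid spaces are constructed as inverse limits of open Stein subspaces of affine space, regarding the closed dagger polydisc $\bD^n$ as a limit of open discs'' --- and you have simply unpacked this remark into an explicit argument (writing $(A_0/\H_0A)^{\loc}$ as a filtered colimit along open immersions of Stein algebras, then adjoining the higher-degree generators), which is exactly what the paper's hint is pointing at.
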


With respect to the model structure of Proposition \ref{locmodelprop}, Lemma \ref{coffactnlemma} ensures that we have cofibrant replacement within the subcategory of EFC-DGAs associated to  localised  dagger dg algebras. Moreover, all objects in that model structure are fibrant.

\begin{lemma}\label{locmaplemma}
 Given a small category $\C$ and a subcategory $\cW$, take an object $A \in \C$ and assume that we have an augmented cosimplicial diagram $\tilde{A}^{\bt} \to A$ in $\C$ such that 
 \begin{enumerate}
  \item the morphisms $\tilde{A}^m \to A$ all lie in $\cW$, and
  \item for any morphism $B \to C$ in $\cW$, the map of simplicial sets given in level $m$ by
  \[
   \Hom_{\C}(\tilde{A}^m,B) \to \Hom_{\C}(\tilde{A}^m,C)
  \]
is a weak equivalence.
 \end{enumerate}
 Then in the simplicial localisation $L_{\cW}(\C)$ of $\C$ at $\cW$, the simplicial set-valued functor
 \[
  \HHom_{L_{\cW}(\C)}(A,-)
 \]
is weakly equivalent to $ m\mapsto \Hom_{\C}(\tilde{A}^m,B)$.
\end{lemma}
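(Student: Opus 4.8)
The plan is to recognise the hypotheses as precisely those making $\tilde A^\bt$ a cosimplicial resolution of $A$ in the sense of the theory of simplicial (hammock) localisations, so that the statement becomes an instance of a standard computation of derived hom-spaces via such resolutions. Concretely, I would first pass to a combinatorial model: Dwyer--Kan's hammock localisation $L^H_{\cW}(\C)$ (or, equivalently, the $\infty$-categorical localisation), recalling that $\HHom_{L_{\cW}(\C)}(A,B)$ can be computed as the diagonal (or $\Tot$) of the bisimplicial set $(m,n) \mapsto \Hom_{\C}(\tilde A^m, \tilde B^n)$ whenever $\tilde A^\bt \to A$ is a cosimplicial resolution and $B \to \tilde B_\bt$ is a simplicial resolution; this is the content of the classical results on function complexes in model and relative categories (Dwyer--Kan, \emph{Function complexes in homotopical algebra}, and Hirschhorn's book). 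The point of hypothesis (2) is exactly that $\Hom_\C(\tilde A^m, -)$ already sends morphisms in $\cW$ to weak equivalences, so no simplicial resolution of $B$ is needed: the bisimplicial set collapses in the second variable.

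The key steps, in order, are: (i) set up the bisimplicial set $X_{m,n} := \Hom_\C(\tilde A^m, \tilde B_n)$ for a choice of simplicial resolution $B \xrightarrow{\sim} \tilde B_\bt$ by objects of $\C$, with each $\tilde B_n \to \tilde B_{n-1}$ (and $B\to \tilde B_0$) lying in $\cW$, and invoke the standard identification of $\diag X$ with $\HHom_{L_{\cW}(\C)}(A,B)$; (ii) use hypothesis (1), that each $\tilde A^m \to A$ lies in $\cW$, to check $\tilde A^\bt \to A$ really is a cosimplicial resolution in $\C$ relative to $\cW$ — here the Reedy-cofibrancy-type condition is automatically met since we only care about the homotopy type, not strictness, once we work in $L_{\cW}(\C)$; (iii) apply hypothesis (2) to the maps $\tilde B_n \to B$ (compositions of maps in $\cW$, hence in $\cW$ if $\cW$ is closed under composition — or, if not assumed, apply (2) levelwise to each structure map and compose weak equivalences) to conclude that for every fixed $m$ the augmentation $X_{m,\bt} = \Hom_\C(\tilde A^m, \tilde B_\bt) \to \Hom_\C(\tilde A^m, B)$ is a levelwise weak equivalence of simplicial sets, in fact exhibiting $\Hom_\C(\tilde A^m, B)$ as a deformation retract-type replacement; (iv) invoke the Bousfield--Kan / realisation lemma: a levelwise weak equivalence of bisimplicial sets induces a weak equivalence on diagonals, so $\diag X \simeq \diag\big((m,n)\mapsto \Hom_\C(\tilde A^m, B)\big)$, and the latter bisimplicial set is constant in $n$, hence its diagonal is just $m \mapsto \Hom_\C(\tilde A^m, B)$; (v) combine (i) and (iv) to obtain the asserted equivalence $\HHom_{L_\cW(\C)}(A,-) \simeq \big(m\mapsto \Hom_\C(\tilde A^m, -)\big)$, naturally in the second argument among objects of $\C$ (and functorially up to homotopy once one restricts attention to $\cW$-local targets, which is all the lemma claims).

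The main obstacle I anticipate is purely bookkeeping at step (ii)–(iii): making precise, without a model structure on $\C$ itself, what "cosimplicial resolution" should mean so that the cited function-complex computations apply. The cleanest route is to avoid demanding any Reedy structure and instead argue entirely at the level of $L_\cW(\C)$, using that the mapping-space functor out of a fixed object preserves homotopy colimits in the source, so that $\HHom_{L_\cW(\C)}(\hocolim_m \tilde A^m, B) \simeq \holim_m \HHom_{L_\cW(\C)}(\tilde A^m, B)$; since each $\tilde A^m \to A$ is in $\cW$ it is an equivalence in $L_\cW(\C)$, so $\hocolim_m \tilde A^m$ need not be computed — rather one uses the dual fact that $A \simeq \tilde A^m$ and that hypothesis (2) says $\Hom_\C(\tilde A^m,-)$ is already homotopy-invariant on $\cW$, hence computes $\HHom_{L_\cW(\C)}(\tilde A^m, -) \simeq \HHom_{L_\cW(\C)}(A,-)$ on $\cW$-local objects. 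In other words, the genuine content is the identification of $\Hom_\C(\tilde A^m, B)$ with the derived mapping space, for which one cites the standard theory (Dwyer--Kan, Hirschhorn), and hypotheses (1)–(2) are exactly the minimal inputs that theory requires; the cosimplicial direction $m$ is then carried along formally and gives the stated simplicial diagram.
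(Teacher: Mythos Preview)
Your primary route via a bisimplicial set $X_{m,n}=\Hom_\C(\tilde A^m,\tilde B_n)$ has a genuine gap: in an arbitrary small category $\C$ with a subcategory $\cW$, there is no reason a simplicial resolution $B\to\tilde B_\bt$ should exist at all, so step~(i) cannot get off the ground. The Dwyer--Kan/Hirschhorn function-complex machinery you cite is formulated for model categories (or at least categories with enough (co)fibrant replacements), and the whole point of the lemma is to handle categories lacking such structure. You recognise this in your final paragraph, but the proposed workaround --- ``$\Hom_\C(\tilde A^m,-)$ is already homotopy-invariant, hence computes $\HHom_{L_\cW(\C)}(\tilde A^m,-)$'' --- is precisely the nontrivial assertion that needs proof, and ``cite the standard theory'' does not suffice since that theory, as usually stated, again presupposes a model structure on $\C$.

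The paper sidesteps this cleanly by moving the problem to a category that \emph{does} carry a model structure: the category of simplicial-set-valued functors on $\C$, with To\"en--Vezzosi's ``restricted diagram'' model structure (a Bousfield localisation at $\cW$) in which the fibrant objects are exactly the objectwise-fibrant functors sending $\cW$ to weak equivalences. Hypothesis~(1) gives a weak equivalence $h_A\to\ho\LLim_m h_{\tilde A^m}=H_{\tilde A}$ in this model category; hypothesis~(2) says $H_{\tilde A}$ is essentially fibrant, so $h_A\to H_{\tilde A}'$ (objectwise fibrant replacement) is a fibrant replacement of $h_A$. The cited result of Dwyer--Kan (as in \cite[Theorem 2.3.5]{hag1}) then identifies fibrant replacements of representables with derived mapping spaces in $L_\cW(\C)$. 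This is the precise statement your fallback was groping for; the presheaf embedding is what makes it work without any resolutions on the $\C$-side.
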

\begin{proof}
 We can consider the model category of restricted diagrams from \cite[\S 2.3.2]{hag1}, applied to our category, so objects are functors from $\C$ to simplicial sets, and fibrant objects are objectwise fibrant functors which send morphisms in $\cW$ to weak equivalences. Writing $h_A:=\Hom_{\C}(A,-)$, in that model structure the morphisms $h_A \to h_{\widetilde{A'}^m} $ are all weak equivalences, since the maps $\widetilde{A'}^m \to A$ are in $\cW$. The map $h_A \to \ho\LLim_{m \in \Delta}h_{\widetilde{A'}^m}$ is thus a weak equivalence, but the latter is just the functor $H_{\tilde{A}}$ sending  $B$ to the simplicial set $m  \mapsto \Hom_{\C}(\widetilde{A'}^m,B)$.
 
By hypothesis,    $H_{\tilde{A}}$  sends morphisms in $\cW$ to weak equivalences, so taking objectwise fibrant replacement gives us a weakly equivalent functor $ H_{\tilde{A}}'$ which is fibrant in the model category of restricted diagrams, and   $h_A \to H_{\tilde{A}}'$ is fibrant replacement. By \cite{DKEquivsHtpyDiagrams}, as interpreted in  \cite[Theorem  2.3.5]{hag1}, this means that $H_{\tilde{A}}(B)$ is weakly equivalent to the space of maps from $A$ to $B$ in $L_{\cW}(\C)$. 
\end{proof}

\begin{proposition}\label{affdsubEFCprop}
 The functor from localised  dagger dg algebras to EFC-DGAs given by Corollary \ref{daggertoEFCcor} induces a fully faithful functor on simplicial categories after simplicial localisation at quasi-isomorphisms, as does its restriction to quasi-free localised  dagger dg algebras.
\end{proposition}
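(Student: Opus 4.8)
Write $\C:=dg_+\Affd\Alg^{\loc,\dagger}_K$ for the category of localised dagger dg algebras, $\mathcal E$ for the category of non-negatively graded EFC-DGAs (or, interchangeably, for those with $A_0\cong(A_0/\H_0A)^{\loc}$, to which it is Quillen-equivalent by Proposition~\ref{locmodelprop}), and $\Phi\co\C\to\mathcal E$ for the functor of Corollary~\ref{daggertoEFCcor}. ``Fully faithful after simplicial localisation'' means that for all $A,B\in\C$ the comparison map $\HHom_{L_{\cW}(\C)}(A,B)\to\HHom_{L_{\cW}(\mathcal E)}(\Phi A,\Phi B)$ is a weak equivalence, and similarly with $A,B$ quasi-free. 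The plan is to compute \emph{both} sides from a single cosimplicial resolution that lives inside $\C$, and then to conclude using that $\Phi$ is fully faithful on strict $\Hom$-sets (Corollary~\ref{daggertoEFCcor}); the model-theoretic input on the EFC side is that the local model structure of Proposition~\ref{locmodelprop} has every object fibrant (the remark following that proposition).

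First I would build the resolution. Using the localised form of Lemma~\ref{coffactnlemma}, factor $K\to A$ as $K\to\breve A\to A$ with $\breve A$ quasi-free (Definition~\ref{qufreedef}) and $\breve A\to A$ a surjective quasi-isomorphism; when $A$ is already quasi-free, take $\breve A=A$. Then run the standard inductive construction of a cosimplicial resolution $\tilde A^{\bt}\to A$ with $\tilde A^{0}=\breve A$: at stage $m$ one factors the canonical map from the $m$-th latching object to the relevant matching pullback as a quasi-free morphism followed by a surjective quasi-isomorphism, again by the localised Lemma~\ref{coffactnlemma}. The only colimits and limits needed are the \emph{finite} latching colimits --- iterated coproducts and coequalisers, i.e.\ base changes along quasi-free morphisms --- and the finite matching limits; these exist inside $\C$ because quasi-dagger algebras are Noetherian and finiteness of the modules in each degree is preserved. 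Thus the construction never leaves $\C$, and all the $\tilde A^{m}$ are again quasi-free and quasi-isomorphic to $A$.

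Next I would transport this to $\mathcal E$ and verify the hypotheses of Lemma~\ref{locmaplemma} on both sides. By Corollary~\ref{daggertoEFCcor} the functor $\Phi$ is fully faithful and preserves quasi-isomorphisms, and by Proposition~\ref{locmodelprop} it sends quasi-free morphisms to cofibrations; using the ind-affinoid presentation of Proposition~\ref{indaffinoidprop} one checks that $\Phi$ also preserves the finite coproducts and coequalisers appearing in the latching objects above (affinoid completed tensor products and quotients match the corresponding EFC coproducts and quotients, and the EFC monad commutes with the filtered colimits over radii). Hence $\Phi\tilde A^{\bt}\to\Phi A$ is a cosimplicial resolution of $\Phi A$ in the local model structure on EFC-DGAs; since every object there is fibrant, $\Hom_{\mathcal E}(\Phi\tilde A^{\bt},-)$ sends quasi-isomorphisms to weak equivalences, so $\Phi\tilde A^{\bt}$ satisfies conditions (1) and (2) of Lemma~\ref{locmaplemma} in $\mathcal E$. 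The \emph{same} cosimplicial object satisfies (1) and (2) in $\C$: (1) is immediate, and for (2) we have $\Hom_{\C}(\tilde A^{m},B)=\Hom_{\mathcal E}(\Phi\tilde A^{m},\Phi B)$ by Corollary~\ref{daggertoEFCcor}, so the required weak equivalences follow from the $\mathcal E$-statement.

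Finally, Lemma~\ref{locmaplemma} identifies $\HHom_{L_{\cW}(\C)}(A,-)$ with $m\mapsto\Hom_{\C}(\tilde A^{m},-)$ and $\HHom_{L_{\cW}(\mathcal E)}(\Phi A,-)$ with $m\mapsto\Hom_{\mathcal E}(\Phi\tilde A^{m},-)$, naturally in the second variable; evaluated at $B$ and $\Phi B$ these two simplicial sets coincide by Corollary~\ref{daggertoEFCcor}, compatibly with the comparison map, so $\Phi$ is homotopy fully faithful. Because the $\tilde A^{m}$ can be taken quasi-free, the same argument runs inside the full subcategory of quasi-free objects (equivalently, Lemma~\ref{coffactnlemma} shows the inclusion of quasi-free localised dagger dg algebras is a Dwyer--Kan equivalence onto all localised dagger dg algebras), giving the second assertion. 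I expect the main obstacle to be the compatibility invoked in the third paragraph: showing that the cosimplicial resolution, although constructed inside the \emph{non-cocomplete} category $\C$, is carried by $\Phi$ to a genuine cosimplicial resolution in EFC-DGAs --- concretely, that $\Phi$ commutes with the finite base changes defining the latching objects --- which is exactly where Proposition~\ref{indaffinoidprop} must be used.
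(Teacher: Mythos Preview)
Your proposal is correct and follows essentially the same route as the paper: build a quasi-free cosimplicial resolution via Lemma~\ref{coffactnlemma}, observe that it is a Reedy cofibrant resolution in the local EFC model structure of Proposition~\ref{locmodelprop} (where every object is fibrant), and then invoke Lemma~\ref{locmaplemma} on both sides together with the strict full faithfulness of Corollary~\ref{daggertoEFCcor}. The one difference is that the paper constructs $\tilde A^{\bt}$ first in \emph{dagger} dg algebras --- where the latching coproducts are the completed projective tensor products $\hten^{\pi}$ --- and only then applies the localisation functor $(-)^{\loc}$ of Definition~\ref{Xlocdef} termwise, which addresses your flagged obstacle more directly than building inside the non-finitely-complete category $\C$.
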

\begin{proof}
This effectively follows with the same reasoning as \cite{DKfunction}, since Lemma \ref{coffactnlemma} means that localised  dagger dg algebras come close to being a model subcategory of the model category in Proposition \ref{locmodelprop}. However, since it is not closed under finite limits, we now give the details. 

Given a dagger dg algebra $A$, repeated application of Lemma \ref{coffactnlemma}  gives us a quasi-free cosimplicial resolution $\tilde{A}^{\bt}$ in the following sense, using Reedy category techniques as in \cite[\S 5]{hovey}. Firstly, each $\tilde{A}^n$ is a quasi-free dagger dg algebra, and moreover the latching maps $(\pd^0,\pd^1) \co \tilde{A}^0\hten^{\pi}\tilde{A}^0 \to \tilde{A}^1$ etc. are quasi-free morphisms of dagger dg algebras, where $\hten^{\pi}$ is the completed projective tensor product (corresponding to product of affinoid dagger spaces). Secondly, the degeneracy maps are all quasi-isomorphisms and we have a quasi-isomorphism  $\tilde{A}^0 \to A$.

Now, any localised dagger dg algebra $A'$ is of the form $A^{\loc}$ for some dagger dg algebra $A$, in the notation of Definition \ref{Xlocdef}, with underlying EFC algebra $A' \cong  A\ten_{A_0}(A_0/\H_0A)^{\loc}$. The construction above then gives us a quasi-free cosimplicial resolution $\widetilde{A'}^{\bt}$ of $A'$ by setting $\widetilde{A'}^m:=(\widetilde{A}^m)^{\loc}$. On 
the underlying EFC algebras, this is a Reedy cofibrant cosimplicial resolution with respect to the model structure of Proposition \ref{locmodelprop}. In particular, for any EFC-DGA $B$ which is fibrant in that model structure, the space of maps $\oR\map_{EFC,DG}(A,B)$ is weakly equivalent to the simplicial set $m  \mapsto \Hom_{EFC,DG}(\widetilde{A'}^m,B) $.

%
%

Since Corollary \ref{daggertoEFCcor} gives a fully faithful functor from  quasi-dagger dg algebras to EFC-DGAs, we have $\Hom_{\Alg_K}(\widetilde{A'}^m,C)\cong \Hom_{DG,EFC}(\widetilde{A'}^m,C)$ for all quasi-dagger dg algebras $C$. If  $C$ is a 
localised dagger dg algebra, its underlying EFC-DGA is  
fibrant in the model structure of Proposition \ref{locmodelprop}. Since $A'$ is a cofibrant cosimplicial resolution, we deduce that the functor $H_{\tilde{A}}(-)$ sending  $C$ to the simplicial set $m  \mapsto \Hom_{DG\Alg_K}(\widetilde{A'}^m,C)$ is a model for $\oR\map_{EFC,DG}(A,-)$, and in particular sends quasi-isomorphisms in $ dg_+\Affd\Alg^{\loc,\dagger}_K$  to weak equivalences.
We may therefore appeal to Lemma \ref{locmaplemma}, from which it follows  that  $H_{\tilde{A}}(-)$  is also a model for the mapping space $\oR\map_{dg_+\Affd\Alg^{\loc,\dagger}_K}(A,-)$.

 Finally, since the objects of $\tilde{A}$ are all quasi-free, exactly the same reasoning applies to the category of restricted diagrams from quasi-free localised dagger dg algebras to simplicial sets. This means that  for $A$ and $B$ quasi-free, $H_{\tilde{A}}(B)$ is also  weakly equivalent to the space of maps from $A$ to $B$ in simplicial localisation at quasi-isomorphisms of the category of quasi-free localised dagger dg algebras.
\end{proof}

\begin{proposition}\label{affdsubEFCprop2}
Under the fully faithful functor from localised dagger dg algebras to EFC-DGAs in Corollary \ref{daggertoEFCcor} and Proposition \ref{affdsubEFCprop}, the essential image consists of those EFC-DGAs $A_{\bt}$ for which  $\H_0A$ is a dagger algebra and the $\H_0A$-modules $\H_mA$ are all finite.
\end{proposition}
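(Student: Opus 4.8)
The plan is to establish the two containments of essential images separately. The containment $\subseteq$ is formal: if $B$ is a localised dagger dg algebra then $\H_0 B = B_0/\delta B_1$ is, by the very definition of ``localised'', the dagger algebra $\Gamma(\pi^0 B,\sO_{\pi^0 B})$, while $\H_m B$ is a subquotient of the finite module $B_m$ over the Noetherian ring $B_0$, hence finite over $B_0$; since $(\delta b)z=\delta(bz)\mp b\,\delta z$ is a boundary whenever $z$ is a cycle, $\delta B_1$ annihilates $\H_* B$, so this module structure factors through $\H_0 B$ and $\H_m B$ is finite over $\H_0 B$. As homology is a quasi-isomorphism invariant and the essential image is closed under equivalence, this proves one containment.

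For $\supseteq$, given an EFC-DGA $A_{\bt}$ with $\H_0 A$ a dagger algebra and each $\H_m A$ finite over $\H_0 A$, I would use the Postnikov tower $A\simeq\Lim_n\tau_{\le n}A$, in which $\tau_{\le 0}A\simeq\H_0 A$ and each $\tau_{\le n}A$ is the square-zero extension of $\tau_{\le n-1}A$ by $\H_n A[n]$ classified by a $k$-invariant valued in $\H_n A[n+1]$, and show by induction on $n$ that each $\tau_{\le n}A$ is equivalent to a (quasi-free) localised dagger dg algebra. The base case holds because a dagger algebra $C$, regarded as a dg algebra concentrated in degree $0$, is itself a localised dagger dg algebra: $C\cong(C/\H_0 C)^{\loc}$ and $\pi^0(C)=\Sp C$ is dagger affinoid. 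For the inductive step, replace a localised dagger dg model of $\tau_{\le n-1}A$ by a quasi-free one $\tilde B$ (localised analogue of Lemma \ref{coffactnlemma}), rigidify the $k$-invariant to an honest morphism $\tilde B\to\tilde B\oplus\H_n A[n+1]$ of EFC-DGAs (permissible since $\tilde B$ is cofibrant and all objects are fibrant in the model structure of Proposition \ref{locmodelprop}), choose a resolution of the finite $\H_0 A$-module $\H_n A$ by finitely generated free $\tilde B$-modules --- possible by Noetherianity of quasi-dagger algebras, with finitely many generators in each degree --- and form the corresponding homotopy fibre. This is again quasi-free over $\tilde B_0$, obtained by adjoining only finitely many new generators in each degree $\ge n$, and is equivalent to $\tau_{\le n}A$.

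It then remains to check that these models are genuinely localised dagger dg algebras and that the tower assembles. The degree-$0$ algebra never changes (it is $\tilde B_0$, a quasi-dagger algebra satisfying the localisation-idempotence), so for $n\ge 2$ the subspace $\pi^0$ and the point-set isomorphism $\pi^0\to(\cdot)^0$ are untouched; for $n=1$ the new degree-$1$ generators a priori carry a differential into $\tilde B_0$, but since $\tau_{\le 1}A$ and $\tau_{\le 0}A$ have the same $\H_0$, this differential lands in $\delta\tilde B_1$ and can be absorbed, so $\pi^0$ is again dagger affinoid and $\pi^0\to(\cdot)^0$ an isomorphism on points. Choosing the cofibrant replacements compatibly, the models $\{B^{(n)}\}$ have fixed degree-$0$ part and stabilise in each positive chain degree (stage $n$ contributes only generators in degrees $\ge n$), so $B:=\Lim_n B^{(n)}$ has every $B_m$ finite over $B_0$ and $\pi^0 B$ dagger affinoid; hence $B$ is a localised dagger dg algebra, and since the tower maps are fibrations between fibrant objects, $B\simeq\holim_n B^{(n)}\simeq\holim_n\tau_{\le n}A\simeq A$.

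I expect the main obstacle to be making the square-zero/$k$-invariant step precise inside the category of localised dagger dg algebras: the bookkeeping that keeps each graded piece a finite $B_0$-module --- which is exactly where the finiteness hypothesis on $\H_m A$ and the Noetherianity of quasi-dagger algebras are indispensable --- together with the preservation of the $\pi^0$-condition (the $n=1$ case), and setting up the Postnikov tower with compatible rigidifications precisely enough that the inverse limit is equivalent to $A$ rather than merely levelwise equivalent.
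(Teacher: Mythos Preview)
Your proof is correct in spirit, but it takes a different and more elaborate route than the paper's. The paper proceeds by direct cell attachment: after replacing $A$ by $A\ten_{A_0}(A_0/\H_0A)^{\loc}$, it picks a Washnitzer algebra $C(0)$ surjecting onto $\H_0A$, lifts this to a map $C(0)\to A$, and then inductively builds $C(n+1)$ from $C(n)$ by adjoining finitely many generators in the single degree $n+1$, chosen to kill $\H_{n+1}\cone(C(n)\to A)$ --- a finite module by Noetherianity and the hypotheses. The colimit $C=\bigcup_n C(n)$ then comes equipped with a quasi-isomorphism $C\to A$, and is visibly a quasi-free dagger dg algebra with finitely many generators in each degree.

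Your Postnikov-tower approach instead builds models $B^{(n)}\simeq\tau_{\le n}A$ and assembles them via an inverse limit. This can be made to work, but it carries overhead that the paper's construction avoids: you need to realise $k$-invariants as strict maps, form homotopy fibres within the category of quasi-free localised dagger dg algebras (which requires adding generators in \emph{all} degrees $\ge n$ at stage $n$, not just one), arrange the equivalences $B^{(n)}\simeq\tau_{\le n}A$ compatibly with the tower maps, and finally identify $\Lim B^{(n)}$ with $\holim\tau_{\le n}A\simeq A$. You correctly flag these as the delicate points. The paper's approach sidesteps all of them by building the map to $A$ from the outset and taking a colimit rather than a limit; the trade-off is the small preliminary localisation of $A$ to make the lift $C(0)\to A_0$ exist. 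Both arguments rest on exactly the same finiteness input (Noetherianity plus finiteness of each $\H_mA$ over $\H_0A$), but the paper's is the more economical implementation.
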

\begin{proof}
 For any localised dagger dg algebra $C$, we have that $\H_0C$ is a dagger algebra, and the modules $\H_mB$ are finite, since coherent.
 
 Given an EFC-DGA $A$ satisfying the conditions above, it thus suffices to construct a dagger dg algebra $C$ quasi-isomorphic to $A$, since we can then localise $C$ at $\H_0A$ to give a quasi-isomorphic localised dagger dg algebra.  We begin by replacing $A$ with the quasi-isomorphic EFC-DGA $A\ten_{A_0}(A_0/\H_0A)^{\loc}$.
 
 Now, since $\H_0A$ is a dagger algebra, there exists a quasi-free dagger algebra $C(0)$ mapping surjectively to $\H_0A$,  and a surjection $C(0) \to \H_0A$ of EFC algebras. Since we have localised $A$ and $C(0)$ is quasi-free, this lifts to a morphism $f(0) \co C(0) \to A_0$ of EFC algebras, and hence to a map $C(0) \to A$ of EFC-DGAs which is surjective on $\H_0$.
 
Now assume inductively that we have constructed a sequence $C(0) \to C(1) \to \ldots \to C(n)$ of quasi-free morphisms  dagger dg algebras, with  $C(i)$ generated over $C(i-1)$ by generators  in degree $i$. Also assume that we have a morphism $f(n) \co C(n) \to A$ which gives isomorphisms on $\H_{< n}$ and is surjective on $\H_n$; this hypotheses amounts to saying that $\H_i\cone(f(n))=0$ for all $i \le n$. 
Since $\H_{n+1}A$ is a finite $\H_0A$-module and $C(n)$ is a complex of finite modules over the Noetherian ring $C(0)=C(n)_0$, it follows that $\H_{n+1}\cone(f(n))$ is a finite $C(0)$-module. We can therefore pick a finite set $S$ of generators and lift them to $\z_{n+1}\cone(f(n))$, giving us a map 
\[
(\delta,f(n+1))\co S \to \{(a,b) \in C(n)_n\by A_{n+1} ~:~ \delta a =0,\, f(n)(a)=\delta b\},
 \]
 and hence, placing $S$ in degree $n+1$ a map $C(n+1):=(C(n)[S],\delta) \to A$ satisfying the hypotheses, which completes the inductive step.
 
  The dagger dg algebra $C:=\bigcup_nC(n)$ then has finitely many generators in each level, so is levelwise finitely generated over $C_0=C(0)$, making it a quasi-free localised dagger dg algebra, and it is equipped with a quasi-isomorphism $C \to A$ of EFC-DGAs.
\end{proof}

The significance of Propositions \ref{affdsubEFCprop} and \ref{affdsubEFCprop2} is that we can use localised affinoid dagger dg spaces as the building blocks for derived dagger stacks satisfying a coherence condition, and hence for partially proper derived $K$-analytic stacks (e.g.  derived $K$-analytic stacks without boundary) satisfying coherence conditions, as in \cite[\S 4.2]{DStein}.

\subsection{Dagger dg spaces and stacks}\label{spacesn}
 
\subsubsection{Definitions} 
 
 \begin{definition}
  Define a $K$-dagger dg space $X$ to be a pair $(\pi^0X,\sO_X)$ where $\pi^0X$ is a $K$-dagger space in the sense of \cite[2.12]{GrosseKloenne} and $\sO_X$ is a presheaf of  quasi-dagger dg $K$-algebras (Definition \ref{dgqdaggerdef})
  on the site of open affinoid subdomains of $\pi^0X$, such that the homology presheaf $\H_0\sO_X$ is just $\sO_{\pi^0X}$, and the homology presheaves $\H_i\sO_X$ are all coherent $\sO_{\pi^0X}$-modules.
 \end{definition}

 \begin{example}
  Given an affinoid dagger dg space $X=(X^0,\sO_X)$, there is an associated dagger dg space given by $(\pi^0X, \iota^{-1}\sO_X)$, for the closed immersion $\iota \co \pi^0X \to X^0$. 
 \end{example}

 \begin{definition}
  A morphism $f \co X \to Y$ of  $K$-dagger dg spaces is said to be a quasi-isomorphism if it induces an isomorphism $\pi^0f \co \pi^0X \to \pi^0Y$ of dagger spaces and  isomorphisms $\H_i(f^{-1}\sO_Y) \to \H_i\sO_X$ for all $i$.
 \end{definition}

 Similarly:
 \begin{definition}\label{dgstackdef}
  Define a $K$-dagger dg analytic Deligne--Mumford  stack $X$ to be a pair $(\pi^0X,\sO_X)$ where $\pi^0X$ is a $K$-dagger analytic Deligne--Mumford stack and $\sO_X$ is a presheaf of  quasi-dagger dg $K$-algebras 
  on the site of dagger affinoid spaces \'etale  over $\pi^0X$, such that the homology presheaf $\H_0\sO_X$ is just $\sO_{\pi^0X}$, and the homology presheaves $\H_i\sO_X$ are all coherent $\sO_{\pi^0X}$-modules.
 \end{definition}
 
 \begin{remark}\label{Nstackrmk}
  Similar definitions exist for $N$-stacks, in which case the \'etale  site has higher categorical structure. 
  
  For an alternative characterisation of dg dagger spaces and stacks, and a generalisation to derived Artin stacks, we can use the approach via \v Cech nerve-type constructions as in \cite{stacks2} and 
  \cite[\S 6]{2021lect}.
  Instead of defining  a presheaf $\sO_X$ on a site associated to $\pi^0X$, we can just take a hypercover $Z_{\bt}$ of $\pi^0X$ with each $Z_n$ a disjoint union of dagger affinoid spaces $U$, and then give a dg dagger  algebra $\sO_{X,\bt}(U)$ for each $U$, such that $\H_0 \sO_{X,\bt}(U) \cong O(U)$ and $\H_i\sO_{X,\bt}(V)\cong \H_i\sO_{X,\bt}(U)\ten_{O(U)}O(V)$ for each morphism $V \to U$. Equivalences are then generated by certain hypercovers. If we restrict to compact stacks with compact (higher) diagonals, we can take each $Z_n$ to be dagger  affinoid rather than a disjoint union of such.
   \end{remark}

   Combining Corollary \ref{daggertoEFCcor}, Proposition \ref{affdsubEFCprop2} and \cite[Remark \ref{DStein-DHomrmk}]{DStein}, we get:
 \begin{corollary}\label{globalEFCdaggercor}
   The dagger-analytic analogue of the $\infty$-category of derived $K$-analytic spaces  from \cite{PortaYuNonArch} is equivalent to the simplicial localisation at quasi-isomorphisms of the category of $K$-dagger dg spaces.
   
   Moreover, the $\infty$-category of partially proper derived $K$-analytic spaces from \cite{PortaYuNonArch} is equivalent to the simplicial localisation at quasi-isomorphisms of the category of partially proper $K$-dagger dg spaces.
   
   The analogous statements for derived  $K$-dagger analytic Deligne--Mumford and Artin ($N$-)stacks also hold. In particular, they can be regarded as functors from localised  dagger dg $K$-algebras to simplicial sets.
 \end{corollary}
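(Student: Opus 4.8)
The plan is to deduce the statement by combining the local comparison of Propositions \ref{affdsubEFCprop} and \ref{affdsubEFCprop2} with the EFC-DGA description of the Porta--Yu $\infty$-category recorded in \cite[Remark \ref{DStein-DHomrmk}]{DStein}, and then to check that the two globalisation procedures agree. The whole point is that the difficulty has been concentrated in the affine comparison, so only bookkeeping about gluing remains.

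First I would recall from \cite[Remark \ref{DStein-DHomrmk}]{DStein} together with the representability theorems of that paper that the dagger-analytic analogue of the $\infty$-category of derived $K$-analytic spaces of \cite{PortaYuNonArch} can be presented as the $\infty$-category of hypercomplete sheaves of spaces glued from affine derived dagger-analytic models, where the admissible affine models are the cofibrant EFC-DGAs whose underlying truncation $\H_0$ is a dagger algebra and whose higher homology modules are finite over it --- precisely the coherence condition appearing in Proposition \ref{affdsubEFCprop2}. For the partially proper variant I would use the observation made just after Proposition \ref{affdsubEFCprop2}: since the closed dagger polydisc $\bD^n$ is an inverse limit of open Stein polydiscs, the affines which glue to partially proper derived $K$-analytic spaces are exactly the localised affinoid dagger dg spaces.

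Next I would invoke Propositions \ref{affdsubEFCprop} and \ref{affdsubEFCprop2} directly: after simplicial localisation at quasi-isomorphisms they identify the category of localised dagger dg $K$-algebras (via Corollary \ref{daggertoEFCcor}) with the full sub-$\infty$-category of EFC-DGAs spanned by the coherent objects above, and likewise the quasi-free localised dagger dg algebras with a generating family of cofibrant such objects. Because localised dagger dg algebras are not closed under finite limits, I would carry out the gluing inside the ambient $\infty$-category of EFC-DGAs and only afterwards restrict to the essential image; this is harmless, since both a hypercomplete sheaf and its stack-theoretic analogue are determined by their restriction to a full generating family of affines closed under the relevant covers, and weak equivalences on either side are detected locally --- on $\pi^0X$ together with the homology sheaves $\H_i\sO_X$.

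The remaining step, and the one I expect to be the main obstacle, is matching topologies and descent data: I would verify that the homology-coherence presheaf condition imposed in Definition \ref{dgstackdef} (and its space-level predecessor) --- that $\H_0\sO_X$ be the structure sheaf of an honest $K$-dagger space or stack and that the $\H_i\sO_X$ be coherent over it --- is precisely the condition that $\sO_X$ be a hypercomplete sheaf (resp.\ stack) of EFC-DGAs with coherent dagger stalks and classical truncation, and that the Grothendieck topology generated by open affinoid subdomain covers (resp.\ \'etale or smooth covers) on the dagger side coincides with the one underlying the EFC-DGA presentation of \cite{DStein,PortaYuNonArch}. Granting this, the equivalence of simplicial localisations follows formally for spaces, for the partially proper variant, and --- running the same argument over the \'etale resp.\ smooth site, and over its higher-categorical enhancement in the $N$-stack case --- for derived $K$-dagger analytic Deligne--Mumford and Artin ($N$-)stacks; the concluding assertion then drops out because the quasi-free localised dagger dg $K$-algebras form a small generating family of affines, so every such stack arises as the sheafification (resp.\ stackification) of a functor on localised dagger dg $K$-algebras.
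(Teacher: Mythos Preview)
Your proposal is correct and follows essentially the same approach as the paper: the paper states the corollary as an immediate consequence of Corollary \ref{daggertoEFCcor}, Proposition \ref{affdsubEFCprop2}, and \cite[Remark \ref{DStein-DHomrmk}]{DStein}, without spelling out any further details. You have simply expanded the gluing bookkeeping that the paper leaves implicit; your inclusion of Proposition \ref{affdsubEFCprop} alongside \ref{affdsubEFCprop2} is natural, since the essential-image statement in \ref{affdsubEFCprop2} rests on the full faithfulness established in \ref{affdsubEFCprop}.
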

Here, we are saying that $X$ is partially proper if and only its underived truncation $\pi^0X$ is so; essentially this means that the space does not have a boundary.

\subsubsection{Representability} 
 
 \begin{definition}
  We denote the category of localised  dagger dg $K$-algebras by $dg_+\Affd\Alg_K^{\dagger,\loc}$. We then denote its full subcategory of objects which are bounded as chain complexes by $dg_+\Affd\Alg_K^{\dagger,\loc, \flat}$.
 \end{definition}

 \begin{definition}
Say that a simplicial set-valued functor $F \co dg_+\Affd\Alg_K^{\dagger,\loc} \to s\Set$  is homotopy-preserving if it maps quasi-isomorphisms to weak equivalences. 
\end{definition}
 
 \begin{definition}\label{sq0def}
We say that  a map $A \to B$ in $dg_+\Affd\Alg_K^{\dagger,\loc}$ is a square-zero extension if it is surjective and the kernel $I$ is square-zero, i.e. satisfies $I^2=0$. 
\end{definition}

\begin{lemma}
 If  $A \to B$ and $C \to B$ are surjective maps in  $dg_+\Affd\Alg_K^{\dagger,\loc}$, with $A \to B$  a square-zero extension, then the fibre product exists $A\by_BC \to C$ in $dg_+\Affd\Alg_K^{\dagger,\loc}$. Similar statements hold for dagger and quasi-dagger dg algebras. 
\end{lemma}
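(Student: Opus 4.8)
The plan is to form $D:=A\times_B C$ as the fibre product in the category of non-negatively graded dg $K$-algebras, where all limits exist, so $D_m:=A_m\times_{B_m}C_m$ with componentwise multiplication and differential; since $dg_+\Affd\Alg_K^{\dagger,\loc}$ (and likewise the categories of dagger and of quasi-dagger dg algebras) is a full subcategory of dg $K$-algebras, it then suffices to check that $D$ lies in it, for $D$ will automatically inherit the required universal property. The key structural observation is that the projection $\pi_C\co D\to C$ is surjective because $A\to B$ is, with kernel $\ker(A\to B)\times\{0\}$, and this is a square-zero ideal of $D$ precisely because $\ker(A\to B)^2=0$; thus $D\to C$ is itself a square-zero extension, by the ideal $\mathcal I:=\ker(A\to B)$, which, being annihilated by itself, is a module over $B=A/\mathcal I$, hence a coherent $C$-module via the surjection $C\to B$ (using that $B_0$ is Noetherian). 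Dually $\pi_A\co D\to A$ is surjective because $C\to B$ is, exhibiting $A_0$ as a quotient of $D_0$.

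It then remains to verify the three defining conditions. Finiteness of the $D_0$-modules $D_m$ follows from the exact sequences $0\to\ker(A_m\to B_m)\to D_m\to C_m\to 0$: the term $C_m$ is finite over $C_0$, hence over $D_0$ via $D_0\onto C_0$, while $\ker(A_m\to B_m)$, being a submodule of a finite module over the Noetherian ring $A_0$, is finite over $A_0$, hence over $D_0$ via $D_0\onto A_0$. That $D_0$ is a quasi-dagger algebra is the substantive point: $D_0$ is a square-zero extension $0\to I_0\to D_0\to C_0\to 0$ of the quasi-dagger algebra $C_0$ by the finite $C_0$-module $I_0:=\ker(A_0\to B_0)$, and I would argue that such an extension is again quasi-dagger as follows. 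Pick a presentation $P:=K\<y_1/s_1,\ldots,y_m/s_m\>^\dagger\onto C_0$ by a quasi-Washnitzer algebra, and generators $e_1,\ldots,e_k$ of $I_0$ lifted to $D_0$; the infinitesimal lifting property of $P$ over $K$ (formal smoothness relative to square-zero extensions of quasi-dagger algebras) provides a $K$-algebra homomorphism $P\to D_0$ lifting $P\onto C_0$ along $D_0\to C_0$, and sending the further variables $z_i\mapsto e_i$ extends it to a homomorphism $K\<y/s,z_1/0,\ldots,z_k/0\>^\dagger\to D_0$ (well defined since $I_0^2=0$ kills all monomials of degree $\ge 2$ in the $z_i$); this is surjective, as its image contains the image $R$ of $P$, which surjects onto $C_0$, together with $I_0=\sum_iR\cdot e_i$. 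Hence $D_0$ is a quotient of a quasi-Washnitzer algebra. Finally, for the localised condition, $\delta(D_1)\onto\delta(C_1)$, so $\pi^0D=\Spec(D_0/\delta D_1)\to\pi^0C=\Spec(C_0/\delta C_1)$ is a square-zero closed immersion whose defining ideal is a quotient of $I_0$, hence a finite module; applying the same lifting argument with Washnitzer algebras shows $\pi^0D$ is again dagger affinoid, and on underlying sets $|\pi^0D|=|\pi^0C|=|\Spec C_0|=|\Spec D_0|$ compatibly with $\pi^0D\into\Spec D_0$, which is therefore bijective on points. The statements for dagger and quasi-dagger dg algebras follow by the same argument, dropping the last condition and, in the dagger case, running the middle step with Washnitzer rather than quasi-Washnitzer algebras.

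The main obstacle is the claim invoked twice above: that a square-zero extension of a dagger (resp.\ quasi-dagger) algebra by a finite module is again dagger (resp.\ quasi-dagger). Everything else is bookkeeping. Its content is the formal smoothness over $K$ of Washnitzer and quasi-Washnitzer algebras relative to square-zero extensions of (quasi-)dagger algebras — classical in the Washnitzer case, and extending to vanishing radii by the reduction-to-Tate-algebras techniques of \cite{guentzer} — where the point requiring care is that one needs a single lifting homomorphism valid for all radii at once; this is guaranteed because $K\<y/s\>^\dagger$ only constrains radii from above by values strictly greater than $s$, so the inevitable slack in lifting along a strict surjection is harmless.
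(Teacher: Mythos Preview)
Your proof is correct and follows essentially the same approach as the paper: reduce to showing that a square-zero extension of a (quasi-)dagger algebra by a finite module is again (quasi-)dagger, then apply this to $D\to C$ with kernel $I=\ker(A\to B)$. The paper compresses the key step you call the ``main obstacle'' into a single phrase (``generators in degree $0$ given by combining those for $J_0$ with lifts of those for $C_0$''), while you spell out the lifting argument and correctly flag the formal-smoothness content.

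One small comment on your framing of that obstacle: your discussion in terms of radii and Tate algebras is more delicate than necessary. The fibre product $D_0=A_0\times_{B_0}C_0$ is automatically an EFC $K$-algebra (limits of EFC algebras are EFC), and by Lemma~\ref{EFCOmegalemma} the quasi-Washnitzer algebra $P$ has free EFC cotangent complex, hence is formally smooth in the EFC category; so the lift $P\to D_0$ exists as an EFC-algebra map directly, without having to control radii level by level. This sidesteps the worry about coherence of lifts across different $\rho$, and also clarifies in what category the lifting is taking place (you phrase it as ``square-zero extensions of quasi-dagger algebras'', which is mildly circular since $D_0$ is not yet known to be quasi-dagger).
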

\begin{proof}
 First observe that if $D  \to C$ is a square-zero extension of  $K$-CDGAs with kernel $J$, and each $C_0$-module $J_r$ is finitely generated, then $D \in dg_+\Affd\Alg_K^{\dagger,\loc}$, with generators in degree $0$ given by combining those for $J_0$ with lifts of those for $C_0$; the dg quasi-dagger  algebra $D$ is localised because the maps $\Sp(C_0) \to \Sp(D_0)$ and $\Sp(\H_0C) \to \Sp(\H_0D)$  give isomorphisms on the underlying sets of points. 
 
 Now observe that our hypotheses imply that $A\by_BC \to C$ is a surjection with kernel $I$. Moreover, $I$ is levelwise finitely generated as a $C_0$-module (since it is as a $B_0$-module, and $C_0$ surjects onto $B_0$), so $A \by_BC \in dg_+\Affd\Alg_K^{\dagger,\loc}$. Beware that this would not be true if $C \to B$ were not surjective.
\end{proof}


\begin{definition}\label{hhgsdef}
We say that a 
functor 
$$
F\co dg_+\Affd\Alg_K^{\dagger,\loc} \to s\Set
$$
is homogeneous if  for all square-zero extensions $A \to B$ and all surjections $C \to B$ in $dg_+\Affd\Alg_K^{\dagger,\loc}$, the natural map
$$
F(A\by_BC) \to F(A)\by^h_{F(B)}F(C)
$$
to the homotopy fibre product is a weak  equivalence.
\end{definition}

\begin{remark}
This terminology is based on that from \cite{drep}, which was inspired by earlier usage in derived deformation theory, such as \cite{Man2}, and is a natural generalisation of Schlessinger's conditions for set-valued deformation functors from \cite[Theorem 2.11]{Sch} and \cite[2.2, S1]{Artin}.
Note however that in algebraic setting of \cite{drep}, the morphism $C \to B$ was not required to be  surjective; here, we have imposed surjectivity to ensure that the fibre product exists in our category. 
Homotopy-homogeneity differs from the notion of infinitesimal cohesion in \cite{lurie} in that we only require one of the morphisms to be nilpotent; our notion does not appear in \cite{lurie}, but its influence  is such that nowadays homogeneity is frequently referred to as ``infinitesimal cohesion on one factor''.
\end{remark}

\begin{definition}\label{Cohdef}
  Given a dg algebra $A$, we say that an $A$-module $M$ is levelwise finitely generated if as a graded $A$-module it has a  generating set with finitely many elements in each degree. We then let  $dg_+\Coh_A \subset dg_+\Mod_A$ be the category of levelwise finitely generated modules.
 \end{definition}
The significance of this condition is that if $A$ is an  quasi-dagger dg algebra and $M$ a levelwise f.g. $A$-module, then $A \oplus M$ is also an  quasi-dagger dg algebra, where the multiplication is defined so that $M$ is square-zero. 
 
\begin{definition}\label{Tdef}
Given a homotopy-preserving homogeneous  functor $F\co dg_+\Affd\Alg_K^{\dagger,\loc} \to s\Set$, an object $A \in dg_+\Affd\Alg_K^{\dagger,\loc}$   and a point $x \in F(A)$, define the tangent functor $T_xF$
\[
 T_xF \co dg_+\Coh_A \to s\Set,
\]
by
$$
T_xF(M):= F(A\oplus M)\by^h_{F(A)}\{x\},
$$
where  $A \oplus M$ is given the multiplication $(a_1,m_1)(a_2,m_2):=(a_1a_2,a_1m_2 +m_1a_2)$.

As for instance in \cite[Lemma \ref{drep-adf}]{drep}, the space $T_xF(M_{[-1]})$ deloops $T_xF(M)$, so we may  define tangent cohomology groups by $\DD^{n-i}_x(F,M):= \pi_i (F(A\oplus M[n])\by^h_{F(A)}\{x\})$.
\end{definition}

\begin{definition}\label{Fcotdef}
 In the setting of Definition \ref{Tdef}, we say that $F$ has a coherent cotangent complex $\bL^{F,x}$ at $x$ if there is a levelwise finitely generated  $A$-module $ \bL^{F,x}$ in chain complexes, bounded below in  chain degrees, representing $T_x(F)$ homotopically  in the sense that the simplicial mapping space 
 \[
  \oR\map_{dg\Mod_{A}}(\bL^{F,x},-) 
 \]
is weakly equivalent to $T_x(F)$ when restricted to $dg_+\Coh_{A}$. 
\end{definition}
In particular, this means that 
\[
\pi_i T_x(F)(M)\cong \EExt^{-i}_{A}(\bL^{F,x},M)
\]
for all $M \in dg_+\Coh_{A}$.

\begin{lemma}\label{cottranslemma}
 In the setting of Definition \ref{Tdef}, if $f \co A \to B$ is a morphism in $dg_+\Affd\Alg_K^{\dagger,\loc}$ and $x \in F(A)$ a point at which  $F$ has a coherent cotangent complex, then there is a natural quasi-isomorphism
 $\bL^{F,f_*x} \simeq  \bL^{F,x}\ten_A^{\oL}B$ of $B$-modules. 
 \end{lemma}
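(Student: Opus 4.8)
The plan is to show that the base change $\bL^{F,x}\otimes_A^{\oL}B$ serves as a coherent cotangent complex for $F$ at $f_*x$ in the sense of Definition~\ref{Fcotdef} (so in particular $F$ has one there); once this is established the asserted quasi-isomorphism holds by construction. Admissibility is the easy part: replacing $\bL^{F,x}$ by a bounded-below, levelwise finitely generated complex of free $A$-modules and applying $-\otimes_AB$ yields a bounded-below, levelwise finitely generated complex of $B$-modules, so $\bL^{F,x}\otimes_A^{\oL}B$ represents an object of $dg_+\Coh_B$. It then remains to produce, naturally in $M\in dg_+\Coh_B$, a weak equivalence $T_{f_*x}F(M)\simeq\oR\map_{dg\Mod_B}(\bL^{F,x}\otimes_A^{\oL}B,M)$, and the derived tensor--hom adjunction rewrites the right-hand side as $\oR\map_{dg\Mod_A}(\bL^{F,x},f^*M)$, where $f^*M$ denotes $M$ with its $A$-module structure along $f$. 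Writing $\widetilde{T_xF}$ for the functor $\oR\map_{dg\Mod_A}(\bL^{F,x},-)$ on all $A$-modules, which by Definition~\ref{Fcotdef} extends $T_xF$ from $dg_+\Coh_A$, the goal becomes the natural equivalence $T_{f_*x}F(M)\simeq\widetilde{T_xF}(f^*M)$.

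To build the comparison, present $f^*M=\LLim_{\alpha}M_{\alpha}$ as the filtered colimit of its sub-dg-$A$-modules $M_{\alpha}$ which are levelwise finitely generated; these exist and exhaust $f^*M$ because the quasi-dagger algebra $A_0$ is Noetherian and each $A_m$ is finite over it, so the sub-dg-$A$-module generated by any finite subset of $M$ is again levelwise finitely generated, hence lies in $dg_+\Coh_A$. Each square-zero extension $A\oplus M_{\alpha}$ lies in $dg_+\Affd\Alg_K^{\dagger,\loc}$, since adjoining a square-zero ideal changes neither the underlying point-set of $\Spec A_0$ nor that of $\Spec\H_0A$, so the localised-affinoid condition is preserved. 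The maps $A\oplus M_{\alpha}\to B\oplus M$ lying over $f$ induce maps $T_xF(M_{\alpha})\to T_{f_*x}F(M)$ and hence a map
\[
\ho\LLim_{\alpha}T_xF(M_{\alpha})\;\lra\;T_{f_*x}F(M).
\]
On the source, $T_xF(M_{\alpha})\simeq\widetilde{T_xF}(M_{\alpha})$ by Definition~\ref{Fcotdef}, and the homotopy colimit over $\alpha$ recovers $\widetilde{T_xF}(f^*M)$ because $\bL^{F,x}$ is levelwise finitely generated, so that $\oR\map_{dg\Mod_A}(\bL^{F,x},-)$ is compatible with this particular filtered colimit.

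The main obstacle is to check that the displayed map is a weak equivalence. The strategy is a Yoneda-type reduction: both $M\mapsto T_{f_*x}F(M)$ and $M\mapsto\ho\LLim_{\alpha}T_xF(M_{\alpha})$ are homotopy-preserving, reduced and homogeneous functors on $dg_+\Coh_B$ — the first because $F$ is homotopy-preserving and homogeneous, the second because it is $\widetilde{T_xF}\circ f^*=\oR\map_{dg\Mod_A}(\bL^{F,x},f^*(-))$ — so each one deloops in the sense of \cite[Lemma~\ref{drep-adf}]{drep}, and the comparison map respects this extra structure. One is thus reduced to the case of free modules $M=B[n]^{r}$, for which $f^*M$ is a direct sum of copies of $f^*(B[n])$ and the statement follows, via the homogeneity of $F$ and the standard obstruction-theoretic description of $F$ on square-zero extensions, from Definition~\ref{Fcotdef} over $A$ applied along the exhaustion of $f^*B$ by coherent submodules. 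This is precisely the argument used for the algebraic analogue in \cite{drep}; the only genuinely new feature is the systematic replacement of finite modules by levelwise-finite (coherent) ones, which is why Noetherianity of the quasi-dagger algebras and the levelwise finite generation of $\bL^{F,x}$ are invoked at each step.
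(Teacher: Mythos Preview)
Your approach has a genuine gap at the crucial step. You reduce to showing that the comparison map
\[
\ho\LLim_{\alpha}T_xF(M_{\alpha})\;\lra\;T_{f_*x}F(M)
\]
is an equivalence for $M=B[n]$, and then assert that this ``follows, via the homogeneity of $F$ \dots, from Definition~\ref{Fcotdef} over $A$ applied along the exhaustion of $f^*B$ by coherent submodules''. But this is exactly the statement you are trying to prove: you have identified the \emph{source} of the map with $\oR\map_A(\bL^{F,x},f^*B)$ using only that $\bL^{F,x}$ is levelwise f.g., while the \emph{target} $T_{f_*x}F(B)$ has no a priori relation to the filtered system $\{A\oplus M_\alpha\}$. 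There is no hypothesis that $F$ (or $T_{f_*x}F$) commutes with such filtered colimits, and indeed the category $dg_+\Affd\Alg_K^{\dagger,\loc}$ does not have them. Your appeal to \cite{drep} is misplaced: in the algebraic setting the tangent functor is defined on \emph{all} $A$-modules, so $A\oplus f^*M$ lies in the category and no colimit approximation is needed; that shortcut is precisely what fails here.

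The paper's argument is genuinely different and supplies the missing geometric input. It first constructs the comparison map $\bL^{F,f_*x}\to\bL^{F,x}\ten_A^{\oL}B$, then factors $f$ as a free Washnitzer extension $A\to A\<z_1/r_1,\ldots,z_m/r_m\>^{\dagger}$ followed by a surjection. For surjections, any coherent $B$-module is already coherent over $A$, so the Milnor square $A\oplus M\cong(B\oplus M)\times_BA$ lies in the category and homogeneity applies directly. For a single free extension $A\to A\<z/r\>^{\dagger}$, the paper uses a support argument in the style of \cite[Theorem 7.4.1]{lurie}: the cone of the comparison map has coherent homology over $\H_0A\<z/r\>^{\dagger}$, and the surjective case forces the lowest nonzero homology group to have empty support among closed points, contradicting the Jacobson property of quasi-Washnitzer algebras. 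This last step is the substantive content you are missing; your filtered-colimit strategy does not provide an alternative route to it.
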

 \begin{proof}
For any morphism $f \co A \to B$ in $dg_+\Affd\Alg_K^{\dagger,\loc}$ and any $M \in dg_+\Coh_A$, flat over $A$ as a graded module, the $B$-module $M\ten_AB$ is levelwise finitely generated over $B$, so the morphism $A\oplus M \to B \oplus (M\ten_AB)$ in  $dg_+\Affd\Alg_K^{\dagger,\loc}$ gives us a map $T_x(F)(M) \to T_{f_*x}(F)(M\ten_AB)$. Since $F$ is homotopy-preserving, this gives us a map $T_x(F)(M) \to T_{f_*x}(F)(M\ten_A^{\oL}B)$ in the derived category for all $M \in dg_+\Coh_A$. By universality, this induces a morphism $ \bL^{F,f_*x} \to  \bL^{F,x}\ten_A^{\oL}B$.

Now, when $f$ is surjective, for any $M\in dg_+\Coh_B$, we have $A \oplus M \in dg_+\Affd\Alg_K^{\dagger,\loc}$, and then homotopy-homogeneity applied to the fibre product $(A \oplus M)\cong (B\oplus M)\by_BA$ gives us a weak equivalence $T_x(F,M) \to T_{f_*x}(F)(M)$. By adjunction, this implies that the map $T_x(F)(M) \to T_{f_*x}(F)(M\ten_A^{\oL}B)$ is a quasi-isomorphism. Since $F$ is homotopy-preserving, the same conclusion holds whenever $\H_0f \co \H_0A \to \H_0B$ is surjective.

In general, we can factorise $f$ as a composite $A \to A\<\frac{z_1}{r_1}, \ldots, \frac{z_m}{r_m}\>^{\dagger} \onto B$, so by transitivity it remains to consider the case when $f$ is of the form $ A \to A\<\frac{z}{r}\>^{\dagger}$. We have a morphism $ \eta\co  \bL^{F,f_*x} \to  \bL^{F,x}\ten_AA\<\frac{z}{r}\>^{\dagger}$ in $dg_+\Coh_{A\<\frac{z}{r}\>^{\dagger}}$, and know that this becomes a quasi-isomorphism on base change along $A\<\frac{z}{r}\>^{\dagger} \to C$ whenever the induced map $\H_0A \to \H_0C$ is surjective. 
We can now argue as for instance in the proof  of \cite[Theorem 7.4.1]{lurie}, using Washnitzer algebras instead of polynomial rings. 
If we look at $\cone(\eta)$, then its homology groups are finite $\H_0A\<\frac{z}{r}\>^{\dagger}$-modules, but the base change result above implies that the support of the lowest non-zero homology group contains no closed points, which is a contradiction.
 \end{proof}

 \begin{definition}
  We say that $F\co dg_+\Affd\Alg_K^{\dagger,\loc} \to s\Set$ is nilcomplete if for all $B \in dg_+\Affd\Alg_K^{\dagger,\loc}$,  applying $F$ to the Postnikov tower of $B$ gives an equivalence
\[
 F(B) \to \ho\Lim_n F(B/\tau_{>n}B),
\]
where $\tau_{>n}B \subset B$ is the dg ideal given by good truncation in degrees above $n$.
 \end{definition}

 \begin{lemma}\label{cotexistslemma}
  If $F\co dg_+\Affd\Alg_K^{\dagger,\loc} \to s\Set$ is a homotopy-preserving, homogeneous, nilcomplete functor such that for all dagger  algebras $A$ and all points  $x \in F(A)$, the groups $\DD^i_x(F, A)$ are all  finitely generated $A$-modules and vanish for $i\ll 0$, then $F$ has coherent cotangent complexes $\bL^{F,y}$ at  all points $y \in F(B)$ for all $B \in dg_+\Affd\Alg_K^{\dagger,\loc}$. 
  \end{lemma}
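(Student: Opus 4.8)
The plan is two-stage: first, when $B$ is a dagger algebra, build $\bL^{F,y}$ by hand out of the tangent cohomology groups $\DD^{\bullet}_y(F,-)$; then, for a general $B\in dg_+\Affd\Alg_K^{\dagger,\loc}$, bootstrap up the Postnikov tower of $B$ and use nilcompleteness to pass to the limit.

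\emph{The dagger-algebra case.} Let $B$ be a dagger algebra and $y\in F(B)$. Homogeneity forces $F$ to respect the identity $B\oplus(M\oplus N)\cong(B\oplus M)\times_B(B\oplus N)$, so $T_yF\co dg_+\Coh_B\to s\Set$ is additive; together with the fact (from Definition \ref{Tdef}) that $T_yF(M[1])$ deloops $T_yF(M)$ and that $T_yF$ is homotopy-preserving, this makes $T_yF$ send cofibre sequences of coherent modules to fibre sequences of spaces. Resolving an arbitrary $M\in dg_+\Coh_B$ by finitely generated free modules --- possible because $B$ is Noetherian --- and chasing the resulting long exact sequences, I would upgrade the hypothesis that $\DD^i_y(F,B)$ be finitely generated and vanish for $i\ll0$ to the same statement for every $\DD^i_y(F,M)$. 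Then $\bL^{F,y}$ is constructed as an increasing union of bounded complexes $\bL(m)$ of finitely generated free $B$-modules, attaching at the $m$-th stage finitely many cells in chain degree $m$ to kill the discrepancy between $\oR\map_{dg\Mod_B}(\bL(m-1),-)$ and $T_yF$ in homotopy degree $-m$; Noetherianity of $B$ is exactly what makes finitely many cells suffice. This is the standard minimal-model construction of a cotangent complex, as in \cite{drep} and \cite{lurie}: the outcome is a levelwise finitely generated, bounded-below $\bL^{F,y}$ with $\oR\map_{dg\Mod_B}(\bL^{F,y},-)\simeq T_yF$ on $dg_+\Coh_B$, the comparison propagating from coefficients $B$ to all coherent $M$ because both sides are additive and exact.

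\emph{The general case.} Write $B_{\le n}:=B/\tau_{>n}B$, so $B_{\le0}=\H_0B$ is a dagger algebra, $B=\Lim_nB_{\le n}$, and each $B_{\le n}\to B_{\le n-1}$ is a square-zero extension with kernel the coherent module $(\H_nB)[n]$; let $y_n\in F(B_{\le n})$ be the image of $y$. I would prove by induction on $n$ that $F$ has a coherent cotangent complex at $y_n$, the base case $n=0$ being the dagger-algebra case. For the inductive step, the square-zero extension together with homogeneity provides, for every $M\in dg_+\Coh_{B_{\le n}}$, a long exact sequence relating $\DD^\bullet_{y_n}(F,M)$ to $\DD^\bullet_{y_{n-1}}(F,-)$ on a coherent $B_{\le n-1}$-module derived from $M$ and to coherent-cohomology groups built from $\bL^{F,y_{n-1}}$ and the coherent module $(\H_nB)[n]$; the last two are finitely generated $\H_0B$-modules and bounded below, by the inductive coherence of $\bL^{F,y_{n-1}}$ and by coherence over $\H_0B$, hence so is $\DD^\bullet_{y_n}(F,M)$. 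The cell construction of the first stage --- which used only Noetherianity of the base and the additive, delooping, homotopy-preserving nature of the tangent functor --- now runs verbatim over the Noetherian ring $B_{\le n}$ and yields $\bL^{F,y_n}$; Lemma \ref{cottranslemma} gives $\bL^{F,y_n}\ten^{\oL}_{B_{\le n}}B_{\le n-1}\simeq\bL^{F,y_{n-1}}$, and a Nakayama argument (the kernel $(\H_nB)[n]$, being nilpotent, lies in the Jacobson radical of $B_{\le n}$) shows the lowest non-zero homological degree of $\bL^{F,y_n}$ is independent of $n$, so the lower bound stays uniform. Assembling the $\bL^{F,y_n}$ along these base-change equivalences then produces a levelwise finitely generated, bounded-below complex $\bL^{F,y}$ over $B=\Lim_nB_{\le n}$ with $\bL^{F,y}\ten^{\oL}_BB_{\le n}\simeq\bL^{F,y_n}$, and nilcompleteness of $F$ --- together with the automatic compatibility of $\oR\map_{dg\Mod_B}(\bL^{F,y},-)$ with the Postnikov tower --- upgrades the levelwise comparisons to $\oR\map_{dg\Mod_B}(\bL^{F,y},-)\simeq T_yF$ on all of $dg_+\Coh_B$, which is precisely the claim that $\bL^{F,y}$ is a coherent cotangent complex for $F$ at $y$.

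I expect the main obstacle to be this passage up the infinite Postnikov tower. The degree-zero construction is routine once Noetherianity is invoked, but one must keep the tangent cohomology coherent and --- more delicately --- uniformly bounded below at every stage (the Nakayama point), and then glue the resulting tower of cotangent complexes into a single coherent complex over $B$ without incurring a divergent $\Lim^1$. It is here that nilcompleteness, the finiteness of every module in sight, and Lemma \ref{cottranslemma} all do essential work; the argument runs parallel to the corresponding cotangent-complex existence results in \cite{drep} and \cite{lurie}.
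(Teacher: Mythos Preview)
Your approach is workable but takes a substantially different route from the paper's. The paper does not build $\bL^{F,y}$ by cell attachment or climb the Postnikov tower of $B$. Instead it observes that Washnitzer algebras are regular, so any $B \in dg_+\Affd\Alg_K^{\dagger,\loc}$ (being almost of finite presentation over one by Lemma~\ref{coffactnlemma}) has a dualising module, and then invokes Lurie's construction of the cotangent complex as the dual of the tangent functor evaluated on the dualising module (\cite[Theorem 3.6.9]{lurie}). The only input that argument needs is that $\DD^i_y(F,M)$ be finitely generated over $\H_0B$ and vanish for $i \ll 0$ for all finitely generated $\H_0B$-modules $M$; and this is reduced to the hypothesis in one stroke by noting that $B \to \H_0B$ is surjective, so homogeneity gives $\DD^i_y(F,M) \cong \DD^i_{\bar{y}}(F,M)$ for such $M$ directly (the argument from the proof of Lemma~\ref{cottranslemma}), with nilcompleteness handling the passage from $M=\H_0B$ to general $M$ via a projective resolution. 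No Postnikov induction enters.

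Your route trades the dualising-module machinery for a more elementary inductive construction, which buys independence from regularity of the Washnitzer algebra. Two points of caution, though. First, your reference to ``the standard minimal-model construction \ldots\ as in \cite{lurie}'' is slightly off: Lurie's Theorem 3.6.9 \emph{is} the dualising-module argument, chosen precisely because cell attachment is awkward when the tangent functor is only defined on coherent (not all) modules --- one lacks the Yoneda-type maps needed to attach cells without first enlarging the module category. Second, your gluing of the tower $\{\bL^{F,y_n}\}$ into a single complex over $B$ is more delicate than the sketch suggests: the base-change equivalences from Lemma~\ref{cottranslemma} are only up to quasi-isomorphism, so producing a strict inverse system with the right limit really forces you to choose compatible cell structures from the outset --- which amounts to running the cell attachment once over $B$ itself. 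At that point the Postnikov step becomes redundant, and the finiteness input you need is exactly what the paper checks via the surjectivity shortcut.
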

\begin{proof}
 By \cite[Proposition 1.5]{GrosseKloenne}, Washnitzer algebras are regular. Any $B \in dg_+\Affd\Alg_K^{\dagger,\loc}$  is almost of finite presentation over a Washnitzer algebra by Lemma \ref{coffactnlemma}, so has a dualising module by \cite[Theorem 3.6.8]{lurie}.
 Although our tangent functor is only defined on coherent complexes, the relevant sections of the proof of \cite[Theorem 3.6.9]{lurie} establish the existence of $\bL^{F,y}$ provided that $\DD^i_y(F, M)$ is a finitely generated $\H_0B$-module for all finitely generated $\H_0B$-modules $M$ and that there exists some $n$ with  $\DD^i_y(F, M)=0$ for all $i<-n$ and all such $M$. We may  take a projective resolution of $M$, and then homogeneity combines with nilcompleteness to imply that these conditions hold provided $\DD^i_x(F, \H_0B) $ is finitely presented, and vanishes for $i<-n$. 
 
 Since $B \to \H_0B$ is surjective, homogeneity gives $\DD^i_X(F,M) \cong \DD^i_{\bar{x}}(F,M)$, where $\bar{x}$ is the image of $x$ in $F(\H_0B)$. Thus the conditions are satisfied by hypothesis.
\end{proof}

We are now in a position to state a weak derived representability result. 

\begin{corollary}\label{lurierep2}
A homotopy-preserving functor  $F\co dg_+\Affd\Alg_K^{\dagger,\loc} \to s\Set$ is a dagger-analytic derived Artin $n$-stack 
if and only if 
 the following conditions hold
\begin{enumerate}
 
\item The restriction $\pi^0F \co \Affd\Alg_K^{\dagger} \to s\Set$ to underived dagger  algebras is represented by a dagger-analytic Artin  $n$-stack.

\item
$F$ is homogeneous.

\item $F$ is nilcomplete.

\item \label{shf2} 
for all dagger  algebras $A \in \Affd\Alg_K^{\dagger}$, all $x \in F(A)_0$ and all \'etale morphisms $f:A \to A'$, the maps
\[
\DD_x^*(F, A)\ten_AA' \to \DD_{fx}^*(F, A')
\]
are isomorphisms.

\item for all  dagger  algebras  $A$  and all $x \in F(A)$, the groups $\DD^i_x(F, A)$ are all  finitely generated $A$-modules. 

\end{enumerate}
\end{corollary}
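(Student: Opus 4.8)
The plan is to follow the usual template for derived representability corollaries: the ``only if'' direction is a routine catalogue of properties enjoyed by derived Artin stacks, while the ``if'' direction feeds Lemmas \ref{cottranslemma} and \ref{cotexistslemma} into the general representability criterion of \cite{drep}, in the analytic incarnation provided by \cite{DStein}.

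For the ``only if'' direction, a dagger-analytic derived Artin $n$-stack $F$ is assembled from localised dagger affinoids by iterated smooth gluing, so its underived truncation $\pi^0F$ is an ordinary dagger-analytic Artin $n$-stack, giving~(1). Homogeneity and nilcompleteness hold for dagger affinoids and pass through smooth descent, giving (2) and~(3). Finally $F$ carries a quasi-coherent cotangent complex, pseudo-coherent because the building blocks are of finite type over Washnitzer algebras, so the tangent cohomology groups $\DD^i_x(F,A)$ are finitely generated over the Noetherian dagger algebra $A$ and are unchanged under \'etale base change, giving (4) and~(5). I would cite \cite{DStein}, with \cite{drep} for the algebraic prototype, for the precise statements.

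The content is the ``if'' direction. The first step is to observe that (1) already forces a uniform vanishing range: for $i<0$ the group $\DD^i_x(F,A)$ is computed from the values of $F$ on the discrete algebra $A\oplus A$, where $F$ agrees with $\pi^0F$, and an Artin $n$-stack takes $n$-truncated values, so $\DD^i_x(F,A)=0$ for $i<-n$. Together with (2), (3) and~(5) this is exactly the input of Lemma \ref{cotexistslemma}, so $F$ acquires coherent cotangent complexes $\bL^{F,y}$ at all points of all $F(B)$; by Lemma \ref{cottranslemma} their formation is compatible with arbitrary derived base change, with~(4) recording the shadow of this on the underived \'etale locus. Thus $F$ has a good obstruction theory in the sense of \cite{drep}. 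Transporting $F$ along the equivalences of Propositions \ref{affdsubEFCprop}, \ref{affdsubEFCprop2} and Corollary \ref{globalEFCdaggercor} into the setting of EFC--DGAs, conditions (1)--(3) together with this obstruction theory are precisely the hypotheses of the analytic representability theorem of \cite{DStein}, whose proof follows \cite{drep}, and the conclusion follows.

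Unwinding that black box, the representability step is an induction up the Postnikov tower: nilcompleteness reduces it to the bounded subcategory $dg_+\Affd\Alg_K^{\dagger,\loc,\flat}$, where one inducts on the top non-vanishing degree $m$, using homogeneity to present $F$ on $B/\tau_{>m}B$ as a principal fibration over its value on $B/\tau_{>m-1}B$ classified by a class valued in $\bL^{F}$, the base case $m=0$ being hypothesis~(1). I expect the main obstacle to be the closure property underlying this step --- that a functor realised as such an $\bL^{F}$-classified fibration over a dagger-analytic derived Artin $n$-stack is again one. This is where the coherence of $\bL^{F}$ and the levelwise finiteness built into localised dagger dg algebras are essential, since they keep the relevant square-zero extensions inside the essential image of Proposition \ref{affdsubEFCprop2}; in the algebraic setting this closure property is the heart of \cite{drep}, and its analytic analogue is what one imports from \cite{DStein}.
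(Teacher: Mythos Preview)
Your outline is broadly aligned with the paper's proof, and in one respect more careful: you explain why Lemma~\ref{cotexistslemma} applies, by extracting the vanishing $\DD^i_x(F,A)=0$ for $i<-n$ from the $n$-truncatedness of $\pi^0F$. The paper invokes the lemma without making this point explicit.

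There is, however, a missing step. You treat condition~(4) as merely ``recording the shadow'' of base change for $\bL^F$, but never actually use it, and you never establish that $F$ is an \'etale hypersheaf. The paper makes this step explicit: once Lemmas~\ref{cottranslemma} and~\ref{cotexistslemma} give a well-behaved cotangent complex, condition~(4) feeds into \cite[Proposition \ref{drep-sheafresult}]{drep} (as in the proof of \cite[Corollary \ref{drep-lurierep2}]{drep}) to deduce that $F$ is a hypersheaf for the \'etale topology. Without this, your Postnikov induction compares $F$ with a functor you have not yet shown to exist: the induction exhibits $F(B)$ as an iterated principal fibration over $\pi^0F(\H_0B)$, but to identify $F$ with a derived Artin $n$-stack you must match it against a specific sheaf built from $\pi^0F$, and that comparison requires $F$ itself to be a sheaf.

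A minor point on citations: the paper does not appeal to a representability theorem in \cite{DStein}. After establishing the hypersheaf property, it invokes the final stage of \cite[Theorem 7.1]{PortaYuRep}, which is the analytic analogue of \cite[Theorem C0.9]{hag2} and \cite{lurie}; your reference to \cite{DStein} here should be replaced accordingly.
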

\begin{proof}
 By Lemma \ref{cotexistslemma}, the conditions imply that $F$ has coherent cotangent complexes $\bL^{F,x}$ at  all points $y \in F(A)$ for all $A \in dg_+\Affd\Alg_K^{\dagger,\loc}$, and by Lemma \ref{cottranslemma} we have quasi-isomorphisms  $\bL^{F,f_*x}\simeq \bL^{F,x}\ten^{\oL}_AB$ for all morphisms $f \co A \to B$ in  $dg_+\Affd\Alg_K^{\dagger,\loc}$. Arguing as in the proof of \cite[Corollary \ref{drep-lurierep2}]{drep}, it follows from  \cite[Proposition \ref{drep-sheafresult}]{drep} that $F$ is an \'etale hypersheaf. The final stage of the proof of \cite[Theorem 7.1]{PortaYuRep} (itself based on the algebraic setting of  \cite[Theorem C0.9]{hag2}, based on \cite{lurie}) then adapts directly to give the desired result. 
\end{proof}

\begin{remarks}
 This result is significantly weaker than the representability results of \cite{lurie,drep} in that it assumes that $\pi^0F$ is representable. Direct analogues of  stronger forms of the representability theorem cannot exist in our setting, because  $dg_+\Affd\Alg_K^{\dagger,\loc}$ does not have filtered colimits or contain complete local rings. However, an analogue of the combined formal effectiveness and finite presentation conditions would be to require that 
 for every complete local $K$-algebra $A$ with residue field finite over $K$, the morphism 
 \[
  \LLim_{\substack{A'\subset A\\ \text{dagger} \\ \text{affinoid}}} F(A') \to \ho\Lim_m F(A/\m_A^m)
 \]
is a weak equivalence; a stronger representability result incorporating such a condition is plausible, but may be of limited use.
 
There is a simpler version of Corollary \ref{lurierep2} in which we just take  $F\co dg_+\Affd\Alg_K^{\dagger,\loc,\flat} \to s\Set$ to be a homotopy-preserving functor on bounded objects, and consequently drop the nilcompleteness condition. Such functors correspond to nilcomplete functors on $dg_+\Affd\Alg_K^{\dagger,\loc}$, the correspondence given by setting $F(B) := \ho\Lim_n F(B/\tau_{>n}B)$.
  \end{remarks}

 The following is an  immediate consequence of Corollary \ref{lurierep2}
\begin{corollary}\label{opencutcor}
 Assume that $F\co dg_+\Affd\Alg_K^{\dagger,\loc} \to s\Set $ is homotopy-preserving, nilcomplete and homogeneous. Take a  $K$-dagger analytic Artin $n$-stack  $\fX$, together with  a natural transformation  $\fX \to \pi^0F$ of functors $\Affd\Alg_K^{\dagger,\loc} \to s\Set $
 which is formally \'etale in the sense that for all square-zero extensions $A \to B$, the map
 \[
  \fX(A) \to F(A)\by^h_{F(B)}\fX(B)
 \]
is a weak equivalence. 

Assume that for all points $x$ in image of $\fX(A) \to \pi^0F(A)$, the groups $\DD^i_x(F, A)$ are all  finitely generated $A$-modules, and that  
 for  all \'etale morphisms $f:A \to A'$, the maps
\[
\DD_x^*(F, A)\ten_AA' \to \DD_{fx}^*(F, A')
\]
are isomorphisms.

Then the functor
\[
 A \mapsto F(A)\by^h_{F(\H_0A)}\fX(\H_0A)
\]
on $dg_+\Affd\Alg_K^{\dagger,\loc}$ is representable by an $n$-geometric (resp. $\infty$-geometric)  $K$-dagger dg analytic Artin stack $\fX$
\end{corollary}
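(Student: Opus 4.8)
The plan is to define $G\co dg_+\Affd\Alg_K^{\dagger,\loc} \to s\Set$ by $G(A):= F(A)\by^h_{F(\H_0A)}\fX(\H_0A)$ and to verify the hypotheses of Corollary \ref{lurierep2} for it. That $G$ is homotopy-preserving is immediate, since $F$ is and since $A\mapsto F(\H_0A)$ and $A\mapsto\fX(\H_0A)$ both send quasi-isomorphisms to weak equivalences (a quasi-isomorphism induces an isomorphism on $\H_0$). Restricting to an underived dagger algebra $A$, where $\H_0A=A$, gives $G(A)=F(A)\by^h_{F(A)}\fX(A)\simeq\fX(A)$, so $\pi^0G=\fX$ is represented by a dagger-analytic Artin $n$-stack; this is condition (1).

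The crux of the argument is that the morphism $G\to F$ is itself formally \'etale. First observe that if $A\to B$ is a square-zero extension in $dg_+\Affd\Alg_K^{\dagger,\loc}$, with kernel $I$ satisfying $I^2=0$, then $\H_0A\to\H_0B$ is surjective with kernel the image of $\H_0I$ in $\H_0A$, and this image is square-zero because $I^2=0$; hence $\H_0A\to\H_0B$ is a square-zero extension of underived dagger algebras. Pasting homotopy pullbacks then yields $F(A)\by^h_{F(B)}G(B)\simeq F(A)\by^h_{F(\H_0B)}\fX(\H_0B)$, and under this identification the comparison map out of $G(A)=F(A)\by^h_{F(\H_0A)}\fX(\H_0A)$ is the base change along $F(A)\to F(\H_0A)$ of the map $\fX(\H_0A)\to F(\H_0A)\by^h_{F(\H_0B)}\fX(\H_0B)$, which is a weak equivalence by the assumed formal \'etaleness of $\fX\to\pi^0F$ applied to the square-zero extension $\H_0A\to\H_0B$. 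So $G(A)\to F(A)\by^h_{F(B)}G(B)$ is a weak equivalence for every square-zero extension $A\to B$.

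Conditions (2)--(5) now follow by formal manipulations of homotopy limits. For homogeneity, given a square-zero extension $A\to B$ and a surjection $C\to B$, the extension $A\by_BC\to C$ is again square-zero, so formal \'etaleness of $G\to F$ and homogeneity of $F$ give
\[ G(A\by_BC)\;\simeq\; F(A\by_BC)\by^h_{F(C)}G(C)\;\simeq\;\bigl(F(A)\by^h_{F(B)}F(C)\bigr)\by^h_{F(C)}G(C)\;\simeq\;F(A)\by^h_{F(B)}G(C), \]
while formal \'etaleness of $G\to F$ applied to $A\to B$ gives $G(A)\by^h_{G(B)}G(C)\simeq F(A)\by^h_{F(B)}G(C)$, and one checks these identifications match the natural map. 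Nilcompleteness holds because $\H_0(B/\tau_{>n}B)=\H_0B$ for all $n\ge 0$, so $G(B/\tau_{>n}B)=F(B/\tau_{>n}B)\by^h_{F(\H_0B)}\fX(\H_0B)$ and the homotopy limit over $n$ commutes with the homotopy pullback, reducing to nilcompleteness of $F$. Finally, applying formal \'etaleness of $G\to F$ to the square-zero extension $A\oplus M\to A$ for $M\in dg_+\Coh_A$ identifies $T_xG(M)\simeq T_{\bar x}F(M)$, where $\bar x\in\pi^0F(A)$ is the image of $x$; hence $\DD^*_x(G,M)\cong\DD^*_{\bar x}(F,M)$ naturally in $M$, and since every point of $G(A)=\fX(A)$ maps into the image of $\fX(A)\to\pi^0F(A)$, the assumed finiteness of $\DD^i_{\bar x}(F,A)$ and \'etale base change for $F$ yield conditions (5) and (4) for $G$. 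Corollary \ref{lurierep2} then shows $G$ is a dagger-analytic derived Artin $n$-stack, and the same argument with $n$ unbounded gives the $\infty$-geometric case.

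I expect the only genuine work to lie in the second paragraph: checking that $\H_0$ carries square-zero extensions to square-zero extensions, and then organising the homotopy-pullback identifications carefully enough that the comparison map $G(A)\to F(A)\by^h_{F(B)}G(B)$ really is the base change of the formal-\'etaleness equivalence for $\fX$ (and likewise that the map in the homogeneity computation is the canonical one), rather than merely abstractly equivalent to it. Everything else is a routine diagram chase of homotopy limits.
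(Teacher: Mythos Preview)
Your proposal is correct and follows exactly the route the paper intends: the paper simply declares the result ``an immediate consequence of Corollary \ref{lurierep2}'' without further elaboration, and your argument is a careful unpacking of that claim, verifying each hypothesis of Corollary \ref{lurierep2} for the functor $G$. Your identification of the key step --- that $G\to F$ is formally \'etale, reduced via the pasting lemma to the assumed formal \'etaleness of $\fX\to\pi^0F$ over the induced square-zero extension $\H_0A\to\H_0B$ --- is exactly the substance behind the word ``immediate''.
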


\subsection{Pro-\'etale sheaves associated to affinoid dagger dg spaces}\label{proetdaggersn}

For our purposes, the great advantage of dagger algebras over EFC algebras is that the former are equipped with canonical topologies, which we now exploit to produce condensed algebras. From now on, we assume that the valuation on our base field $K$ is discretely valued, so the ring $\cO_K:=\{\lambda \in K ~:~ |\lambda|\le 1\}$ is a DVR with maximal ideal $\m_K:=\{\lambda \in K ~:~ |\lambda|< 1\}$, 
and the topology on $K$ induced by the norm is the $\varpi$-adic topology, where $\varpi$ is an element of $\m_K$ of maximum norm (and hence a generator of that ideal). 


Now, every affinoid $K$-algebra $A$ is a Banach $K$-algebra, and every finite $A$-module $M$ then inherits the structure of a Banach space (up to Banach space isomorphism), since it admits a surjection $A^n \onto M$ for some $n$, with kernel necessarily closed by \cite[Proposition 5.2.7.1]{BoschGuentzerRemmertNonArchanalysis}.


\begin{lemma}\label{exactmodsaffdlemma}
 Every surjection $f \co M \onto N$ of finite modules over an affinoid $K$-algebra  $A$ admits a continuous $K$-linear section.
\end{lemma}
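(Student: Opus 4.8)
The plan is to use the fact that over a (discretely valued) non-Archimedean field, finite modules over affinoid algebras are Banach spaces that admit $\cO_K$-lattices, and to exploit the strong notion of orthonormal (Schauder) basis available in this setting. First I would reduce to the free case: choose a surjection $\pi \co A^n \onto M$; then $f \circ \pi \co A^n \onto N$ is again a surjection of finite $A$-modules, and a continuous $K$-linear section $\sigma$ of $f\circ\pi$ would give a continuous $K$-linear section $\pi\circ\sigma$ of $f$. So it suffices to split a surjection $A^n \onto N$ of Banach $A$-modules, $K$-linearly and continuously.

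Next I would invoke the structure theory of $K$-Banach spaces over a discretely valued field. By \cite[Proposition 5.2.7.1]{BoschGuentzerRemmertNonArchanalysis} the kernel $P := \ker(A^n \onto N)$ is closed, so we have a short exact sequence $0 \to P \to A^n \to N \to 0$ of $K$-Banach spaces (the quotient norm on $N$ is equivalent to its given norm, again by the open mapping theorem). Now the key point: since $K$ is discretely valued with $|K^\times|$ discrete in $\R_{>0}$, every $K$-Banach space of countable type is isomorphic to a space of the form $c_0(I)$ with an orthonormal basis (this is the non-Archimedean analogue of the fact that Hilbert spaces have orthonormal bases; see e.g.\ \cite[\S 2.4--2.5]{BoschGuentzerRemmertNonArchanalysis}, or the theory of weakly cartesian/$c_0$-spaces). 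In particular $N$, being a finite module over a Banach algebra, is of countable type, hence admits an orthonormal basis, and then one lifts each basis vector $e_i$ of $N$ to some $\tilde e_i \in A^n$ with $\|\tilde e_i\| \le (1+\varepsilon)\|e_i\|$; because the basis is orthonormal, the assignment $e_i \mapsto \tilde e_i$ extends to a bounded $K$-linear map $N \to A^n$, and by construction it is a section of the projection. Composing with $\pi$ gives the desired continuous $K$-linear section of $f$.

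An alternative, perhaps cleaner, route that avoids choosing bases: for a discretely valued $K$, a closed subspace $P \subseteq V$ of a $K$-Banach space always admits a continuous $K$-linear complement when $V/P$ is of countable type — equivalently, the quotient map $V \onto V/P$ splits $K$-linearly and continuously. This is a standard consequence of the non-Archimedean Hahn--Banach theorem in the spherically complete / discretely valued case, applied coordinatewise to a Schauder basis of $V/P$. I would cite this directly and apply it to $V = A^n$, $P = \ker(f\circ\pi)$.

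The main obstacle is making sure the Banach-space input is correctly invoked: the splitting of quotients of Banach spaces is \emph{false} for general non-Archimedean fields (it genuinely uses that $K$ is discretely valued, which is exactly why the hypothesis was imposed at the start of \S\ref{proetdaggersn}), and it also uses that the relevant quotient is of countable type — which holds here because finite modules over affinoid algebras are quotients of $A^n$ and $A$ itself is separable of countable type. Once these two facts are pinned down with the right references, the reduction above is routine. One should also remark that the section is only $K$-linear, not $A$-linear, which is all that is claimed and all that the later pro-\'etale/condensed constructions require.
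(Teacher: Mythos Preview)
Your proposal is correct and close in spirit to the paper's proof, but packaged differently. Both arguments ultimately lift an orthonormal-type basis of $N$ to $M$; the paper does this concretely via $\cO_K$-lattices: it takes the unit balls $M^o$, $N^o$, observes that the discrete valuation makes the topology on $N^o$ $\varpi$-adic, lifts a $k$-basis of $N^o/\varpi N^o$ to $N^o$ and then to $M^o$, and checks by completeness that this defines a continuous $\cO_K$-linear section, which is then tensored up to $K$. You instead invoke the general structure theory of $K$-Banach spaces of countable type over a discretely valued field (existence of orthonormal bases, or equivalently non-Archimedean Hahn--Banach) and lift the basis directly.

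Two minor remarks. First, your reduction to the free case $M = A^n$ is unnecessary: the paper works with arbitrary finite $M$ throughout, and nothing in your own argument actually uses freeness of the source once you have identified $N$ as a $c_0$-space. Second, the paper's lattice argument is really just a hands-on proof of the orthonormal-basis result you cite, so the two are not genuinely different routes --- yours is more citation-heavy, the paper's more self-contained. Your identification of the discretely-valued hypothesis as the crux is exactly right; it is precisely what makes the quotient norm take discrete values and hence makes the lattice $N^o$ $\varpi$-adically complete (equivalently, what makes $K$ spherically complete so that Hahn--Banach applies).
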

\begin{proof}
Since the kernel $L$ of $f$ is (necessarily) closed, we have a topological $A$-linear isomorphism $M/L \cong N$. 
 The norm on $N$ is thus equivalent to the quotient norm induced from $M$. With respect to these norms, we can then let $M^o:=\{m \in M~:~ |m|\le 1\}$ and similarly for $N^o \subset N$. These are $\cO_K$-modules, with $M= \bigcup_n \varpi^{-n}M^o \cong M^o\ten_{\cO_K}K$ and similarly for $N$, where $\varpi$  generates $\m_K$.
 
 Now, since the norm on $N$ is ultrametric and discrete (taking the same values as the norm on $K$),  the topology on the  $\cO_K$-module $N^o$ is the $\varpi$-adic topology. Pick a basis for the $\cO_K/\m_K$-vector space  $ N^o/\varpi N^o = N^o/\m_KN^o$, and lift to a set $S$ of elements of $N^o$. Then we have a topological $\cO_K$-linear isomorphism $\Lim_n ((\cO_K/\varpi^n) .S) \to N^o$, since $N^o$ is $\varpi$-adically complete. 
 
 Picking pre-images of $S$ in $M^o$ then gives us a continuous $\cO_K$-linear section of $f \co M^o \to N^o$, and hence a continuous $K$-linear section of $f \co M\to N$ on tensoring with $K$.
 \end{proof}

\begin{proposition}\label{exactmodsaffdprop}
 Given a pro-finite set $S$ and an affinoid $K$-algebra $A$, the functor
 \[
  \Hom_{\cts}(S,-)
 \]
on the abelian category of finite $A$-modules is exact. 

In consequence, for all finite $A$-modules $M$ the natural map $ \Hom_{\cts}(S,A)\ten_AM \to  \Hom_{\cts}(S,M)$ is an isomorphism.
 \end{proposition}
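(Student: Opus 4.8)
The plan is to derive both assertions from Lemma~\ref{exactmodsaffdlemma}. Fix a short exact sequence $0\to L\xra{i}M\xra{p}N\to 0$ of finite $A$-modules; since $A$ is Noetherian, each term carries a canonical Banach topology, unique up to equivalence, every $A$-linear map of finite $A$-modules is continuous, and by \cite[Proposition 5.2.7.1]{BoschGuentzerRemmertNonArchanalysis} together with the open mapping theorem for $K$-Banach spaces, $i$ is a closed topological embedding and $p$ a topological quotient map. The functor $\Hom_{\cts}(S,-)$ takes values in $A$-modules via the pointwise action, and $i_*, p_*$ are $A$-linear. First I would dispose of left exactness and exactness in the middle, which are formal: $i_*$ is injective because $i$ is, and if $g\co S\to M$ is continuous with $p_*g=0$ then $g$ has image in $i(L)$ and hence corestricts to a continuous map $S\to L$ precisely because $i$ is a topological embedding; thus the kernel of $p_*$ is the image of $i_*$.

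The one substantive point, which I expect to be the main (essentially the only) obstacle, is surjectivity of $p_*\co\Hom_{\cts}(S,M)\to\Hom_{\cts}(S,N)$. Here I would invoke Lemma~\ref{exactmodsaffdlemma} to choose a continuous (merely $K$-linear) section $\sigma\co N\to M$ of $p$; then for any continuous $g\co S\to N$ the composite $\sigma\circ g\co S\to M$ is continuous and satisfies $p_*(\sigma\circ g)=g$. It is worth flagging why one cannot avoid the preceding lemma: a continuous map out of a profinite set need not be locally constant (e.g.\ $\Z_p\hookrightarrow\Q_p$), so $\Hom_{\cts}(S,-)$ is \emph{not} the filtered colimit over the finite quotients of $S$ of the associated finite-product functors, and the exactness genuinely requires a topological splitting rather than a combinatorial argument.

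For the final assertion I would take a finite presentation $A^{\oplus m}\to A^{\oplus n}\to M\to 0$, available since $A$ is Noetherian, and compare the two functors $\Hom_{\cts}(S,A)\ten_A(-)$ and $\Hom_{\cts}(S,-)$ on it. In the resulting commutative ladder the top row is exact because $\ten$ is right exact, the bottom row is exact by the exactness of $\Hom_{\cts}(S,-)$ just established, and the comparison maps on $A^{\oplus n}$ and $A^{\oplus m}$ are isomorphisms because $A^{\oplus n}$ carries the product topology, so that $\Hom_{\cts}(S,A^{\oplus n})\cong\Hom_{\cts}(S,A)^{\oplus n}\cong \Hom_{\cts}(S,A)\ten_A A^{\oplus n}$. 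A diagram chase (the five lemma) then gives the natural isomorphism $\Hom_{\cts}(S,A)\ten_AM\xra{\sim}\Hom_{\cts}(S,M)$ for every finite $A$-module $M$.
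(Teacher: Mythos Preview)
Your proof is correct and follows essentially the same approach as the paper: use Lemma~\ref{exactmodsaffdlemma} to obtain a continuous $K$-linear splitting of the short exact sequence, which reduces exactness of $\Hom_{\cts}(S,-)$ to the left-exact case, and then deduce the tensor statement from a finite presentation. The paper's write-up is terser (it observes that the functor depends only on the topological abelian group structure, so a topological splitting immediately gives exactness), but the content is the same.
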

\begin{proof}
The functor is obviously left exact, preserving finite limits. Lemma \ref{exactmodsaffdlemma} implies that for any short exact sequence of finite $A$-modules, the underlying  sequence of topological $K$-modules splits. Since $\Hom_{\cts}(S,-)$ depends only on the topological abelian group structure, left exactness thus guarantees exactness.

The final statement then follows by applying the functor to a finite presentation $\coker(A^m \to A^n)$ of $M$, noting that finite $A$-modules are all finitely presented because $A$ is Noetherian.
\end{proof}

 

\begin{definition}\label{ulinedef}
 Given  a topological  $K$-vector space $V$, we let $\uline{V}$ be the functor from pro-finite sets to  $K$-vector spaces given by 
 \[
  S \mapsto \Hom_{\cts}(S,V),
 \]
 the space of continuous functions from $S$ to $V$.
 
 We extend this definition to  quasi-dagger dg algebras $A$ and their modules via Proposition \ref{indaffinoidprop}, writing $A$ as a filtered colimit $A=\LLim_{i \in I}A(i) $ of dg affinoid algebras,   and setting
 \[
\uline{A}(S):= \LLim_{i \in I} \uline{A(i)}(S),
 \]
and similarly for modules.
 \end{definition}


\begin{corollary}\label{preservehtpycor}
 For any pro-finite set $S$, the functor $A \mapsto \uline{A}(S)$ from quasi-dagger dg algebras to CDGAs over $\uline{K}(S)$ preserves quasi-isomorphisms. The functor is moreover naturally isomorphic  to the functor $A \mapsto \uline{A_0}(S)\ten_{A_0}A$. 
 \end{corollary}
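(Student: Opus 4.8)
The plan is to reduce both assertions to Proposition \ref{exactmodsaffdprop} by passing to the colimit over the ind-affinoid presentation furnished by Proposition \ref{indaffinoidprop}. Write $A_0 = \LLim_{\underline{\rho}>\underline{r}} A_0(\underline{\rho})$ as a filtered colimit of affinoid algebras as in that proposition, so that each finite $A_0$-module $A_m$ is correspondingly a filtered colimit $\LLim_{\underline{\rho}} A_m(\underline{\rho})$ of finite $A_0(\underline{\rho})$-modules, compatibly with $\delta$ and the products once $\underline{\rho}$ is small enough; this is precisely the colimit topology on $A$ referred to in the statement. Since $S$ is profinite, hence compact, $\Hom_{\cts}(S,-)$ commutes with all these filtered colimits.

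First I would record that $\underline{A_0}(S)$ is flat over $A_0$: the second assertion of Proposition \ref{exactmodsaffdprop} identifies $-\ten_{A_0(\underline{\rho})}\underline{A_0(\underline{\rho})}(S)$ with $\Hom_{\cts}(S,-)$ on finite $A_0(\underline{\rho})$-modules, which is exact by the first assertion, so each $\underline{A_0(\underline{\rho})}(S)$ is $A_0(\underline{\rho})$-flat ($A_0(\underline{\rho})$ being Noetherian), and hence $\underline{A_0}(S)=\LLim_{\underline{\rho}}\underline{A_0(\underline{\rho})}(S)$ is flat over $A_0$. Next, for any finite $A_0$-module $N=\LLim_{\underline{\rho}}N(\underline{\rho})$, applying the isomorphism $\Hom_{\cts}(S,N(\underline{\rho}))\cong \underline{A_0(\underline{\rho})}(S)\ten_{A_0(\underline{\rho})}N(\underline{\rho})$ of Proposition \ref{exactmodsaffdprop} at each stage and passing to the colimit (colimits commuting with tensor products and, by compactness of $S$, with $\Hom_{\cts}(S,-)$) gives a natural isomorphism $\Hom_{\cts}(S,N)\cong \underline{A_0}(S)\ten_{A_0}N$. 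Applying this to $N=A_m$ for each $m$, the resulting isomorphisms respect $\delta$ and the multiplication, yielding the natural isomorphism $\underline{A}(S)\cong \underline{A_0}(S)\ten_{A_0}A$ of CDGAs over $\underline{K}(S)$; applying it to the finite $A_0$-module $N=\H_nA$ (Noetherianity of $A_0$) gives $\underline{\H_nA}(S)\cong \underline{A_0}(S)\ten_{A_0}\H_nA$.

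For preservation of quasi-isomorphisms I would combine these: flatness of $\underline{A_0}(S)$ over $A_0$ gives $\H_n(\underline{A}(S))\cong \H_n(\underline{A_0}(S)\ten_{A_0}A)\cong \underline{A_0}(S)\ten_{A_0}\H_nA\cong \underline{\H_nA}(S)$, naturally in $A$. A quasi-isomorphism $f\co A\to B$ induces isomorphisms $\H_nf\co \H_nA\to \H_nB$ of finite modules over the isomorphic quasi-dagger algebras $\H_0A\cong \H_0B$; such an isomorphism is realised over a stage of the ind-affinoid presentations, where an isomorphism of finite modules over an affinoid algebra is automatically a homeomorphism (open mapping theorem, as in the discussion preceding Lemma \ref{exactmodsaffdlemma}), so $\H_nf$ is a topological isomorphism, whence $\Hom_{\cts}(S,\H_nf)$ and therefore $\H_n(\underline{f}(S))$ are isomorphisms for every $n$.

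The step I expect to be the main obstacle is the disciplined handling of the colimit topologies: checking that $\Hom_{\cts}(S,-)$ genuinely commutes with the filtered colimits defining the ind-affinoid structures (this is where compactness of $S$ is essential) and that the stage-wise comparison maps of Proposition \ref{exactmodsaffdprop} assemble into natural isomorphisms in the colimit, together with the point that an abstract homology isomorphism induced by a quasi-isomorphism is automatically a homeomorphism, so that it is detected by the topologically defined functor $\Hom_{\cts}(S,-)$. Everything else is a formal consequence of Proposition \ref{exactmodsaffdprop} and exactness of filtered colimits.
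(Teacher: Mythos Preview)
Your argument is correct, and for the second statement it coincides with the paper's. For preservation of quasi-isomorphisms, however, you take a genuinely different route. You compute $\H_n(\uline{A}(S))\cong \uline{\H_nA}(S)$ via flatness and then argue that the abstract isomorphism $\H_nf$ is a homeomorphism, so that $\Hom_{\cts}(S,-)$ detects it. The paper instead uses Lemma~\ref{coffactnlemma} to factor an arbitrary quasi-isomorphism through a zigzag involving surjective quasi-isomorphisms, reducing to the case where both complexes consist of finite modules over a single ring $A_0$; exactness of $\uline{(-)}(S)$ on that category then finishes immediately. The paper's reduction buys you freedom from the topological step you flag as the main obstacle: once $A_\bt$ and $B_\bt$ are complexes of finite $A_0$-modules, there is no change-of-ring issue and no need to invoke the open mapping theorem on the homology groups. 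Conversely, your approach avoids the factorisation lemma and yields the stronger intermediate statement $\H_n(\uline{A}(S))\cong \uline{\H_nA}(S)$, at the cost of having to justify that any $K$-algebra isomorphism of finite modules over (possibly different) quasi-dagger algebras is automatically a homeomorphism for the canonical ind-Banach topologies --- a point which is true but requires the care you acknowledge.
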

\begin{proof} 
Because filtered colimits are exact, Proposition \ref{exactmodsaffdprop} also applies to finite modules over Noetherian filtered colimits of affinoid algebras, hence over  quasi-dagger algebras; Noetherianity means every finite $A_0$-module is finitely presented, so induced by a module over some affinoid subalgebra. Since the $A_0$-modules $A_n$ are all finite, it follows immediately that  $\uline{A}(S)\cong \uline{A_0}(S)\ten_{A_0}A$, giving the second statement.

By applying Lemma \ref{coffactnlemma}, we see that any quasi-isomorphism $A \to B$ of quasi-dagger dg algebras admits a factorisation $A \to C \to B$ 
into quasi-isomorphisms for which the first map admits a retraction and the second is surjective. It thus suffices to prove that the functor $A \mapsto \uline{A}(S)$ sends surjective quasi-isomorphisms to quasi-isomorphisms.
 
 Now, given a surjective quasi-isomorphism $A \to B$ of quasi-dagger dg algebras, the $A_0$-modules $A_n$ and $B_n$ are all finite. Since the functor $M \mapsto \uline{M}(S)$ is exact on finite $A_0$-modules, it preserves quasi-isomorphisms, giving the first statement.
 \end{proof}

 \begin{definition}
  Given a  scheme $X$ and a quasi-dagger dg algebra $A$, we define the sheaf $\uline{A}_X$ of CDGAs on the affine pro-\'etale site $X_{\pro\et}^{\aff}$ of $X$ (see \cite[Definition 4.2.1]{BhattScholzeProEtale}) by
  \[
   U \mapsto \uline{A}(\pi_0U),
  \]
where $\pi_0U$ is the pro-finite set of components of  the quasi-compact quasi-separated scheme $U$, constructed as in \cite[\S 2]{BhattScholzeProEtale}.  It follows from \cite[Lemma 4.2.12]{BhattScholzeProEtale} that the presheaves $\uline{A}_{X}$ are indeed sheaves.
 \end{definition}
 
 We can use this to construct moduli functors of various flavours of sheaf on the pro-\'etale site:
 
 \begin{definition}\label{FXproetdef}
  Given a scheme $X$ and a  functor $F \co dg_+\CAlg_K \to s\Set$ from differential graded-commutative $K$-algebras in non-negative chain degrees to simplicial sets, define the functor
  \[
   F(X_{\pro\et},-) \co dg_+\Affd\Alg^{\loc,\dagger}_K \to s\Set
  \]
from  localised dagger dg algebras to simplicial sets by
\[
 A \mapsto \oR\Gamma(X_{\pro\et},F(\uline{A}_X))
\]  
where $\oR\Gamma$ is the right-derived functor of the global sections functor $\Gamma$ in simplicial sets.
 \end{definition}

 \begin{example}\label{BGXex}
  If $G$ is an algebraic group over $K$, then we can let $F$ be the derived stack $BG$, parametrising $G$-torsors. The functor $ BG(X_{\pro\et},-)$ then parametrises $G$-torsors on $X_{\pro\et}$. In particular, when $A$ is a dagger algebra (concentrated in degree $0$), $ BG(X_{\pro\et},A)$ is the nerve of the groupoid of $G(\uline{A}_X)$-torsors on $X_{\pro\et}$. 
  
  If $X$ is locally topologically Noetherian and connected, with a geometric point $x$, then for $A \in \Affd\Alg^{\dagger}_K$, this means that  we have the nerve
  \[
   BG(X_{\pro\et},A) \simeq B[\Hom(\pi_1^{\pro\et}(X,x), G(A))/G(A)],
  \]
  of the groupoid of continuous group homomorphisms,
where $G(A)$ is topologised using the topology on $A$ (take the coarsest topology for which the maps $G(A) \to A$ given by elements of $\sO_G$ are all continuous),  $ \pi_1^{\pro\et}$ is the pro-\'etale fundamental group of \cite[\S 7]{BhattScholzeProEtale}, and $G(A)$ acts on the set $\Hom(\pi_1^{\pro\et}(X,x), G(A))$ by conjugation.
 \end{example}

 \begin{lemma}\label{proethtpylemma}
  \begin{enumerate}
   \item If $F$ is homotopy-preserving in the sense that it maps quasi-isomorphisms to weak equivalences, then so is $F(X_{\pro\et},-)$.
   
   \item Assume that the natural map $F(A'\by_{B'}C')\to F(A')\by^h_{F(B')}F(C')$ is a weak equivalence for all surjections $A' \to B' \la C'$ of CDGAs with $\ker(A'_0 \to B'_0)$ nilpotent; in particular this holds if $F$  is homotopy-homogeneous in the sense of \cite{drep}. Then for any surjective morphisms $A \to B \la C$ of localised dagger dg algebras, with $\ker(A_0 \to B_0)$ nilpotent, the object $A\by_BC$ is also a localised dagger dg algebra and the natural map
 \[
  F(X_{\pro\et},A\by_BC)\to F(X_{\pro\et},A)\by^h_{F(X_{\pro\et},B)}F(X_{\pro\et},C)
 \]
 is a weak equivalence.
  \end{enumerate}
 \end{lemma}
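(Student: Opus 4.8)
The plan is to reduce both statements to the objectwise behaviour of $F$ on the presheaf $\uline{A}_X$ of CDGAs on $X_{\pro\et}^{\aff}$, and then to transport the conclusions through $\oR\Gamma(X_{\pro\et},-)$, which preserves objectwise weak equivalences and commutes with homotopy limits.

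For (1), Corollary~\ref{preservehtpycor} shows that a quasi-isomorphism $A \to B$ of localised dagger dg algebras induces a quasi-isomorphism $\uline{A}(S) \to \uline{B}(S)$ for every pro-finite set $S$, hence an objectwise weak equivalence $\uline{A}_X \to \uline{B}_X$ of presheaves of CDGAs. Since $F$ is homotopy-preserving this gives an objectwise weak equivalence $F(\uline{A}_X) \to F(\uline{B}_X)$ of simplicial presheaves, and $\oR\Gamma(X_{\pro\et},-)$ --- computed by fibrant replacement in the local model structure, equivalently as a homotopy limit over a hypercover --- sends such maps to weak equivalences. Hence $F(X_{\pro\et},A) \to F(X_{\pro\et},B)$ is a weak equivalence.

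For (2), write $I := \ker(A \to B)$. That $A\by_B C$ lies in $dg_+\Affd\Alg_K^{\dagger,\loc}$ follows exactly as in the Lemma preceding Definition~\ref{hhgsdef}: the map $A \by_B C \to C$ is surjective with kernel $I$, which is levelwise finitely generated over $C_0$ because $A \to B$ is levelwise surjective and $C_0 \onto B_0$; since $I_0$ is nilpotent, $(A\by_B C)_0 = A_0 \by_{B_0} C_0 \to C_0$ is a nilpotent extension of quasi-dagger algebras by a finite module, hence itself quasi-dagger, and nilpotence makes the induced maps on $\Sp$ bijections, so the algebra is localised. The key point is then the objectwise identity $\uline{(A\by_B C)}_X \cong \uline{A}_X \by_{\uline{B}_X} \uline{C}_X$. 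Fix a pro-finite set $S$ and work in each chain degree $n$: the sequence $0 \to I_n \to A_n \to B_n \to 0$ of finite $A_0$-modules is exact, and by Proposition~\ref{exactmodsaffdprop} (extended to quasi-dagger algebras as in the proof of Corollary~\ref{preservehtpycor}) the functor $\Hom_{\cts}(S,-)$ is exact on finite modules, so $\uline{A_n}(S) \onto \uline{B_n}(S)$ has kernel $\uline{I_n}(S)$; combining $\uline{(A\by_B C)_n}(S) \cong \uline{(A_0\by_{B_0}C_0)}(S)\ten_{A_0\by_{B_0}C_0}(A\by_B C)_n$ with the same exactness applied to $0 \to I_n \to (A\by_B C)_n \to C_n \to 0$ shows $\uline{(A\by_B C)_n}(S) \onto \uline{C_n}(S)$ also has kernel $\uline{I_n}(S)$. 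Comparing the two short exact sequences, the natural map $\uline{(A\by_B C)}(S) \to \uline{A}(S)\by_{\uline{B}(S)}\uline{C}(S)$ is an isomorphism, and since $\uline{A}_X(U) = \uline{A}(\pi_0U)$ this holds objectwise on $X_{\pro\et}^{\aff}$.

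It remains to observe that $\ker(\uline{A_0}(S) \to \uline{B_0}(S)) = \Hom_{\cts}(S,I_0)$ is nilpotent (pointwise products of $N$ elements vanish when $I_0^N=0$), so the hypothesis on $F$ applies objectwise, giving objectwise weak equivalences $F(\uline{(A\by_B C)}_X)(U) \to F(\uline{A}_X)(U)\by^h_{F(\uline{B}_X)(U)}F(\uline{C}_X)(U)$. Applying $\oR\Gamma(X_{\pro\et},-)$, which preserves objectwise weak equivalences (as in (1)) and commutes with homotopy fibre products, produces the stated weak equivalence. The main obstacle is the objectwise identity $\uline{(A\by_B C)}_X \cong \uline{A}_X \by_{\uline{B}_X} \uline{C}_X$: it relies on the canonical topology on $A\by_B C$ being well behaved with respect to continuous maps out of pro-finite sets, which is exactly what the exactness in Proposition~\ref{exactmodsaffdprop} --- ultimately the continuous splittings of Lemma~\ref{exactmodsaffdlemma} --- provides; everything else is routine homotopy-limit bookkeeping.
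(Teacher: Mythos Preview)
Your proof is correct and follows essentially the same approach as the paper's. The one notable difference is how you establish the objectwise identity $\uline{(A\by_B C)}_X \cong \uline{A}_X \by_{\uline{B}_X} \uline{C}_X$: the paper dispatches this in one line by observing that the functor $A \mapsto \uline{A}_X$ preserves limits (since $\Hom_{\cts}(S,-)$ does), whereas you argue via the exactness of Proposition~\ref{exactmodsaffdprop} applied to the two short exact sequences with kernel $I_n$. Both arguments are valid; the paper's is slicker, while yours makes explicit that the canonical topology on the fibre product is the one for which the comparison works, which is precisely the point you flag as the ``main obstacle''.
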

\begin{proof}
 The first statement is an immediate consequence of Corollary \ref{preservehtpycor}.

 For the second statement, begin by noting that since both $A \to B$ and $C \to B$ are surjective, the modules $(A_n\by_{B_n}C_n)$ must  all be finite over $A_0 \by_{C_0} B_0$, so $A \by_BC$ is a quasi-dagger dg algebra. Since $A_0 \to B_0$ is a nilpotent surjection, the map $A_0\by_{B_0}C_0 \to C_0$ is also a nilpotent surjection. Any element $x$ in $\ker(A_0\by_{B_0}C_0) \to \H_0(A\by_BC)$ maps to $0$ in $\H_0C$, so $1+x$ maps to a unit in $C_0$, since $C$ is a localised dagger algebra. Nilpotence of $ A_0\by_{B_0}C_0 \to C_0$ then implies that $1+x$ is a unit, so $A\by_BC$ is also localised.
 
 The maps $\uline{A}_X \to \uline{B}_X \la \uline{C}_X$ are now objectwise surjections of CDGAs by Proposition \ref{exactmodsaffdprop}, and the first is objectwise nilpotent. Since the functor $A \mapsto \uline{A}_X$ preserves finite limits, we have $\uline{A\by_BC}_X \cong \uline{A}_X\by_{\uline{B}_X}\uline{C}_X$.  The hypothesis on $F$ then gives 
 \[
  F(\uline{A\by_BC}_X) \simeq F(\uline{A}_X)\by^h_{F(\uline{B}_X)}F(\uline{C}_X),
 \]
and the desired statement then follows because derived global sections preserve homotopy limits.
 \end{proof}

 
 \section{Shifted symplectic structures}\label{sympsn}
 
 
 \subsection{Structures on EFC algebras}
 
 We now recall how standard algebraic definitions adapt to the analytic setting, as sketched in \cite[\S 4.4]{DStein}.
 
 \begin{definition}\label{Omegadef}
 Given an EFC-DGA $A$, define the complex $\Omega^1_A$ to be the $A$-module in chain complexes representing  the functor 
 \[
 M \mapsto \Hom_{EFC-DGA}(A, A \oplus M)\by_{\Hom_{EFC-DGA}(A,A)}\{\id\}
 \]
 of closed EFC derivations from $A$ to $M$ of degree $0$. Here, the EFC structure on $A \oplus M$ is determined by requiring it to be a group object over $A$; explicitly, for a holomorphic function $f(z_1, \ldots, z_n)$ in $n$ variables, we set
\[
 f(a_1+m_1, \ldots, a_n+m_n):= f(a_1, \ldots, a_n) + \sum_i \frac{\pd f}{\pd z_i}(a_1, \ldots, a_n)m_i,
\]
which in particular means that the multiplication on $M$ is $0$.
 
 Given a morphism $R \to A$ of EFC-DGAs, define $\Omega^1_{A/R}$ to be the cokernel of $\Omega^1_R\ten_RA \to \Omega^1_A$.
\end{definition}

As in \cite[\S \ref{DStein-cotsn}]{DStein}, we can then follow the approach of \cite{Q} to give EFC cotangent complexes.

\begin{definition}
 Denote by $A \mapsto (A, \oL\Omega^1_A)$ the left-derived functor of the functor  $A \mapsto (A, \Omega^1_A)$ from EFC-DGAs to  the category of pairs  $(A,M)$ of   EFC-DGAs and modules. We refer to $\oL\Omega^1_A$ as the cotangent complex of $A$. Given a morphism $R \to A$ of EFC-DGAs, write $\oL\Omega^1_{A/R}$ for the cone of the natural map $ \oL\Omega^1_R\ten_R^{\oL}A \to  \oL\Omega^1_A$. 
\end{definition}

\begin{definition}\label{DRdef}
Given an EFC-DGA $A$,  write $\Omega^p_{A}:= \L^p_{A}\Omega^1_{A}$, and define the de Rham complex $\DR(A)$ to be the product total cochain complex of the double complex
\[
 A \xra{d} \Omega^1_{A} \xra{d} \Omega^2_{A}\xra{d} \ldots,
\]
so the total differential is $d \pm \delta$.

We define the Hodge filtration $F$ on  $\DR(A)$ by setting $F^p\DR(A) \subset \DR(A)$ to consist of terms $\Omega^i_{A}$ with $i \ge p$.

Define $\oL\DR(A)$ to be $\DR(\tilde{A})$ for any cofibrant replacement of $A$.
\end{definition}

Properties of the product total complex ensure that a map $f \co A \to B$ induces a filtered quasi-isomorphism $\DR(A) \to \DR(B)$ whenever the maps $\Omega^p_{A} \to \Omega^p_{B}  $ are quasi-isomorphisms, which will happen whenever $f$ is a weak equivalence between cofibrant EFC-DGAs.

Note that if $\tilde{A}$ is a cofibrant replacement for $A$, there is a natural $\tilde{A}$-linear quasi-isomorphism $\Omega^1_{\tilde{A}}\to \oL\Omega^1_A$.

\begin{definition}\label{presymplecticdef}
Define an $n$-shifted pre-symplectic structure $\omega$ on an EFC-DGA $A$ to be an element
\[
 \omega \in \z^{n+2}F^2\oL\DR(A).
\]
\end{definition}

Explicitly, this means that $\omega$ is given by an infinite sum $\omega = \sum_{i \ge 2} \omega_i$, with $\omega_i \in (\Omega^i_{\tilde{A}})_{i-n-2}$ and $d\omega_i = \pm\delta \omega_{i+1}$.

\begin{definition}
 Define an $n$-shifted symplectic structure $\omega$ on $A$ to be an $n$-shifted pre-symplectic structure $\omega$ for which the component $\omega_2 \in \z^n\Omega^2_{\tilde{A}}$ induces a quasi-isomorphism
\[
 \omega_2^{\sharp} \co \HHom_{\tilde{A}}(\Omega^1_{\tilde{A}},\tilde{A}) \to (\Omega^1_{\tilde{A}})_{[-n]},
\]
with $\oL\Omega^1_{A} $  perfect as an $A$-module. 
\end{definition}

\begin{definition}\label{PreSpdef}
 Define the space of $n$-shifted pre-symplectic structures on an EFC-DGA $A$ to be the simplicial set $\PreSp(A,n)$ given by Dold--Kan denormalisation of the chain complex
\[
 \tau_{\ge 0}(\oL F^2\DR(A)^{[n+2]}),
\]
where $\tau_{\ge 0}$ denotes good truncation in non-negative chain degrees.

Set $\Sp(A,n) \subset \PreSp(A,n)$ to consist of the symplectic structures --- this is a union of path-components.
\end{definition}

Observe that for any morphism $f \co A \to B$ of EFC-DGAs, we have a natural map $\PreSp(A,n) \to \PreSp(B,n)$, but that this does not restrict to the subspaces of symplectic structures unless $f$ gives a quasi-isomorphism $B\ten^{\oL}_A\oL\Omega^1_A\to \oL\Omega^1_B$ on cotangent complexes, i.e. unless $f$ is homotopy  ind-\'etale.

\begin{definition}\label{Lagdef}
Take a morphism $f \co A \to B$ of  EFC-DGAs, with an  $n$-shifted pre-symplectic structure $\omega$ on $A$. We then define an isotropic structure on $B$ relative to $\omega$ to be an element $(\omega, \lambda)$ of 
\[
 \z^{n+1}\cone(\oL F^2\DR(A)\to \oL F^2\DR(B))
\]
lifting $\omega$.

 This structure is called Lagrangian if $\oL \Omega^1_{A} $ and $\oL \Omega^1_{B}$ are perfect as an $A$-module and a $B$-module respectively, if $\omega$ is symplectic, and if contraction with the image $(\omega_2,\lambda_2)$ of $(\omega,\lambda)$ in  $\z^{n-1}\cone(\Omega^2_{\tilde{A}} \to \Omega^2_{\tilde{B}} )$ 
 induces a quasi-isomorphism
\[
 (f \circ \omega_2^{\sharp}, \lambda_2^{\sharp}) \co 
 \cone(\HHom_{\tilde{B}}(\Omega^1_{\tilde{B}},\tilde{B}) \to \HHom_{\tilde{A}}(\Omega^1_{\tilde{A}},\tilde{B}))  \to (\Omega^1_{\tilde{B}})_{[-n]}.
\] 
\end{definition}

 \begin{definition}\label{Isodef}
  Given a morphism $A \to B$ of  EFC-DGAs, define 
  the space $\Iso(A,B;n)$
  of  $n$-shifted isotropic structures on the pair $(A,B)$ to be the simplicial set given  by Dold--Kan denormalisation of the chain complex 
\[
 \tau_{\ge 0}(\cone(\oL F^2\DR(A)  \to \oL F^2\DR(B))^{[n+1]}).
\]

Set $\Lag(A,B;n) \subset \Iso(A,B;n)$ to consist of the   Lagrangians on symplectic structures --- this is a union of path-components.
\end{definition}

\subsection{Structures on quasi-dagger algebras}

\begin{lemma}\label{EFCOmegalemma}
If $A$ is the EFC-algebra underlying the quasi-dagger algebra $K\<\frac{x_1}{r_1}, \ldots ,\frac{x_n}{r_n}\>^{\dagger}$, for $r_1, \ldots,r_n\ge 0$, then 
\[
 \Omega^1_A \cong \bigoplus_{i=1}^n A.dx_i,
\]
and the natural map $\oL\Omega^1_A \to \Omega^1_A$ is a quasi-isomorphism.
\end{lemma}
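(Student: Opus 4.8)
The plan is to recognise $A$, as an EFC-algebra, as a \emph{localisation} of the free EFC-algebra on $n$ generators, and then transport the obvious computation of Kähler differentials along that localisation. First I would recall from the proof of Proposition~\ref{daggertoEFCprop} that, as an EFC-algebra, $A$ is the filtered colimit $\LLim_{\uline{\rho}>\uline{r}} O(\Delta(\rho_1,\ldots,\rho_n))$ of the Stein algebras of the open polydiscs, and that each inclusion $\Delta(\uline{\rho}')\into\Delta(\uline{\rho})$ (for $\uline{r}<\uline{\rho}'<\uline{\rho}$) and $\Delta(\uline{\rho})\into\bA^n_K$ is an open immersion of Stein spaces. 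Writing $F_n:=O(\bA^n_K)$ for the free EFC-algebra on $x_1,\ldots,x_n$, these open immersions are instances of the localisations of Definition~\ref{locdef}, so the natural map $F_n\to A$ is a localisation in the sense of that definition.

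Two consequences then give the lemma. Firstly, by Proposition~\ref{locmodelprop} the map $K\to F_n$ is quasi-free and $F_n\to A$ is a transfinite composition of the cofibrations associated to open immersions of Stein spaces, so $A$ is cofibrant in the local model structure; hence the canonical comparison map $\oL\Omega^1_A\to\Omega^1_A$ is a quasi-isomorphism (indeed an isomorphism), which is the second assertion. Secondly, since $F_n$ is free on $x_1,\ldots,x_n$, the universal property in Definition~\ref{Omegadef} gives $\Omega^1_{F_n}\cong\bigoplus_{i=1}^n F_n\,dx_i$; moreover $F_n\to A$ is an epimorphism of EFC-algebras, because open immersions of Stein spaces are monomorphisms and the EFC monad preserves filtered colimits, so that $A\ten_{F_n}A\cong A$. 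For an epimorphism $R\to S$ of EFC-algebras one has $\Omega^1_{S/R}=0$, so base change yields $\Omega^1_A\cong\Omega^1_{F_n}\ten_{F_n}A\cong\bigoplus_{i=1}^n A\,dx_i$, with the universal derivation $d$ acting on each Tate algebra $K\<\frac{x_1}{\rho_1},\ldots,\frac{x_n}{\rho_n}\>$ in the colimit by $a\mapsto\sum_i \frac{\pd a}{\pd x_i}\,dx_i$.

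The real work is not in any single step but in assembling two facts that are implicit in \S\ref{cfEFCsn} and \cite{DStein}: that the localisations of Definition~\ref{locdef} arising from open immersions of Stein spaces are epimorphisms (equivalently, formally \'etale, with vanishing relative cotangent complex), and that the resulting composite $K\to A$ is a genuine cofibration, so that $\oL\Omega^1_A$ really coincides with $\Omega^1_A$ on the nose rather than merely after a resolution. Both follow from the characterisation of open immersions together with Proposition~\ref{locmodelprop}, and once they are in place the lemma is formal. An alternative, more computational route would verify directly that $\bigoplus_{i=1}^n A\,dx_i$ corepresents closed degree-$0$ EFC-derivations out of $A$, by expanding the EFC chain rule of Definition~\ref{Omegadef} on the nested union of Tate algebras defining $A$; this avoids invoking $F_n$ but is messier, and still needs the cofibrancy input for the $\oL\Omega^1$ statement.
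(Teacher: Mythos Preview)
Your overall architecture is exactly the paper's: compute $\Omega^1$ on the free EFC-algebra $F_n=O(\bA^n_K)$, push along the map to $A=\LLim_{\uline{\rho}>\uline{r}}O(\Delta(\uline{\rho}))$ coming from open immersions, and use cofibrancy in the local model structure of Proposition~\ref{locmodelprop} for the $\oL\Omega^1$ statement. The paper runs the first two steps at the level of each open polydisc (citing \cite[Lemmas \ref{DStein-Cunramlemma} and \ref{DStein-cotcoflemma}]{DStein}) before taking the filtered colimit, while you go straight to $A$; that compression is harmless.

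The genuine gap is the step ``$F_n\to A$ is an epimorphism, hence $\Omega^1_{A/F_n}=0$, so base change yields $\Omega^1_A\cong\Omega^1_{F_n}\ten_{F_n}A$''. The vanishing of $\Omega^1_{A/F_n}$ only gives \emph{surjectivity} of $\Omega^1_{F_n}\ten_{F_n}A\to\Omega^1_A$ via the right-exact cotangent sequence; it does not give injectivity (already in ordinary commutative algebra, $k[x]\to k[x]/(x^2)$ is an epimorphism with $\Omega^1_{S/R}=0$, but $\Omega^1_R\ten_RS\to\Omega^1_S$ has a kernel). Relatedly, your parenthetical ``epimorphisms (equivalently, formally \'etale, with vanishing relative cotangent complex)'' conflates two different conditions: formally \'etale means $\oL\Omega^1_{A/F_n}\simeq 0$, which is strictly stronger than the epimorphism property and is precisely what the paper imports from \cite[Lemma \ref{DStein-Cunramlemma}]{DStein}. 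You can repair your argument either by invoking that formally-\'etale input directly (this is what the paper does), or by exhibiting an explicit section: the partial derivatives $\pd/\pd x_i$ are well-defined continuous $K$-linear endomorphisms on each Tate algebra in the colimit, hence give EFC-derivations $A\to A$ whose induced map $\Omega^1_A\to A^n$ splits the surjection $\bigoplus_i A\,dx_i\to\Omega^1_A$.
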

\begin{proof}
The free EFC-algebra on $n$-generators is given by the ring 
\[
O(\bA^n_K) \cong \Lim_{s_1, \ldots s_n \to \infty}K\<\frac{x_1}{s_1}, \ldots ,\frac{x_n}{s_n}\>^{\dagger}
\]
of analytic functions on affine $n$-space, so
\[
 \Omega^1_{O(\bA^n_K)} \cong \bigoplus_{i=1}^n O(\bA^n_K).dx_i.
\]
Since the inclusions $\Delta(\rho_1,\ldots,\rho_n) \subset \bA^n_K$ of open polydiscs are open immersions,
\cite[Lemma \ref{DStein-Cunramlemma}]{DStein} then implies that
\[
 \Omega^1_{O(\Delta(\rho_1,\ldots,\rho_n) )} \cong \bigoplus_{i=1}^n O(\Delta(\rho_1,\ldots,\rho_n)).dx_i,
\]
while \cite[Lemma \ref{DStein-cotcoflemma}]{DStein} implies that 
\[
 \oL \Omega^1_{O(\Delta(\rho_1,\ldots,\rho_n) )} \simeq \Omega^1_{O(\Delta(\rho_1,\ldots,\rho_n) )}.
\]
The result then follows by passing to the filtered colimit over $\rho_i >r_i$.
\end{proof}

\begin{definition}\label{OmegaAffddef}
 Given a   quasi-dagger DGA $A$, we define the $A$-module  $\Omega^1_A$ such that  $d \co A \to \Omega^1_A$ is the universal $K$-linear derivation from $A$ to  
 $A$-modules $M$  in complexes which are  \emph{coherent} in the sense of Definition \ref{Cohdef},
  generalising the definition for dagger algebras in \cite[4.1]{GrosseKloenne}; this exists with similar reasoning. 
\end{definition}
Beware that this is not usually the same as the algebraic module $\Omega^1_{A^{\alg}}$ of K\"ahler differentials, which is universal with respect to $K$-linear derivations from $A$ to  \emph{all} $A$-modules in complexes; there is a canonical map from $\Omega^1_{A^{\alg}} \to \Omega^1_A$, by universality.

\begin{lemma}\label{qufreeOmegalemma}
 If $A$ is a quasi-free quasi-dagger DGA, then the $A$-module $\Omega^1_A$ is a model for the cotangent complex of the EFC-DGA $B$ underlying $A$. 
\end{lemma}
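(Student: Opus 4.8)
The plan is to reduce everything to the degree-$0$ statement of Lemma~\ref{EFCOmegalemma} by tracking how $\Omega^1$ behaves under free extensions. Write $A_0 = K\<\frac{x_1}{r_1},\ldots,\frac{x_n}{r_n}\>^{\dagger}$ and $A = A_0\langle\{y_\alpha\}_\alpha\rangle$, freely generated over $A_0$ by elements $y_\alpha$ in strictly positive chain degrees (finitely many in each degree, since the $A_0$-modules $A_m$ are finite), and let $B$, $B_0$ denote the underlying EFC-DGA and EFC-algebra.

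First I would note that $B$ is cofibrant in the local model structure of Proposition~\ref{locmodelprop}: choosing a cofinal sequence $\rho^{(1)}>\rho^{(2)}>\cdots\to \underline{r}$ exhibits $B_0 = \bigcup_k O(\Delta(\rho^{(k)}))$ as a transfinite composition of open immersions of Stein algebras, hence a cofibration, and $B_0 \to B$ is quasi-free; this is exactly the situation covered by the final sentence of Proposition~\ref{locmodelprop}. Consequently the canonical map $\oL\Omega^1_B \to \Omega^1_B$ to the EFC Kähler differentials of Definition~\ref{Omegadef} is a quasi-isomorphism, and it remains to produce an isomorphism $\Omega^1_A \cong \Omega^1_B$ of $A$-modules.

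Next I would split both sides over the degree-$0$ part. A coherent-module-valued derivation out of the free extension $A = A_0\langle\{y_\alpha\}\rangle$ amounts to a derivation out of $A_0$ together with an arbitrary assignment of values to the $y_\alpha$, and similarly for EFC derivations out of $B = B_0\langle\{y_\alpha\}\rangle$ — adjoining generators in strictly positive degree imposes no coherence or EFC constraint. Hence, naturally in the coherent target module,
\[
 \Omega^1_A \cong \bigl(\Omega^1_{A_0}\ten_{A_0}A\bigr)\oplus\bigoplus_\alpha A\,dy_\alpha, \qquad \Omega^1_B \cong \bigl(\Omega^1_{B_0}\ten_{B_0}B\bigr)\oplus\bigoplus_\alpha B\,dy_\alpha.
\]
Since $A$ and $B$ coincide as graded algebras, it now suffices to identify $\Omega^1_{A_0}$ with $\Omega^1_{B_0}$: the universal coherent derivation $d\co A_0 \to \Omega^1_{A_0}$ is automatically continuous and EFC because $\Omega^1_{A_0}$ is a finite $A_0$-module, so it induces a map $\Omega^1_{B_0}\to\Omega^1_{A_0}$, and on the explicit presentations $\bigoplus_{i=1}^n A_0\,dx_i$ supplied by Definition~\ref{OmegaAffddef} (after \cite[4.1]{GrosseKloenne}) and by Lemma~\ref{EFCOmegalemma} this map is the identity on the $dx_i$, hence an isomorphism. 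Composing the resulting isomorphism $\Omega^1_A \cong \Omega^1_B$ with the quasi-isomorphism $\oL\Omega^1_B\xra{\sim}\Omega^1_B$ gives the lemma.

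The only input beyond bookkeeping is the cofibrancy of $B$, which yields $\oL\Omega^1_B\simeq\Omega^1_B$; the step I would handle most carefully is the claim that the two $\Omega^1$-constructions commute with adjunction of positive-degree generators in the same, natural fashion — in particular that no completion phenomenon intervenes in positive degrees — so that the displayed decompositions are valid and functorial in the module, allowing the degree-$0$ identification to propagate. An essentially equivalent route avoids discussing cofibrancy of $B$ directly: apply the transitivity triangle for $\oL\Omega^1$ of EFC-DGAs along $B_0\to B$, which splits for the quasi-free morphism $B_0\to B$, and combine it with $\oL\Omega^1_{B_0}\simeq\Omega^1_{B_0}$ from Lemma~\ref{EFCOmegalemma} together with the freeness (hence flatness) of $\Omega^1_{B_0}=\bigoplus_i B_0\,dx_i$.
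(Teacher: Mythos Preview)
Your proof is correct, and in fact you outline two valid routes. Your alternative route in the final paragraph --- applying the transitivity triangle for $\oL\Omega^1$ along $B_0 \to B$, identifying $\oL\Omega^1_{B/B_0}$ with $\Omega^1_{A/A_0}$ (since adjoining positive-degree generators is purely algebraic), and combining with $\oL\Omega^1_{B_0}\simeq\Omega^1_{B_0}$ from Lemma~\ref{EFCOmegalemma} --- is exactly the paper's argument; the paper cites \cite[Lemma \ref{DStein-cotalglemma}]{DStein} for the relative cotangent complex statement and phrases the algebraic side as the short exact sequence $0 \to \Omega^1_{A_0}\ten_{A_0}A \to \Omega^1_A \to \Omega^1_{A/A_0}\to 0$ rather than its split form.

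Your primary route is a genuine, if mild, variant: you argue that $B$ itself is cofibrant in the local model structure of Proposition~\ref{locmodelprop}, so that $\oL\Omega^1_B \simeq \Omega^1_B$ on the nose, and then identify $\Omega^1_B$ with $\Omega^1_A$ directly via the split decompositions. This is more concrete but requires the cofibrancy input (your chain $K \to O(\bA^n) \to O(\Delta(\rho^{(1)})) \to \cdots$ should start with the free EFC map $K \to O(\bA^n)$ before the open immersions, a point worth making explicit). The paper's route avoids invoking cofibrancy of the whole of $B$ and instead confines the EFC-specific input to degree~$0$, reducing the rest to standard commutative algebra; that makes it marginally more robust, but both arguments are short and rest on the same degree-$0$ calculation.
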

\begin{proof}
Since $A_0$ is a free quasi-dagger algebra, it is smooth, so \cite[4.1]{GrosseKloenne} implies that $\Omega^1_{A_0}$ is the free $A_0$-module of Lemma \ref{EFCOmegalemma}, which by that lemma is a model for the cotangent complex $\oL \Omega^1_{B_0}$ of the EFC-algebra underlying $A_0$.

Now $A$ is freely generated as an $A_0$-algebra, with generators in strictly positive degrees, so we have a short exact sequence
\[
 0 \to \Omega^1_{A_0}\ten_{A_0}A \to \Omega^1_A \to \Omega^1_{A/A_0} \to 0.
\]
Moreover, \cite[Lemma \ref{DStein-cotalglemma}]{DStein} implies that $\oL \Omega^1_{B/B_0}$ is given by the cotangent complex for commutative algebras, which is just $\Omega^1_{A/A_0}$ by quasi-freeness. Thus the short exact sequence above must be quasi-isomorphic to the exact triangle
\[ 
 \oL\Omega^1_{B_0}\ten_{B_0}B \to \oL\Omega^1_B \to \oL\Omega^1_{B/B_0} \xra{[1]} \ldots, 
\]
and hence $\oL\Omega^1_B \simeq  \Omega^1_A$. 
\end{proof}
Again, beware that the EFC cotangent complex will not be the same as the cotangent complex for commutative algebras, as the CDGAs underlying  quasi-free quasi-dagger DGAs are not sufficiently close to being cofibrant, unlike the underlying EFC-DGAs. 

The following is now immediate, and allows us to  interpret shifted symplectic structures in terms of dagger affinoid constructions:
\begin{corollary}
Given a quasi-free quasi-dagger DGA $A$ with underlying EFC-DGA $B$, the de Rham complex $\oL\DR(B)$ of Definition \ref{DRdef} 
is quasi-isomorphic to the
 product total cochain complex of the double complex $\DR(A)$ given by
\[
 A \xra{d} \Omega^1_{A} \xra{d} \Omega^2_{A}\xra{d} \ldots,
\]
where $\Omega^p_{A}:= \L^p_{A}\Omega^1_{A}$

Moreover the complex  $F^p\oL\DR(B)$ is quasi-isomorphic to the  subcomplex of  $\DR(A)$ to consisting of terms $\Omega^i_{A}$ with $i \ge p$.
\end{corollary}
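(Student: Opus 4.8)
The plan is to reduce the whole statement to Lemma \ref{qufreeOmegalemma} by comparing Hodge-graded pieces of the two de Rham complexes. First, fix a cofibrant replacement $\tilde{B} \xra{\sim} B$ of EFC-DGAs, so that by Definition \ref{DRdef} we have $\oL\DR(B) = \DR(\tilde{B})$ and $F^p\oL\DR(B) = F^p\DR(\tilde{B})$. Functoriality of the de Rham complex along $\tilde{B} \to B$, combined with the canonical map comparing the EFC module $\Omega^1_B$ with the coherent analytic module $\Omega^1_A$ of Definition \ref{OmegaAffddef} (the universal coherent-valued derivation $A \to \Omega^1_A$ being in particular an EFC-derivation, since the analytic chain rule holds for derivations valued in the colimit-of-Banach modules $\Omega^p_A$), produces a natural morphism of double complexes $\DR(\tilde{B}) \to \DR(A)$ compatible with the Hodge filtrations. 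By the property of the product total complex recorded just after Definition \ref{DRdef}, it then suffices to prove that each component $\Omega^p_{\tilde{B}} \to \Omega^p_A$ is a quasi-isomorphism; the main statement and the ``Moreover'' clause then both follow, the latter because on either side the Hodge filtration is complete, exhaustive, and given by naive truncation in the $\Omega$-direction.

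For $p=1$ this is essentially Lemma \ref{qufreeOmegalemma}: $\Omega^1_{\tilde{B}}$ is a model for $\oL\Omega^1_B$ because $\tilde{B}$ is cofibrant, the lemma asserts the same for $\Omega^1_A$, and the comparison map realises the canonical equivalence (also using that $\Omega^1_{\tilde{B}}$ is flat over $\tilde{B}$, so base change along the quasi-isomorphism $\tilde{B} \to B$ is innocuous). For $p \ge 2$ I would show that both sides compute the derived $p$-th exterior power of $\oL\Omega^1_B$. On the $\tilde{B}$-side, $\Omega^1_{\tilde{B}}$ is semifree, so $\L^p_{\tilde{B}}\Omega^1_{\tilde{B}}$ is already the derived exterior power. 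On the $A$-side, the short exact sequence
\[
 0 \to \Omega^1_{A_0}\ten_{A_0}A \to \Omega^1_A \to \Omega^1_{A/A_0} \to 0
\]
from the proof of Lemma \ref{qufreeOmegalemma} has free graded $A_{\#}$-modules at either end --- the left term by Lemma \ref{EFCOmegalemma} and the right term by quasi-freeness of $A$ over $A_0$ --- so it splits as a sequence of graded modules, exhibiting $\Omega^1_A$ as a bounded-below complex with (levelwise finite) free underlying graded $A_{\#}$-module; hence $\L^p_A\Omega^1_A$ likewise computes the derived exterior power. The comparison $\Omega^p_{\tilde{B}} \to \Omega^p_A$ factors as the base-change quasi-isomorphism $\L^p_{\tilde{B}}\Omega^1_{\tilde{B}} \to (\L^p_{\tilde{B}}\Omega^1_{\tilde{B}})\ten_{\tilde{B}} B \cong \L^p_B(\Omega^1_{\tilde{B}}\ten_{\tilde{B}} B)$ followed by $\L^p_B$ applied to the quasi-isomorphism of flat $B$-modules $\Omega^1_{\tilde{B}}\ten_{\tilde{B}} B \to \Omega^1_A$, and both maps are quasi-isomorphisms.

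I expect the only real point to watch to be the bookkeeping around the two a priori different notions of Kähler differentials --- the EFC cotangent complex of the cofibrant $\tilde{B}$ versus the coherent analytic $\Omega^1_A$ of Definition \ref{OmegaAffddef} --- together with the verification that the natural comparison map is the canonical equivalence; but this is precisely the content of Lemma \ref{qufreeOmegalemma} in degree $1$, and for $p \ge 2$ nothing is needed beyond the formal facts that exterior powers of a flat, bounded-below complex compute derived exterior powers and commute with flat base change. A minor auxiliary input is that $\Omega^1_{\tilde{B}}$ is semifree over $\tilde{B}$ for the cofibrant objects of Proposition \ref{locmodelprop}, which follows from \cite[Lemma \ref{DStein-Cunramlemma}]{DStein} and Lemma \ref{EFCOmegalemma} given the explicit description of those objects.
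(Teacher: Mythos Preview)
Your argument is correct and fleshes out what the paper leaves implicit: the paper offers no proof, simply stating that the corollary ``is now immediate'' from Lemma \ref{qufreeOmegalemma}. Your reduction to comparing the Hodge-graded pieces, invoking Lemma \ref{qufreeOmegalemma} for $p=1$ and derived exterior powers for $p\ge 2$, is precisely the expected expansion.

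One simplification is available. You introduce an auxiliary cofibrant replacement $\tilde{B}\to B$, but the EFC-DGA $B$ underlying a quasi-free quasi-dagger DGA $A$ is already cofibrant in the local model structure of Proposition \ref{locmodelprop}: the quasi-Washnitzer algebra $A_0$ is a filtered colimit of Stein algebras along open immersions (as in the proof of Lemma \ref{EFCOmegalemma}), hence a transfinite composition of the cofibrations listed there, and $A$ is then freely generated over $A_0$. Taking $\tilde{B}=B$ collapses your argument: $\oL\DR(B)=\DR(B)$, and since the proof of Lemma \ref{qufreeOmegalemma} exhibits $\Omega^1_A$ as a free graded $A$-module (the short exact sequence splits as graded modules, with both ends free) coinciding with $\Omega^1_B$, one has $\Omega^p_B\cong\Omega^p_A$ on the nose for all $p$. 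This is presumably the one-line argument the paper has in mind. Your version via a general $\tilde{B}$ is not wrong, merely more elaborate than necessary, and the extra bookkeeping you flag around the two notions of K\"ahler differentials becomes trivial once one takes $\tilde{B}=B$.
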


\begin{corollary}\label{Spcor}
Given a quasi-dagger DGA $A$ with underlying EFC-DGA $B$, and a quasi-isomorphism $\tilde{A} \to A$ from a quasi-free quasi-dagger DGA, the space $\PreSp(B,n)$   of $n$-shifted pre-symplectic structures on $B$ is equivalent to the 
Dold--Kan denormalisation of the chain complex
\[
 \tau_{\ge 0}( F^2\DR(A)^{[n+2]}). 
\]

The space $\Sp(B,n) \subset \PreSp(B,n)$ then corresponds to the subspace of points $\omega$ 
for which the component $\omega_2 \in \z^n\Omega^2_{\tilde{A}}$ induces a quasi-isomorphism
\[
 \omega_2^{\sharp} \co \HHom_{\tilde{A}}(\Omega^1_{\tilde{A}},\tilde{A}) \to (\Omega^1_{\tilde{A}})_{[-n]},
\]
with $\Omega^1_{\tilde{A}} $  perfect as an $\tilde{A}$-module. 
\end{corollary}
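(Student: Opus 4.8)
The plan is to chase the definitions of \S\ref{sympsn}, using Lemma \ref{qufreeOmegalemma} and the two preceding corollaries to replace the abstract left-derived de Rham complex of $B$ by the naive de Rham complex of the quasi-free model $\tilde A$, and then to match up the non-degeneracy conditions. For the pre-symplectic statement: by Definition \ref{PreSpdef}, $\PreSp(B,n)$ is the Dold--Kan denormalisation of $\tau_{\ge 0}(\oL F^2\DR(B)^{[n+2]})$, and since $\tau_{\ge 0}$, the shift $(-)^{[n+2]}$ and Dold--Kan denormalisation all preserve quasi-isomorphisms, it suffices to produce a quasi-isomorphism $\oL F^2\DR(B)\simeq F^2\DR(\tilde A)$, where $F^2\DR(\tilde A)$ denotes the subcomplex of the product total complex of $A\xra{d}\Omega^1_A\xra{d}\Omega^2_A\xra{d}\cdots$ on the terms $\Omega^i_{\tilde A}:=\L^i_{\tilde A}\Omega^1_{\tilde A}$ with $i\ge 2$. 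Now $\tilde A\to A$ is in particular a quasi-isomorphism $\tilde B\to B$ of the underlying EFC-DGAs, so homotopy invariance of the left-derived functor $\oL F^2\DR$ gives $\oL F^2\DR(B)\simeq\oL F^2\DR(\tilde B)$, and the corollary immediately preceding this one, applied to the quasi-free quasi-dagger DGA $\tilde A$, identifies $F^p\oL\DR(\tilde B)$ with the corresponding subcomplex of $\DR(\tilde A)$; taking $p=2$ gives the claim.

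For the symplectic locus, recall that $\Sp(B,n)\subset\PreSp(B,n)$ is the union of those path components on which the leading term $\omega_2$ (a degree-$n$ two-form, computed on a cofibrant EFC-DGA replacement of $B$) induces a quasi-isomorphism between $\HHom$ of $\oL\Omega^1_B$ and its $n$-shift, with $\oL\Omega^1_B$ perfect over $B$. Under the equivalence above, $\omega_2$ corresponds to an element of $\z^n\Omega^2_{\tilde A}$; by Lemma \ref{qufreeOmegalemma} the $\tilde A$-module $\Omega^1_{\tilde A}$ is a model for $\oL\Omega^1_B$, so $\Omega^2_{\tilde A}$ models $\oL\Omega^2_B$, the de Rham differential transports correctly, and, the extra EFC structure not affecting the notion of module, module categories over $\tilde A$ and over $B$ coincide. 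Hence $\oL\Omega^1_B$ is perfect over $B$ if and only if $\Omega^1_{\tilde A}$ is perfect over $\tilde A$, and, granting perfectness, the canonical maps relating $\HHom_{\tilde A}(\Omega^1_{\tilde A},\tilde A)$ and $(\Omega^1_{\tilde A})_{[-n]}$ to the analogous objects built from a cofibrant replacement are quasi-isomorphisms compatible with contraction against $\omega_2$. Therefore $\omega$ is symplectic if and only if $\omega_2^{\sharp}\co\HHom_{\tilde A}(\Omega^1_{\tilde A},\tilde A)\to(\Omega^1_{\tilde A})_{[-n]}$ is a quasi-isomorphism and $\Omega^1_{\tilde A}$ is perfect over $\tilde A$, which is the asserted description.

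The routine content is the bookkeeping around homotopy invariance of $\oL\DR$ and the Dold--Kan and truncation compatibilities. The one point needing genuine care is the last step of the symplectic part: once $\Omega^1_{\tilde A}$ is known to be perfect, one must check that the contraction map $\omega_2^{\sharp}$ does not depend on whether it is computed from $\tilde A$ or from an honest cofibrant EFC-DGA replacement of $B$. This uses that the derived dual of a perfect module is insensitive to replacing the base ring by a quasi-isomorphic one, together with the fact that $\tilde A$ and a cofibrant replacement of $B$ are linked by a zigzag of quasi-isomorphisms of EFC-DGAs.
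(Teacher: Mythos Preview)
Your proof is correct and follows exactly the route the paper intends: the paper treats Corollary~\ref{Spcor} as an immediate consequence of the preceding (unlabelled) corollary identifying $F^p\oL\DR(B)$ with $F^p\DR(\tilde A)$ for quasi-free $\tilde A$, and you have simply spelled out how the definitions of $\PreSp$ and $\Sp$ from \S\ref{sympsn} unwind under that identification. One cosmetic remark: in your first paragraph the displayed complex should read $\tilde A\xra{d}\Omega^1_{\tilde A}\xra{d}\cdots$ rather than $A\xra{d}\Omega^1_A\xra{d}\cdots$; and your final caveat about comparing $\tilde A$ with a cofibrant EFC replacement can be shortened by observing that Proposition~\ref{locmodelprop} already makes the localisation of a quasi-free quasi-dagger DGA cofibrant in the local model structure, so no further zigzag is needed.
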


From now on, we will simply refer to these as (pre-)symplectic structures on $A$, denoted $\PreSp(A,n)$ and $\Sp(A,n)$.

\subsection{Structures on dagger dg spaces and stacks} 

\begin{definition}\label{SpDMdef} 
Given a $K$-dagger dg space or $K$-dagger dg DM stack $X=(\pi^0X,\sO_X)$ as in \S \ref{spacesn}, we define  the spaces $\PreSp(X,n)$ and $\Sp(X,n)$ of $n$-shifted pre-symplectic and symplectic structures on $X$ as follows. First take an objectwise quasi-free replacement $\tilde{\sO}_X$ of $X$ (so each $\tilde{\sO}_X(U)$ is a quasi-free quasi-dagger dg algebra, and $\tilde{\sO}_X \to \sO_X$ is a quasi-isomorphism).   
We then set 
\[
 \PreSp(X,n):= \oR\Gamma(\pi^0X, \PreSp(\tilde{\sO}_X)) \quad \Sp(X,n):= \oR\Gamma(\pi^0X, \Sp(\tilde{\sO}_X)).
\]
\end{definition}

Note that this is well-defined (i.e. independent of the choice $\tilde{\sO}_X$) as a consequence of Proposition \ref{daggertoEFCprop}.
 
 \begin{definition}\label{daggerLagdef}
Similarly, given a morphism $f \co X \to Y$ of $K$-dagger dg spaces or $K$-dagger dg DM stacks, we generalise Definition \ref{Isodef} to define the space  $\Iso(X,Y;n)$   of $n$-shifted isotropic structures on $X$ over $Y$ to be the homotopy fibre product
\[
 \Iso(X,Y;n):=\PreSp(Y,n)\by^h_{\PreSp(X,n)}\{0\},
\]
so that  isotropic structures on $X$ over a fixed $n$-shifted pre-symplectic structure $\omega$ on $Y$ are given by the homotopy fibre $\{f^*\omega\}\by^h_{\PreSp(X,n)}\{0\}$ of $\Iso(X,Y;n)\to \PreSp(Y,n)$ over $\omega$.

We then define the space  $\Lag(X,Y;n) \subset \Iso(X,Y;n)$   of $n$-shifted  Lagrangian structures on $X$ over $Y$ to consist of the path components of objects $(\omega,\lambda)$ which are non-degenerate in the sense that $\omega$ is symplectic  and  contraction with the image $(\omega_2,\lambda_2)$ of $(\omega,\lambda)$ in  $\z^{n-1}\cone(\Omega^2_{\tilde{\sO}_Y} \to \Omega^2_{\tilde{\sO}_X} )$ 
 induces a quasi-isomorphism
\[
 (f^{\sharp} \circ \omega_2^{\sharp}, \lambda_2^{\sharp}) \co 
 \cone(\HHom_{\tilde{\sO}_X}(\Omega^1_{\tilde{\sO}_X},\tilde{\sO}_X) \to \HHom_{f^{-1}\tilde{\sO}_Y}(f^{-1}\Omega^1_{\tilde{\sO}_Y},\tilde{\sO}_X))  \to (\Omega^1_{\tilde{\sO}_X})_{[-n]}.
\] 
 \end{definition}

\begin{remark}
 As a consequence of Corollary \ref{Spcor}, these correspond to shifted symplectic and Lagrangian structures in the sense of \cite[\S 4.4]{DStein}  on the derived analytic spaces and stacks associated to $K$-dagger dg spaces and $K$-dagger dg DM stacks by Corollary \ref{globalEFCdaggercor}.
 \end{remark}
 
 For $K$-dagger dg Artin stacks, we now have to take a little more care, as non-degeneracy is not preserved by smooth morphisms. Given such a stack $X=(\pi^0X,\sO_X)$, with $\sO_X$ a sheaf on the site of affinoids which are smooth over $\pi^0X$, the cotangent complex $\bL^X$ is  a Cartesian $\sO_X$-module in chain complexes representing the functor of derived derivations. It is
 determined by the property that for all Cartesian $\sO_X$-modules $M$ with $M(U) \in dg_+\Coh_{\sO_X(U)}$, the space of maps $(\pi^0X, \sO_X \oplus M) \to (\pi^0X, \sO_X)$ extending the identity map is the Dold--Kan denormalisation of the good truncation of   
 $\oR\HHom_{\sO_X}(\bL^X,M)$.

 \begin{definition}
  Given a  $K$-dagger dg Artin stack $X$  we define  the space $\PreSp(X,n)$ of $n$-shifted pre-symplectic  structures on $X$ by
$
 \PreSp(X,n):= \oR\Gamma(\pi^0X, \PreSp(\tilde{\sO}_X)).
$
 
We then define  $\Sp(X,n) \subset \PreSp(X,n)$ to consist of the path components of objects $\omega$ which are non-degenerate in the sense that the essentially unique  morphism $\oR\hom_{\sO_X}(\bL^X, \sO_X) \to \bL^X$ making the diagrams
\[
 \begin{CD}
  \oR\sHom_{\sO_X}(\bL^X, \sO_X)(U) @>>>  \bL^X(U) \\
  @VVV @AAA \\
  \oR\sHom_{\tilde{\sO}_X(U)}(\Omega^1_{\tilde{\sO}_X(U)}, \sO_X(U)) @>{\omega_2^{\sharp}}>> \Omega^1_{\tilde{\sO}_X(U)}
 \end{CD}
\]
commute is a quasi-isomorphism, where $\bL$ denotes the cotangent complex. 
  \end{definition}

  We can then define shifted isotropic and Lagrangian structures exactly as in Definition \ref{daggerLagdef}, with the non-degeneracy conditions being imposed on cotangent complexes.
 
 \begin{remark}\label{Nstackrmk2}
 As in Remark \ref{Nstackrmk},
 similar definitions exist for $N$-stacks, the only difference being that derived global sections are taken over   sites with higher categorical structure. 
  
  Alternatively, we could proceed as in \cite[\S \ref{poisson-DMsn}]{poisson} and define these structures  via \v Cech nerve-type constructions as in \cite{stacks2} and 
 \cite[\S 6]{2021lect} . Given a suitable simplicial hypercover $Z_{\bt}$ of $\pi^0X$, the definitions then reduce to taking cosimplicial homotopy limits, so that 
\[
 \PreSp(X,n):=\ho\Lim_{i \in \Delta} \PreSp(\Gamma(Z_i, \sO_X(Z_i),n),
\]
with the symplectic condition having to be imposed globally on path components in the  Artin  case. 

For more general homotopy-preserving functors $F \co dg_+\Affd\Alg_K^{\dagger,\loc} \to s\Set$, we can generalise this definition to look at the space $\oR\map(F,  \PreSp(-,n))$ of maps of homotopy-preserving presheaves from $F$ to $\PreSp(-,n)$, thus functorially associating an $n$-shifted pre-symplectic structure on $A$ to each point in $F(A)$. In order to define a subspace of shifted symplectic structures, we need $F$ to moreover be homogeneous with a cotangent complex in order to formulate the non-degeneracy condition. See \S \ref{hgsSpsn} for a slightly subtler, but more effective, definition of a space of symplectic structures on a homogeneous homotopy-preserving functor; it  admits a canonical map from the space  considered here, with the two agreeing for derived DM stacks.  


\end{remark}

 \section{Shifted symplectic structures associated to pro-\'etale sheaves}\label{proetsn}  

 \subsection{Pre-symplectic structures}
 
\begin{lemma}\label{anDRlemma}
 Given a quasi-free quasi-dagger dg algebra $A$, we have natural maps
 \[
  F^p\oL\DR(\uline{A}(S)^{\alg}) \to \uline{F^p\DR(A)}(S),
 \]
 functorial in  pro-finite sets $S$, where the domain is the algebraic derived de Rham cohomology of the CDGA $\uline{A}(S)$, as in \cite{poisson}.
 \end{lemma}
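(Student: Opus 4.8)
The plan is to describe the target $\uline{F^p\DR(A)}(S)$ explicitly as a Hodge-truncated de Rham complex built from the CDGA $\uline{A}(S)$ and the module $\uline{A}(S)\ten_A\Omega^1_A$, and then to map the algebraic derived de Rham complex of $\uline{A}(S)$ into it via the universal property of Kähler differentials. The quasi-free hypothesis is what makes the target concrete: by Lemma \ref{EFCOmegalemma} and the proof of Lemma \ref{qufreeOmegalemma}, $\Omega^1_A$ is free over $A$ (on the differentials of the Washnitzer coordinates and of the higher generators), hence levelwise finite, so each $\Omega^q_A=\L^q_A\Omega^1_A$ is levelwise finite and $F^p\DR(A)$ is the honest subcomplex $\bigl(\prod_{q\ge p}\Omega^q_A,\ d\pm\delta\bigr)$ of the product total complex of $A\xra{d}\Omega^1_A\xra{d}\cdots$. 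Applying Proposition \ref{exactmodsaffdprop} in each chain degree, extended from affinoid to quasi-dagger algebras by the filtered-colimit argument in the proof of Corollary \ref{preservehtpycor}, gives natural isomorphisms $\uline{\Omega^q_A}(S)\cong\uline{A}(S)\ten_A\Omega^q_A$, and, since exterior powers commute with base change, these identify $\uline{\Omega^q_A}(S)$ with $\L^q_{\uline{A}(S)}\bigl(\uline{A}(S)\ten_A\Omega^1_A\bigr)$. Hence $\uline{F^p\DR(A)}(S)$ is the Hodge-truncated total complex of
\[
 \uline{A}(S)\xra{\uline{d}}\uline{A}(S)\ten_A\Omega^1_A\xra{\uline{d}}\L^2_{\uline{A}(S)}\bigl(\uline{A}(S)\ten_A\Omega^1_A\bigr)\xra{\uline{d}}\cdots,
\]
where $\uline{d}$ is obtained by applying the functor $B\mapsto\uline{B}(S)$ to $d\colon A\to\Omega^1_A$.

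Next I would check that $\uline{d}$ is a genuine closed $K$-linear derivation, with no need to discuss topologies directly: since $\Omega^1_A$ is levelwise finite, $A\oplus\Omega^1_A$ (with $\Omega^1_A$ square-zero) is again a quasi-dagger dg algebra and $a\mapsto(a,da)$ is a morphism of such, so applying the product-preserving functor $B\mapsto\uline{B}(S)$ of Corollary \ref{preservehtpycor} yields a section $\uline{A}(S)\to\uline{A}(S)\oplus\uline{\Omega^1_A}(S)$ over $\uline{A}(S)$, i.e.\ exhibits $\uline{d}$ as a derivation commuting with the internal differentials $\delta$. Then, for any CDGA $P$ equipped with a morphism $P\to\uline{A}(S)^{\alg}$, composition with $\uline{d}$ gives a $K$-linear derivation $P\to\uline{A}(S)\ten_A\Omega^1_A$, which by the universal property of $\Omega^1_P$ factors through an $\uline{A}(S)$-linear, $\delta$-compatible map $\Omega^1_P\to\uline{A}(S)\ten_A\Omega^1_A$; taking exterior powers over $P\to\uline{A}(S)$ and totalising produces a map of filtered complexes $F^p\DR(P)\to\uline{F^p\DR(A)}(S)$. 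Taking $P=\tilde{C}_S$ a cofibrant CDGA replacement of $\uline{A}(S)^{\alg}$, so that $F^p\DR(\tilde{C}_S)$ represents $F^p\oL\DR(\uline{A}(S)^{\alg})$ in the sense of \cite{poisson}, then gives the asserted map; its independence of the choice of $\tilde{C}_S$ up to coherent homotopy is the usual functoriality of the derived de Rham construction, and naturality in $S$ follows because $\uline{A}(-)$ is a contravariant functor on profinite sets and every remaining ingredient ($\uline{d}$, the universal factorisations, cofibrant replacement) is natural.

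The step I expect to be the main obstacle is the first paragraph: checking that $\uline{\,\cdot\,}(S)$ commutes with the formation of the modules $\Omega^q_A$ and with the de Rham differential, so that the target really is a de Rham complex of $\uline{A}(S)$ rather than some larger object. This is exactly where Proposition \ref{exactmodsaffdprop}, together with the levelwise finiteness of $\Omega^1_A$ guaranteed by quasi-freeness, does the work; once that is in hand, the rest is formal manipulation of universal properties and cofibrant replacements.
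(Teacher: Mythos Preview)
Your proposal is correct and follows essentially the same route as the paper: obtain a derivation $\uline{A}(S)\to\uline{\Omega^1_A}(S)$ from $d$, invoke universality of K\"ahler differentials to map algebraic forms to it, then pass to exterior powers and product total complexes. The paper's argument is more streamlined in two respects: rather than taking a cofibrant replacement of $\uline{A}(S)^{\alg}$, it simply uses the canonical map $F^p\oL\DR\to F^p\DR$ and then maps $\Omega^1_{\uline{A}(S)^{\alg}}\to\uline{\Omega^1_A}(S)$ directly; and it never needs the explicit identification $\uline{\Omega^q_A}(S)\cong\L^q_{\uline{A}(S)}(\uline{A}(S)\ten_A\Omega^1_A)$ via Proposition~\ref{exactmodsaffdprop}, since only a map \emph{into} $\uline{\Omega^q_A}(S)$ is required, not a description of the target. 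Your square-zero-extension argument for why $\uline{d}$ is a derivation is a nice alternative to the paper's one-line appeal to continuity of $d$, and makes the functoriality in $S$ transparent.
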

 \begin{proof}
  We automatically  have a natural map $  F^p\oL\DR(\uline{A}(S)^{\alg}) \to  F^p\DR(\uline{A}(S)^{\alg})$. Since the derivation $d \co A \to \Omega^1_A$ is continuous, it induces a derivation $\uline{A}(S) \to \uline{\Omega^1_A}(S)$, and hence an $\uline{A}(S)$-linear map $\Omega^1_{\uline{A}(S)^{\alg}} \to \uline{\Omega^1_A}(S)$, by universality. On passing to alternating powers, we then get compatible maps $\Omega^j_{\uline{A}(S)^{\alg}} \to \uline{\Omega^j_A}(S)$, and the desired morphisms arise by passing to product total complexes.
 \end{proof}
%
%
%

 \begin{proposition}\label{traceprop} 
Let $\ell$ be the unique integral prime in $\m_K$, and assume that $X$ is a topologically Noetherian scheme with a constructible  $\Zl$-complex $\bD$ such that the \'etale  cohomology groups $\H^i (X_{\et}, \bD/\ell^n)$ vanish for $i>d$ and are equipped with compatible  maps 
\[
\tr \co \H^d (X_{\et}, \bD/\ell^n)\to \Z/\ell^n.
\]

Then given a quasi-dagger dg $K$-algebra $A$ and a finite $A$-module $M$ in complexes, we have a natural map
\[
 \oR\Gamma(X_{\pro\et},\uline{M}_X\hten_{\Zl}\bD ) \to M[-d] 
\]
in the $\infty$-category of cochain complexes, depending only on the structure of $M$ as a  
complex of topological abelian groups.
 \end{proposition}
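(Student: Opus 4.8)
The plan is to observe that $\uline{M}_X$ is, up to inverting $\ell$, a \emph{constant} sheaf, so that the required map is produced by a projection formula from a trace map $\oR\Gamma(X_{\pro\et},\bD)\to\Zl[-d]$, which the hypothesis furnishes after truncating to top degree. First I would reduce to the case that $A_0$ is affinoid and $M$ a finite $A_0$-module: by Proposition~\ref{indaffinoidprop} and Corollary~\ref{preservehtpycor}, $\uline{A}(S)\cong\uline{A_0}(S)\ten_{A_0}A$ with $A_0$ a filtered colimit of affinoid algebras and $M$ a compatible filtered colimit of finite affinoid modules, and all the operations below (completed tensor with $\bD$, reduction modulo $\ell^n$, and $\oR\Gamma(X_{\pro\et},-)$ of the resulting constructible torsion sheaves) commute with such filtered colimits, the last because pro-\'etale and \'etale cohomology agree on torsion constructible sheaves and \'etale cohomology of the (topologically Noetherian, hence quasi-compact quasi-separated) scheme $X$ commutes with filtered colimits. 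So assume $A_0,M$ affinoid. As in the proof of Lemma~\ref{exactmodsaffdlemma}, choose a bounded $\cO_K$-lattice $M^o\subset M$; since $\ell\in\m_K$ its $\ell$-adic and $\varpi$-adic topologies agree, so $M^o$ is $\ell$-adically complete and torsion-free, hence flat over $\Zl$, and $M=M^o[\ell^{-1}]$. Because a continuous map from a profinite set to the discrete group $M^o/\ell^n$ is locally constant, $\uline{M^o/\ell^n}_X$ is the constant sheaf $\pi^{*}(M^o/\ell^n)$ for $\pi\co X_{\pro\et}\to\ast$, and $\uline{M^o}_X=\holim_n\pi^{*}(M^o/\ell^n)$, $\uline{M}_X=\uline{M^o}_X[\ell^{-1}]$; correspondingly the completed tensor product unwinds as $\uline{M^o}_X\hten_{\Zl}\bD\simeq\holim_n\bigl(\pi^{*}(M^o/\ell^n)\ten^{\oL}_{\Z/\ell^n}\bD/\ell^n\bigr)$.

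Now the hypothesis provides the map. The vanishing $\H^i(X_{\et},\bD/\ell^n)=0$ for $i>d$ means the truncation $\oR\Gamma(X_{\et},\bD/\ell^n)\to\tau^{\ge d}\oR\Gamma(X_{\et},\bD/\ell^n)$ lands in $\H^d(X_{\et},\bD/\ell^n)[-d]$, so composing with $\tr$ and passing to $\holim_n$ gives a trace map $\oR\Gamma(X_{\pro\et},\bD)\to\Zl[-d]$. Since $M^o/\ell^n$ is flat over $\Z/\ell^n$ it is a filtered colimit of finite free modules (Lazard), for which the projection formula is immediate, and \'etale cohomology commutes with filtered colimits; hence $\oR\Gamma(X_{\pro\et},\pi^{*}(M^o/\ell^n)\ten^{\oL}_{\Z/\ell^n}\bD/\ell^n)\simeq\oR\Gamma(X_{\et},\bD/\ell^n)\ten^{\oL}_{\Z/\ell^n}(M^o/\ell^n)$. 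Applying $\oR\Gamma(X_{\pro\et},-)$ to the displayed presentation and commuting it past the homotopy limit then gives
\[
 \oR\Gamma(X_{\pro\et},\uline{M^o}_X\hten_{\Zl}\bD)\;\simeq\;M^o\hten_{\Zl}\oR\Gamma(X_{\pro\et},\bD)\;\xra{\ \id\hten\tr\ }\;M^o\hten_{\Zl}\Zl[-d]\;=\;M^o[-d].
\]
Inverting $\ell$ yields the desired natural map $\oR\Gamma(X_{\pro\et},\uline{M}_X\hten_{\Zl}\bD)\to M[-d]$; since commensurable lattices give the same map after inverting $\ell$, and every step above is functorial and performed levelwise when $M$ is a complex, the construction depends only on $M$ as a complex of topological abelian groups, and one removes the initial affinoid reduction by passing back up the filtered colimit.

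The step I expect to be the main obstacle is the homotopy-coherent book-keeping around the completed tensor product: justifying rigorously that $\uline{M}_X\hten_{\Zl}\bD$ may be computed as the stated homotopy limit of the finite-level tensors, that $\oR\Gamma(X_{\pro\et},-)$ commutes with that homotopy limit and with the final inversion of $\ell$, and that the projection-formula identification holds with the non-finitely-generated coefficient modules $M^o/\ell^n$. Concretely this amounts to managing derived $\ell$-completeness in the condensed/pro-\'etale setting; once these finite-level statements and their compatibilities are in place, assembling the map and checking its naturality are formal.
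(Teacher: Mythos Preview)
Your proposal is correct and follows essentially the same route as the paper: construct the trace $\oR\Gamma(X_{\et},\bD/\ell^n)\to\Z/\ell^n[-d]$ from the truncation hypothesis, tensor it up to coefficients in $M^o/\ell^n$ for a lattice $M^o\subset M$, pass to the homotopy limit over $n$, invert $\ell$, and handle the quasi-dagger case by filtered colimits. The only notable difference is that you justify the projection formula at each finite level explicitly via Lazard's theorem and commutation of \'etale cohomology with filtered colimits, whereas the paper phrases the same step more tersely as extending the map to ``finite $\ell$-torsion'' and then ``pro-$\ell$'' abelian groups; your version is a useful elaboration of that passage rather than a genuinely different argument.
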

\begin{proof}
The hypotheses give us $\Z/\ell^n$-linear zigzags
\[
 \oR\Gamma(X_{\et}, \bD/\ell^n) \xla{\sim} \tau^{\le d}  \oR\Gamma(X_{\et}, \bD/\ell^n) \to \H^d(X_{\et}, \bD/\ell^n)[-d] \to \Z/\ell^n[-d].
\]
Since the rings $\Z/\ell^n$ are discrete, \cite[Corollary 5.1.6]{BhattScholzeProEtale} then gives $ \oR\Gamma(X_{\et}, \bD/\ell^n) \simeq \oR\Gamma(X_{\pro\et}, \bD/\ell^n)$. We thus have a map
\[
 \oR\Gamma(X_{\pro\et}, \uline{N}_X\hten_{\Zl}\bD) \to N[-d]
\]
for all finite $\ell$-torsion abelian groups $N$. Passing to derived inverse limits and completing the tensor product, this map extends to pro-$\ell$ abelian groups $N$. 

Given a finite module $N$ over an  affinoid $K$-algebra $B$, as in Lemma \ref{exactmodsaffdlemma} we can take elements of norm $\le 1$ to give a pro-$\ell$ algebra $B^o$ and a finite $B^o$-module (in particular pro-$\ell$) $N^o$ with $N\cong N^o\ten \Q$. 
By \cite[Lemma 6.8.12]{BhattScholzeProEtale}, the functor $\oR\Gamma(X_{\pro\et},-)$ commutes with filtered colimits, giving us
\[
 \oR\Gamma(X_{\pro\et}, \uline{N}_X\hten_{\Zl}\bD) \to N[-d]
\]
for all such modules $N$. Passing to filtered colimits again, this extends to the case where $B$ is a dg  affinoid $K$-algebra, and $N$ a $B$-module in complexes.

Now, our quasi-dagger dg $K$-algebra $A$ is a filtered colimit of affinoid  dg $K$-algebras $B$, and $M$ is of the form $A\ten_BN$ for some such $B$ and some finite $B$-module $N$. Passing to filtered colimits yet again gives us the desired map
\[
  \oR\Gamma(X_{\pro\et}, \uline{M}_X\hten_{\Zl}\bD) \to M[-d]
\]
of complexes.
 \end{proof}

 \begin{examples}\label{traceex}
 If $X$ is  an $\ell$-coprime proper scheme, then trace maps $\tr$ of the form required in Proposition \ref{traceprop} arise from the six functors formalism (and specifically Poincar\'e duality)  whenever we have a form of duality on the base. Examples include the cases:
 
 \begin{enumerate}[itemsep=5pt, parsep=0pt]
 \item
 $X$ is a proper separated scheme over a separably closed field $k$ prime to $\ell$. In particular, when $X$ is moreover smooth  of dimension $m$ over $k$, we just have $\bD\simeq \uline{\Zl(m)}$, with trace 
 \[
 \H^{2m}(X_{\pro\et},\uline{\Zl(m)})\to \Zl.
 \]
 
  \item\label{tracelocex} $X$ is a proper separated scheme over a local field $k$ containing $\ell^{-1}$ with finite residue field, 
where the trace given  by combining Poincar\'e duality with local Tate duality (e.g. \cite[proof of Proposition 1.5]{milneArithDuality}). 
 When $X$ is moreover smooth of dimension $m$ over $k$, we have $\bD\simeq \uline{\Zl(m+1)}$, with trace 
 \[
 \H^{2m+2}(X_{\pro\et},\uline{\Zl(m+1)})\to \Zl.
 \]
 
\item $X$ is a proper separated scheme over a finite field $k$ prime to $\ell$. This follows similarly, by combining Poincar\'e duality with duality for $\Gal(k)\cong \hat{\Z}$-representations.  In particular, when $X$ is moreover smooth  of dimension $m$ over $k$, we have $\bD\simeq \uline{\Zl(m)}$, with trace 
\[
\H^{2m+1}(X_{\pro\et},\uline{\Zl(m)})\to \Zl. 
\]

\item An example with a similar flavour, but not strictly of the form in Proposition \ref{traceprop}, is given by starting from a variety $U$ over one of the fields $k$ above, and taking an open immersion $j \co U \into \bar{U}$ into a complete variety, with complement $i \co Z \to \bar{U}$.  If we have a trace map $\oR \Gamma_c(U_{\pro\et}, \bD)[r] \to \Zl$ from cohomology with compact supports, then there is a composite trace map 
\[
 \oR\Gamma(Z_{\pro\et}, i^*\oR j_*\bD) \to \oR\Gamma_c(U _{\pro\et}, \bD)[1]  \to \Zl[1-r],
\]
via the equivalence 
\[
\oR\Gamma(Z_{\pro\et}, i^*\oR j_*\bD) \simeq \cone( \oR\Gamma_c(U _{\pro\et}, \bD) \to  \oR\Gamma(U _{\pro\et}, \bD) ) ,
\]
which follows from \cite[Prop III.1.2.9]{Mi}. 

Unlike cohomology with compact supports, the cohomology theory $ \oR\Gamma(Z_{\pro\et}, i^*\oR j_* -)$ carries a cup product; the resulting pairing induced by the trace  then corresponds to the Poincar\'e duality pairing between $\oR\Gamma(U,-)$ and $\oR\Gamma_c(U,-)$. This leads to traces acting on cohomology one degree lower than those above.
Explicitly, when $U$ is smooth of dimension $m$  over a separably closed base field, we have $\bD\simeq \uline{\Zl(m)}$ with trace given by the composite 
\[
 \H^{2m-1}(Z_{\pro\et},i^*\oR j_* \uline{\Zl(m)})\to \H^{2m}_c(U_{\pro\et},\uline{\Zl(m)}) \to  \Zl;
\]
there are similar statements for local and finite fields, but with different twists and shifts.

Now, for  Deligne's oriented fibre product $Z_{\pro\et} \overleftarrow{\by}_{\bar{U}_{\pro\et}}U_{\pro\et}$, known as the deleted tubular neighbourhood of $Z$ in $X$, we have
\[
\oR\Gamma(Z_{\pro\et} \overleftarrow{\by}_{\bar{U}_{\pro\et}}U_{\pro\et},p^*L) \simeq \oR\Gamma(Z_{\pro\et}, i^*\oR j_*j^*L);
\]
see for instance \cite[6.4.4]{illusieVanishing}. This example thus fits into the framework of  Proposition \ref{traceprop} if we generalise from locally Noetherian schemes to topoi equipped with a cosheaf $\pi_0$ of pro-finite sets.
The deleted tubular neighbourhood is a pro-\'etale analogue of the  boundary $\pd U$ at infinity of $U$ featuring for instance in \cite[Definition 4.2]{PantevToenLocSys}. 
\end{enumerate}
   \end{examples}

\begin{corollary}\label{tracecor}
If $X$ is  a topologically Noetherian scheme satisfying the conditions of Proposition \ref{traceprop} with an isomorphism $\bD\cong \Zl$, 
then for any 
 $n$-shifted pre-symplectic (in the terminology of \cite{poisson}) derived $\infty$-geometric Artin stack $F \co dg_+\CAlg_K \to s\Set$,
 the   functor
  \[
   F(X_{\pro\et},-) \co dg_+\Affd\Alg^{\loc,\dagger}_K \to s\Set
  \]
of Definition \ref{FXproetdef} carries a functorial $(n-d)$-shifted pre-symplectic structure at all points; in particular,  this implies that any formally \'etale 
map  $Y \to F(X_{\pro\et},-)$ from a dg dagger-analytic Artin $\infty$-stack $Y$ induces an $(n-d)$-shifted pre-symplectic structure on $Y$.
\end{corollary}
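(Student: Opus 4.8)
The plan is to construct a natural transformation of homotopy-preserving presheaves $\Phi\co F(X_{\pro\et},-)\to \PreSp(-,n-d)$ on $dg_+\Affd\Alg^{\loc,\dagger}_K$; by Remark \ref{Nstackrmk2} such a $\Phi$ is precisely a functorial $(n-d)$-shifted pre-symplectic structure at all points, and the final clause about a formally \'etale $Y\to F(X_{\pro\et},-)$ then follows by precomposing $\Phi$ with that map. Since $F$, the functor $A\mapsto\uline{A}_X$ (Corollary \ref{preservehtpycor}) and $\PreSp(-,m)$ all send quasi-isomorphisms to weak equivalences, I may replace the variable $A$ throughout by a quasi-free quasi-dagger dg algebra $\tilde{A}$, so that by Corollary \ref{Spcor} (and the remark after Definition \ref{SpDMdef}) $\PreSp(\tilde{A},m)$ is the Dold--Kan denormalisation of $\tau_{\ge 0}\big(F^2\DR(\tilde{A})^{[m+2]}\big)$, with each $\Omega^p_{\tilde{A}}$ levelwise finitely generated over $\tilde{A}$ in the sense of Definition \ref{Cohdef}.

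\emph{Transgressing the form on $F$.} Since $F$ is an $\infty$-geometric derived Artin stack equipped with an $n$-shifted pre-symplectic structure, by \cite{poisson} this structure is the same datum as a morphism $\omega_F\co F\to\mathbf{PreSp}^{\alg}(-,n)$ of presheaves on $dg_+\CAlg_K$, where $\mathbf{PreSp}^{\alg}(B,n)$ denotes the denormalisation of $\tau_{\ge 0}\big(F^2\oL\DR(B)^{[n+2]}\big)$ for the \emph{algebraic} derived de Rham complex. Composing $\omega_F$ with the sheaf $\uline{\tilde{A}}_X$ of $K$-CDGAs on the affine pro-\'etale site $X_{\pro\et}^{\aff}$ (Definition \ref{FXproetdef}), noting that each $\uline{\tilde{A}}_X(U)=\uline{\tilde{A}}(\pi_0 U)$ lies in $dg_+\CAlg_K$, and applying $\oR\Gamma(X_{\pro\et},-)$ yields a natural map $F(X_{\pro\et},\tilde{A})\to\oR\Gamma\big(X_{\pro\et},\mathbf{PreSp}^{\alg}(\uline{\tilde{A}}_X,n)\big)$.

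\emph{Comparing de Rham complexes and applying the trace.} Lemma \ref{anDRlemma}, applied with $S=\pi_0 U$ and hence sheafified over $X_{\pro\et}^{\aff}$, gives natural maps $F^p\oL\DR(\uline{\tilde{A}}_X^{\alg})\to\uline{F^p\DR(\tilde{A})}_X$; and Proposition \ref{traceprop} with $\bD=\Zl$ gives, for each $p$, a natural trace $\oR\Gamma(X_{\pro\et},\uline{\Omega^p_{\tilde{A}}}_X)\to\Omega^p_{\tilde{A}}[-d]$ in cochain complexes, to which Proposition \ref{traceprop} applies since the $\Omega^p_{\tilde{A}}$ are levelwise finitely generated over $\tilde{A}$. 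Naturality of both constructions in the underlying complexes of topological $K$-vector spaces, in particular with respect to the continuous de Rham differential, lets one pass to product total complexes, apply $\tau_{\ge 0}\big((-)^{[n+2]}\big)$ and denormalise, and then, after applying $\oR\Gamma(X_{\pro\et},-)$, obtain a composite $\oR\Gamma\big(X_{\pro\et},\mathbf{PreSp}^{\alg}(\uline{\tilde{A}}_X,n)\big)\to\oR\Gamma(X_{\pro\et},\uline{\PreSp(\tilde{A},n)}_X)\to\PreSp(\tilde{A},n-d)=\PreSp(A,n-d)$. Here one uses: that $\uline{(-)}(S)=\Hom_{\cts}(S,-)$ commutes with products, good truncations, and the finitely many (co)products in each level of the Dold--Kan functor, being exact on finite modules over affinoid and hence over quasi-dagger algebras (Proposition \ref{exactmodsaffdprop}, Corollary \ref{preservehtpycor}); that $\big(F^2\DR(\tilde{A})[-d]\big)^{[n+2]}\cong F^2\DR(\tilde{A})^{[(n-d)+2]}$; and that the connective truncation built into Dold--Kan identifies $\oR\Gamma(X_{\pro\et},-)$ of a connective sheaf of complexes with $\oR\Gamma(X_{\pro\et},-)$ of the associated sheaf of spaces. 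Precomposing with the previous step defines $\Phi$; it is natural in $A$ because each step is, it preserves quasi-isomorphisms by Corollary \ref{preservehtpycor} and the homotopy-invariance of $F$ and of $\PreSp$, and it is independent of the choice of $\tilde{A}$ as after Definition \ref{SpDMdef} (ultimately Proposition \ref{daggertoEFCprop}).

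The crux is bookkeeping rather than conceptual content: one must check that the trace maps of Proposition \ref{traceprop}, supplied one finite module at a time, are natural enough to be compatible with the de Rham differential, so that they transgress the entire Hodge-filtered complex $F^2\DR(\tilde{A})$ at once; and that this transgression interacts correctly both with the connective truncations defining the spaces $\PreSp(-,m)$ and with the passage from the algebraic $\oL\DR$ of the \emph{sheaf} $\uline{A}_X$ to the analytic $\DR$ of a quasi-free model via Lemma \ref{anDRlemma}. Observe that no non-degeneracy condition enters, which is exactly why only a \emph{pre}-symplectic structure is produced here; upgrading it to a symplectic structure, as in the examples of the next section, requires the additional input that the trace pairing is a perfect duality.
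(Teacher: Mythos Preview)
Your proof is correct and follows essentially the same route as the paper's: interpret the pre-symplectic structure on $F$ as a map $F\to\PreSp^{\alg}(-,n)$, sheafify over $X_{\pro\et}$ via $\uline{\tilde A}_X$, pass from algebraic to analytic de Rham via Lemma~\ref{anDRlemma}, and then apply the trace of Proposition~\ref{traceprop} to shift by $-d$. The only cosmetic difference is that the paper packages the construction as a map $\PreSp^{\alg}(F,n)\to\oR\map(F(X_{\pro\et},-),\PreSp(-,n-d))$ on the whole space of pre-symplectic structures rather than fixing a single $\omega_F$, and works at the level of filtered complexes throughout rather than passing through the intermediate object you write as $\uline{\PreSp(\tilde A,n)}_X$; but these are equivalent by adjunction and by the compatibilities you spell out.
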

 \begin{proof}
  The space $\PreSp(F,n)$ of  $n$-shifted pre-symplectic structures on $F$ is the space of maps from $F$ to the functor $\PreSp^{\alg}(-,n)\co dg_+\CAlg_K \to s\Set$ defined similarly to $\PreSp(-,n)$ in Definition \ref{PreSpdef}, but based on algebraic cotangent complexes $\oL\Omega^1_{A^{\alg}}$.

 For any  quasi-free quasi-dagger dg algebra $A$, Lemma \ref{anDRlemma} combines with Proposition \ref{traceprop} to give us maps 
 \[
\oR\Gamma(X_{\pro\et},  F^2\oL\DR(\uline{A}_X^{\alg})) \to \oR\Gamma(X_{\pro\et}, \uline{F^2\DR(A)}_X) \to F^2\DR(A)[-d],
\] 
and hence
\[
 \oR\Gamma(X_{\pro\et},\PreSp^{\alg}(\uline{A}_X^{\alg},n)) \to \PreSp(A,n-d).
\]

Since $F$ maps quasi-isomorphisms to weak equivalences, Proposition \ref{affdsubEFCprop}  applies, and in particular it suffices to work with the restriction of  $F(X_{\pro\et},-)$ to objects which are quasi-free.
Because $ \PreSp^{\alg}(F,n) \simeq \oR\map(F,\PreSp^{\alg}(-,n))$, the maps above combine to give a composite natural transformation
\[
 \PreSp^{\alg}(F,n)   \by  F(X_{\pro\et},-) \to \PreSp^{\alg}(X_{\pro\et},-,n) \to  \PreSp(-,n-d),
\]
of functors on quasi-free localised dagger dg algebras. 

By adjunction, we can rephrase this as a morphism
\[
 \PreSp^{\alg}(F,n) \to \oR\map(F(X_{\pro\et},-), \PreSp(-,n-d))
\]
in the $\infty$-category of simplicial sets, where the space of maps is taken in the $\infty$-category of homotopy-preserving simplicial set-valued  functors on $dg_+\Affd\Alg^{\loc,\dagger}_K$.
\end{proof}

\subsection{Duality  over cyclotomic fields --- Selmer and Iwasawa theory}\label{cycdualitysn}

In order to apply Proposition \ref{traceprop} to construct shifted  pre-symplectic structures on conventional derived moduli stacks of $\ell$-adic sheaves, we need the dualising object $\bD$ to be isomorphic to the trivial pro-sheaf $\Zl$. In practice, this only tends to occur for such duality theories over separably closed fields. However, in order to trivialise Tate twists it suffices for the base to contain the roots $\mu_{\ell^{\infty}}$ of unity, leading to examples of a more arithmetic flavour.  In this brief interlude, we develop the necessary theory, involving subtler duality results which do not  fit directly into  the six functors formalism.


We first fix some notation, partly based on that in \cite{nekovarSelmerComplexes}. 
%
%
%
$\Gamma$ will be a pro-finite group isomorphic to $\Zl$, with $\ell$-completed group ring $\Lambda:= \Zl[\Gamma]$. If $u \in \Zl[\Gamma]$  is a  generator for $\Gamma$ and $t=u-1$, then $\Lambda \cong \Zl \llb t \rrb$.
Let $I$ be the kernel of the augmentation map $\Lambda \to \Zl$ (sending all elements of $\Gamma$ to $1$), so $I\cong t\Lambda$. We then write $I^*:=\Hom_{\Lambda}(I,\Lambda)$, so $I^* \cong t^{-1}\Lambda$.

\begin{lemma}\label{traceIwlemma}
 If $f \co X_{\infty} \to X$ is a pro-\'etale Galois cover with Galois group $\Gamma$ isomorphic to $\Zl$, then for any \'etale sheaf $N$ in abelian $\ell$-power torsion groups on $X$, we have a trace map
 \[
  \tr_{\Gamma} \co \oR\Gamma(X_{\infty,\et}, f^{-1}N) \to \Gamma\ten_{\Zl}N[1].
 \]
\end{lemma}
\begin{proof} 
 We  have a $\Gamma$-equivariant isomorphism $I/I^2 \cong \Gamma$ (with $\Gamma$ acting trivially on itself), under which $\gamma \in \Gamma$ corresponds to the class $\gamma -1 +I^2$. Hence  $I^*$  is a $\Gamma$-equivariant  extension of  $\Hom_{\Zl}(\Gamma,\Zl) \cong t^{-1}\Zl$ by $\Lambda$.

Writing $\tilde{\Lambda}$ for the pro-local system on $X$ associated to the regular $\Gamma$-action on $\Lambda$, and similarly for $I$ and $I^*$, Shapiro's lemma gives an isomorphism $f_*f^{-1}N \cong \Hom_{\cts}(\tilde{\Lambda},N)$ of $\ell$-power torsion \'etale sheaves on $X$. We thus have an $\ell$-power torsion \'etale sheaf $\Xi(N):=\Hom_{\cts}(\tilde{I}^*,N)$ which is canonically an extension of $ f_*f^{-1}N$ by $\Gamma\ten_{\Zl}N$, so the corresponding extension class gives us the trace map $\tr_{\Gamma}$.
 \end{proof}

\begin{corollary}\label{tracepropinfty}
 In the setting of Proposition \ref{traceprop}, if $f \co X_{\infty} \to X$ is a pro-\'etale Galois cover with Galois group $\Gamma$ isomorphic to $\Zl$, then given a quasi-dagger dg $K$-algebra $A$ and a finite $A$-module $M$ in complexes, we have a natural map
\[
\tr_{X_{\infty},X} \co \oR\Gamma(X_{\infty,\pro\et},\uline{M}_X\hten_{\Zl}f^{-1}\bD ) \to \Gamma\ten_{\Zl}M[1-d] 
\]
in the $\infty$-category of cochain complexes, depending only on the structure of $M$ as a  
complex of topological abelian groups.
\end{corollary}
\begin{proof}
 As in the proof of Proposition \ref{traceprop}, we have compatible maps $\tr_X \co \oR\Gamma(X_{\et}, \bD/\ell^n) \to  (\Z/\ell^n)[-d]$, which we can compose with the map $\tr_{\Gamma}$ of Lemma \ref{traceIwlemma} to give 
 \[
 \tr_{X_{\infty},X} \co \oR\Gamma(X_{\infty,\et},f^{-1}\bD/\ell^n) \to (\Gamma/\ell^n\Gamma)[1-d],
 \]
and the remainder of the proof then follows exactly as in Proposition \ref{traceprop}.
\end{proof}

We now introduce some more notation, allowing for more general bases. Write $\Gamma_n:= \ell^n\Gamma$ and $\Lambda_n:= \Zl[\Gamma_n]$. Then  $u^{\ell^n}$ is a  generator for $\Gamma_n$, and for $t_n:=u^{\ell^n}-1$ we have $\Lambda_n \cong \Zl \llb t_n \rrb$. We then let
 $I_n$ be the kernel of the augmentation map $\Lambda_n \to \Zl$ and $I_n^*:=\Hom_{\Lambda_n}(I,\Lambda_n)$, so $I_n^* \cong t_n^{-1}\Lambda_n$.

\begin{lemma}\label{cyctraceindeptlemma}
 The trace map in Corollary \ref{tracepropinfty} is independent of the base, in the sense that for the quotient $f_n \co X_{\infty} \to X_n$ by 
$\Gamma_n \subset \Gamma$,
we have an equivalence between $\tr_{X_{\infty},X_n}$ and the composition of $\tr_{X_{\infty},X}$ with the isomorphism  
$\ell^n \co \Gamma \to \Gamma_n$. 
\end{lemma}
\begin{proof}
We need to compare the map $\tr_{\Gamma}$ from the proof of Corollary \ref{tracepropinfty} with the composition  
\[
 f_*f^{-1}N \xra{p_{n*}\tr_{\Gamma_n}p_n^{-1}}  \Gamma_n\ten_{\Zl}p_{n*}p_n^{-1}N[1] \xra{\tr_{X_n/X} }\Gamma_n\ten_{\Zl}N[1],
\]
where $\tr_{X_n/X} \co p_{n*}p_n^{-1}\bD \to \bD$ is the relative trace. 

Write $\Xi_n(N)$ for the extension of $f_{n*}f_n^{-1}N$ by $\Gamma_n\ten_{\Zl}N$ coming from the proof of Lemma \ref{traceIwlemma} applied to $f_n$. 
Via Shapiro's lemma, 
we then have 
\[
p_{n*}\Xi_n(p_n^{-1}N)\cong \Hom_{\cts}(I_n^*\ten_{\Lambda_n} \tilde{\Lambda},N), 
\]
an extension of $f_{*}f^{-1}N$ by $ \Gamma_n\ten_{\Zl}p_{n*}p_n^{-1}N\cong \Hom_{\Lambda_n}(\tilde{\Lambda}, \Gamma_n\ten_{\Zl}N)$. 

Now, the relative trace map $p_{n*}p_n^{-1}N \cong \Hom(\tilde{\Lambda}\ten_{\tilde{\Lambda}_n}\Zl,N) \to N $ is given by evaluation at the sum $\phi_n$ of the elements of $\Gamma/\Gamma_n$. The extension of $f_*f^{-1}N$ by $\Gamma_n\ten_{\Zl}N$ corresponding to  the composite trace $\tr_{X_n/X} \circ p_{n*}\tr_{\Gamma_n}p_n^{-1}$ is thus given by maps out of the pullback
\[
 I_n^*\ten_{\Lambda_n} \tilde{\Lambda} \xra{\alpha} \Hom_{\Zl}(\Gamma_n,\Zl)\ten_{\Zl} \Zl[\widetilde{\Gamma/\Gamma_n}] 
 \xla{\phi_n} \Hom_{\Zl}(\Gamma_n,\Zl),
\]
where $\alpha$ is the quotient map $-\ten_{\Lambda_n}\Zl$.

 Since  $t_n =(t+1)^{\ell^n} -1$ and  $\phi_n=\sum_{i=0}^{\ell^n -1} [u]^i$, we have $\phi_n=[t_n/t]$. In co-ordinate terms, $I_n^*\ten_{\Lambda_n} \Lambda \cong t_n^{-1}\L$, and then we have 
 \[
 \alpha(t_n^{-1}f(t))(u^{\ell^n})= [f(t)] \in \Lambda/(t_n)= \Zl[\Gamma/\Gamma_n].
 \]
 Thus the pullback consists of elements $t_n^{-1}f(t)$ such that $t_n/t$ divides $f(t)$, so is given by  $t^{-1}\tilde{\Lambda}= \tilde{I}^*$. The map $I^* \to \Hom_{\Zl}(\Gamma_n,\Zl) $ to the second factor  sends $t^{-1}$ to the dual of  $u^{\ell^n}$, so corresponds to the composition of the canonical map $ I^* \to \Hom_{\Zl}(\Gamma,\Zl)$ with the isomorphism $\ell^n \co \Gamma \to \Gamma_n$, as required.
\end{proof}

We now look to understand duality for this trace map. The key observation is that for a finite $\ell$-group $N$, we have a $\Zl\llb t \rrb$-linear isomorphism $  N(\!(t)\!)/tN\llb t \rrb \xra{\simeq} \Hom_{\Zl}(\Zl\llb t \rrb, N)$ given by multiplication followed by evaluation at the constant coefficient. Writing $N^{\vee}$ for the Pontrjagin dual $\Hom_{\Zl}(N,\Ql/\Zl)$, we have $\Hom_{\Zl}(\Zl\llb t \rrb, N)^{\vee} \cong N^{\vee}\llb t \rrb$, so we can think of the module $N (\!(t)\!)$ as an extension of  $f_*f^{-1}N$ by $f_!f^{-1}N$  (up to a factor of $t$), though this only makes sense as a Tate object in abelian sheaves.

\begin{lemma}\label{cycdualitylemma}
 Take a discrete $\Z/\ell^m$-algebra $R$ and \'etale sheaves $M,N$ of $R$-modules on $X$.  The following are naturally quasi-isomorphic:
\begin{enumerate}
\item  the complex given by the cone of the map 
 \[
  \tau \co \oR\HHom_{R,X_{\infty}}(f^{-1}M, f^{-1}N)[-1] \to \Gamma\ten_{\Zl}\oR\HHom_{R,X} ( f_*f^{-1}M, N) 
  \]
induced by the trace map $f_*f^{-1}N \to \Gamma\ten_{\Zl}N[1]$   of Lemma \ref{traceIwlemma},

\item the complex $\oR\HHom_{R[\Gamma]}(R(\Gamma -1),  \oR\HHom_{R,X_{\infty}}(f^{-1}M, f^{-1}N)  )$, for the localisation $R(\Gamma -1)$ of $R[\Gamma]$ at elements $\gamma -1$ for $\gamma \in \Gamma \setminus \{1\}$,

\item when $M$ is constructible, the complex $\Gamma\ten_{\Zl}\oR\HHom_{R,X} ( f_*f^{-1}M, N)\ten_{ R[\Gamma]}R(\Gamma -1)$,

\item the complex $\oR\Lim_n \oR\HHom_{R,X_n}(p_n^{-1}M, \Xi_n(p_n^{-1}N))$, for $\Xi_n$ as in the proof of Lemma \ref{cyctraceindeptlemma}.
\end{enumerate}
\end{lemma}
\begin{proof}
Under the isomorphism $R[\Gamma] \cong R \llb t \rrb$ given by a choice $u=t+1$ of generator for $\Gamma$, the localisation $R(\Gamma -1)$ is simply given by 
$R(\!(t)\!)$; the elements $u^n-1$ are all invertible in the local ring $(\Z/\ell^m)(\!(t)\!)$ because their images in the residue  field $\bFl(\!(t)\!)$ are non-zero.
 
Given a morphism $\theta \co K[-1] \to L$ of  $R \llb t \rrb$-modules with:
\begin{itemize}
 \item[(a)] $t$ acting quasi-isomorphically on $\cone(\theta)$, such that
\item[(b)] $K\ten_{R\llb t \rrb} R(\!(t)\!)\simeq 0$, resp. 
\item[(c)] $\oR\HHom_{R\llb t \rrb} (R(\!(t)\!),L)\simeq 0$, 
\end{itemize}
 we automatically have quasi-isomorphisms
\begin{align*}
  \cone(\theta) \to \cone(\theta)\ten_{R\llb t \rrb} R(\!(t)\!) \la  L\ten_{R\llb t \rrb} R(\!(t)\!),  \text{ resp. }\\
  \cone(\theta) \la \oR\HHom_{R\llb t \rrb} (R(\!(t)\!),\cone(\theta)) \to \oR\HHom_{R\llb t \rrb} (R(\!(t)\!),K).
 \end{align*}
Since $R[\Gamma] \cong R \llb t \rrb$, equivalence of the first three statements in the lemma will follow if we can establish these properties when $K$ and $L$ are the second and third complexes above.

Now, $\oR\HHom_{R,X_{\infty}}(f^{-1}M, f^{-1}N) \simeq \oR\HHom_{R,X}(M, f_*f^{-1}N)$ and since  Shapiro's Lemma gives $f_*f^{-1}N \cong \sHom_{\pro(\Ab)}(\tilde{\Lambda}, N)$, 
we have 
%
\begin{align*}
&\cone(R\llb t \rrb \to t^{-1}R\llb t \rrb )\ten_{R\llb t \rrb}\oR\HHom_{R,X_{\infty}}(f^{-1}M, f^{-1}N)\\
&\simeq \oR\HHom_{R,X}(M, \cone(f_*f^{-1}N \xra{t} f_*f^{-1}N) )\\
&\simeq \oR\HHom_{R,X}(M, \sHom_{\pro(\Ab)}(\cone(t\tilde{\Lambda}\to \tilde{\Lambda}),N))[1]\\
&\simeq \oR\HHom_{R,X}(M,N)[1].
\end{align*}

Similarly, since the cocone of $t$ acting on $ f_*f^{-1}M$ is quasi-isomorphic to $M$, 
we have
\[
 \cone(tR\llb t \rrb \to R\llb t \rrb )\ten_{R\llb t \rrb} \oR\HHom_{R,X} ( f_*f^{-1}M, N) \simeq \oR\HHom_{R,X}(M,N)
\]

Thus $\tau$ induces an quasi-isomorphism on the respective cones with respect to $t$, so $t$ acts quasi-isomorphically on $\cone(\tau)$, giving property (a). 

 Now, 
\[
\oR\HHom_{R\llb t \rrb} (R(\!(t)\!), \oR\HHom_{R,X} ( f_*f^{-1}M, N)) \simeq   \oR\HHom_{R,X} ( R(\!(t)\!)\ten_{R\llb t \rrb}  f_*f^{-1}M, N)),
\]
but the elements of  $ f_*f^{-1}M$  are all $t$-power torsion, so this is acyclic, giving (c).

Similarly,  
\[
R(\!(t)\!)\ten_{R\llb t \rrb}\oR\HHom_{R,X}(M, f_*f^{-1}N) \simeq  \oR\HHom_{R,X}(M, R(\!(t)\!)\ten_{R\llb t \rrb}f_*f^{-1}N) 
\]
when $M$ is constructible, and this is again acyclic, giving (b).

It remains to prove the equivalence with  $\oR\Lim_n \oR\HHom_{R,X}(M, p_{n*}\Xi_n(p_n^{-1}N))$. The comparison of Lemma \ref{cyctraceindeptlemma} gives a map to the latter complex, which we can express in co-ordinates 
as
\begin{align*}
 &\oR\Lim_n \oR\HHom_{R,X}(M,   \Hom_{\Lambda_n,\cts}(\tilde{\Lambda},  \Hom_{\cts}(t_n^{-1}\Lambda_n ,N))\\
 &\simeq  \oR\Lim_n \oR\HHom_{R,X}(M,\Hom_{\cts}(t_n^{-1}\tilde{\Lambda},N)).
\end{align*}
Since $t_n \equiv t^{\ell^n} \mod \ell$ and $R$ is a $\Z/\ell^m$-algebra, the natural inclusion $\LLim_n t_n^{-1}R\llb t \rrb \to R(\!(t)\!)$ is an isomorphism, so this is quasi-isomorphic to $\oR\HHom_{R\llb t \rrb}(R (\!(t)\!),  \oR\HHom_{R,X}(M, f_*f^{-1}N)  )$, as required.
%
%
%
%
\end{proof}

\begin{definition}
 Given an $\ell$-adically complete $\Zl$-algebra $R$, define the ring $R\{\Gamma -1\}$ to be the $\ell$-completed localisation of $R[\Gamma]$ at the elements $\gamma -1$ for $\gamma \in \Gamma\setminus \{1\}$. 
\end{definition}
Explicitly, for the isomorphism $R[\Gamma] \cong R\llb t \rrb$, we have  $R\{\Gamma -1\} \cong  \Lim_n\left( (R/\ell^n)(\!(t)\!)\right)$, a ring usually denoted by   $R\{\!\{t\}\!\}$ (see e.g.\ \cite{zhukovHigherDimLocalFields}). It consists of elements $\sum_{i=-\infty}^{\infty} a_i t^i$ with $a_i \in R$ such that $a_i \to 0$ as $i \to -\infty$. In particular,  $ R\{\!\{t\}\!\}$ is larger than the ring $ R(\!(t)\!)$ of Laurent series, and  the elements $t_n= u^{\ell^n}-1$ are not invertible in the latter; they are invertible in $ \Zl\{\!\{t\}\!\}$ and hence $R\{\!\{t\}\!\} $ because their images in the residue field $\bFl(\!(t)\!)$ are non-zero.

\begin{lemma}\label{constrcycdualitylemma}
Assume that we have an $\ell$-torsion-free $\ell$-adically complete commutative Noetherian ring $R$, together with  constructible $\hat{R}_X$-complexes $M,N$ in the sense of \cite[\S 6.5]{BhattScholzeProEtale}. 

 The following are then naturally quasi-isomorphic:
\begin{enumerate}
\item  the complex given by the cone of the map 
 \[
  \tau \co \oR\HHom_{\uline{R}_{X_{\infty}}}(f^{-1}M, f^{-1}N)[-1] \to \Gamma\ten_{\Zl}\oR\HHom_{\uline{R}_X} ( f_*f^{-1}M, N) 
  \]
induced by the trace maps $f_*f^{-1}(N/\ell^n) \to \Gamma\ten_{\Zl}(N/\ell^n)[1]$   of Lemma \ref{traceIwlemma},

\item the complex $\oR\HHom_{R[\Gamma]}(R\{\Gamma -1\},  \oR\HHom_{\uline{R}_{X_{\infty}}}(f^{-1}M, f^{-1}N)  )$, 

\item when $\oR\HHom_{\uline{R}_X} ( f_*f^{-1}M, N)$ is perfect as an $R[\Gamma]$-module, the complex $\Gamma\ten_{\Zl}\oR\HHom_{R,X} ( f_*f^{-1}M, N)\ten_{ R[\Gamma]}^{\oL}R\{\Gamma -1\}$.
\end{enumerate}
\end{lemma}
\begin{proof}
Since $R$ is $\ell$-adically complete, writing $R_n:= R/\ell^n$ we have 

\begin{align*}
\oR\HHom_{\uline{R}_X}(M, N)& \simeq \oR\Lim_n \oR\HHom_{\uline{R_n}_X}(M\ten_{\uline{R}_X}^{\oL}\uline{R_n}_X  , N\ten_{\uline{R}_X}^{\oL}\uline{R_n}_X), \\
&\simeq\oR\Lim_n \oR\HHom_{\uline{R}_X}(M, N)\ten_R^{\oL}R_n,
\end{align*}
with similar expressions for the other complexes. Since the modules are constructible, we can then rephrase in terms of \'etale 
$R_n$-module sheaves
and substitute in Lemma \ref{cycdualitylemma}. 

The first equivalence then follows because
\begin{align*}
& \oR\HHom_{R[\Gamma]}(R\{\Gamma -1\},  \oR\HHom_{\uline{R}_{X_{\infty}}}(f^{-1}M, f^{-1}N)  )\\ & \simeq \oR\Lim_n  \oR\HHom_{R_n[\Gamma]}(R\{\Gamma -1\}/\ell^n,  \oR\HHom_{\uline{R}_{X_{\infty}}}(f^{-1}M, f^{-1}N)  \ten_R^{\oL}R_n)\\
&\simeq  \oR\Lim_n  \oR\HHom_{R_n[\Gamma]}(R_n(\Gamma -1),  \oR\HHom_{\uline{R_n}_{X_{\infty}}}(f^{-1}(M\ten_{\uline{R}_X}^{\oL}\uline{R_n}_X), f^{-1}(N\ten_{\uline{R}_X}^{\oL}\uline{R_n}_X))  )
 \end{align*}
with the equivalences $R\{\!\{t\}\!\}\ten^{\oL}_{R}R_n  \simeq R\{\!\{t\}\!\}/\ell^n \cong R_n(\!(t)\!)$ following from $\ell$-torsion-freeness of $R$. 


For the second equivalence,     we just need a quasi-isomorphism  
\[
\oR\HHom_{\uline{R}_X} ( f_*f^{-1}M, N)\ten_{ R\llb t \rrb}^{\oL}R \{\!\{t\}\!\} \simeq \oR\Lim_n (\oR\HHom_{\uline{R}_X} ( f_*f^{-1}M, N)\ten_{ R\llb t \rrb}^{\oL}R_n(\!(t)\!)).
\]
Because we are acting on a perfect $R\llb t \rrb$-module by hypothesis, this reduces to the equivalences
\[
R \{\!\{t\}\!\} \cong \Lim_n R_n(\!(t)\!) \simeq \oR\Lim_n R_n(\!(t)\!).
\]
\end{proof}

\begin{remark}
Lemma \ref{constrcycdualitylemma} is expressed in terms of global sections for simplicity of exposition. For sheaf-level statements, we would have to regard the objects $(f_!f^{-1}\sHom_R(M,N))\ten_{\Zl\llb t \rrb} \Zl\{\!\{t\}\!\}$ and $\sHom_{\Zl\llb t \rrb}( \Zl\{\!\{t\}\!\},f_*f^{-1}\sHom_R(M,N)) $  as  $2$-Tate objects in finite group sheaves in the sense of \cite[\S 7]{BraunlingGroechenigWolfsonTate}, a perspective which can also be used to describe their rationalisations. This is because the Laurent series from Lemma \ref{cycdualitylemma} are already $1$-Tate objects, so the passage from finite to $\ell$-adic produces $2$-Tate objects.
\end{remark}

Writing $\bD_R(M):= \sHom_{\uline{R}_X}(M,  \uline{R}_X\ten_{\Zl}\bD)$, we have:

\begin{proposition}\label{dualpropinfty}
Assume the trace maps $\tr \co \H^d (X_{\et}, \bD/\ell^n)\to \Z/\ell^n$ as in  Proposition \ref{traceprop} induce a perfect pairing on constructible $\Z/\ell^n$-modules. For any constructible $\uline{R}_X$-complex $M$,  the cone of the natural map
\[
 \oR\Gamma(X_{\infty},f^{-1}\bD_R(M)) \to \oR\HHom_R(\oR\Gamma(X_{\infty},f^{-1}M),R)[1-d]
\]
induced by the trace $\tr_{X_{\infty},X}$ of Corollary \ref{tracepropinfty} is then quasi-isomorphic to both:
\begin{enumerate}
\item the complex $\oR\HHom_{R[\Gamma]}(R\{\Gamma -1\}, \oR\Gamma(X_{\infty},f^{-1}\bD_R(M)) )$, and
\item  the complex $\Gamma\ten_{\Zl}\oR\HHom_{R,X} ( f_*f^{-1}M, \bD)\ten_{ R[\Gamma]}^{\oL}R\{\Gamma -1\}$.
\end{enumerate}
\end{proposition}
\begin{proof}
 Since $f^{-1}\bD_R(M)\cong \sHom_{\uline{R}_{X_{\infty}}}(f^{-1}M, f^{-1}\bD)$,   this follows immediately from Lemma \ref{constrcycdualitylemma} and the following quasi-isomorphisms, for $R_n=R/\ell^n$:
\begin{align*}
  \oR\HHom_R(\oR\Gamma(X_{\infty},f^{-1}M),R) &\simeq \oR\Lim_n \oR\HHom_R(\oR\Gamma(X_{\infty},f^{-1}M),R_n)\\
  &\simeq \oR\Lim_n \oR\HHom_{R_n}(\LLim_m \oR\Gamma(X_m,p_m^{-1}(M\ten^{\oL}_RR_n)),R_n)\\
  &\simeq \oR\Lim_n \oR\Lim_m \oR\Gamma(X, \bD_{R_n}(p_{m*}p_m^{-1}(M\ten^{\oL}_RR_n)))[-d]\\
  &\simeq \oR\Lim_n \oR\HHom_{\uline{R_n}_X}(  \Lim_m  (p_{m*}p_m^{-1}(M\ten^{\oL}_RR_n)), \bD\ten_{\Zl}\uline{R_n}_X )[-d]\\
  &\simeq \oR\HHom_{\uline{R}_X}( f_*f^{-1}M, \bD\ten_{\Zl}\uline{R}_X)[-d]. \qedhere
 \end{align*}

\end{proof}

\begin{remark}[Iwasawa cohomology]\label{Iwasawarmk}
The equivalence 
\[
 \oR\HHom_{\uline{R}_X} ( f_*f^{-1}M, N) \simeq \oR\Lim_n \oR\Lim_m \oR\HHom_{\uline{R_n}_X}(  p_{m*}p_m^{-1}(M\ten^{\oL}_RR_n), N\ten^{\oL}_RR_n)
\]  
from   the proof of Proposition \ref{dualpropinfty} is true more generally, not relying on the trace from Proposition \ref{traceprop}. Since the morphisms $p_m$ are finite \'etale covers, we can rewrite this as 
 $ \oR\Lim_m\oR\Lim_n \oR\HHom_{\uline{R_n}_X}(  M\ten^{\oL}_RR_n, p_{m*}p_m^!(N\ten^{\oL}_RR_n)) $. 
 
In particular, $\oR\HHom_{\uline{R}_X} ( f_*f^{-1}\uline{R}_X, N) \simeq \oR\Lim_m \oR\Gamma(X_{m,\pro\et}, p_m^!N)$, which is given by  the derived limit of the trace maps
\[
 \oR\Lim
 ( \ldots \xra{\tr_{X_{m+1}/X_m}}   \oR\Gamma(X_{m,\pro\et}, p_m^{-1}N) \xra{\tr_{X_m/X_{m-1}}} \ldots \xra{\tr_{X_1}/X} \oR\Gamma(X_{\pro\et}, N)
 ).
\]

By analogy with \cite{nekovarSelmerComplexes}, we could thus set 
\[
\oR\Gamma_{\mathrm{Iw}}(X_{\infty}/X, N):=\oR\HHom_{\uline{R}_X} ( f_*f^{-1}\uline{R}_X, N).
\]
Lemma \ref{constrcycdualitylemma} then implies that  the morphism $\oR\Gamma(X_{\infty,\pro\et},f^{-1}N) \to\oR\Gamma_{\mathrm{Iw}}(X_{\infty}/X, N)\ten_{\Zl}\Gamma[1]  $ induced by the trace of Lemma \ref{traceIwlemma} is a quasi-isomorphism  if and only if either $\Ext^*_{R[\Gamma]}(R\{\Gamma -1\}, \oR\Gamma(X_{\infty,\pro\et},f^{-1}N))=0 $ or $\H^*(\oR\Gamma_{\mathrm{Iw}}(X_{\infty}/X, N)\ten_{R[\Gamma]}^{\oL}R\{\Gamma -1\})=0 $, and similarly after base change. 

Since $\Ql\{\!\{t\}\!\}:=\Zl\{\!\{t\}\!\}\ten_{\Zl}\Ql$ is a field, this in particular gives an isomorphism between the respective cohomologies with $\Ql$ coefficients whenever  $\H^i_{\mathrm{Iw}} 
 (X_{\infty}/X, N)\ten_{\Zl}\Ql$ is torsion for some non-zero element $f$ of $\Zl \llb t \rrb$, because then $\H^i_{\mathrm{Iw}} 
 (X_{\infty}/X, N)\ten_{\Zl\llb t \rrb}\Ql\{\!\{t\}\!\} \cong 0$. In particular, this is guaranteed when   $\H^i_{\mathrm{Iw}} 
 (X_{\infty}/X, N)\ten_{\Zl}\Ql$ is finite-dimensional over $\Ql$, taking $f$ to be the characteristic polynomial of the $t$-action. A similar statement holds for the dual cohomology theory, with an isomorphism if $\Hom_{\Zl}(\Ql,\H^i(X_{\infty,\et},N\ten_{\Zl}(\Ql/\Zl)))$ is cotorsion for some such $f$; finite-dimensionality again suffices.
 
In particular, if $X$ and $X_{\infty}$ are respectively the spectra  of a local $p$-adic field $k$ (for $p \ne \ell$) containing $\mu_{\ell}$ and its cyclotomic extension $k_{\infty}=k(\mu_{\ell^{\infty}})$,  we have finite-dimensionality for all $\Ql$ local systems $V$. This follows because  a similar argument to \cite[II.5.4]{galoisienne} gives a normal pro-finite subgroup $H$ of $\Gal(\bar{k}/k_{\infty})$ of order prime to $\ell$, with quotient $\Zl$, so $\H^*_{\cts}(\Gal(\bar{k}/k_{\infty}),V) \cong \H^*_{\cts}(\Zl, V^H)$.
 
 
 
 
 Similarly to \cite{nekovarSelmerComplexes}, on suitable coefficients a non-degenerate trace induces a perfect pairing between 
 $\oR\Gamma(X_{\infty,\pro\et},-)$ and $\oR\Gamma_{\mathrm{Iw}}(X_{\infty}/X, -)[d] $. We can then interpret pairings induced by the trace of Corollary \ref{tracepropinfty} as combining this pairing with the connecting homomorphism of the exact triangle
 \[
  \oR\Gamma_{\mathrm{Iw}}(X_{\infty}/X, -)\ten_{\Zl}\Gamma \to \oR\Gamma_{\mathrm{Iw}}(X_{\infty}/X, -)\ten_{\Zl[\Gamma]}\Zl\{\Gamma -1\} \ten_{\Zl}\Gamma \to \oR\Gamma(X_{\infty,\pro\et},-).
 \]
 For $\Z/\ell^n$-modules $N$, the image of this connecting homomorphism on cohomology is the kernel of the map $\H^*_{\mathrm{Iw}}(X_{\infty}/X, N) \to \H^*_{\mathrm{Iw}}(X_{\infty}/X, -)\ten_{(\Z/\ell^n)\llb t \rrb} (\Z/\ell^n)(\!(t)\!)$, so consists of $t$-torsion elements. Our pairing of degree $d-1$ on cohomology of $X_{\infty}$ is thus induced by the generalised Cassels--Tate pairing of degree $d+1$ on torsion Iwasawa cohomology from \cite[\S 10]{nekovarSelmerComplexes} where applicable.
\end{remark}

\begin{examples}\label{traceexIwasawa1}
 As in Examples \ref{traceex}, if $X$ is  an $\ell$-coprime proper scheme, then trace maps  of the form required in Corollary \ref{tracepropinfty} arise from the six functors formalism (and specifically Poincar\'e duality)  whenever we have a form of duality on the base, so those examples all have induced traces one degree lower on adjoining $\mu_{\ell^{\infty}}$.
 
 In particular, if $X$ is a smooth, proper scheme of dimension $m$ scheme over a non-Archimedean local field $k$ containing $\mu_{\ell}$, 
 with $X_{\infty}:= X\ten_kk_{\infty}$, for $k_{\infty}:= k(\mu_{\ell^{\infty}})$, then Corollary \ref{tracepropinfty} gives a trace 
 \[
  \H^{2m+1}(X_{\infty,\pro\et},\uline{\Gamma^*(m+1)})\to \Zl,
\]
where  $\Gamma:=  \Gal(k_{\infty}/k) \cong \Zl$ and we write $\Gamma^*:= \Hom_{\Zl}(\Gamma, \Zl)$. Note that the sheaf $\uline{\Gamma^*(m+1)}_{X_{\infty}}$ is non-canonically isomorphic to $\uline{\Zl}_{X_{\infty}}$. 
 
%
\end{examples}

\subsection{Symplectic structures}  

If the shifted pre-symplectic structure on $F$ is in fact symplectic and the trace from Proposition \ref{traceprop} or Corollary \ref{tracepropinfty} leads to a duality theory, then one might expect that the pre-symplectic structure from Corollary \ref{tracecor} is in fact symplectic, as happens for analogous constructions for topological spaces as in \cite[\S 2.1]{PTVV}. 
 In order to establish similar results,
 we will be cutting down to an open subfunctor on which duality does behave well.

Fix a topologically Noetherian scheme $X$ and
 assume that we have a trace map
 \[
 \tr \co \oR\Gamma(X_{\pro\et},\uline{M}_X\hten_{\Zl}\bD ) \to M[-d] 
\]
 as in Proposition \ref{traceprop} or Corollary \ref{tracepropinfty} (where $X$ here corresponds to $X_{\infty}$ there);  $\bD$ is constructible, and we denote its $\Zl$-linear dual by $\bD^*$.

\begin{definition}\label{weakdualdef}
 Given a quasi-dagger dg algebra $A$ and a presheaf $N$ of  $\uline{A}_X$-modules in chain complexes  on $X_{\pro\et}$, we say that $N$ satisfies weak duality with respect to the trace $\tr$  if for all morphisms $A \to C$ of quasi-dagger dg algebras,  
 the map
 \[
  \oR\HHom_{\uline{A}_X}( N, \uline{C}_X)\to \oR\HHom_A(\oR\Gamma(X,N\hten_{\Zl}\bD)[d],C)
 \]
induced by the pairing
\[
 \oR\Gamma(X,N\hten_{\Zl}\bD)\ten_A^{\oL} \oR\HHom_{\uline{A}_X}( N, \uline{C}_X) \to \oR\Gamma(X, \uline{C}_X\hten_{\Zl}\bD)\to C[-d]
\]
is a quasi-isomorphism, and $\oR\Gamma(X,N\hten_{\Zl}\bD)$ is a perfect $A$-module.
\end{definition}
Note that by taking $C=A \oplus M$, we can deduce a similar quasi-isomorphism for all $M\in dg_+\Coh_A$ in place of $C$.

\begin{lemma}\label{wdopenlemma}
 Given a surjection $g \co A \to B$ of quasi-dagger dg algebras with square-zero kernel $I$, a presheaf $N$ of  $\uline{A}_X$-modules in chain complexes satisfies weak duality if and only if the presheaf $\bar{N}:=N\ten^{\oL}_{\uline{A}_X} \uline{B}_X$  of  $\uline{B}_X$-modules does so.
  \end{lemma}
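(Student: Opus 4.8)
The plan is to deduce both implications from the square-zero structure that $\underline{(-)}_X$ inherits from $A\to B$, together with a base-change statement for the trace pairing of Proposition~\ref{traceprop}.

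First I would record the relevant structure. The kernel $I$ is levelwise finitely generated (each $I_r\subset A_r$ is finite over the Noetherian ring $A_0$), so $I\in dg_+\Coh_A$, and since $I^2=0$ also $I\in dg_+\Coh_B$. By Proposition~\ref{exactmodsaffdprop} and Corollary~\ref{preservehtpycor} (applied in each chain degree), $\underline{(-)}_X$ carries $0\to I\to A\to B\to 0$ to a short exact sequence $0\to\underline{I}_X\to\underline{A}_X\to\underline{B}_X\to 0$; the same inputs show $\underline{A}_X$ (resp. $\underline{B}_X$) is a sheaf of flat $A$-algebras (resp. $B$-algebras), whence $\underline{B}_X\simeq\underline{A}_X\ten_A^{\oL}B$ and $\underline{I}_X\simeq\underline{B}_X\ten_B I$. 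Consequently $\bar N\simeq N\ten_A^{\oL}B$, and tensoring $N$ over $\underline{A}_X$ with $\underline{I}_X\to\underline{A}_X\to\underline{B}_X$ gives an exact triangle
\[
\bar N\ten_B^{\oL}I\;\lra\;N\;\lra\;\bar N\;\xra{+1}
\]
of presheaves of $\underline{A}_X$-modules whose outer terms are induced up from $\underline{B}_X$-modules.

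The main point — and what I expect to be the chief obstacle — is a base-change isomorphism for the trace of Proposition~\ref{traceprop}: for any presheaf $N$ of $\underline{A}_X$-modules,
\[
\oR\Gamma\bigl(X_{\pro\et},\bar N\hten_{\Zl}\bD\bigr)\;\simeq\;\oR\Gamma\bigl(X_{\pro\et},N\hten_{\Zl}\bD\bigr)\ten_A^{\oL}B,
\]
natural in $N$ and, when $N=\underline{C}_X$, compatible with the trace maps to $C[-d]$. This relies on the constructibility of $\bD$ (so $-\hten_{\Zl}\bD$ is computed through the perfect dual $\bD^*$) and on the cohomological bound $\H^{>d}(X_{\et},\bD/\ell^n)=0$ from Proposition~\ref{traceprop}, which makes $\oR\Gamma(X_{\pro\et},-)$ of bounded amplitude on the complexes in play and hence compatible with the derived base change along the (topologically) finite-type map $A\to B$; the delicate part is matching the pairing that Proposition~\ref{traceprop} builds by passage to colimits with the one constructed directly over $B$. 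Writing $P_N:=\oR\Gamma(X_{\pro\et},N\hten_{\Zl}\bD)$, this yields $P_{\bar N}\simeq P_N\ten_A^{\oL}B$; in particular $P_{\bar N}$ is perfect over $B$ once $P_N$ is perfect over $A$, and conversely — since $P_N$ is bounded by the amplitude bound and $I$ is nilpotent — a derived Nakayama argument (perfectness modulo a nilpotent ideal) recovers perfectness of $P_N$ over $A$ from that of $P_{\bar N}$ over $B$.

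With these in hand, the forward implication is straightforward: for any quasi-dagger dg $B$-algebra $C$, extension-of-scalars adjunction gives $\oR\HHom_{\underline{A}_X}(N,\underline{C}_X)\simeq\oR\HHom_{\underline{B}_X}(\bar N,\underline{C}_X)$ and $\oR\HHom_A(P_N[d],C)\simeq\oR\HHom_B(P_{\bar N}[d],C)$, and one checks that these identifications send the weak-duality map for $N$ over $A$ (applied to $A\to B\to C$) to the weak-duality map for $\bar N$ over $B$, which is therefore a quasi-isomorphism, while $P_{\bar N}=P_N\ten_A^{\oL}B$ is perfect over $B$. For the converse, given a quasi-dagger dg $A$-algebra $C$, apply $\oR\HHom_{\underline{A}_X}(-,\underline{C}_X)$ and $\oR\HHom_A(\oR\Gamma(X_{\pro\et},(-)\hten_{\Zl}\bD)[d],C)$ to the triangle $\bar N\ten_B^{\oL}I\to N\to\bar N$: using $\oR\HHom_{\underline{A}_X}(\underline{B}_X,\underline{C}_X)\simeq\underline{C^{[I]}}_X$ for the derived $I$-torsion $C^{[I]}:=\oR\HHom_A(B,C)$, and the analogous identity with $I$-coefficients, both resulting triangles have outer terms of the form $\oR\HHom_{\underline{B}_X}(\bar N,\underline{M}_X)$ and $\oR\HHom_B(P_{\bar N}[d],M)$ for the $B$-modules $M\in\{C^{[I]},\,\oR\HHom_B(I,C^{[I]})\}$; weak duality for $\bar N$ over $B$ identifies these (reducing to coherent $M$ via the observation following Definition~\ref{weakdualdef} and the colimit argument in the proof of Proposition~\ref{traceprop}, since $\oR\Gamma(X_{\pro\et},-)$ and $\oR\HHom$ commute with filtered colimits), so two-out-of-three forces the weak-duality map for $N$ over $A$ to be a quasi-isomorphism; perfectness of $P_N$ over $A$ was noted above.
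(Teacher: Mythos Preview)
Your core strategy---the exact triangle $\bar N\ten^{\oL}_{\underline B_X}\underline I_X \to N \to \bar N$---is exactly what the paper uses; its entire proof is the single sentence ``the result then follows immediately by substitution into Definition \ref{weakdualdef}''. Your forward implication via adjunction is clean and correct, and you are right that the base-change $P_{\bar N}\simeq P_N\ten^{\oL}_A B$ is needed (at minimum for the perfectness clause); the paper leaves this implicit.

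Your converse, however, is overcomplicated and has genuine gaps. The identification $\oR\HHom_{\underline{A}_X}(\underline{B}_X,\underline{C}_X)\simeq\underline{C^{[I]}}_X$ for $C^{[I]}:=\oR\HHom_A(B,C)$ amounts to asking that $\oR\HHom_A(B,-)$ commute with the flat base change $-\ten_A\underline{A}(S)$. That holds when $B$ is a perfect $A$-module, but a square-zero quotient need not be: nothing in the hypotheses forces $I$ to be perfect over $A$. Even granting that identification, the $B$-modules $C^{[I]}$ and $\oR\HHom_B(I,C^{[I]})$ are neither $B$-algebras nor in $dg_+\Coh_B$ in general, so the observation following Definition \ref{weakdualdef} does not apply directly; your ``reducing to coherent $M$ via \ldots\ the colimit argument'' is not justified, since the functors $\oR\HHom_{\underline B_X}(\bar N,-)$ and $\oR\HHom_B(P_{\bar N}[d],-)$ into such targets do not commute with filtered colimits absent compactness hypotheses you have not established.

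The paper's intended ``substitution'' is simpler: both sides of the weak-duality map are exact contravariant functors in the presheaf variable, so the triangle produces a morphism of triangles over $A$ at each test $C$, reducing the question for $N$ to the same question for the $\underline B_X$-modules $\bar N$ and $\bar N\ten^{\oL}_B I$. The passage between the over-$A$ and over-$B$ conditions for those terms is then handled by adjunction together with the module form of weak duality (the note after Definition \ref{weakdualdef}), not by computing $\oR\HHom_A(B,C)$.
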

\begin{proof}
 We have  an exact triangle
 \[
 \bar{N} \ten^{\oL}_{\uline{B}_X}\uline{I}_X \to N \to \bar{N} \to ,
 \]
and the result then follows immediately by substitution into Definition \ref{weakdualdef}.
 \end{proof}
 
 \begin{lemma}\label{wdnilcompletelemma}
Given a quasi-dagger dg algebra $A$, a  module $N \in dg_+\Mod_{\uline{A}_X}$  
satisfies weak duality if and only if the presheaf $\bar{N}:= N\ten^{\oL}_{\uline{A}_X} \uline{\H_0A}_X$ of $\uline{\H_0A}_X$-modules does so.
 \end{lemma}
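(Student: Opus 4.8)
The plan is to unwind the two weak‑duality conditions into a common shape — after which the only remaining difference is whether one tests against quasi‑dagger dg algebras over $A$ or only over $\H_0A$ — and to close that gap by a dévissage along Postnikov towers. \emph{First,} record the basic identifications. Since $\uline{\cdot}(S)=\Hom_{\cts}(S,-)$ is exact on finite modules (Proposition~\ref{exactmodsaffdprop}, extended to quasi‑dagger algebras via Corollary~\ref{preservehtpycor}), the functor $\uline{\cdot}_X$ sends the quotient map $A\to\H_0A$ to a base change, so $\uline{\H_0A}_X\simeq\uline A_X\ten_A\H_0A$, $\bar N\simeq N\ten^{\oL}_{\uline A_X}\uline{\H_0A}_X$, and, writing $P:=\oR\Gamma(X,N\hten_{\Zl}\bD)$, also $\oR\Gamma(X,\bar N\hten_{\Zl}\bD)\simeq P\ten^{\oL}_A\H_0A$ (derived global sections commuting with this constant base change, by the finite cohomological dimension and boundedness below already exploited for Proposition~\ref{traceprop}). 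Feeding this into the adjunctions $\oR\HHom_{\uline{\H_0A}_X}(\bar N,-)\simeq\oR\HHom_{\uline A_X}(N,-)$ and $\oR\HHom_{\H_0A}(P\ten^{\oL}_A\H_0A,-)\simeq\oR\HHom_A(P,-)$ (the latter on $\H_0A$‑modules), one sees that for every quasi‑dagger dg algebra $C$ equipped with a map $\H_0A\to C$, the weak‑duality map of $\bar N$ at $C$ is exactly the weak‑duality map of $N$ at $C$.

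\emph{Perfectness.} The complex $P$ is bounded below with homology finite over $\H_0A$ (as $N$ is connective, $\bD$ constructible and $X$ of bounded cohomology), hence pseudo‑coherent over the Noetherian ring $A$; so by derived Nakayama $P$ is perfect over $A$ if and only if $P\ten^{\oL}_A\H_0A$ is perfect over $\H_0A$. By the previous paragraph this matches the perfectness clauses of the two weak‑duality conditions.

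\emph{The quasi‑isomorphism clause.} If $N$ satisfies weak duality then, since every quasi‑dagger dg algebra over $\H_0A$ is one over $A$, Step~1 shows $\bar N$ does too. Conversely, assume $\bar N$ satisfies weak duality; I must show the weak‑duality map $\phi_C$ of Definition~\ref{weakdualdef} is a quasi‑isomorphism for every $A\to C$. Both $C\mapsto\oR\HHom_{\uline A_X}(N,\uline C_X)$ and $C\mapsto\oR\HHom_A(P[d],C)$ send $C\simeq\ho\Lim_m C/\tau_{>m}C$ to a homotopy limit — $\oR\HHom$ preserves homotopy limits, and $\uline{\cdot}_X$ does so here because the Postnikov tower is levelwise eventually constant (so that $\ho\Lim=\Lim$) and $\Hom_{\cts}(S,-)$ preserves limits — so we may assume $C$ bounded, whence its Postnikov tower $C\to\cdots\to C/\tau_{>0}C=\H_0C$ is finite. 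Its base $\H_0C$ carries a map $\H_0A\to\H_0C$, so $\phi_{\H_0C}$ is a quasi‑isomorphism by hypothesis and Step~1; likewise, since $\H_kC$ is finite over $\H_0C$, the algebra $\H_0C\oplus\H_kC[k+1]$ is a quasi‑dagger dg algebra over $\H_0A$, so $\phi_{\H_0C\oplus\H_kC[k+1]}$ is a quasi‑isomorphism, and the splitting $\phi_{B\oplus M}=\phi_B\oplus\phi_M$ (valid because $\uline{\cdot}_X$, $\oR\HHom_{\uline A_X}(N,-)$ and $\oR\HHom_A(P[d],-)$ are additive) forces the ``linear'' map $\phi_{\H_kC[k+1]}$ to be a quasi‑isomorphism. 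Now climb the tower: if $\phi_{C/\tau_{>(k-1)}C}$ is a quasi‑isomorphism, apply the two functors to the homotopy‑cartesian square presenting $C/\tau_{>k}C$ as the square‑zero extension of $C/\tau_{>(k-1)}C$ by $\H_kC[k]$, whose remaining vertices are $C/\tau_{>(k-1)}C$ (twice) and $C/\tau_{>(k-1)}C\oplus\H_kC[k+1]$. Both functors preserve homotopy pullbacks, and $\phi$ is a quasi‑isomorphism at those three vertices (the first two by induction, the third by the splitting together with the previous sentence), hence at $C/\tau_{>k}C$. This completes the induction and the proof.

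\emph{Main obstacle.} The delicate part is the last step: $A\to\H_0A$ is \emph{not} a composite of strict square‑zero extensions, so the dévissage must be run on the test object $C$ rather than on $A$, and one must arrange that the derived square‑zero extensions appearing in the Postnikov tower of $C$ are the ones carrying $\H_0A$‑algebras — which works only because $\H_kC$ is finite over $\H_0C$ (not over $\H_0A$), so that $\H_0C\oplus\H_kC[k+1]$ is a legitimate test object for $\bar N$ even when $C$ is not finite over $A$. The homotopy‑limit reduction to bounded $C$ (where the coherence built into $dg_+\Coh$ keeps the mapping complexes under control) and the eventual constancy of the towers are the other points needing care; the remainder is bookkeeping with the five lemma.
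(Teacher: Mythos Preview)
Your argument is correct and shares the paper's overall strategy (Postnikov d\'evissage plus a homotopy-limit reduction over the tower of $C$), but you run the induction in a different place, and your stated reason for doing so is mistaken. The claim that $A\to\H_0A$ is not a composite of strict square-zero extensions is false: with the standard good-truncation model, the kernel of $A/\tau_{>k+1}A\to A/\tau_{>k}A$ is concentrated in chain degrees $k+1,k+2$, and its square lands in degree $\ge 2k+2$, which vanishes in $A/\tau_{>k+1}A$ (the borderline case $k=0$ reduces to $\z_1\cdot\z_1\subset\z_2$, immediate from the Leibniz rule). Even without this computation, quasi-isomorphism invariance of weak duality (via Proposition~\ref{exactmodsaffdprop}) would let one model each homotopy square-zero step by a strict one. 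The paper exploits this directly: it applies Lemma~\ref{wdopenlemma} up the Postnikov tower of $A$, deducing that each $N\ten^{\oL}_{\uline A_X}\uline{A/\tau_{>k}A}_X$ satisfies weak duality, and then observes that any $A\to C/\tau_{>k}C$ factors through $A/\tau_{>k}A$ before passing to the limit in $k$.

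Your route---inducting on $C$ via the homotopy-pullback presentation of its Postnikov steps---works too, and has the merit of making the perfectness clause explicit, but it unpacks by hand what Lemma~\ref{wdopenlemma} already packages. One minor imprecision in your perfectness paragraph: finiteness of $\H_*(P)$ over $\H_0A$ is not known a priori from the hypotheses on $N$, so ``hence pseudo-coherent'' is unjustified as stated; the honest argument for bounded-below $P$ is to build a finite cell approximation using that $\H_{\mathrm{bottom}}(P)=\H_{\mathrm{bottom}}(P\ten^{\oL}_A\H_0A)$ is finitely generated, and to terminate by derived Nakayama once $P\ten^{\oL}_A\H_0A$ becomes acyclic---which is presumably what your phrase ``by derived Nakayama'' abbreviates.
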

 \begin{proof}
If $N$ satisfies weak duality, then by base change $\bar{N}$ does so. Conversely, if  $\bar{N}$ satisfies weak duality then we may apply  Lemma \ref{wdopenlemma}  to the Postnikov tower $\{A/\tau_{>k}A\}$ of $A$ (a sequence of homotopy square-zero extensions),  
since Proposition \ref{exactmodsaffdprop} gives quasi-isomorphism-invariance, and thus deduce that the $ \uline{A/\tau_{>k}A}_X$-modules  $N\ten^{\oL}_{\uline{A}_X} \uline{A/\tau_{>k}A}_X$ satisfy weak duality  for all $k$. The result now follows by writing each  quasi-dagger dg $C$-algebra as $\ho\Lim_k (C/\tau_{>k}C)$ and taking homotopy limits in  Definition \ref{weakdualdef}.
 \end{proof}

 \begin{lemma}\label{constrlemma}
Let $X$ be a quasi-compact and quasi-separated scheme, and assume that we have an $\ell$-adically complete commutative Noetherian ring $R$, together with a constructible $\hat{R}_X$-complex $L$ in the sense of \cite[\S 6.5]{BhattScholzeProEtale}. Then for any    quasi-dagger algebra $C$ equipped with a $\Zl$-algebra homomorphism $R \to C$, 
  we have $ \oR\Gamma(X_{\pro\et},L\ten^{\oL}_{\hat{R}_X}\uline{C}_X)\simeq \oR\Gamma(X_{\pro\et},L)\ten^{\oL}_RC$ and $ \oR\HHom_{\hat{R}_X}( L, \uline{C}_X)
 \simeq \oR\HHom_{\hat{R}_X}( L,\hat{R}_X)\ten^{\oL}_RC$.
 \end{lemma}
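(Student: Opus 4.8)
The plan is to reduce the statement about $\oR\HHom$ to the one about $\oR\Gamma$, then to reduce the $\oR\Gamma$ statement to the case of an affinoid coefficient algebra, and finally to deduce that case from the base-change properties of constructible complexes on the pro-\'etale site together with the description of $\uline{B}_X$ in terms of the unit ball $B^{\circ}\subset B$.

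For the first reduction, I would use that a constructible $\hat{R}_X$-complex $L$ is, by definition, pro-\'etale-locally (on a finite stratification, after passing to a cover) a perfect complex of $\hat{R}_X$-modules, and hence dualisable: the natural map
\[
 \oR\sHom_{\hat{R}_X}(L,\hat{R}_X)\ten^{\oL}_{\hat{R}_X}M \lra \oR\sHom_{\hat{R}_X}(L,M)
\]
is then an equivalence for every $\hat{R}_X$-module complex $M$ (it may be checked on the cover), and $L^{\vee}:=\oR\sHom_{\hat{R}_X}(L,\hat{R}_X)$ is again constructible. Taking $M=\uline{C}_X$ and applying $\oR\Gamma(X_{\pro\et},-)$ then identifies $\oR\HHom_{\hat{R}_X}(L,\uline{C}_X)$ with $\oR\Gamma(X_{\pro\et},L^{\vee}\ten^{\oL}_{\hat{R}_X}\uline{C}_X)$ and $\oR\HHom_{\hat{R}_X}(L,\hat{R}_X)$ with $\oR\Gamma(X_{\pro\et},L^{\vee})$, so the second assertion will follow from the first applied to the constructible complex $L^{\vee}$.

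For the $\oR\Gamma$ statement I would first reduce to $C$ affinoid. Writing $C\cong\LLim_i B_i$ as a filtered colimit of affinoid $K$-algebras as in Proposition~\ref{indaffinoidprop}, and arguing as in the proof of that proposition (power-boundedness forces the topologically nilpotent generators of $R$ to land in some affinoid approximation), one may arrange the $B_i$ to be affinoid $R$-algebras with $\LLim_i B_i=C$. Since $\Hom_{\cts}(S,-)$ commutes with filtered colimits for pro-finite $S$, we get $\uline{C}_X\cong\LLim_i\uline{B_i}_X$; since $X$ is quasi-compact and quasi-separated, $\oR\Gamma(X_{\pro\et},-)$ commutes with filtered colimits (\cite[Lemma~6.8.12]{BhattScholzeProEtale}, as used in Proposition~\ref{traceprop}); and both $L\ten^{\oL}_{\hat{R}_X}-$ and $\oR\Gamma(X_{\pro\et},L)\ten^{\oL}_R-$ preserve filtered colimits. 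So it suffices to treat $C=B$ affinoid. There one has $B=B^{\circ}[\tfrac1\ell]$ with $B^{\circ}$ an $\ell$-adically (equivalently $\varpi$-adically) complete Noetherian $\cO_K$-algebra through which $R\to B$ factors, and the presentation $B^{\circ}=\Lim_n B^{\circ}/\ell^n$ with surjective transition maps identifies $\uline{B^{\circ}}_X$ with the $\ell$-completed constant sheaf $\hat{(B^{\circ})}_X$ of \cite[\S6.8]{BhattScholzeProEtale}, so that $\uline{B}_X\simeq\hat{(B^{\circ})}_X[\tfrac1\ell]$. I would then invoke the base-change property of constructible complexes for the ring map $R\to B^{\circ}$ — namely that $\oR\Gamma(X_{\pro\et},L)$ has bounded, finitely generated cohomology over $R$ and
\[
 \oR\Gamma(X_{\pro\et},L)\ten^{\oL}_R B^{\circ}\;\simeq\;\oR\Gamma\bigl(X_{\pro\et},L\ten^{\oL}_{\hat{R}_X}\hat{(B^{\circ})}_X\bigr),
\]
the finiteness over the complete Noetherian ring $R$ ensuring that no additional $\ell$-completion intervenes on the left — and then invert $\ell$, using $L\ten^{\oL}_{\hat{R}_X}\hat{(B^{\circ})}_X[\tfrac1\ell]=L\ten^{\oL}_{\hat{R}_X}\uline{B}_X$, to obtain the claim.

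The hard part, and the one genuinely specific to this setting, is the base-change input in the affinoid case: that cohomology of a constructible $\hat{R}_X$-complex on a merely quasi-compact quasi-separated scheme commutes with derived $\ell$-adic base change along $R\to B^{\circ}$. This is precisely where constructibility (as opposed to an arbitrary $\hat{R}_X$-complex, for which $\uline{C}_X$ is far from perfect) is used, and I would take it from Bhatt--Scholze's treatment of $D_{\mathrm{cons}}$ in \cite[\S6.5--6.8]{BhattScholzeProEtale}. A secondary subtlety — the reason for isolating the affinoid case before passing to the colimit — is the clash between the inverse limits defining the $\ell$-completed coefficient sheaves $\hat{R}_X$ and $\hat{(B^{\circ})}_X$ and the filtered colimit presenting the general quasi-dagger algebra $C$; keeping $B^{\circ}$ Noetherian and $\ell$-adically complete is what makes the $\ell$-completion harmless there.
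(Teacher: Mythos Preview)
Your proposal is correct and covers the same ground as the paper, but organises the argument differently. The paper proves the two statements in parallel: it writes $C$ as a filtered colimit of $\ell$-adically complete $R$-modules $C(\rho)$, then for the $\oR\HHom$ statement invokes the compactness of constructible $\hat{R}_X$-complexes (so that $\oR\HHom_{\hat{R}_X}(L,-)$ commutes with filtered colimits, citing \cite[Lemmas 6.3.14 and 6.8.12]{BhattScholzeProEtale}), and for the $\oR\Gamma$ statement invokes the fact that cohomology preserves filtered colimits. You instead reduce the $\oR\HHom$ statement to the $\oR\Gamma$ statement first, using that constructible complexes are dualisable (a stronger property than compactness), and then give a more detailed treatment of the $\oR\Gamma$ statement by explicitly reducing to the affinoid case and isolating $B^{\circ}$. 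Your route is more explicit about the base-change step for $\ell$-adically complete coefficients, which the paper's proof leaves entirely to the cited references; the paper's route is terser and avoids the detour through $L^{\vee}$ by appealing to compactness directly.
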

\begin{proof}
As in \S \ref{proetdaggersn}, all quasi-dagger algebras $C$ over $R$ can be written as filtered colimits
of $\ell$-adically complete $R$-modules $C(\rho)$. Since $L$ is constructible, the compactness property of \cite[Lemmas 6.3.14 and 6.8.12 ]{BhattScholzeProEtale} gives the second statement.
Similarly, cohomology of $X$ preserves filtered colimits giving the first statement. 
\end{proof}


 
 \begin{examples}\label{wdex}
 Roughly speaking, for traces coming from Proposition \ref{traceprop}, a sufficient condition for an $\uline{A}_X$-module $N$ on an $\ell$-coprime proper scheme $X$ to satisfy weak duality is that  its sheafification  be constructible in an appropriate sense;  by Lemma \ref{wdnilcompletelemma}, we can reduce to looking at the $\uline{\H_0A}_X$-module $N\ten^{\oL}_{\uline{A}_X} \uline{\H_0A}_X$. Also note that the category of modules satisfying weak duality is triangulated and idempotent-complete.
 Although \cite[\S 6]{BhattScholzeProEtale} is  developed for fields over $\Ql$ rather than  arbitrary dagger  algebras, most of the arguments do generalise.  
 
 If there exists an $\ell$-adically complete Noetherian ring $R$, a constructible $\hat{R}_X$-complex $L$ in the sense of \cite[\S 6.5]{BhattScholzeProEtale} and a $K$-algebra homomorphism $R \to \H_0A$, then an  $\uline{\H_0A}_X$-module $N$ with sheafification $L\ten_{\hat{R}_X}\uline{\H_0A}_X$ satisfies weak duality whenever  $X$ is one of the schemes from Examples \ref{traceex}. This follows by combining Lemma \ref{constrlemma} with the results of \cite[\S 6.5]{BhattScholzeProEtale} and the six functors formalism, which leads to equivalences 
 \[
 \oR\Gamma(X_{\pro\et}, L^* ) \simeq \oR\HHom_{\Zl}(\oR\Gamma(X_{\pro\et},  L\ten_{\uline{\Zl}}\bD)[d],\Zl)
 \]
 in each case.
 
 For traces coming from Corollary \ref{tracepropinfty}, the situation is much more subtle, with duality  requiring the additional condition of Proposition \ref{dualpropinfty} to hold. As in Remark \ref{Iwasawarmk}, cohomological finite-dimensionality ensures this when $A=\Ql$.
 \end{examples}
 
Every derived $\infty$-geometric Artin stack $F$ has a cotangent complex $\bL^F$, consisting of suitably functorial $A$-modules $\bL^{F,A,x}$ in chain complexes for each $x \in F(A)$.

\begin{lemma}\label{FXcotlemma}
Let $X$ be  a topologically Noetherian scheme satisfying the conditions of Proposition \ref{traceprop}  or those for $X_{\infty}$ in Corollary  \ref{tracepropinfty}, and  
  $F \co dg_+\CAlg_K \to s\Set$ a derived $\infty$-geometric Artin stack.
At any point $\phi \in F(X_{\pro\et},A)$ at which the presheaf $\bL^{F,\uline{A}_X, \phi} $  of $\uline{A}_X$-modules satisfies weak duality in the sense of Definition \ref{weakdualdef}, the functor 
\[
T_{\phi}(F(X_{\pro\et},-),-) \co M \mapsto F(X_{\pro\et},A\oplus M)\by^h_{F(X_{\pro\et},A)}\{\phi\}
\]
on levelwise f.g. $A$-modules $M$ is represented by the perfect complex
\[
 \oR\Gamma(X,\bL^{F,\uline{A}_X, \phi}\hten_{\Zl}\bD)[d].
\]
\end{lemma}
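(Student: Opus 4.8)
The plan is to compute the tangent functor of $F(X_{\pro\et},-)$ at $\phi$ directly from Definitions \ref{FXproetdef} and \ref{Tdef}, and then feed the result through the weak duality hypothesis. Write $N:=\bL^{F,\uline{A}_X,\phi}$, so that $P:=\oR\Gamma(X_{\pro\et},N\hten_{\Zl}\bD)[d]$ is a perfect $A$-module by assumption, and recall from Definition \ref{Fcotdef} that it suffices to produce a natural equivalence $T_\phi(F(X_{\pro\et},-),M)\simeq \oR\map_{dg\Mod_A}(P,M)$ for all $M\in dg_+\Coh_A$; since the spaces $T_\phi(F(X_{\pro\et},-),M[n])$ deloop one another by Definition \ref{Tdef}, this amounts to identifying $T_\phi(F(X_{\pro\et},-),-)$ with the Dold--Kan denormalisation of $\tau_{\ge0}\oR\HHom_A(P,-)$.

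First I would unfold the left-hand side. The functor $A\mapsto\uline{A}_X$ preserves limits (Corollary \ref{preservehtpycor}, Proposition \ref{exactmodsaffdprop}), so it carries the square-zero extension $A\oplus M$ to the square-zero extension $\uline{A}_X\oplus\uline{M}_X$ of sheaves of CDGAs on $X_{\pro\et}$, with $\uline{M}_X\cong\uline{A}_X\ten_AM$ as a sheaf of $\uline{A}_X$-modules. As $\oR\Gamma(X_{\pro\et},-)$ commutes with homotopy fibre products, this gives a natural equivalence
\[
 T_\phi(F(X_{\pro\et},-),M)\ \simeq\ \oR\Gamma\bigl(X_{\pro\et},\ F(\uline{A}_X\oplus\uline{M}_X)\by^h_{F(\uline{A}_X)}\{\phi\}\bigr).
\]
Next I would identify the presheaf inside $\oR\Gamma$ section by section. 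Over an affinoid $U\to X$ in the pro-\'etale site, with $S:=\pi_0U$, it is the space $F(\uline{A}(S)\oplus\uline{M}(S))\by^h_{F(\uline{A}(S))}\{\phi|_U\}$; since $F$ is a derived $\infty$-geometric Artin stack it is homogeneous and carries a cotangent complex, so this space is the Dold--Kan denormalisation of $\tau_{\ge0}\oR\HHom_{\uline{A}(S)}(\bL^{F,\uline{A}(S),\phi|_U},\uline{M}(S))$. Functoriality of $\bL^F$ under base change identifies $\bL^{F,\uline{A}(S),\phi|_U}$ with $N(U)$ compatibly in $U$, so the presheaf under consideration is the Dold--Kan denormalisation of the good truncation of the sheaf-Hom complex $\oR\sHom_{\uline{A}_X}(N,\uline{M}_X)$. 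Passing to $\oR\Gamma$, and interchanging it with Dold--Kan denormalisation and good truncation --- legitimate because $\oR\Gamma(X_{\pro\et},-)$ has bounded cohomological amplitude by the hypothesis on $X$ in Proposition \ref{traceprop}, once one also runs the argument over all shifts $M[n]$ --- I obtain that $T_\phi(F(X_{\pro\et},-),M)$ is the Dold--Kan denormalisation of $\tau_{\ge0}\oR\HHom_{\uline{A}_X}(N,\uline{M}_X)$, with $\oR\HHom_{\uline{A}_X}:=\oR\Gamma(X_{\pro\et},\oR\sHom_{\uline{A}_X}(-,-))$ as in Definition \ref{weakdualdef}.

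Finally I would invoke weak duality. By hypothesis $N$ satisfies weak duality with respect to $\tr$, so Definition \ref{weakdualdef} applied with $C=A\oplus M$ (together with the remark following it) shows that the pairing through $\tr$ induces a quasi-isomorphism $\oR\HHom_{\uline{A}_X}(N,\uline{M}_X)\simeq\oR\HHom_A(P,M)$, natural in $M$. Combining this with the previous paragraph yields $T_\phi(F(X_{\pro\et},-),M)\simeq\oR\map_{dg\Mod_A}(P,M)$ naturally in $M\in dg_+\Coh_A$, and $P$ is perfect, which is precisely the assertion. I expect the genuine work to be concentrated in the middle step: one must check that the cotangent complex of $F$ is Cartesian along the restriction maps of $\uline{A}_X$, so that the sectionwise tangent spaces glue to $\oR\sHom_{\uline{A}_X}(N,\uline{M}_X)$, and that $\oR\Gamma(X_{\pro\et},-)$ really may be moved past the Dold--Kan and truncation functors; once that bookkeeping is in place, the weak duality input and the perfectness of $P$ are immediate.
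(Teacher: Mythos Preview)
Your proposal is correct and follows essentially the same route as the paper: unfold the tangent functor as $\oR\Gamma$ of the sectionwise tangent presheaf, identify that presheaf with $N^{-1}\tau_{\ge 0}\oR\sHom_{\uline{A}_X}(\bL^{F,\uline{A}_X,\phi},\uline{M}_X)$, commute $\oR\Gamma$ past Dold--Kan and truncation, then invoke weak duality. If anything you are more explicit than the paper about the bookkeeping in the middle step (the Cartesian property of $\bL^F$ and the commutation of $\oR\Gamma$ with truncation via shifts), which the paper's proof compresses to a single line.
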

 \begin{proof}
 Since homotopy limits commute, we have
 \begin{align*}
  F(X_{\pro\et},A\oplus M)\by^h_{F(X_{\pro\et},A)}\{\phi\} &\simeq \oR\Gamma(X_{\pro\et},F(\uline{A}_X\oplus \uline{M}_X)\by^h_{F(\uline{A}_X)} \{\phi\})\\
  &\simeq \oR\Gamma(X_{\pro\et},N^{-1}\tau_{\ge 0}\oR\sHom_{\uline{A}_X}(\bL^{F,\uline{A}_X,\phi}, \uline{M}_X))\\
  &\simeq N^{-1}\tau_{\ge 0}\oR\HHom_{\uline{A}_X}(\bL^{F,\uline{A}_X,\phi}, \uline{M}_X),
\end{align*}
where $N^{-1}$ is Dold--Kan denormalisation and $\tau_{\ge 0}$ denotes good truncation of a chain complex. 

By weak duality, this in turn is equivalent to 
\[
 N^{-1}\tau_{\ge 0}\oR\HHom_A(\oR\Gamma(X,\bL^{F,\uline{A}_X, \phi}\hten_{\Zl}\bD)[d],M)
\]
so is represented by $\oR\Gamma(X,\bL^{F,\uline{A}_X, \phi}\hten_{\Zl}\bD)[d] $.
\end{proof}

 \begin{corollary}\label{tracecor2a}
Let $X$ be  a topologically Noetherian scheme satisfying the conditions of Proposition \ref{traceprop}  or those for $X_{\infty}$ in Corollary  \ref{tracepropinfty} 
with $\bD|_X=\Zl$, 
and let $F \co dg_+\CAlg_K \to s\Set$ be an
 $n$-shifted symplectic  derived Artin $\infty$-stack.
 Then there is a natural $(n-d)$-shifted symplectic structure (in the sense of Remark \ref{Nstackrmk2}) on  the full subfunctor $ F(X_{\pro\et},-)^{wd} \subset F(X_{\pro\et},-) $ consisting of points $\phi$ at which the presheaf $\bL^{F,\uline{A}_X, \phi} $   satisfies weak duality.
\end{corollary}
\begin{proof}
The  $(n-d)$-shifted pre-symplectic structure $\omega$ of Corollary \ref{tracecor} 
 pulls back along the inclusion map $ F(X_{\pro\et},-)^{wd} \subset F(X_{\pro\et},-)$
to give an $(n-d)$-shifted pre-symplectic structure, and we need to check that it is non-degenerate. By  Lemma \ref{wdopenlemma}, the inclusion map is formally \'etale, so $\bL^{F(X_{\pro\et},-)^{wd},\phi}\simeq \bL^{F(X_{\pro\et},-),\phi}$ at all points $\phi$.

  By Lemma \ref{FXcotlemma} and its proof, we have
\begin{align*}
 \bL^{F(X_{\pro\et},-),\phi} &\simeq \oR\Gamma(X,\bL^{F,\uline{A}_X, \phi})[d],\\
 \oR\HHom_{A}(\bL^{F(X_{\pro\et},-),\phi} ,A) &\simeq \oR\Gamma(X_{\pro\et},\oR\sHom_{\uline{A}_X}(\bL^{F,\uline{A}_X,\phi}, \uline{A}_X))
\end{align*}
since $\omega$ is induced by an $n$-shifted symplectic structure on $F$, the map
\[
 \omega_2^{\sharp}\co \oR\HHom_{A}(\bL^{F(X_{\pro\et},-),\phi} ,A) \to (\bL^{F(X_{\pro\et},-),\phi} )_{[d-n]}
\]
then comes from derived global sections of  the quasi-isomorphism 
\[
 \oR\sHom_{\uline{A}_X}(\bL^{F,\uline{A}_X,\phi}, \uline{A}_X)\to \bL^{F,\uline{A}_X, \phi}[n],
\]
  so is itself a quasi-isomorphism.
\end{proof}

\begin{corollary}\label{tracecor2b}
Under the conditions of Corollary \ref{tracecor2a}, take an
 (underived)  dagger Artin analytic $\infty$-stack $Y$ equipped with a 
formally \'etale morphism 
\[
 \eta \co Y \to  \pi^0F(X_{\pro\et},-)
\]
of functors $\Affd\Alg^{\loc,\dagger}_K \to s\Set $, such that at all points $\phi$ in the image of $\eta$,  the presheaf $\bL^{F,\uline{A}_X, \phi} $   satisfies weak duality.
 
  Then  the functor $\tilde{Y} \co A \mapsto Y(\H_0A)\by^h_{F(X_{\pro\et},\H_0A)}F(X_{\pro\et},A) $ on $dg_+\Affd\Alg^{\loc,\dagger}_K$
 is a dg dagger  Artin analytic $\infty$-stack carrying a natural $(n-d)$-shifted symplectic structure.
 \end{corollary}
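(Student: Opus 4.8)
The plan is to obtain representability of $\tilde{Y}$ from Corollary \ref{opencutcor}, and then to transport the symplectic structure of Corollary \ref{tracecor2a} along a natural formally \'etale map $\tilde{Y} \to F(X_{\pro\et},-)^{wd}$. For representability one applies Corollary \ref{opencutcor} to the functor $F(X_{\pro\et},-)$, the underived dagger analytic Artin $\infty$-stack $Y$, and the transformation $\eta$. Here $F(X_{\pro\et},-)$ is homotopy-preserving and homogeneous by Lemma \ref{proethtpylemma}, the latter because a derived Artin $\infty$-stack is homotopy-homogeneous; it is nilcomplete because $F$ is, since the exact functor $A \mapsto \uline{A}_X$ commutes with good truncations (Corollary \ref{preservehtpycor}) and hence carries the Postnikov tower of $A$ to that of $\uline{A}_X$, while $\oR\Gamma(X_{\pro\et},-)$ preserves homotopy limits; and $\eta$ is formally \'etale by hypothesis. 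It then remains to control the tangent cohomology at points $\phi$ in the image of $\eta$: for such $\phi$ the presheaf $\bL^{F,\uline{A}_X,\phi}$ satisfies weak duality, so Lemma \ref{FXcotlemma} (with $\bD=\Zl$) identifies $T_\phi(F(X_{\pro\et},-),-)$ with the perfect complex $\oR\Gamma(X,\bL^{F,\uline{A}_X,\phi}\hten_{\Zl}\bD)[d]$; in particular the groups $\DD^i_\phi$ are finitely generated, and since $F(X_{\pro\et},-)$ is homogeneous with a coherent cotangent complex at $\phi$, Lemma \ref{cottranslemma} gives the base change $\bL^{F(X_{\pro\et},-),f_*\phi}\simeq \bL^{F(X_{\pro\et},-),\phi}\ten^{\oL}_AA'$ along any morphism, in particular along \'etale ones. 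Corollary \ref{opencutcor} then shows that $A \mapsto F(X_{\pro\et},A)\by^h_{F(X_{\pro\et},\H_0A)}Y(\H_0A)$, which is precisely $\tilde{Y}$, is an $\infty$-geometric $K$-dagger dg analytic Artin stack, with $\pi^0\tilde{Y}\simeq Y$.

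Next I would check that $\tilde{Y} \to F(X_{\pro\et},-)$ factors through the weak-duality locus and that the factorisation is formally \'etale, so that Corollary \ref{tracecor2a} applies directly. A point of $\tilde{Y}(A)$ has an $F(X_{\pro\et},A)$-component $\phi$ whose restriction $\bar\phi$ to $\H_0A$ lies in the image of $\eta$, hence satisfies weak duality; since $F$ has a cotangent complex, $\bL^{F,\uline{A}_X,\phi}\ten^{\oL}_{\uline{A}_X}\uline{\H_0A}_X\simeq \bL^{F,\uline{\H_0A}_X,\bar\phi}$, so Lemma \ref{wdnilcompletelemma} shows that $\bL^{F,\uline{A}_X,\phi}$ itself satisfies weak duality and $\phi\in F(X_{\pro\et},A)^{wd}$. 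For formal \'etaleness, given a square-zero extension $A \to B$ the induced map $\H_0A\to \H_0B$ is again a square-zero extension (its kernel is the image of $\H_0(\ker(A\to B))$, which is square-zero), so formal \'etaleness of $\eta$ gives $Y(\H_0A)\simeq Y(\H_0B)\by^h_{F(X_{\pro\et},\H_0B)}F(X_{\pro\et},\H_0A)$; substituting into the definition of $\tilde{Y}(A)$ and cancelling the common factor $F(X_{\pro\et},\H_0A)$ identifies $\tilde{Y}(A)$ with $F(X_{\pro\et},A)\by^h_{F(X_{\pro\et},B)}\tilde{Y}(B)$, which is the formal \'etaleness condition over $F(X_{\pro\et},-)$; composing with the formally \'etale inclusion $F(X_{\pro\et},-)^{wd}\hookrightarrow F(X_{\pro\et},-)$ of Lemma \ref{wdopenlemma} gives the claim. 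Since a formally \'etale morphism induces a quasi-isomorphism on the pullback of cotangent complexes, pulling the $(n-d)$-shifted symplectic structure of Corollary \ref{tracecor2a} back along $\tilde{Y} \to F(X_{\pro\et},-)^{wd}$ produces an $(n-d)$-shifted pre-symplectic structure which is still non-degenerate, hence symplectic.

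The bulk of the work has already been carried out in Corollaries \ref{tracecor2a} and \ref{opencutcor}, so I expect the main obstacle to be bookkeeping rather than a new difficulty: verifying carefully that $F(X_{\pro\et},-)$ meets every hypothesis of Corollary \ref{opencutcor} at points in the image of $\eta$ --- in particular the finiteness and \'etale base change for the tangent cohomology, which rest on the weak-duality assumption through Lemma \ref{FXcotlemma} --- together with the homotopy-fibre-product manipulation establishing that $\tilde{Y} \to F(X_{\pro\et},-)^{wd}$ is formally \'etale, including the small but essential point that $\H_0$ of a square-zero extension is again a square-zero extension.
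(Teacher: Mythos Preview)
Your proposal is correct and follows essentially the same route as the paper: verify the hypotheses of Corollary~\ref{opencutcor} for $F(X_{\pro\et},-)$ at points in the image of $\eta$ (homotopy-preserving and homogeneous via Lemma~\ref{proethtpylemma}, nilcomplete, finite tangent cohomology and \'etale base change via Lemma~\ref{FXcotlemma} and weak duality), conclude representability of $\tilde{Y}$, factor the projection through $F(X_{\pro\et},-)^{wd}$ using Lemma~\ref{wdnilcompletelemma}, and pull back the symplectic structure along the resulting formally \'etale map. Your explicit verification that $\tilde{Y}\to F(X_{\pro\et},-)$ is formally \'etale (including the check that $\H_0$ of a square-zero extension is square-zero) spells out a step the paper leaves implicit, but the overall argument is the same.
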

 \begin{proof}
 Since $F$ is homotopy-preserving, nilcomplete and homogeneous, Corollary \ref{preservehtpycor} implies that the same is true of $A \mapsto F(\uline{A}_X(U))$ for all $U$; it is thus also true for $F(X_{\pro\et},-)$ by passing to homotopy limits. Moreover, Lemmas \ref{wdopenlemma} and \ref{wdnilcompletelemma} imply that the same is true of  the full subfunctor $F(X_{\pro\et},-)^{wd}$; they moreover imply that $F(X_{\pro\et},A)^{wd}\simeq F(X_{\pro\et},\H_0A)^{wd}\by^h_{F(X_{\pro\et},\H_0A)}F(X_{\pro\et},A) $, which in particular gives us a map $\tilde{Y} \to F(X_{\pro\et},-)^{wd}$.
 
  Existence of a perfect cotangent complex from Lemma \ref{FXcotlemma} then ensures that for any dagger  algebra $A$ and any $\phi \in F(X_{\pro\et},A)$ at which   $\bL^{F,\uline{A}_X, \phi} $   satisfies weak duality, the $A$-module  $\Ext^i_A(\bL^{F(X_{\pro\et},-),\phi},A)$ is finitely generated. Moreover, the universal nature of Definition \ref{weakdualdef} ensures that  $\bL^{F(X_{\pro\et},-),\phi'} \simeq \bL^{F(X_{\pro\et},-),\phi}\ten_A^{\oL}A'$ for $\phi'$ the image of $\phi$ under $F(A)\to F(A')$, so flat base change gives 
\[
\Ext^i_{A'}(\bL^{F(X_{\pro\et},-),\phi},A') \cong \Ext^i_A(\bL^{F(X_{\pro\et},-),\phi},A)\ten_AA'
\]
whenever $A \to A'$ is \'etale.

Thus all the conditions of Corollary \ref{opencutcor} are satisfied, making $\tilde{Y}$   a dg dagger  Artin analytic $\infty$-stack, with cotangent complex $\bL^{\tilde{Y},\phi} \simeq \bL^{F(X_{\pro\et},-),\phi}$. It has a shifted symplectic structure induced by pulling back along the formally \'etale map  $\tilde{Y} \to F(X_{\pro\et},-)$.

%
  
 \end{proof}

 \begin{examples}\label{Spex}
 Examples \ref{traceex}, \ref{traceexIwasawa1} and \ref{wdex} now lead to some instances of $(n-d)$-shifted symplectic moduli stacks when substituted into Corollary \ref{tracecor2b}. If we take the derived stack $F$ to be $B\GL_n$, or $BG$ for any other affine algebraic group equipped with a $G$-equivariant inner product on its Lie algebra, or to be the moduli stack of perfect complexes, then $n=2$, so the corollary produces $(2-d)$-shifted symplectic structures on moduli of $G$-torsors or of complexes of pro-\'etale sheaves on $X$, provided we impose some constructibility constraints.
 
 In particular:
 \begin{enumerate}[itemsep=0pt, parsep=5pt]
  \item  If $X$ is a smooth proper scheme of dimension $m$ over a separably closed field prime to $\ell$, we have $(n-2m)$-shifted symplectic structures on suitable open substacks of the derived moduli stack of $F$-valued sheaves on $X_{\pro\et}$, depending on a choice of isomorphism $ \Zl(m) \cong \Zl$.
 
 \item \label{Iwasawa2} 
 If $X$ is a smooth proper scheme of dimension $m$ over a cyclotomic extension $k_{\infty}:=k(\mu_{\ell^{\infty}})$ of a 
 local field $k$ containing $\ell^{-1}$ with finite residue field, we have $(n+1-2m)$-shifted symplectic structures on suitable open substacks of the derived moduli stack of $F$-valued sheaves on $X_{\pro\et}$, depending on a choice of isomorphism $ \Zl(m+1) \cong \Gal(k_{\infty}/k)$. 
 
 Although the pairing of Examples \ref{traceexIwasawa1} will seldom be perfect on finite coefficients, Proposition \ref{dualpropinfty} and Remark \ref{Iwasawarmk} ensure that pullbacks of constructible complexes over schemes of finite type satisfy duality when the residue field has characteristic prime to $\ell$. The symplectic locus of $F(X_{\pro\et},-)$ thus contains all points $\phi$ where $\bL^{F,\uline{A}_X, \phi} $  is constructible.
 
 For $\ell$-adic local fields,  points of the pre-symplectic derived moduli stack at which the cotangent complex is perfect are extremely rare, so the symplectic locus will often be empty. This is because Tate's calculations give the Euler characteristic $\chi(k,V)$ of $\H^*_{\pro\et}(k,V)$ as $-[k:\Ql]\dim V$, while we would have to have $\chi(k,V)= \chi( k_{\infty},V)-\chi( k_{\infty},V)=0$ whenever $\chi( k_{\infty},V) $ is finite.
 
 \item  If $U$ is a smooth scheme of dimension $m$ over a separably closed field prime to $\ell$, a choice of trivialisation of $ \Zl(m)$ similarly 
 leads to an $(n+1-2m)$-shifted symplectic structure on suitable open substacks of the derived moduli stack 
 \[
 A \mapsto \oR\Gamma(Z_{\pro\et}, i^*\oR j_*F(\uline{A}_X))
\]  
 of $F$-valued sheaves on the deleted tubular neighbourhood $ Z \overleftarrow{\by}_{\bar{U}}U$ (thought of as the  boundary of $U$), where $i \co Z \to \bar{U}$ is the complement of $U$ in a compactification $j \co U \to \bar{U}$.
 \end{enumerate}
 
 However, the hypothesis $\bD \cong \Zl$ of Corollary \ref{tracecor2a} is not satisfied by the other cases of Examples \ref{traceex}, which we will tackle by using weighted symplectic structures in \S \ref{weightedsn}, leading to Examples \ref{wSpex}.
\end{examples}

 \begin{remark}\label{pappasrmk}
When $\tilde{Y}$ is a smooth (underived) analytic space or DM stack, a $0$-shifted symplectic structure is just a symplectic structure in the classical sense. In particular, this applies when studying $\ell$-adic local systems or $G$-torsors on a smooth proper curve over an algebraically closed field, giving the symplectic structures studied in \cite{pappasVolSymplAdicLocSys}.
 %
%
 \end{remark}
 
 \subsection{Lagrangian structures}\label{Lagsn}

 We now assume that we have a morphism $f \co \pd U \to U$ of topologically Noetherian schemes (where the notation $\pd U$ is intended to be suggestive, but does not indicate a specific construction at this point)  and a constructible complex $\bD$ on $U$,  equipped with a system of trace maps 
 \[
 \cone(\oR\Gamma (U_{\et}, \bD/\ell^n) \to  \oR\Gamma (\pd U_{\et}, f^{-1}\bD/\ell^n))[d-1]\to \Z/\ell^n
 \]
 similarly to Proposition \ref{traceprop}. 
 We will require the resulting composite pairing
 \[
 \oR\Gamma (U_{\et}, L/\ell^n) \ten \cone(\oR\Gamma (U_{\et},  L^*\ten \bD/\ell^n) \to  \oR\Gamma (\pd U_{\et}, f^{-1}(L^*\ten\bD/\ell^n)))[d-1]\to \Z/\ell^n
 \]
to be perfect for constructible complexes $L$ with dual $L^*$.

 \begin{examples}\label{traceLagex}
Trace maps $\tr$ of the form required above arise from the six functors formalism (and specifically Poincar\'e duality)  in the following situations, with $\oR\Gamma_c(U,L) \simeq \cone(\oR\Gamma(U,L) \to \oR\Gamma(\pd U,f^*L))[-1]$ in each case:
 
 \begin{enumerate}[itemsep=5pt, parsep=0pt]
 
 \item\label{traceLaglocex} If $F$ is a non-Archimedean local field with residue characteristic prime to $\ell$, and $\cO_F$ its ring of integers, then we can look at the morphism $\Spec F \to \Spec \cO_F$. Local Tate duality in the form of
\cite[Theorem II.1.8(b)]{milneArithDuality} then gives the desired trace pairing, with $\bD=\uline{\Zl(1)}$ and $d=3$. 
 
\item More generally, if $Z$ is a smooth proper scheme of dimension $m$ over $\cO_F$, consider the morphism $Z\by_{\Spec \cO_F}\Spec F \to Z$. Local Tate duality combines with Poincar\'e duality to give the  desired trace pairing here, with $\bD=\uline{\Zl(m+1)}$ and $d=2m+3$.
 
 \item If $F$ is a number field and $S$ a finite set of primes including all those dividing $\ell$, with $\cO_{F,S}$ the localisation of $\cO_F$ at $S$, then consider the morphism $\Spec \prod_{v \in S} F_v  \to \Spec \cO_{F,S}$.  Poitou--Tate duality in the form of \cite[Proposition II.2.3 and Theorem II.3.1]{milneArithDuality}  
  then gives the desired trace pairing, with $\bD=\uline{\Zl(1)}$ and $d=3$.
  
\item  More generally, if $Z$ is a smooth proper scheme of dimension $m$ over $\cO_{F,S}$, consider the morphism $Z\by_{\Spec \cO_{F.S}} \Spec \prod_{v \in S} F_v \to Z$. Poitou--Tate duality combines with Poincar\'e duality to give the  desired trace pairing here, with $\bD=\uline{\Zl(m+1)}$ and $d=2m+3$.
  
 \item\label{Iwasawa3} 
 There are variations of the previous examples in which we adjoin $\mu_{\ell^{\infty}}$ to the base, resulting in duality one degree lower via pairings of generalised Cassels--Tate form as in \S \ref{cycdualitysn}. Explicitly, for $F$ a non-Archimedean local field with residue characteristic prime to $\ell$, 
 we can look at  $\Spec F(\mu_{\ell^{\infty}}) \to \Spec \cO_F(\mu_{\ell^{\infty}})$. 
  The trace map of Examples \ref{traceexIwasawa1} then gives a trace pairing with $\bD=\uline{\Gamma^*(1)}\cong \uline{\Zl}$ and $d=2$. 
 
Similarly,
for $F$ a number field and $S$ a finite set of primes, we can consider the morphism $\Spec \prod_{v \in S} F_v(\mu_{\ell^{\infty}})  \to \Spec \cO_{F,S}(\mu_{\ell^{\infty}})$. An identical argument, using Poitou--Tate duality rather than local Tate duality, gives a trace pairing with $\bD= \uline{\Gamma^*(1)}\cong \uline{\Zl}$ and $d=2$. 
 
For  smooth proper schemes of dimension $m$ over those bases, we can combine these with Poincar\'e duality to obtain a   trace pairing with $\bD=\uline{\Gamma^*(m+1)}\cong \uline{\Zl}$ and $d=2m+2$.

 \item A situation of a similar flavour is given if $U$ is a smooth variety of dimension $m$  over a separably closed field $k$ which is prime to $\ell$, and we have an open immersion $j \co U \into X$ into a complete variety, with complement $i \co Z \to X$, so let $\pd U:=Z \overleftarrow{\by}_{X}U$, the deleted tubular neighbourhood. The projection map $p \co \pd U \to U$ is of the desired form, for $\bD=\uline{\Zl(m)}$ and $d=2m$. This follows because $ \oR\Gamma(\pd U,p^*L) \simeq \oR\Gamma(Z,i^*\oR j_*L)$ and hence  $\oR\Gamma(X,\oR j_!L) \simeq \cone(\oR\Gamma(U,L) \to \oR\Gamma(\pd U,f^*L))[-1]$.
 
 The same example  also works over local fields $k$ containing $\ell^{-1}$  with finite residue field (resp. over finite fields $k$ prime to $\ell$), with $d= 2m+2$ (resp. $2m+1$)  and $\bD=\uline{\Zl(m+1)}$ (resp. $\uline{\Zl(m)}$), and over $k(\mu_{\ell^{\infty}})$ with $d= 2m+1$ (resp. $2m$, being separably closed) and $\bD=\uline{\Gamma^*(m+1)}\cong \uline{\Zl}$.
\end{enumerate}
   \end{examples}

\begin{example}\label{traceLagex2}
  We can also combine
  the previous types of  Examples \ref{traceLagex}, at least at a formal level. Take $U$ to be a smooth separated of dimension $m$  over a either the ring of integers $\cO_F$ of a non-Archimedean local field, or over a localisation $\cO_{F,S}$ of the ring of integers of a number field (there are also versions attaching $\mu_{\ell^{\infty}}$ to the base, replacing $d$ with $d-1$). 
    The idea then is to take a deleted tubular neighbourhood $\pd_0U \to U$ over the same base, 
  with $\pd_{01} U \to \pd_1U$ being the base change of that morphism to $F$ (resp. $\prod_{v \in S} F_v$). Then the morphisms $ \pd_{01} U \to \pd_0U$ 
   and $\pd_{01} U \to \pd_1U$   both give trace pairings of the desired form, with $d=2m+2$
  and $\bD=\uline{\Zl(m+1)}$. 
  
 More significantly, if we formally construct $\pd U$ to be the pushout  $\pd_0 U \cup_{\pd_{01}U} \pd_1U$, then the morphism 
 $\pd U \to U$ gives a trace pairing of that form, with $d=2m+3$ 
  and $\bD=\uline{\Zl(m+1)}$.
  
 Explicitly, if we have  an open immersion $j \co U \into X$ into a flat proper scheme over $\cF$, with complement $i \co Z \to X$, then we let $\pd_0 U:=Z \overleftarrow{\by}_{X}U$, the deleted tubular neighbourhood, while $\pd_1U=\coprod_v U_{F_v}$ (base change along $\cO_{F,S} \to \prod_{v \in S} F_v$, or along $\cO_F \to F$ in the local case). Thus $\pd_{01}U=  \coprod_v Z_{F_v} \overleftarrow{\by}_{X_{F_v}}U_{F_v}$. 
 
For  for the Galois group $G_{F,S}$ associated to $\cO_{F,S}$  and $G_v$ the Galois group of $F_v$, Poitou--Tate duality gives
 $\prod_{v \in S} \oR\Gamma(G_v,-) \simeq \cone(\oR\Gamma_c(G_{F,S},-) \to \oR\Gamma(G_{F,S},-)$, while the six functors formalism gives $\oR\Gamma(Z_{\bar{F}}, i^*\oR j_*-) \simeq \cone(\oR\Gamma_c(U_{\bar{F}},-) \to \oR\Gamma(U_{\bar{F}},-)$.
    For a constructible complex $L$ on $U$, this means that for the Galois group $G_{F,S}$ associated to $\cO_{F,S}$, we have:
    \begin{align*}
 \oR\Gamma(U,L) &\simeq \oR\Gamma(G_{F,S}, \oR\Gamma(U_{\bar{F}}, L))\\
 \oR\Gamma(\pd_0 U,f_0^*L) &\simeq \cone(\oR\Gamma(G_{F,S}, \oR\Gamma_c(U_{\bar{F}}, L)) \to \oR\Gamma(G_{F,S}, \oR\Gamma(U_{\bar{F}}, L)) )\\   
 \oR\Gamma(\pd_1 U,f_1^*L) &\simeq \cone(\oR\Gamma_c(G_{F,S}, \oR\Gamma(U_{\bar{F}}, L)) \to \oR\Gamma(G_{F,S}, \oR\Gamma(U_{\bar{F}}, L))).
     \end{align*}
Moreover
\begin{align*}
 \oR\Gamma(\pd_{01} U,f_{01}^*L)\simeq \prod_{v\in S} \oR\Gamma(Z_{F_v}, i^*\oR j_*L)),
 \end{align*}
which in turn is quasi-isomorphic to the total cone complex of
\begin{align*}
 &\oR\Gamma_c(G_{F,S}, \oR\Gamma_c(U_{\bar{F}}, L))\\
& \to \oR\Gamma_c(G_{F,S}, \oR\Gamma(U_{\bar{F}}, L))\oplus \oR\Gamma(G_{F,S}, \oR\Gamma_c(U_{\bar{F}}, L))\\
&\quad\,\ \to \oR\Gamma(G_{F,S}, \oR\Gamma(U_{\bar{F}}, L)).
\end{align*}
Thus
\begin{align*}
 \oR\Gamma(\pd U, f^*L) &\simeq  \cone(\oR\Gamma(\pd_0 U,f_0^*L) \oplus \oR\Gamma(\pd_1 U,f_1^*L) \to \oR\Gamma(\pd_{01} U,f_{01}^*L))[-1]\\
 &\simeq
  \cone(\oR\Gamma_c(G_{F,S}, \oR\Gamma_c(U_{\bar{F}}, L)) \to \oR\Gamma(G_{F,S}, \oR\Gamma(U_{\bar{F}}, L))),
\end{align*}
 from which the desired duality for the trace pairing associated to $\pd U \to U$ follows.

\end{example}

 \begin{definition}\label{weakdualLagdef}
 Given a quasi-dagger dg algebra $A$ and a presheaf $N$ of  $\uline{A}_U$-modules in chain complexes  on $U_{\pro\et}$, we say that $N$ satisfies weak duality with respect to the trace $\tr$ above if $\oL f^*N:= f^{-1}N\ten^{\oL}_{f^{-1}\uline{A}_U}\uline{A}_{\pd U}$ satisfies weak duality in the sense of Definition \ref{weakdualdef}, and  for all morphisms $A \to C$ of quasi-dagger dg algebras,  
 the map
 \begin{align*}
&\cone( \oR\HHom_{\uline{A}_U}( N, \uline{C}_U) \to  \oR\HHom_{\uline{A}_{\pd U}}( \oL f^* N, \uline{C}_{\pd U}))\\
&\to \oR\HHom_A(\oR\Gamma(U,N\hten_{\Zl}\bD)[d-1],C)
 \end{align*}
induced by the pairing
\begin{align*}
 \oR\Gamma(U,N\hten_{\Zl}\bD)\ten_A^{\oL} \cone( \oR\HHom_{\uline{A}_U}( N, \uline{C}_U) \to \oR\HHom_{\uline{A}_{\pd U}}( \oL f^* N, \uline{C}_{\pd U}))\\ 
 \to \cone(\oR\Gamma(U, \uline{C}_U\hten_{\Zl}\bD) \to \oR\Gamma({\pd U}, \uline{C}_{\pd U}\hten_{\Zl}f^{-1}\bD))\xra{\tr} C[1-d]
\end{align*}
is a quasi-isomorphism, and $\oR\Gamma(U,N\hten_{\Zl}\bD)$ is a perfect $A$-module.
\end{definition}

\begin{examples}\label{wdexLag}
 Similarly to Examples \ref{wdex}, if there exists an $\ell$-adically complete Noetherian ring $R$, a constructible $\hat{R}_U$-complex $L$ in the sense of \cite[\S 6.5]{BhattScholzeProEtale} and a $K$-algebra homomorphism $R \to \H_0A$, then an  $\uline{\H_0A}_U$-module $N$ with sheafification $L\ten_{\hat{R}_U}\uline{\H_0A}_U$ satisfies weak duality whenever $(U,\pd U)$ is one of the pairs of schemes or sites from Examples \ref{traceLagex} for which the pairing comes from the six functors formalism. 
 
 For pairings over cyclotomic extensions induced by the methods of \S \ref{cycdualitylemma}, we have additional torsion of finiteness conditions to impose, as in Proposition \ref{dualpropinfty} and Remark \ref{Iwasawarmk}.
\end{examples}

The proofs of Lemmas \ref{wdopenlemma} and \ref{wdnilcompletelemma} then adapt to give the same statements verbatim in this setting. Corollary \ref{tracecor2a} adapts to give:
 
 \begin{corollary}\label{tracecor2Laga}
Take     a morphism $\pd U\to U$ of topologically Noetherian schemes which has a trace pairing as above with $\bD=\uline{\Zl}$, 
and  $F \co dg_+\CAlg_K \to s\Set$ an
 $n$-shifted symplectic  derived Artin $\infty$-stack. Then for the full subfunctors   $F({\pd U}_{\pro\et},-)^{wd}  \subset F({\pd U}_{\pro\et},-)$ and $F( U_{\pro\et},-)^{wd}\subset F( U_{\pro\et},-)$ consisting of points $\phi$ at which   the presheaf $\bL^{F,\uline{A}_{\pd U}, \phi} $ (resp. $\bL^{F,\uline{A}_{U}, \phi} $)  satisfies weak duality in the sense of Definition \ref{weakdualdef} (resp. Definition \ref{weakdualLagdef}), the natural map
\[ 
 F( U_{\pro\et},-)^{wd} \to F({\pd U}_{\pro\et},-)^{wd}
 \]
 carries a natural Lagrangian structure with respect to the $(n-d+1)$-shifted symplectic structure on $F({\pd U}_{\pro\et},-)^{wd}$ given by Corollary \ref{tracecor2a}.
 \end{corollary}
 
 Corollary \ref{tracecor2b} then adapts to give:
  \begin{corollary}\label{tracecor2Lagb}
In the setting of Corollary \ref{tracecor2Laga},  take a morphism $W \to Y$ of (underived)  dagger Artin analytic $\infty$-stacks, equipped with a 
commutative diagram
\[
 \begin{CD}
 W @>{\eta'}>>  \pi^0F( U_{\pro\et},-)\\
 @VVV @VV{f^*}V\\
   Y @>{\eta}>>  \pi^0F({\pd U}_{\pro\et},-)
 \end{CD}
\]
of functors $\Affd\Alg^{\loc,\dagger}_K \to s\Set $.
Assume moreover that  the horizontal maps $\eta,\eta'$ are formally \'etale and  that at all points $\phi$ in the image of $\eta'$ (resp. $\eta$),  the presheaf $\bL^{F,\uline{A}_{\pd U}, \phi} $ (resp. $\bL^{F,\uline{A}_{U}, \phi} $)  satisfies weak duality in the sense of Definition \ref{weakdualdef} (resp. Definition \ref{weakdualdef}).
 
  Then  the functor $\tilde{W} \co A \mapsto W(\H_0A)\by^h_{F({\pd U}_{\pro\et},\H_0A)}F({\pd U}_{\pro\et},A) $ on $dg_+\Affd\Alg^{\loc,\dagger}_K$
 is a dg dagger  Artin analytic $\infty$-stack. It  carries a natural Lagrangian structure with respect to the $(n-d+1)$-shifted symplectic structure on $\tilde{Y}$ given by Corollary \ref{tracecor2b}.
 \end{corollary}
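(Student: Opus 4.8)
The plan is to run the proof of Corollary~\ref{tracecor2b} almost verbatim to obtain representability of $\tilde{W}$, and then to produce the Lagrangian structure by transgression, pulling the Lagrangian structure of Corollary~\ref{tracecor2Laga} back along formally \'etale maps.

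\textbf{Representability.} As in Corollary~\ref{tracecor2b}: $F$ being homotopy-preserving, nilcomplete and homogeneous, Corollary~\ref{preservehtpycor} shows that $A\mapsto F(\uline{A}_U(V))$ and $A\mapsto F(\uline{A}_{\pd U}(V))$ inherit these three properties for every quasi-compact quasi-separated $V$; passing to homotopy limits, so do $F(U_{\pro\et},-)$ and $F({\pd U}_{\pro\et},-)$, and hence, by the relative analogues of Lemmas~\ref{wdopenlemma} and~\ref{wdnilcompletelemma} (whose proofs go through verbatim in the setting of Definition~\ref{weakdualLagdef}), so do the full subfunctors $F(U_{\pro\et},-)^{wd}$ and $F({\pd U}_{\pro\et},-)^{wd}$. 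Those lemmas also give $F(U_{\pro\et},A)^{wd}\simeq F(U_{\pro\et},\H_0A)^{wd}\by^h_{F(U_{\pro\et},\H_0A)}F(U_{\pro\et},A)$, so that $\tilde{W}$ is precisely the $\infty$-geometric thickening of $W$ attached by Corollary~\ref{opencutcor} to the formally \'etale morphism $\eta'\co W\to\pi^0F(U_{\pro\et},-)$, provided the remaining hypotheses of that corollary hold. For these, the relative form of Lemma~\ref{FXcotlemma} — proved by the same computation of homotopy limits, now invoking the cone-duality of Definition~\ref{weakdualLagdef} in place of Definition~\ref{weakdualdef} — represents the tangent functor of $F(U_{\pro\et},-)$ at a point $\phi$ in the image of $\eta'$ by the perfect $A$-module $\cone\bigl(\oR\Gamma(U,\bL^{F,\uline{A}_U,\phi}\hten_{\Zl}\bD)\to\oR\Gamma({\pd U},\oL f^*\bL^{F,\uline{A}_U,\phi}\hten_{\Zl}\bD)\bigr)[d-1]$, with finitely generated cohomology groups; and the universal property in Definition~\ref{weakdualLagdef} gives flat base change $\bL^{F(U_{\pro\et},-),\phi'}\simeq\bL^{F(U_{\pro\et},-),\phi}\ten_A^{\oL}A'$ along \'etale $A\to A'$. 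Corollary~\ref{opencutcor} therefore applies, making $\tilde{W}$ a dg dagger-analytic Artin $\infty$-stack with $\bL^{\tilde{W},\phi}\simeq\bL^{F(U_{\pro\et},-),\phi}$, and in particular equipping it with a formally \'etale map $\tilde{W}\to F(U_{\pro\et},-)^{wd}$.

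\textbf{Lagrangian structure.} Corollary~\ref{tracecor2Laga} endows $F(U_{\pro\et},-)^{wd}\to F({\pd U}_{\pro\et},-)^{wd}$ with a Lagrangian structure relative to the $(n-d+1)$-shifted symplectic structure on $F({\pd U}_{\pro\et},-)^{wd}$, and Corollary~\ref{tracecor2b} gives the formally \'etale map $\tilde{Y}\to F({\pd U}_{\pro\et},-)^{wd}$ through which the symplectic structure on $\tilde{Y}$ is pulled back. The hypothesised commutative square relating $\eta$ and $\eta'$ shows that $\tilde{W}\to F(U_{\pro\et},-)^{wd}$, $\tilde{Y}\to F({\pd U}_{\pro\et},-)^{wd}$, the base-change map $f^*$, and $\tilde{W}\to\tilde{Y}$ form a commutative square with formally \'etale horizontal edges. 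Pulling the isotropic datum of Corollary~\ref{tracecor2Laga} back along this square (using Definition~\ref{daggerLagdef}, or the presheaf-level formulation sketched in Remark~\ref{Nstackrmk2}) yields an isotropic structure on $\tilde{W}$ over $\tilde{Y}$, automatically compatible with the symplectic structure on $\tilde{Y}$ since the latter was itself obtained by pullback along $\tilde{Y}\to F({\pd U}_{\pro\et},-)^{wd}$. Finally, because the two horizontal edges are formally \'etale they induce quasi-isomorphisms on cotangent complexes, so the non-degeneracy quasi-isomorphism witnessing the Lagrangian condition below transports to $\tilde{W}\to\tilde{Y}$; hence the pulled-back isotropic structure is Lagrangian.

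I expect the main obstacle to be the coherence bookkeeping in the last step: one has to check that the transgression of Corollary~\ref{tracecor} and its relative counterpart are compatible with the formally \'etale square above at the level of the spaces of (pre-)symplectic and isotropic structures on $F(U_{\pro\et},-)$, $F({\pd U}_{\pro\et},-)$, $\tilde{W}$ and $\tilde{Y}$, so that the Lagrangian class genuinely restricts, and that formal \'etaleness transports the isotropic class together with its non-degeneracy simultaneously. Once the relative version of Lemma~\ref{FXcotlemma} is established — a routine adaptation — the remainder is manipulation of homotopy limits, but it is the part that needs care.
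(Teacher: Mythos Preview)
Your proposal is correct and is precisely the adaptation of Corollary~\ref{tracecor2b} that the paper leaves implicit (the paper gives no proof beyond the phrase ``Corollary~\ref{tracecor2b} then adapts to give''); you have correctly identified the necessary relative form of Lemma~\ref{FXcotlemma} via the cone-duality of Definition~\ref{weakdualLagdef}, and the transport of the Lagrangian datum along the formally \'etale square. Your reading of $\tilde{W}$ as the derived thickening of $W$ along $F(U_{\pro\et},-)$, rather than along $F(\pd U_{\pro\et},-)$ as literally printed in the statement, is the intended one --- the printed formula (and the swapped resp.\ clauses in the hypotheses) are typos.
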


 \begin{examples}\label{SpexLag}
 Examples \ref{traceLagex} and \ref{wdexLag} now lead to many instances of Lagrangians in weighted shifted symplectic moduli stacks when substituted into Corollary \ref{tracecor2Lagb}. 
 
 If we take the $n$-shifted symplectic derived stack $F$ to be $B\GL_n$, or $BG$ for any other affine algebraic group equipped with a $G$-equivariant inner product on its Lie algebra, or to be the moduli stack of perfect complexes, then $n=2$, so the corollary produces $(3-d)$-shifted 
 Lagrangian structures on moduli of $G$-torsors or of complexes of pro-\'etale sheaves on $U$, provided we impose some constructibility constraints.
 
 In particular, if $U$ is a smooth proper scheme of dimension $m$ over a separably closed field prime to $\ell$, we have $(n+1-2m)$-shifted Lagrangian structures on suitable open substacks of the derived moduli stack of $F$-valued sheaves on $U_{\pro\et}$, depending on a choice of isomorphism $ \Zl(m) \cong \Zl$. These stacks are Lagrangian over the derived moduli stack 
 of $F$-valued sheaves on the deleted tubular neighbourhood, as in Examples \ref{Spex}.
 
The other cases of Examples \ref{traceLagex}   for which the hypothesis $\bD \cong \uline{\Zl}$ of Corollary \ref{tracecor2Laga} is satisfied are those for which the base field contains $\mu_{\ell^{\infty}}$. These produce many examples arising from pairings of generalised Cassels--Tate type as in \S \ref{cycdualitysn}, where the constraints ensuring non-degeneracy of the pairing are stricter than constructibility, but tend to be satisfied by sheaves which are Iwasawa torsion. In particular:

\begin{enumerate}[itemsep=5pt, parsep=0pt] 
 \item\label{Iwlocalex} If $K$ is a non-Archimedean local field with finite residue field prime to $\ell$,  and $K_{\infty}:=K(\mu_{\ell^{\infty}})$ with ring of integers $\cO_{K_{\infty}}$, 
then we can take $\pd U \to U$ to be the morphism $\Spec K_{\infty} \to \Spec \cO_{K_{\infty}}$, giving us   $(n-1)$-shifted Lagrangian structures  on suitable open substacks of the derived moduli stack  $F ((\Spec \cO_{K_{\infty}})_{\pro\et},-)$ over $ F ((\Spec K_{\infty})_{\pro\et},-)$, coming from local Tate duality and the trace of Lemma \ref{traceIwlemma}.  This morphism of derived stacks essentially maps from moduli of unramified  $\Gal(\bar{K}/K(\mu_{\ell^{\infty}}))$-representations to moduli of all  $\Gal(\bar{K}/K(\mu_{\ell^{\infty}}))$-representations.

 \item\label{IwlocalZex} More generally, if $Z$ is a smooth proper scheme of dimension $m$ over $\cO_{K_{\infty}}$, we have $(n-1-2m)$-shifted Lagrangian structures on suitable open substacks of the derived moduli stack  $F(Z_{\pro\et},-)$ over $ F ((Z\ten_{\cO_K}K)_{\pro\et},-)$, since we can choose a model $Z_n$ for $Z$ over some $\cO_{K(\mu_{\ell^n})}$ and apply the traces of \S \ref{cycdualitysn} to the cover $Z \to Z_n$.

\item\label{Iwglobalex} If $K$ is a number field and $S$ a finite set of primes including all those dividing $\ell$, with $\cO_{K,S}$ the localisation of $\cO_K$ at $S$, then we have an $(n-1)$-shifted isotropic structure on  the derived moduli stack  $F( (\Spec \cO_{K(\mu_{\ell^{\infty}},S})_{\pro\et},-)$ over $ \prod_{v \in S}F ((\Spec  K(\mu_{\ell^{\infty}})_v  )_{\pro\et},-)$, coming from Poitou--Tate duality and the trace of Lemma \ref{traceIwlemma}. 

However, the necessary presence of primes $v$ dividing $\ell$ means few points lie in the non-degenerate locus, as discussed in Example \ref{Spex}.(\ref{Iwasawa2}), so there will not be many examples of suitable open substacks on which the isotropic structure is in fact Lagrangian.

 \item\label{IwglobalZex} More generally, if $Z$ is a smooth proper scheme of dimension $m$ over $\cO_{K(\mu_{\ell^{\infty}}),S}$, then we similarly  have an $(n-1-2m)$-shifted isotropic structure  on the derived moduli stack  $F( Z_{\pro\et},-)$ over $ \prod_{v \in S}F ((Z\ten_{\cO_{K,S}}K_v  )_{\pro\et},-)$. 

 \item If $U$ is a smooth proper scheme of dimension $m$ over $K(\mu_{\ell^{\infty}})$, for $K$ a local field with finite residue field, 
 we have $(n-1-2m)$-shifted  Lagrangian structures  on suitable open substacks of the derived moduli stack  $F(U_{\pro\et},-)$. These stacks are Lagrangian over the derived moduli stack 
\[
  A \mapsto \oR\Gamma(Z_{\pro\et}, i^*\oR j_*F(\uline{A}_U)) 
  \]  
 of $F$-valued sheaves on the deleted tubular neighbourhood of $U$ in its compactification, as in Examples \ref{Spex}.
 
\item\label{IwglobalopenZex} We can also combine these as in Example \ref{traceLagex2}. Take $U$ to be a smooth separated scheme of dimension $m$  over a either the ring of integers $\cO_{K(\mu_{\ell^{\infty}})}$, for $K$  a non-Archimedean local field with residue filed prime to $\ell$, or (more rarely) over a localisation $\cO_{K(\mu_{\ell^{\infty}}),S}$ of the ring of integers of the extension $K( \mu_{\ell^{\infty}})$ of a number field $K$. For a compactification $j \co U \into \bar{U}$ of $U$ over the same base, with complement $i \co Z \to \bar{U}$, we then have an
 $(n-1-2m)$-shifted isotropic structure on the derived moduli stack  
 \[
 F(U_{\pro\et},-) \co  A \mapsto \oR\Gamma(U_{\pro\et}, F(\uline{A}_U) ). 
\]
This is isotropic over 
the  derived moduli stack sending $A$ to the homotopy fibre product of the diagram
\[
  %
  \xymatrix@R=0ex{
  \oR\Gamma(Z_{\pro\et}, i^*\oR j_*F(\uline{A}_U)) \ar[dr]\\
 & \prod_{v \in S}\oR\Gamma( Z_{K_v,\pro\et}, i^*\oR j_*F(\uline{A}_U))\\
  \ar[ur]
  \prod_{v \in S}\oR\Gamma(U_{ K_v,\pro\et}, F(\uline{A}_U))
  }
  \]  
 of $F$-valued sheaves on the boundary $\pd U$ constructed as in Example \ref{traceLagex2}, where we take  $\{K_v\}_{v \in S}=\{K\}$ in the local case. Like those in example (\ref{Iwglobalex}), the Lagrangian locus for these isotropic structures will tend to be very small.
 \end{enumerate}
 
\end{examples}

\begin{example}\label{IwintersectGLCex}
  We would also like to consider  derived intersections of  the Lagrangians of Examples \ref{SpexLag}.(\ref{IwglobalZex}) and  (\ref{IwlocalZex}), but the former need primes dividing $\ell$ in the base, while the latter avoid them. 
One way to resolve this is by considering an analogue of Greenberg's local conditions \cite[0.9]{nekovarSelmerComplexes} at the primes dividing $\ell$, by choosing a Lagrangian $G \to F$.

  Given $Z$ smooth and proper of dimension $m$ over the ring of integers $\cO_{K(\mu_{\ell^{\infty}})}[\ell^{-1}]$, for $K$ a number field,  we then have an $(n-2-2m)$-shifted pre-symplectic  structure on the homotopy limit   of the diagram
  \[
  \xymatrix{
    &  \prod_{\substack{v \in S \\ v \nmid \ell}}F ((Z\ten_{\cO_{K}}\cO_v  )_{\pro\et},-) \by  \prod_{\substack{v \in S \\ v \mid \ell}}G((Z\ten_{\cO_{K}}K_v  )_{\pro\et},- ) \ar[d]\\ 
    F( (Z\ten_{\cO_K}\cO_{K,S})_{\pro\et},-)\ar[r]  &\prod_{v \in S}F ((Z\ten_{\cO_{K}}K_v  )_{\pro\et},-).
  }
  \]

When $Z= \Spec \cO_{K(\mu_{\ell^{\infty}})}[\ell^{-1}]$ itself, we might think of this as a  generalisation of Selmer complexes, 
with the resulting  $0$-shifted symplectic structure corresponding to a form of generalised Cassels--Tate pairing as in \cite[\S 10]{nekovarSelmerComplexes} and Remark \ref{Iwasawarmk}. In particular, 
the pre-symplectic form is symplectic at $\Q_p$-points $\phi$  where 
\[
\cone\big( \oR\Gamma(\cO_{K,S} (\mu_{\ell^{\infty}}),\uline{\bL}^{F, \phi})    \to \prod_{\substack{v \in S \\ v \nmid \ell}}  \oR\Gamma_c(\cO_v (\mu_{\ell^{\infty}}),\uline{\bL}^{F, \phi} )[1]    \by \prod_{\substack{v \in S \\ v \mid \ell}} \oR\Gamma( K_v(\mu_{\ell^{\infty}}) ,\uline{\bL}^{G, \phi})\big)
\]
is perfect and the complexes  $\bL^{F, \phi}$ and $\bL^{G, \phi}$ are induced by 
$G_{\cO_{K,S} (\mu_{\ell^n}) }$- and $G_{K_v(\mu_{\ell^n})}$-representations for some $n<\infty$; here $\oR\Gamma_c(\cO_v,-)[1]= \cone(\oR\Gamma(\cO_v,-) \to \oR\Gamma(K_v,-)$.   Such finiteness would be implied by the associated Iwasawa modules being torsion, as conjectured by Mazur for representations coming from abelian varieties.

Instead of forcing a Lagrangian at places dividing $\ell$, it would be more satisfactory to take a crystalline construction as in the  $f$-cohomology of \cite{BlochKatoTamagawa}. However, we then  encounter the obstacle that Tate twists in crystalline cohomology are non-trivial even over cyclotomic fields, in particular because they change the Hodge filtration, so we do not have a relative trace of the required form. The weighted theory of \S \ref{weightedsn} will handle twisted duality, with a non-cyclotomic  crystalline  analogue of this construction in  \S \ref{wSpexLagcrissn}.

  %
  
 %

 
 \end{example}
 
\begin{remark}[Comparison with Selmer complexes]\label{selmer1rmk}
 To understand how the construction of Example \ref{IwintersectGLCex} generalises Selmer complexes, note that our symplectic constructions all extend to relative symplectic structures, i.e. morphisms $F \to M$ is morphism of derived Artin stacks with a non-degenerate form $\omega \in \z^{n+2}F^2\DR(F/M)$, and that for the Lagrangian case we can similarly take a morphism $N \to M$ and a Lagrangian $G \to N\by_M^hF$ relative to $N$. We can then take $M= B\Sp_{2g}$, $F= B(\Sp_{2g} \ltimes \bA^{2g})$, with $N=BU$ for $U \subset \Sp_{2g}$ the group of block upper triangular symplectic matrices and $G= B(U \ltimes \bA^g)$. Then $F$ carries a canonical $2$-shifted symplectic structure over $M$, coming from the canonical symplectic product on $\bA^{2g}$, and $G \to F\by^h_MN$ a canonical Lagrangian structure over $N$, corresponding to the Lagrangian structure on $\bA^g \subset \bA^{2g}$. 

We can then apply the construction above to $G \to F$ and obtain a pre-symplectic structure by taking the homotopy fibre over a point of the same construction applied to $M \to N$. A point of the latter consists of a symplectic $G_{S,\infty}$-representation $U$ unramified at places in $S$ not dividing $\ell$, together with $G_{v,\infty}$-equivariant  Lagrangian subspaces  $U^+_v \subset U$ at all places $v \in S$. The homotopy fibre over such a point then just corresponds to the complex
\[
 \cone( \oR\Gamma(G_{S,\infty} ,U)\oplus \prod_{\substack{v \in S \\ v \nmid \ell}}  \oR\Gamma(G_{v,\infty}/I_{v,\infty} ,U)\oplus \prod_{\substack{v \in S \\ v \mid \ell}} \oR\Gamma( G_v ,U_+) \to \prod_{v \in S} \oR\Gamma(G_v,U)).
\]
\end{remark}

\subsection{Infinitesimal formality for moduli of \tps{$G$}{G}-torsors} 

\subsubsection{Deformation functors}

We can look at infinitesimal neighbourhoods of any $K$-valued point $\phi$ in one of our derived moduli functors, giving rise to the functor of derived deformations of $\phi$. This is purely algebraic in nature, essentially because Artinian $K$-algebras have unique EFC-structures and are already dagger affinoid. Explicitly, each such derived deformation functor takes inputs from the category of Artinian local $K$-CDGAs with residue field $K$, and sends $A$ to the homotopy fibre
\[
F(X_{\pro\et},A)_{\phi}:=  F(X_{\pro\et},A)\by^h_{F(X_{\pro\et},K)}\{\phi\}.
\]

 \subsubsection{DGLAs}\label{DGLAsn}
 
For simplicity, we focus on the functor $(BG)(X_{\pro\et},-)$ of moduli of $G$-torsors\footnote{Taking a general homogeneous functor instead of $BG$, \cite[Theorems 2.30 and 4.57]{ddt1} give us a hypersheaf of DGLAs instead of $ \uline{\g}_{\phi,X}$, but shifted symplectic structures are generally encoded by more complicated data than the inner products we will work with on $\g$.}, for an affine algebraic group $G$ equipped with a $G$-invariant inner product on its Lie algebra $\g$. In particular,  $ (BG)(X_{\pro\et},K)$ is equivalent to the groupoid of pro-\'etale $\uline{G(K)}_X$-torsors. With reasoning as in \cite[\S 4.5]{dmc}, the functor of derived deformations of a  torsor $\phi$ is governed by a DG Lie algebra of the form
\[
 \oR\Gamma(X_{\pro\et}, \uline{\g}_{\phi,X}),
\]
where $\g_{\phi}$ is the sheaf given by the adjoint bundle of $\phi$ (i.e. $(\phi\by \uline{\g}_X)/\uline{G(K)}_X$), and derived global sections are taken in the model category of DGLAs (if using a \v Cech complex, the simplest models for such homotopy limits involve Thom--Sullivan cochains). Specifically, following \cite{hinstack}, we say that a derived deformation functor is governed by a DGLA $L$ means if it is equivalent to the functor $A \mapsto \mmc(L\ten \m_A)$, where $\m_A$ is the maximal dg-ideal of $A$.%
\footnote{There is also an equivalent form given by the homotopy quotient of  $\mmc(L\ten \m_A)$ by the gauge action $*$ of the simplicial group $\uline{\Gg}(L\ten \m_A)$ given by $n \mapsto  \exp(L\ten \m_A\ten \Omega^{\bt}(\Delta^n))^0$,  the equivalence following by looking at tangent cohomology. There is a further equivalent form given by the nerve of the simplicial groupoid  with objects  $\mmc(L\ten \m_A)_0$ and with morphisms $\{g \in \uline{\Gg}(L\ten \m_A)~:~ g*x=y\}$ between each pair $x,y$ of objects; for further discussion and references, see \cite[Remark 4.7]{dmc}.} 

In our setting, the $G$-invariant inner product $\<-,-\>$ on the Lie algebra $\g$ induces a morphism
\[
  \oR\Gamma(X_{\pro\et}, \uline{\g}_{\phi,X}) \by  \oR\Gamma(X_{\pro\et}, \uline{\g}_{\phi,X}) \xra{\<-,-\> }  \oR\Gamma(X_{\pro\et}, \Ql) 
\]
which is compatible with the Lie bracket in the sense that $\<[x,y],z\>= \<x,[y,z]\>$. If the conditions of Proposition \ref{traceprop} are satisfied with $\D =\uline{\Zl}$, then we can moreover compose this with the trace map $ \tr \co \oR\Gamma(X_{\pro\et}, \uline{K})\to K[-d] $ to give a shifted inner product
\[
 \<-,-\>_{\tr} \co \oR\Gamma(X_{\pro\et}, \uline{\g}_{\phi,X}) \by  \oR\Gamma(X_{\pro\et}, \uline{\g}_{\phi,X}) \to K[-d]
\]
satisfying $\<[x,y],z\>_{\tr}= \<x,[y,z]\>_{\tr}$.

\subsubsection{Shifted symplectic structures}\label{formalsympsn}

Since a Maurer--Cartan element $\alpha \in  \mmc(\oR\Gamma(X_{\pro\et}, \uline{\g}_{\phi,X}) \ten \m_A)$ defines an element of $(BG)(X_{\pro\et},A)$, Corollary \ref{tracecor} induces a $(2-d)$-shifted pre-symplectic structure $\omega_A \in F^2\DR(A)$ on $A$ whenever $X$ satisfies the conditions of Proposition \ref{traceprop} with $\bD=\Zl$. 

In order to understand this $(2-d)$-shifted pre-symplectic structure on $A$ explicitly, note that the tangent complex $T_{\alpha}(\mmc(L\ten-))$ is quasi-isomorphic to $(L\ten A, \delta +[\alpha,-])[1]$. The morphism $\bL^{\mmc(L\ten-),\alpha } \to \Omega^1_A$ then defines a closed element of degree $0$ in the complex 
\[
 \oR\HHom_A(\bL^{\mmc(L\ten-),\alpha },\Omega^1_A) \simeq (L\ten \Omega^1_A, \delta +[\alpha,-])[1]
\]
represented by $d\alpha \in (L \ten \Omega^1_A)^1$; this satisfies $\delta(d\alpha) +[\alpha,d\alpha]=0$ because $\delta \alpha + \half [\alpha,\alpha]=0$. 
 
 Analysing the proof of Corollary \ref{tracecor}, it thus follows that $\omega_A$ is given by 
 \[
 \<d\alpha, d\alpha\>_{\tr} \in \z^{2-d} (\Omega^1_A), 
 \]
 which is closed under $d$. 
 
 Although we have described this construction for Artinian CDGAs $A$, the same arguments apply to pro-Artinian CDGAs after taking suitable limits. In this infinitesimal setting, we can even work with $\Z$-graded CDGAs (playing the same role here as  the stacky CDGAs of \S \ref{stackysn}) as in \cite{Man2}, and then this description gives a $(2-d)$-shifted pre-symplectic structure on the pro-Artinian CDGA $(\widehat{\Symm}(L[1]^*), \delta +[-,-]^*)$ representing $\mmc(L\ten-)$.

\subsubsection{Formality}

Now take $K=\Ql$ and let  $X_k$ be a smooth proper variety of dimension $m$ over  an $\ell$-coprime finite field $k$, with $X:=X_k\ten_k\bar{k}$ the base change to  the algebraic closure $\bar{k}$ of $\bar{k}$. Some power of the
 Frobenius automorphism of $\bar{k}$  gives rise to a $k$-linear automorphism of $X$, so acts on the groupoid $(BG)(X_{\pro\et},\Ql)$ of pro-\'etale $G(\Ql)$-torsors. Assume moreover that  $\phi \in (BG)(X_{\pro\et},\Ql)$ is fixed by some power of Frobenius and is reductive in the sense that for every algebraic $G$-representation $V$, the local system $\uline{V,X,\phi}$ is irreducible. Then,  generalising \cite[\S 2.3]{paper1}, the argument of \cite[Theorem 6.10]{weiln}\footnote{or the result itself, applied to groups of the form $R \ltimes \exp(\g \ten \m_A)$, for $R \subset G$ the Zariski closure of $\pi_1(X)$} implies that our governing DGLA is formal, in the sense that we have a zigzag of quasi-isomorphisms
 \[
  \oR\Gamma(X_{\pro\et}, \uline{\g}_{\phi,X}) \simeq \H^*(X_{\pro\et}, \uline{\g}_{\phi,X}).
 \]

\begin{proposition}\label{formalsympprop}
 The $(2-2m)$-shifted symplectic structure on $BG(X_{\pro\et},-)_{\phi}$ given by Examples \ref{SpexLag} (and depending on a choice of isomorphism $ \Zl(m) \cong \Zl$)
 is also formal, in the sense that it is induced by the perfect pairing
 \[
\<-,-\>_{\tr}\co  \H^*(X_{\pro\et}, \uline{\g}_{\phi,X}) \ten \H^*(X_{\pro\et}, \uline{\g}_{\phi,X}) \to \Ql[-2m],
 \]
 so the symplectic structure on an $A$-valued Maurer--Cartan element 
 $\alpha \in \mmc(\H^*(X_{\pro\et}, \uline{\g}_{\phi,X})\ten A)$ is $\<d\alpha,d\alpha\>_{\tr}$.
  \end{proposition}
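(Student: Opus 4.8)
The plan is to reduce the assertion to a \emph{cyclic} formality statement for the governing DGLA and then prove that cyclic formality by a weight argument. Recall from \S\ref{formalsympsn} that $BG(X_{\pro\et},-)_\phi$ is governed by the DGLA $L:=\oR\Gamma(X_{\pro\et},\uline{\g}_{\phi,X})$, equipped with the shifted inner product $\<-,-\>_{\tr}$ of degree $-2m$ obtained by composing the $G$-invariant pairing on $\g$ with the trace $\tr\co \oR\Gamma(X_{\pro\et},\Ql)\to \Ql[-2m]$ of Proposition~\ref{traceprop} (so here $d=2m$ and $\bD\simeq \Zl(m)\cong \Zl$); this pairing is cyclic, $\<[x,y],z\>_{\tr}=\<x,[y,z]\>_{\tr}$, and is perfect on cohomology by Poincar\'e duality. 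The analysis of \S\ref{formalsympsn} already shows that on an $A$-valued Maurer--Cartan element $\alpha$ the $(2-2m)$-shifted symplectic structure coming from Corollary~\ref{tracecor2a} is precisely $\<d\alpha,d\alpha\>_{\tr}$, so what has to be shown is that the same formula describes the structure after replacing $L$ by $\H^*(X_{\pro\et},\uline{\g}_{\phi,X})$ with the induced cyclic structure.

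The first step is to observe that this entire construction --- Maurer--Cartan functor together with the $2$-form $\<d\alpha,d\alpha\>_{\tr}$ --- depends only on the class of $(L,\<-,-\>_{\tr})$ in the homotopy category of cyclic $L_\infty$-algebras: a cyclic $L_\infty$-quasi-isomorphism induces an equivalence of the associated derived deformation functors carrying $\<d\alpha,d\alpha\>_{\tr}$ to $\<d\alpha,d\alpha\>_{\tr}$ up to de Rham coboundaries (absorbed by the gauge action). Hence it suffices to prove that $(L,\<-,-\>_{\tr})$ is \emph{cyclically formal}, i.e.\ connected by a zigzag of cyclic $L_\infty$-quasi-isomorphisms to $(\H^*(X_{\pro\et},\uline{\g}_{\phi,X}),\<-,-\>_{\tr})$, the target carrying the perfect Poincar\'e pairing; non-degeneracy of the resulting symplectic structure is then immediate from perfectness of that pairing.

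The second step is to upgrade the formality of \cite[Theorem~6.10]{weiln} to cyclic formality by a weight argument. Since $\phi$ is fixed by a power of Frobenius and reductive, $\uline{\g}_{\phi,X}$ is (a Frobenius pullback of) a pure weight-$0$ local system on $X_k$, so $\H^i(X_{\pro\et},\uline{\g}_{\phi,X})$ is pure of weight $i$ and $L$ has a model (\'etale-cohomological or Thom--Sullivan) carrying a compatible Frobenius/weight structure for which the bracket has weight $0$. The invariant pairing on $\g$ is algebraic, hence Frobenius-equivariant of weight $0$, and the Tate twist in $\bD\simeq\Zl(m)$ is exactly what makes $\tr$ have weight $0$; thus $\<-,-\>_{\tr}$ is a morphism of weight-graded objects. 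Running the minimal model / homotopy transfer construction of \cite{weiln} equivariantly --- using a weight-equivariant deformation retraction of $L$ onto $\H^*L$, which exists because the weight grading on cohomology is pure --- produces a cyclic $L_\infty$-quasi-isomorphism whose linear term is the identity on cohomology, and which therefore intertwines $\<-,-\>_{\tr}$ with the Poincar\'e pairing. Transporting $\<d\alpha,d\alpha\>_{\tr}$ along this zigzag and comparing with \S\ref{formalsympsn} then yields the statement.

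The main obstacle is precisely this last step: one must arrange the formality quasi-isomorphism to respect the cyclic (invariant-pairing) structure, not merely the DGLA structure. This is what forces the weight bookkeeping and is the reason the Tate twist has to be trivialised weight-consistently --- without the weight-$0$ normalisation of the trace, $\<-,-\>_{\tr}$ is not a morphism in the relevant category and the cyclic homotopy transfer theorem does not apply. All other ingredients (the explicit form of the symplectic structure on $\mathrm{MC}(L\ten-)$, purity of the adjoint local system, perfectness of Poincar\'e duality) are either recalled in \S\ref{formalsympsn} or standard.
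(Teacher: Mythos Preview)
Your approach via cyclic $L_\infty$-formality is a genuinely different route from the paper's. The paper does \emph{not} attempt to upgrade the DGLA formality of \cite{weiln} to a cyclic statement. Instead, it works directly on the pro-representing CDGA $A=(\widehat{\Symm}(L[1]^*),[-,-]^*)$ for $L=\H^*(X_{\pro\et},\uline{\g}_{\phi,X})$, and uses a weight-versus-degree count on $F^2\DR(A)$: since $L^r$ has Frobenius weight $r$, elements of weight $-2m$ in $\Omega^p_A[-p]\cong S^p(L[2]^*)\ten A$ live in cochain degree $\ge 2p-2m$, so the space of $(2-2m)$-shifted symplectic structures of Frobenius weight $-2m$ collapses to a discrete subset of $(S^2L^*)^{-2m}$. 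This reduces everything to computing the leading term $\omega_2 \bmod \m_A$, which is done by observing that $[du]\bmod \m_A$ is the linear part of the formality $L_\infty$-quasi-isomorphism, hence the identity on cohomology. No cyclic homotopy transfer is invoked; the only input beyond ordinary DGLA formality is that the symplectic structure itself has Frobenius weight $-2m$, because it comes from the trace.

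On that last point, your weight bookkeeping is slightly off: once the isomorphism $\Zl(m)\cong\Zl$ is fixed (as the statement stipulates), the trace $\H^{2m}(X,\Ql)\to\Ql$ has weight $-2m$, not $0$, and so does $\<-,-\>_{\tr}$. This is harmless for your equivariant transfer argument (homogeneity is all you need), but it is precisely this $-2m$ that drives the paper's degree count. Your strategy should work in principle, but it leans on a cyclic homotopy transfer theorem and on the assertion that a cyclic $L_\infty$-quasi-isomorphism carries $\<d\alpha,d\alpha\>_{\tr}$ to $\<d\alpha,d\alpha\>_{\tr}$ up to coherent homotopy --- both plausible, but neither cited precisely nor justified in your sketch. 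The paper's argument is shorter and self-contained, trading the cyclic machinery for the rigidity that the Frobenius grading imposes on $\DR(A)$.
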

 \begin{proof}
 The trace map for $X$ is given by $\H^{2m}(X_{\pro\et},\uline{\Zl(m)})\to \Zl$, which ($X$ being algebraically closed) we can rewrite as $ \H^{2m}(X_{\pro\et},\uline{\Zl})(m)\to \Zl$. Consequently, it has weight $-2m$ with respect to the Frobenius action, as does the shifted symplectic structure it induces.
  
  Now write $L:= \H^*(X_{\pro\et}, \uline{\g}_{\phi,X})$, so the deformation functor is pro-represented by $A:=(\widehat{\Symm}(L[1]^*), [-,-]^*)$. We can thus represent the $(2-2m)$-shifted symplectic structure as an element of 
  \[
   \z^{4-2m}F^2\DR(A) 
  \]
of weight $-2m$ with respect to the Frobenius action. 

Now the Frobenius action on $L^r$ has weight $r$ for all $r$, so  weight is at most  cochain degree in $\Symm(L[1]^*)$ and hence in $A$.
In $\Omega^p_{A}[-p] \cong S^p(L[2]^*)\ten A$, it then follows that  elements of weight $-2m$ live in cochain degree at least $2p-2m$, with equality only on $S^p(L[2]^*)\ten \Ql$.

Since the space of $(2-2m)$-shifted symplectic structures on $A$ only involves terms in $F^2\DR(A)$ of  cochain degree at most $4-2m$, the space of such structures $\omega$ of weight $-2m$ with respect to the Frobenius action is  equivalent to a   subset of $ S^2(L^*)^{-2m}$ (regarded as a discrete space). This implies that  $\omega$ lies in $\Omega^2_A$, and also that $\omega$ is determined by its image in $\Omega^2_A\ten_A\Ql$. 

To calculate this, take a universal deformation $u \in \mmc(  \oR\Gamma(X_{\pro\et}, \uline{\g}_{\phi,X}) \ten \m_A)$; this is unique up to a contractible space of choices, and encodes the data of an $L_{\infty}$-quasi-isomorphism $\Upsilon \co L \to \oR\Gamma(X_{\pro\et}, \uline{\g}_{\phi,X}) $. We know from \S \ref{formalsympsn} that the equivalence class of $\omega$ is given by $\< du,du\>_{\tr} \in \H^{2-2m}\Omega^2_A$, and we need to know its image 
\[
 \< du+\m_A,du +\m_A\>_{\tr} \in \H^{2-2m}(\Omega^2_A/\m_A\cdot \Omega^2_A).
\]
 Now observe that the element 
\begin{align*}
[du] \in  \H^1(\oR\Gamma(X_{\pro\et}, \uline{\g}_{\phi,X}) \ten(\Omega^1_A/\m_A)) &\cong \H^1(\oR\Gamma(X_{\pro\et}, \uline{\g}_{\phi,X}) \ten L[1]^*)\\
&\cong \Ext^0(L, \oR\Gamma(X_{\pro\et}, \uline{\g}_{\phi,X}) )
\end{align*}
is just the linear part of the $L_{\infty}$-quasi-isomorphism $\Upsilon$, so gives the identity on cohomology. Thus $\< du+\m_A,du +\m_A\>_{\tr} \in \H^{-2m}(S^2(L^*))\cong S^2(L^*)^{-2m}$ is   the pairing $\<-,-\>_{\tr} $  on cohomology, as required. 
\end{proof}

\begin{remark}
In accordance with the philosophy of weights \cite{poids},  the analogue of Proposition \ref{formalsympprop} for derived moduli of $G$-torsors on the analytic site of a compact K\"ahler manifold is also true. It has a much simpler proof, since trace maps are compatible with the zigzag of quasi-isomorphisms coming from the  $d\dc$- (or $\pd\bar{\pd}$)-lemma in \cite[7.6]{GM} and \cite[Lemma 2.2]{Simpson}.
 \end{remark}
 
\begin{remark}[Shifted Poisson structures]
  Since the shifted symplectic structure of Proposition \ref{formalsympprop} is strict, in the sense that it is a genuinely closed $2$-form which induces an isomorphism rather than a quasi-isomorphism, we can simply invert it to give the corresponding shifted Poisson structure.
 
 Explicitly, dualising the perfect pairing $\<-,-\>_{\tr}$ on   $L=\H^*(X_{\pro\et}, \uline{\g}_{\phi,X})$  gives us a Casimir element $\pi \in (S^2L)^{2m}$. The Poisson structure is then the biderivation of cochain degree $2m-2$ on $(\widehat{\Symm}(L[1]^*), [-,-]^*)$ given on generators $L[1]^*\by L[1]^*$ by contraction with  $\pi$.
\end{remark}

\section{Weighted shifted symplectic structures}\label{weightedsn}
 
 We now set up the theory to allow us to look  into cases where the trace maps $\H^d (X_{\et}, \bD/\ell^n)\to \Z/\ell^n$ exist on cohomology of  a  non-trivial rank $1$ local system $\bD$. Non-triviality means that results such as Corollaries \ref{tracecor} and \ref{tracecor2a} do not apply, but with some work, we can still construct a moduli functor with a  symplectic structure which is shifted in degree and twisted by a line bundle (i.e. a $\sP$-shifted symplectic structure in the terminology of \cite{BouazizGrojnowski}).
 
 If we were in the algebraic setting, looking at derived mapping stacks from a scheme or algebraic stack $X$ with dualising line bundle $\sL$, then we could take the $\bG_m$-torsor $P:= \oSpec_X (\bigoplus_{r \in \Z} \sL^{\ten r})$  on $X$ associated to $\sL$, and look at the derived mapping stack $\oR\map(P,F)$. This derived stack carries a natural $\bG_m$-action, with homotopy invariants
 \[
  \oR\map(P,F)^{h \bG_m} \simeq \oR\map([P/\bG_m],F) \simeq  \oR\map(X,F).
 \]
Moreover, a trace map on $\H^d(X,\sL^{\ten e})$ 
gives rise to a trace map on $\H^d(P,\sO_P) \cong \bigoplus_{r \in \Z}\H^d(X,\sL^{\ten r}) $ via projection to the $r=e$ factor, so an $n$-shifted pre-symplectic structure on $F$ leads to an $(n-d)$-shifted pre-symplectic structure on $\oR\map(P,F)$, though the latter is rarely representable because $P$ is not proper. 

\begin{example}
 If $X$ is the nerve $BG$ of an algebraic group, the line bundle $\sL$ corresponds to an algebraic group homomorphism $G \to \bG_m$, and then $P=[\bG_m/G]$, which is isomorphic to $B\ker(G \to \bG_m)$ if the homomorphism is surjective. The procedure of replacing $X$ with $P$ is thus analogous to replacing a Galois group $\Gal(F)$ with that of a cyclotomic extension $\Gal(F(\mu_{\infty}))$, as featured in \S \ref{cycdualitysn}. By looking at homomorphisms to $\bG_m$ rather than $\hat{\Z}^*\cong \Gal(F)/\Gal(F(\mu_{\infty})) $, we produce objects which have better finiteness properties than infinite cyclotomic extensions would; even when infinite-dimensional, they are so in a controlled way.
\end{example}

When $F$ is symplectic, this $(n-d)$-shifted pre-symplectic becomes symplectic under finiteness conditions on $\bigoplus_{r \in \Z}\H^*(X,\sL^{\ten r})$, which in particular require almost all of the complexes $\oR\Gamma(X,\sL^{\ten r})$ to be acyclic. We can resolve this by looking at a formal neighbourhood of $\oR\map(X,F)$ in $\oR\map(P,F)$, which carries a form of shifted symplectic structure and representability relying only on finiteness conditions on the groups $\H^*(X,\sL^{\ten r})$ separately for each $r$.

More explicitly, a $\bG_m$-equivariant derived affine scheme is given by  a CDGA $A$ equipped with a weight decomposition $A = \bigoplus_{r \in \Z} \cW_rA$, and then the space of $\bG_m$-equivariant maps from $\Spec A$ to $\oR\map(P,F) $ consists of maps 
\[
\oSpec_X (\bigoplus_{r \in \Z} \cW_rA \ten \sL^{\ten r}) \to F.
\]
The $\bG_m$-invariant substack $\oR\map(X,F)$  is then recovered by restricting to  $\bG_m$-equivariant CDGAs $A$ with $A=\cW_0A$. Its formal neighbourhood in $\oR\map(P,F) $ is  recovered by restricting to $\bG_m$-equivariant CDGAs with only finitely many non-zero weights, since in that case the ideal generated by non-zero weights is nilpotent.
This last functor has a natural affinoid analogue, since the category of coherent modules contains finite direct sums; we can also study it without needing to worry about an analytic analogue of the group scheme $\bG_m$. 

\subsection{Weighted structures}


\subsubsection{Weighted dagger dg algebras}

We now introduce the test objects on which we define moduli functors of the form above; there are entirely similar definitions in the algebraic setting.

\begin{definition}\label{wdgqdaggerdef}
 Define a weighted  dagger (resp. quasi-dagger, resp. localised dagger) dg algebra to be a dagger (resp. quasi-dagger, resp. localised dagger) dg algebra $A_{\bt}$ equipped with a decomposition
 \[
  A_{\bt}= \bigoplus_{n \in \Z} \cW_n A_{\bt},
 \]
into chain subcomplexes, with only finitely many non-zero terms, respecting the multiplication in the sense  that for $a \in \cW_m A_{\bt}$ and $b \in \cW_n A_{\bt}$, the product $ab$ lies in $\cW_{m+n}A_{\bt}$.

Accordingly, say that an affinoid  dagger (resp. quasi-dagger, resp. localised dagger)  dg space  $X=(X^0, \sO_{X,\bt})$ is weighted if it is equipped with such a decomposition on the sheaf $\sO_{X,\bt}$; this gives a contravariant equivalence between weighted algebras and weighted affinoid spaces. 

A morphism $A \to B$ of weighted quasi-dagger dg algebras 
is just a homomorphism of dg $K$-algebras which respects the weight decompositions.
 \end{definition}
 
 \begin{definition}
  We denote the category of weighted localised  dagger dg $K$-algebras by $dg_+\Affd\Alg_K^{f\bG_m,\dagger,\loc}$. We then denote its full subcategory of objects which are bounded as chain complexes by $dg_+\Affd\Alg_K^{f\bG_m,\dagger,\loc, \flat}$.
 \end{definition}
 
 \begin{definition}
  Given  a weighted quasi-dagger dg algebra $A_{\bt}$, define the quasi-dagger dg algebra $A_{\bt}/\bG_m$ to be the quotient of $A_{\bt}$ by the dg ideal generated by $\bigoplus_{n \ne 0} \cW_n A_{\bt}$; all elements of this ideal are nilpotent because the weights are bounded.
  
 For a weighted affinoid quasi-dagger dg space $X$, then define the space $X^{\bG_m}$ of $\bG_m$-invariants by the property that $ \Gamma(X^{\bG_m},\sO_{X^{\bG_m},\bt})\cong \Gamma(X,\sO_{X,\bt})/\bG_m$.
 \end{definition}

\begin{definition}\label{wcohdef}
  Given a weighted quasi-dagger dg algebra $A=\bigoplus_{n\in \Z} \cW_nA$, we let $dg\Coh_A^{f\bG_m}$ (resp. $dg_+\Coh_A^{f\bG_m}$)  be the category of those $\bG_m$-equivariant $A$-modules $M=\bigoplus_n \cW_nM$ in chain complexes (resp. non-negatively graded chain complexes) which are levelwise finitely generated and have only finitely many $\cW_nM$ non-zero.

  We then define $dg\Coh_A^{\hat{f}\bG_m}$ (resp. $dg_+\Coh_A^{\hat{f}\bG_m}$) to be the larger category consisting of those $\bG_m$-equivariant $A$-modules $M=\bigoplus_n \cW_nM$ in chain complexes (resp. non-negatively graded chain complexes) which are generated as bigraded $A$-modules by sets with finitely many elements for each pair  (weight, degree). 
\end{definition}
In particular, note that given $M \in dg\Coh_A^{\hat{f}\bG_m}$, the quotient $M/(\cW_{<r}M,\cW_{>s}M)$ of $M$ by the $A$-submodule $(\cW_{<r}M,\cW_{>s}M)$ generated by $\bigoplus_{n \notin [r,s]} \cW_nM$ lies in $dg\Coh_A^{f\bG_m}$. Moreover, if the weights of $A$ lie in the interval $[-x,y]$ for $x,y \ge 0$, then the weights of $(\cW_{<r}M,\cW_{>s}M)$ lie in the range $(-\infty, r+y)\cup(s-x,\infty)$; thus $M$ is isomorphic to the limit
$\Lim_{r,s} M/(\cW_{<-r}M,\cW_{>s}M)$  in the category of $\bG_m$-equivariant $A$-modules, since the inverse system stabilises in each weight. 

\medskip
We then have notions of weighted (quasi-)dagger spaces and analytic stacks, by introducing weights into the definitions of \S \ref{spacesn}; we will not reiterate the definitions explicitly here. One difference is that we do not have enough quasi-free objects
in our category to allow quasi-free replacement, because a free generator in non-zero weight would generate an algebra neither dagger affinoid nor with bounded weights. As a consequence, our moduli functors will only be representable by formal spaces or stacks, built from objects of $\pro(dg_+\Affd\Alg_K^{f\bG_m,\dagger,\loc})$ as in \S \ref{wfspacesn}  below. 

\subsubsection{Alternative formulations and quasi-free objects}\label{wqufreesn}

In order to get round the lack of quasi-free objects in $dg_+\Affd\Alg_K^{f\bG_m,\dagger,\loc}$, we now introduce slightly larger categories with closely related, but  more manageable, homotopy theory. 

\begin{definition}\label{bigwdgqdaggerdef}
Define $dg_+\Affd\Alg_K^{\bG_m,\dagger,\loc}$ to consist of $K$-CDGAs  $A_{\bt}= A_{\ge 0}$ equipped with a decomposition
 \[
  A_{\bt}= \bigoplus_{n \in \Z} \cW_n A_{\bt},
 \]
into chain subcomplexes (all of which are allowed to be non-zero), respecting the multiplication, such that $\cW_0A_{\bt}$ is a quasi-dagger dg algebra, $A_{\bt}$ is generated as a  $\cW_0A_{\bt}$-algebra by finitely many generators in each chain degree, and the map $\cW_0A_0 \to \H_0\cW_0A_{\bt}/\bG_m$  gives an isomorphism on the associated sets of points (i.e. of maximal ideals).

A morphism $A_{\bt} \to B_{\bt}$ in $dg_+\Affd\Alg_K^{\bG_m,\dagger,\loc}$  
 is just a homomorphism of dg $K$-algebras which respects the weight decompositions.
 \end{definition}

\begin{remarks}
 Note that the finiteness condition in Definition \ref{bigwdgqdaggerdef} only applies in each chain degree, rather than in pairs (degree, weight), so in particular for each chain degree, objects of $dg_+\Affd\Alg_K^{\bG_m,\dagger,\loc}$ only have generators in finitely many weights.

 The condition that  $\cW_0A_0 \to \H_0\cW_0A_{\bt}/\bG_m$  give an isomorphism on the associated sets of points implies that $\cW_0A_0\to \H_0\cW_0A_{\bt}$ also gives an isomorphism on the associated sets of points, so $\cW_0A_{\bt}$  is a localised dagger dg algebra. Dropping that condition would give us a weighted generalisation of the notion of quasi-dagger dg algebra, and localising at  $\H_0\cW_0A_{\bt}/\bG_m$ (rather than $\H_0A_{\bt}$ as in  Definition \ref{Xlocdef}) gives an object $A^{\loc}_{\bt} \in dg_+\Affd\Alg_K^{\bG_m,\dagger,\loc}$, this functor being left adjoint to the obvious inclusion functor.
 \end{remarks}

 \begin{definition}\label{bigqufreedef}
 We say that an object $ A_{\bt} \in dg_+\Affd\Alg_K^{\bG_m,\dagger,\loc}$ is quasi-free if, 
 in the category of $\bG_m$-equivariant graded-commutative algebras which are quasi-dagger  in weight and degree $0$, the underlying  object $A_{\#}$ given by   forgetting the differential is freely generated over a quasi-Washnitzer algebra  $K\<\frac{x_1}{r_1}, \ldots ,\frac{x_n}{r_n}\>^{\dagger}$ (see Definition \ref{genWashdef}).
 
 Similarly, say that a morphism $f \co R_{\bt} \to A_{\bt}$ in $dg_+\Affd\Alg_K^{\bG_m,\dagger,\loc}$ is quasi-free if $A_{\#}$ is freely generated in that category over  $W\ten_{R_0}R_{\#}$ for some relative quasi-Washnitzer algebra $W=R_0\<\frac{x_1}{r_1}, \ldots ,\frac{x_n}{r_n}\>^{\dagger}$.
 \end{definition}

 As the following example shows, the condition that $\cW_0A_0$ be a  quasi-dagger algebra leads to some slightly unusual behaviour for these quasi-free objects
 \begin{example}
  The quasi-free object  $ A_{\bt} \in dg_+\Affd\Alg_K^{\bG_m,\dagger,\loc}$ generated by $x \in \cW_1A_0$ and $y \in \cW_{-1}A_0$ is given by 
  \[
   \cW_iA_{\bt} = \begin{cases} K\<\frac{xy}{0}\>^{\dagger} x^i & i\ge 0 \\
                   K\<\frac{xy}{0}\>^{\dagger} y^{-i} & i\le 0,
                  \end{cases}
  \]
with the obvious multiplication. 

This has the property that $\Hom_{dg_+\Affd\Alg_K^{\bG_m,\dagger,\loc}}(A_{\bt},B_{\bt}) \cong \cW_1B_0 \by \cW_{-1}B_0$, since for $(u,v) \in \cW_1B_0 \by \cW_{-1}B_0$, we have  $uv \in \ker (\cW_0B \to B/\bG_m)$, so  the localisation condition ensures that it gives rise to a morphism $K\<\frac{xy}{0}\>^{\dagger} \to \cW_0B_0$.
 \end{example}
 
 The proof of Lemma \ref{coffactnlemma} adapts directly to give:
 \begin{lemma}\label{wcoffactnlemma}
  Every morphism $ f \co A \to B$ in $dg_+\Affd\Alg_K^{\bG_m,\dagger,\loc}$ admits a factorisation  $A \xra{p} \breve{B} \xra{r} B $ with $p$ quasi-free and $r$ a surjective quasi-isomorphism, and a factorisation  $A \xra{q} \tilde{B} \xra{s} B $ with $q$ a quasi-free quasi-isomorphism and $s$ surjective in strictly positive degrees.
 \end{lemma}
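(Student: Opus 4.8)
The plan is to mimic the proof of Lemma \ref{coffactnlemma} step by step, checking that each ingredient survives the passage to the weighted setting where $\cW_0A_{\bt}$ must be a (localised) quasi-dagger dg algebra, the algebra must be generated by finitely many generators in each chain degree, and the localisation condition on $\cW_0A_0$ must hold. First I would handle the degree-zero part. Since $\cW_0B_0$ is a (localised) quasi-dagger algebra it admits a surjection from a quasi-Washnitzer algebra $K\<\tfrac{x_1}{r_1},\ldots,\tfrac{x_n}{r_n}\>^{\dagger}$; combining this with $f_0$ gives a surjection $r_0\co \cW_0A_0\<\tfrac{x_1}{r_1},\ldots,\tfrac{x_n}{r_n}\>^{\dagger}\onto \cW_0B_0$, and we set $\breve{B}_0$ to be this relative quasi-Washnitzer algebra. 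The map $p_0\co\cW_0A_0\to\breve{B}_0$ is quasi-free in weight and degree $0$, and the localisation condition for $\breve{B}$ will be inherited from that of $B$ once we note that $\Sp(\breve{B}_0)\to\Sp(\H_0\cW_0\breve{B}/\bG_m)$ factors compatibly through the corresponding map for $B$.

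Next I would perform the relative cofibration–trivial fibration factorisation. We have a morphism $A\ten_{\cW_0A_0}\breve{B}_0\to B$ of $\bG_m$-equivariant CDGAs which is surjective in degree $0$; applying the standard small object argument in the category of $\bG_m$-equivariant CDGAs over $\breve{B}_0$ factorises it as $A\ten_{\cW_0A_0}\breve{B}_0\xra{p'}\breve{B}\xra{r}B$ with $p'$ quasi-free and $r$ a surjective quasi-isomorphism. The one thing that needs care here is boundedness of weights: the generators added in the inductive construction of the resolution can be taken to have weights appearing in $\H_*$ of the relevant modules, and since at each stage these modules have weights in a fixed bounded range (bounded by those of $A$, $\breve{B}_0$ and $B$), only finitely many weights occur in each chain degree — this is exactly the finiteness condition in Definition \ref{bigwdgqdaggerdef}, not the stronger pairwise (degree, weight) condition of Definition \ref{wdgqdaggerdef}, which is why the proof works in $dg_+\Affd\Alg_K^{\bG_m,\dagger,\loc}$ but would fail in the smaller category. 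Noetherianity of $\breve{B}_0$ (valid for quasi-Washnitzer algebras by \cite{guentzer}) then lets us choose finitely many generators in each chain degree, so $\breve{B}\in dg_+\Affd\Alg_K^{\bG_m,\dagger,\loc}$, giving the first factorisation $A\xra{p}\breve{B}\xra{r}B$. For the second factorisation one runs the dual small object argument (trivial cofibration–fibration) in the same category, again with quasi-Washnitzer rather than Tate algebras in degree $0$, obtaining $A\xra{q}\tilde{B}\xra{s}B$ with $q$ a quasi-free quasi-isomorphism and $s$ surjective in strictly positive degrees.

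The main obstacle is verifying that the objects produced actually lie in $dg_+\Affd\Alg_K^{\bG_m,\dagger,\loc}$ rather than merely being abstract $\bG_m$-equivariant CDGAs: one must confirm (i) that $\cW_0\breve{B}$ (and $\cW_0\tilde{B}$) is a localised dagger dg algebra — which follows because $\Sp(\breve{B}_0)\to\Sp(\H_0\cW_0\breve{B}/\bG_m)$ is bijective on points, by construction and by the corresponding property for $B$ together with nilpotence of the ideal generated by non-zero weights — and (ii) that only finitely many generators occur in each chain degree, using Noetherianity as above. Once these two points are settled, the rest is identical bookkeeping to Lemma \ref{coffactnlemma}, so I would simply remark that ``the proof of Lemma \ref{coffactnlemma} adapts directly, using quasi-Washnitzer algebras in weight and degree $0$ and working in $dg_+\Affd\Alg_K^{\bG_m,\dagger,\loc}$; boundedness of weights is preserved at each stage of the inductive resolution, and the localisation condition on $\cW_0$ is inherited from $B$'' rather than reproducing the argument in full.
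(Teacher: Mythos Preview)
Your proposal is correct and matches the paper's approach exactly: the paper's entire proof is the single sentence ``The proof of Lemma \ref{coffactnlemma} adapts directly to give'' this statement, and your expansion (and your own concluding one-liner) faithfully reproduces that adaptation. One small caveat: the ideal generated by non-zero weights is \emph{not} nilpotent in $dg_+\Affd\Alg_K^{\bG_m,\dagger,\loc}$ (only in the $f\bG_m$ subcategory), so for the localisation condition on $\breve{B}$ and $\tilde{B}$ it is cleaner to simply apply the localisation functor $(-)^{\loc}$ of the remarks preceding Definition~\ref{bigqufreedef} at the end of the construction rather than argue via nilpotence.
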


%
%
 
%
 
 \begin{lemma}\label{wcompletelemma}
  There is a functor $A \mapsto \hat{A}$ from $dg_+\Affd\Alg_K^{\bG_m,\dagger,\loc}$ to the pro-category $\pro(dg_+\Affd\Alg_K^{f\bG_m,\dagger,\loc,\flat})$ determined by the property that for $A \in dg_+\Affd\Alg_K^{\bG_m,\dagger,\loc}$ and $B \in dg_+\Affd\Alg_K^{f\bG_m,\dagger,\loc,\flat}$, we have
  \[
   \Hom_{\pro(dg_+\Affd\Alg_K^{f\bG_m,\dagger,\loc,\flat})}(\hat{A},B) \cong \Hom_{dg_+\Affd\Alg_K^{\bG_m,\dagger,\loc}}(A,B).
  \]

This functor sends quasi-isomorphisms to pro-quasi-isomorphisms (i.e. essentially levelwise quasi-isomorphism in the sense of \cite[\S 2.1]{isaksenStrict}), so induces a functor on the simplicial localisation at those classes of morphisms, respectively. 
 \end{lemma}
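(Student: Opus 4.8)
The plan is to construct $\hat A$ explicitly as a pro-object and then verify the universal property and homotopy invariance by hand.

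\emph{Construction.} For $A\in dg_+\Affd\Alg_K^{\bG_m,\dagger,\loc}$ and $(r,s)\in\N^2$ write $A_{r,s}:=A/(\cW_{<-r}A,\cW_{>s}A)$ for the quotient of $A$ by the weight-homogeneous dg ideal generated by the weight pieces outside $[-r,s]$, and put
\[
A_{r,s}:=A/(\cW_{<-r}A,\cW_{>s}A),\qquad \hat A^{(r,s,n)}:=A_{r,s}/\tau_{>n}A_{r,s}.
\]
These assemble via the evident surjective transition maps into a pro-object indexed by the cofinite directed poset $\N^3$. First I would check that each $\hat A^{(r,s,n)}$ lies in $dg_+\Affd\Alg_K^{f\bG_m,\dagger,\loc,\flat}$: it has only finitely many non-zero weights and is a bounded chain complex by construction; since $A$ is levelwise finitely generated over $\cW_0A$ and $\cW_0A$ is a (Noetherian) quasi-dagger dg algebra, $\bigoplus_{w\in[-r,s]}(\cW_wA)_\bullet$ is a levelwise-finite extension of $\cW_0A$, hence again quasi-dagger, and the localisation condition on $A$ passes to the quotient $\hat A^{(r,s,n)}$. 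These are the algebra-level counterparts of the remarks on $dg_+\Coh^{f\bG_m}_A$ following Definition \ref{wcohdef}.

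\emph{Hom bijection and functoriality.} By definition $\Hom_{\pro(\cdots)}(\hat A,B)=\LLim_{(r,s,n)}\Hom(\hat A^{(r,s,n)},B)$; given $B\in dg_+\Affd\Alg_K^{f\bG_m,\dagger,\loc,\flat}$, any weight-preserving dg algebra homomorphism $A\to B$ annihilates every weight piece outside the finite weight-support of $B$ and every chain degree above the top degree of $B$, so it factors uniquely through $\hat A^{(r,s,n)}$ for all sufficiently large $(r,s,n)$. This yields the asserted bijection, manifestly natural in $B$; naturality in $A$ is equally immediate and at the same time equips $A\mapsto\hat A$ with the structure of a functor (a morphism $A\to A'$ induces a compatible family of maps on the truncation systems).

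\emph{Homotopy invariance.} It remains to show $\hat{(-)}$ sends quasi-isomorphisms to pro-quasi-isomorphisms, in the sense of \cite[\S 2.1]{isaksenStrict}; the induced functor on simplicial localisations is then formal. As in the proof of Corollary \ref{preservehtpycor}, the weighted factorisation Lemma \ref{wcoffactnlemma} reduces the claim to the case of a surjective quasi-isomorphism $f\co A\to A'$, with acyclic weight-graded kernel $K$. Surjectivity and weight-preservation make each $\hat f^{(r,s,n)}$ surjective, with kernel (up to a pro-zero correction in degree $n$) the good truncation to degree $n$ of $K/(K\cap(\cW_{<-r}A,\cW_{>s}A))$; since $\hat f$ is a levelwise surjection of pro-objects over the cofinite directed poset $\N^3$, the long exact homology sequence in the abelian category of pro-objects shows that $\hat f$ is an essentially levelwise quasi-isomorphism provided this kernel pro-object is pro-acyclic. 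The $n$-direction contributes only a pro-zero error (the part of $K$ in degrees $>n$ vanishes once $n$ exceeds the fixed homological degree under consideration), so it suffices to prove that for each weight $w$ the pro-system $\{\cW_wK/(\cW_wK\cap\cW_w(\cW_{<-r}A,\cW_{>s}A))\}_{r,s}$ is pro-acyclic and then assemble over the finitely many weights contributing to any fixed truncation stage. The hard part will be precisely this last step: the functor ``kill the weights outside $[-r,s]$'' is right exact but not exact, so $K$ acyclic does not force $K/(K\cap(\cW_{<-r}A,\cW_{>s}A))$ acyclic, and one must instead recover pro-acyclicity. My plan here is to show that in each weight $w$ the filtration $\{\cW_w(\cW_{<-r}A,\cW_{>s}A)\cap\cW_wK_d\}_{r,s}$ of the finite $(\cW_0A)_0$-module $\cW_wK_d$ is cofinal with an $\fa$-adic filtration for a suitable ideal $\fa$ of the Noetherian ring $(\cW_0A)_0$ (in the archetypal local case, $\fa$ generated by the degree-$0$ products of opposite-weight elements), so that the Artin--Rees lemma and the Mittag--Leffler property yield pro-acyclicity of $\{\cW_wK/(\cW_wK\cap\cW_w(\cW_{<-r}A,\cW_{>s}A))\}$ on the acyclic complex $\cW_wK$. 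Verifying this cofinality — identifying the weighted truncation with an adic completion on each graded piece — is the technical heart; the rest is routine diagram chasing in the category of pro-objects and the reductions already available in the unweighted setting.
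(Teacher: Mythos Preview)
Your construction of $\hat A$ and the verification of the Hom-bijection are correct and essentially match the paper's argument. For homotopy invariance you and the paper share the same underlying strategy --- reduce the weight-truncation system to an adic completion and then appeal to Artin--Rees type results --- but the executions diverge at the key step.

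The paper first reduces to bounded $A$ via good truncation, so that $A$ is finitely generated over the Noetherian ring $(\cW_0A)_0$. It then shows directly that the two systems of ideals $\{(\cW_{<-n},\cW_{>n})\}_n$ and $\{(\cW_{\ne 0})^n\}_n$ of $A$ \emph{itself} are cofinal: one containment uses that the generators of $A$ have weights in some bounded interval $[-r,r]$, the other uses the pigeonhole observation that among any $n$ elements of nonzero weight, at least $\lceil n/2\rceil$ have weights of the same sign, so their product lies in $(\cW_{<-n/2},\cW_{>n/2})$. This identifies $\hat A$ with the $(\cW_{\ne 0})$-adic completion, and the paper then invokes a generalisation of \cite[Lemma 2.13]{drep} (adic completion of a finitely generated bounded CDGA over a Noetherian base preserves surjective quasi-isomorphisms); arbitrary quasi-isomorphisms are handled at the end via the factorisation lemma.

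Your plan instead fixes a weight $w$ and a degree $d$ and looks for an ideal $\fa\subset(\cW_0A)_0$ such that the induced filtration on $\cW_wK_d$ is cofinal with the $\fa$-adic one. This is plausible and holds in your ``archetypal'' examples, but proving it in general essentially forces you through the same cofinality with $(\cW_{\ne 0})$-adic, \emph{plus} an additional transfer step relating the weight-$w$ pieces of $(\cW_{\ne 0})^n$ to powers of an ideal in the much smaller ring $(\cW_0A)_0$. That extra step is not obvious (the weight-$w$ component of $I^n$ for $I=(\cW_{\ne 0})$ is not of the form $\fa^n\cdot\cW_wA$ in general) and is in any case unnecessary: once you know the filtration is $(\cW_{\ne 0})$-adic on all of $A$, Artin--Rees applied to $K\subset A$ as $A$-modules gives pro-acyclicity of $\{K/I^nK\}$, and pro-acyclicity of each weight piece follows immediately since $\cW_w(-)$ is exact. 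So your route is workable but strictly longer; the shortcut is to take the adic ideal inside $A$ rather than inside $(\cW_0A)_0$.
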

\begin{proof}
 Since  $B$ is bounded and exists in finitely many weights, any morphism  $A\to B$ must factor through $A/\tau_{>k}A/(\cW_{<-n}, \cW_{>n})$ for some $n,k$. Since $A$ is degreewise finitely generated over a localised dg dagger  algebra, those quotients all live in $dg_+\Affd\Alg_K^{f\bG_m,\dagger,\loc,\flat}$, so we just set $\hat{A}:=\{A/\tau_{>k}A/(\cW_{<-n}, \cW_{>n})\}_{n,k}$.
 
It remains to prove the final statement. Since good truncation preserves quasi-isomorphisms, we may assume that $A$ is bounded, in which case $\hat{A}$ just becomes the inverse system  $\{A/\tau_{>k}A/(\cW_{<-n}, \cW_{>n})\}$. Now, since $A$ exists in only finitely many chain degrees, it is finitely generated over $A_0$. There thus exists some generating set lying in weights $[-r,r]$, which implies that the ideal $(\cW_{\le-nr}, \cW_{\ge nr})$ is contained in $(\cW_{\ne 0})^n$, where $(\cW_{\ne 0})$ is the ideal generated by elements of non-zero weight. Moreover, any set of $n$ elements in $\cW_{\ne 0}$ contains either at least $\lceil n/2\rceil$ positively weighted elements, or at least $\lceil n/2\rceil $ negatively weighted elements, from which we conclude that $ (\cW_{\ne 0})^n \subset (\cW_{<-n/2}, \cW_{>n/2})$. Thus the systems $\{(\cW_{<-n}, \cW_{>n})\}_n$ and  $(\cW_{\ne 0})^n$ of ideals are equivalent, and $\hat{A}$ is isomorphic to the $(\cW_{\ne 0})$-adic completion $\{A/(\cW_{\ne 0})^n\}_n$.

Now, the proof of \cite[Lemma 2.13]{drep} generalises to give a pro-quasi-isomorphism $\{A/I^n\}_n \to \{B/f(I)^n\}_n$ 
for any  surjective quasi-isomorphism $f \co A \to B$ of finitely generated bounded non-negatively graded CDGAs over a Noetherian $\Q$-algebra, for any ideal $I$. Thus the functor $A \mapsto \hat{A}$ sends surjective quasi-isomorphisms to pro-quasi-isomorphisms.

Given an arbitrary quasi-isomorphism $A \to A'$, Lemma \ref{wcoffactnlemma} gives us a factorisation $A \xra{q} C \xra{s} A \by A'$ of its graph, with $s$ surjective in strictly positive degrees and $q$ a quasi-isomorphism. Thus the morphisms $s_1 \co C \to A$ and $s_2 \co C \to A'$ are surjective quasi-isomorphisms (since they are quasi-isomorphisms and are surjective in strictly positive degrees).  This  gives us pro-quasi-isomorphisms $\hat{A} \la \hat{C} \to \hat{A'}$, and  the section $\hat{q}\co \hat{A} \to \hat{C}$  of $\hat{s}_1$ is thus also a   pro-quasi-isomorphism, as is the composite $\hat{f}= \hat{s}_2 \circ \hat{q}  \co \hat{A} \to \hat{A'}$.
\end{proof}

\begin{proposition}\label{waffdsubprop1}
The inclusion functor $ dg_+\Affd\Alg_K^{f\bG_m,\dagger,\loc,\flat} \into dg_+\Affd\Alg_K^{\bG_m,\dagger,\loc}$  induces a fully faithful functor on simplicial categories after simplicial localisation at quasi-isomorphisms. More generally, for $A, B \in dg_+\Affd\Alg_K^{f\bG_m,\dagger,\loc}$, we have
\[
 \oR\map_{dg_+\Affd\Alg_K^{\bG_m,\dagger,\loc}}(A,B) \simeq \oR\Lim_k \oR\map_{dg_+\Affd\Alg_K^{f\bG_m,\dagger,\loc,\flat}}(A/\tau_{>k}A, B/\tau_{>k}B).
\]
\end{proposition}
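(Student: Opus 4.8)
The plan is to adapt the proof of Proposition~\ref{affdsubEFCprop}, replacing the local model structure of Proposition~\ref{locmodelprop} by its weighted analogue and using the completion functor of Lemma~\ref{wcompletelemma} to pass between $dg_+\Affd\Alg_K^{\bG_m,\dagger,\loc}$ and the flat subcategory. First I would note that the opening assertion is a special case of the displayed formula: for $A,B\in dg_+\Affd\Alg_K^{f\bG_m,\dagger,\loc,\flat}$ one has $\tau_{>k}A=\tau_{>k}B=0$ once $k$ exceeds the chain amplitudes, so the pro-system $\{(A/\tau_{>k}A,\,B/\tau_{>k}B)\}_k$ is essentially constant and the homotopy limit collapses to $\oR\map_{dg_+\Affd\Alg_K^{f\bG_m,\dagger,\loc,\flat}}(A,B)$.

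For the formula itself, I would begin by setting up the weighted analogue of the local model structure: combining the local model structure on EFC-DGAs with the standard model structure on $\bG_m$-equivariant (i.e.\ weight-graded) graded-commutative algebras over a fixed base gives a cofibrantly generated model structure on the category of $\bG_m$-equivariant EFC-DGAs $A=\bigoplus_n\cW_nA$ with $\cW_0A_0\cong(\cW_0A_0/\H_0\cW_0A)^{\loc}$, with quasi-isomorphisms as weak equivalences, surjections in strictly positive chain degrees as fibrations, all objects fibrant, and localisations of quasi-free objects cofibrant. Then, exactly as in Proposition~\ref{affdsubEFCprop}, Lemma~\ref{wcoffactnlemma} makes $dg_+\Affd\Alg_K^{\bG_m,\dagger,\loc}$ close to being a model subcategory, so for $A\in dg_+\Affd\Alg_K^{\bG_m,\dagger,\loc}$ repeated application of that lemma (with Reedy-category techniques) produces a quasi-free cosimplicial resolution $\tilde{A}^{\bt}\to A$ with quasi-free latching maps and codegeneracies quasi-isomorphisms, and Lemma~\ref{locmaplemma} (whose hypotheses hold because quasi-free objects are cofibrant and all objects fibrant) identifies $\oR\map_{dg_+\Affd\Alg_K^{\bG_m,\dagger,\loc}}(A,-)$ with $m\mapsto\Hom_{dg_+\Affd\Alg_K^{\bG_m,\dagger,\loc}}(\tilde{A}^m,-)$, hence with $\oR\map$ in the ambient model category.

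Next I would fix $A,B\in dg_+\Affd\Alg_K^{f\bG_m,\dagger,\loc}$ and strip off the truncations one variable at a time. Since $B=\Lim_k B/\tau_{>k}B$ with each structure map $B/\tau_{>k+1}B\to B/\tau_{>k}B$ surjective, hence a fibration, $\Hom(\tilde{A}^{\bt},-)$ carries this tower to a homotopy limit and $\oR\map_{dg_+\Affd\Alg_K^{\bG_m,\dagger,\loc}}(A,B)\simeq\oR\Lim_k\oR\map_{dg_+\Affd\Alg_K^{\bG_m,\dagger,\loc}}(A,B/\tau_{>k}B)$. Each $B/\tau_{>k}B$ is bounded with only finitely many weights, so lies in $dg_+\Affd\Alg_K^{f\bG_m,\dagger,\loc,\flat}$; by the adjunction of Lemma~\ref{wcompletelemma} the set $\Hom_{dg_+\Affd\Alg_K^{\bG_m,\dagger,\loc}}(\tilde{A}^m,B/\tau_{>k}B)$ is the filtered colimit of the $\Hom$'s out of the bounded finite-weight quotients $\tilde{A}^m/\tau_{>j}\tilde{A}^m/(\cW_{<-n},\cW_{>n})$, and since $\hat{(-)}$ sends quasi-isomorphisms to pro-quasi-isomorphisms these quotients assemble into a resolution of $A/\tau_{>j}A$ in the flat category. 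Using moreover that $B/\tau_{>k}B$ has homology concentrated in chain degrees $\le k$, so that the mapping space into it depends only on the good truncation $A/\tau_{>k}A$ of the source, I would obtain $\oR\map_{dg_+\Affd\Alg_K^{\bG_m,\dagger,\loc}}(A,B/\tau_{>k}B)\simeq\oR\map_{dg_+\Affd\Alg_K^{f\bG_m,\dagger,\loc,\flat}}(A/\tau_{>k}A,B/\tau_{>k}B)$, and reassembling over $k$ gives the formula.

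The hard part will be the middle step: extracting from Lemma~\ref{wcompletelemma} that $\hat{(-)}$ induces an equivalence on mapping spaces after localisation, i.e.\ that for bounded finite-weight targets the simplicial localisation of $dg_+\Affd\Alg_K^{\bG_m,\dagger,\loc}$ coincides with that of $\pro(dg_+\Affd\Alg_K^{f\bG_m,\dagger,\loc,\flat})$. Because the flat category contains too few quasi-free objects to allow quasi-free replacement, this cannot simply be imported from a model structure on that category; it must instead be teased out of the cosimplicial-resolution description above together with the adjunction and pro-quasi-isomorphism-invariance of $\hat{(-)}$, and keeping track of how the good truncations $\tau_{>k}$, the weight truncations $(\cW_{<-n},\cW_{>n})$, and the ensuing homotopy limits and colimits interact is the main technical burden.
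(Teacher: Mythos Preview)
Your approach is essentially the same as the paper's: build a quasi-free cosimplicial resolution $\tilde{A}^{\bt}$ via Lemma~\ref{wcoffactnlemma}, invoke Lemma~\ref{locmaplemma} to identify $\oR\map_{dg_+\Affd\Alg_K^{\bG_m,\dagger,\loc}}(A,-)$ with $\Hom(\tilde{A}^{\bt},-)$, pass to the Postnikov tower of $B$, and use the adjunction and pro-quasi-isomorphism preservation of Lemma~\ref{wcompletelemma} to land in the flat category. You also correctly flag the delicate step.

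One difference worth noting: the paper deliberately does \emph{not} set up an ambient model structure on weighted EFC-DGAs (this is exactly what Remark~\ref{wEFCDGA} declines to do). Instead it argues directly that quasi-free morphisms in $dg_+\Affd\Alg_K^{\bG_m,\dagger,\loc}$ have the LLP with respect to acyclic surjections --- this is all Lemma~\ref{locmaplemma} needs. Your proposed model structure would require the localisation condition $\cW_0A_0\cong(\cW_0A_0/(\H_0A/\bG_m))^{\loc}$ (quotient by non-zero weights), not the weaker condition $\cW_0A_0\cong(\cW_0A_0/\H_0\cW_0A)^{\loc}$ you wrote; the stronger condition is what makes free generators in non-zero weight behave cofibrantly. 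Rather than verify transfer of the model structure with this twist, it is cleaner to bypass it as the paper does.

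For the ``hard part'', the paper's resolution of the bookkeeping is: observe that for $A\in dg_+\Affd\Alg_K^{f\bG_m,\dagger,\loc}$ the pro-object $\hat{A}$ is just $\{A/\tau_{>n}A\}_n$, so $\widehat{\tilde{A}^{\bt}}$ is a cosimplicial resolution of $\{A/\tau_{>n}A\}_n$ in the pro-category, while $\widehat{\tilde{A}^{\bt}/\tau_{>n}}$ resolves the individual term $A/\tau_{>n}A$. One then uses the embedding of the pro-category into restricted diagrams (as in the proof of Lemma~\ref{locmaplemma}) to identify $\oR\map_{dg_+\Affd\Alg_K^{f\bG_m,\dagger,\loc,\flat}}(A/\tau_{>n}A,-)$ with $\Hom_{\pro}(\widehat{\tilde{A}^{\bt}/\tau_{>n}},-)$, and the strict identity $\Hom_{\pro}(\widehat{\tilde{A}^{\bt}},B/\tau_{>n}B)\cong\Hom_{\pro}(\widehat{\tilde{A}^{\bt}/\tau_{>n}},B/\tau_{>n}B)$ (valid because $B/\tau_{>n}B$ is $n$-truncated) links the two. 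This is precisely the content of your final paragraph, made explicit.
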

\begin{proof}
Given $A \in  dg_+\Affd\Alg_K^{\bG_m,\dagger,\loc}$, the proof of  Proposition \ref{affdsubEFCprop}, using  Lemma \ref{wcoffactnlemma} in place of of Lemma \ref{coffactnlemma},  gives us a cosimplicial resolution $\tilde{A}^{\bt}\to A$ which is quasi-free in the sense that the latching maps are all quasi-free, and a resolution in the sense that  the degeneracy maps are all quasi-isomorphisms and we have a quasi-isomorphism  $\tilde{A}^0 \to A$. Our definition of  $dg_+\Affd\Alg_K^{\bG_m,\dagger,\loc}$ ensures that quasi-free morphisms have the left lifting property with respect to  acyclic surjections, so the functor $\Hom_{dg_+\Affd\Alg_K^{\bG_m,\dagger,\loc}}(\tilde{A}^{\bt},-) \co  dg_+\Affd\Alg_K^{\bG_m,\dagger,\loc} \to s\Set$ given in level $m$ by $\Hom_{dg_+\Affd\Alg_K^{\bG_m,\dagger,\loc}}(\tilde{A}^m,-)$ sends acyclic surjections to trivial Kan fibrations, and in particular to weak equivalences. As in the proof of Lemma \ref{wcompletelemma}, this implies that it sends all quasi-isomorphisms to weak equivalences. Thus Lemma \ref{locmaplemma} implies that 
\[
\oR\map_{dg_+\Affd\Alg_K^{\bG_m,\dagger,\loc}}(A,-) \simeq \Hom_{dg_+\Affd\Alg_K^{\bG_m,\dagger,\loc}}(\tilde{A}^{\bt},-). 
\]

Now for $B \in dg_+\Affd\Alg_K^{f\bG_m,\dagger,\loc,\flat}$,  Lemma \ref{wcompletelemma} gives 
\[
 \Hom_{dg_+\Affd\Alg_K^{\bG_m,\dagger,\loc}}(\tilde{A}^m,B)\cong \Hom_{\pro(dg_+\Affd\Alg_K^{f\bG_m,\dagger,\loc,\flat})}(\widehat{\tilde{A}^{m}},B),
\]
while for $B \in dg_+\Affd\Alg_K^{f\bG_m,\dagger,\loc}$ we have
\begin{align*}
 \Hom_{dg_+\Affd\Alg_K^{\bG_m,\dagger,\loc}}(\tilde{A}^m,B) &\cong \Lim_n \Hom_{dg_+\Affd\Alg_K^{\bG_m,\dagger,\loc}}(\tilde{A}^m,B/\tau_{>n}B)\\
 &\cong \Lim_n \Hom_{\pro(dg_+\Affd\Alg_K^{f\bG_m,\dagger,\loc,\flat})}(\widehat{\tilde{A}^{m}},B/\tau_{>n}B);
\end{align*}
quasi-freeness of $\tilde{A}^{\bt}$ implies that  $\Hom_{dg_+\Affd\Alg_K^{\bG_m,\dagger,\loc}}(\tilde{A}^{\bt},-)$ maps nilpotent surjections to  Kan fibrations so 
\[
\Lim_n \Hom_{\pro(dg_+\Affd\Alg_K^{f\bG_m,\dagger,\loc,\flat})}(\widehat{\tilde{A}^{\bt}},B/\tau_{>n}B)
\simeq \ho\Lim_n \Hom_{\pro(dg_+\Affd\Alg_K^{f\bG_m,\dagger,\loc,\flat})}(\widehat{\tilde{A}^{\bt}},B/\tau_{>n}B)
\]

Now, observe that if $A \in dg_+\Affd\Alg_K^{f\bG_m,\dagger,\loc}$, the inverse system $\hat{A}$ from  Lemma \ref{wcompletelemma} is isomorphic to $\{A/\tau_{>n}A\}_n$, so that lemma gives quasi-isomorphisms 
\[
\widehat{\tilde{A}^{m}} \to \{A/\tau_{>n}A\}_n \quad\text{ and }\quad \widehat{\tilde{A}^{m}}/\tau_{>n}\widehat{\tilde{A}^{m}}  \to A/\tau_{>n}A.
\]
In particular, 
$\widehat{\tilde{A}^{\bt}}$ is a cosimplicial resolution of $\{A/\tau_{>n}A\}_n$ in the pro-category; since $\Hom_{\pro(dg_+\Affd\Alg_K^{f\bG_m,\dagger,\loc,\flat})}(\widehat{\tilde{A}^{\bt}},-)$ sends quasi-isomorphisms in $dg_+\Affd\Alg_K^{f\bG_m,\dagger,\loc,\flat}$ to weak equivalences, the proof of Lemma \ref{locmaplemma} (embedding the pro-category in the model category of restricted diagrams)  then  implies that
\[
 \oR\map_{dg_+\Affd\Alg_K^{f\bG_m,\dagger,\loc,\flat}}(A/\tau_{>n}A,-) \simeq \Hom_{\pro(dg_+\Affd\Alg_K^{f\bG_m,\dagger,\loc,\flat})}(\widehat{\tilde{A}^{\bt}/\tau_{>n}},-)
\]
on $dg_+\Affd\Alg_K^{f\bG_m,\dagger,\loc,\flat}$, so for $B \in dg_+\Affd\Alg_K^{f\bG_m,\dagger,\loc,\flat}$ we have
\begin{align*}
 \oR\map_{dg_+\Affd\Alg_K^{\bG_m,\dagger,\loc}}(A,B) &\simeq   \ho\Lim_n \Hom_{\pro(dg_+\Affd\Alg_K^{f\bG_m,\dagger,\loc,\flat})}(\widehat{\tilde{A}^{\bt}},B/\tau_{>n}B)\\
 &\cong   \ho\Lim_n \Hom_{\pro(dg_+\Affd\Alg_K^{f\bG_m,\dagger,\loc,\flat})}(\widehat{\tilde{A}^{\bt}/\tau_{>n}},B/\tau_{>n}B)\\
  &\simeq \ho\Lim_n \oR\map_{dg_+\Affd\Alg_K^{f\bG_m,\dagger,\loc,\flat}}(A/\tau_{>n}A,B/\tau_{>n}B ).
\end{align*}

\end{proof}
 
\begin{lemma}\label{wextendlemma}
The essential image of $\Ho(dg_+\Affd\Alg_K^{f\bG_m,\dagger,\loc,\flat})\to \Ho(dg_+\Affd\Alg_K^{\bG_m,\dagger,\loc})$ is closed under square-zero extensions by complexes $I$ with homology concentrated in a single chain degree and with finitely many weights. 
 \end{lemma}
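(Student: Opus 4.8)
The statement to prove, Lemma \ref{wextendlemma}, asks us to show that if $B \in dg_+\Affd\Alg_K^{\bG_m,\dagger,\loc}$ lies in the essential image of $dg_+\Affd\Alg_K^{f\bG_m,\dagger,\loc,\flat}$ and $A \to B$ is a square-zero extension with kernel $I$ a complex concentrated in a single chain degree and with only finitely many non-zero weights, then $A$ again lies in the essential image. The natural approach is to reduce, via Proposition \ref{waffdsubprop1}, to a concrete model-theoretic statement: $A$ lies in the essential image of $dg_+\Affd\Alg_K^{f\bG_m,\dagger,\loc,\flat}$ precisely when it is quasi-isomorphic (in $dg_+\Affd\Alg_K^{\bG_m,\dagger,\loc}$) to an object which is \emph{bounded} as a chain complex; indeed $dg_+\Affd\Alg_K^{f\bG_m,\dagger,\loc,\flat}$ is by definition the full subcategory of bounded objects, and Proposition \ref{waffdsubprop1} (together with Lemma \ref{wcompletelemma}) shows this inclusion is fully faithful after localisation, with essential image the objects quasi-isomorphic to bounded ones. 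So it suffices to show: given $B$ quasi-isomorphic to a bounded weighted localised dagger dg algebra $B^{\flat}$, and a square-zero extension $A \onto B$ with kernel $I$ concentrated in one chain degree (say degree $m$) and finitely many weights, the algebra $A$ is quasi-isomorphic to a bounded object of $dg_+\Affd\Alg_K^{f\bG_m,\dagger,\loc,\flat}$.

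\textbf{Key steps.} First, replace $B$ by a bounded model $B^{\flat}$; by homotopy-invariance of the relevant statements (and using that square-zero extensions are classified up to quasi-isomorphism by a class in the appropriate André--Quillen-type cohomology group), I may replace the square-zero extension $A \to B$ by a quasi-isomorphic one $A^{\flat} \to B^{\flat}$ of bounded objects, with kernel still $I$ (up to quasi-isomorphism $I$ is a shift $M[m]$ of a single weighted coherent module $M$ with finitely many weights, so is itself a bounded weighted coherent complex). This uses the standard fact (as invoked elsewhere in the paper, e.g. in the discussion around nilcompleteness and the Postnikov tower) that a square-zero extension by $I$ corresponds to a derivation-type class, and can be transported along a quasi-isomorphism $B \to B^{\flat}$ by taking a pullback; here one needs $I$ to be a coherent complex in the sense relevant for the weighted category, which holds since $I$ has finitely many weights and is levelwise finitely generated. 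Second, observe that $A^{\flat}$ is then manifestly bounded (as an extension of the bounded complex $B^{\flat}$ by the bounded complex $I$), is degreewise finitely generated over $\cW_0 A^{\flat}_0$ (since this holds for $B^{\flat}$ and $I$ contributes only finitely many generators, in degree $m$ and finitely many weights), has only finitely many non-zero weights (those of $B^{\flat}$ together with those of $I$), and $\cW_0 A^{\flat}_0$ is a localised dagger dg algebra because the square-zero kernel is nilpotent, so $\Sp(\cW_0 A^{\flat}_0) \to \Sp(\H_0\cW_0 B^{\flat}/\bG_m)$ is a bijection on points, composing with the analogous bijection for $B^{\flat}$. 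Hence $A^{\flat} \in dg_+\Affd\Alg_K^{f\bG_m,\dagger,\loc,\flat}$, and $A \simeq A^{\flat}$ in $dg_+\Affd\Alg_K^{\bG_m,\dagger,\loc}$, which is what we wanted.

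\textbf{Main obstacle.} The delicate point is the first step: transporting the square-zero extension along the quasi-isomorphism $B \xrightarrow{\sim} B^{\flat}$ so that the resulting extension of $B^{\flat}$ is both bounded and has the same (up to quasi-isomorphism) kernel, while remaining in the weighted localised category. Concretely, one wants to say that square-zero extensions of $B$ by $I$ are classified by $\pi_0$ of a mapping space $\oR\map(\bL^{B}, I[1])$ of weighted modules, and that this is invariant under quasi-isomorphism of $B$; then the image of the class of $A$ under $\bL^{B^{\flat}} \simeq \bL^{B} \otimes^{\oL}_B B^{\flat}$ gives the desired bounded extension $A^{\flat}$. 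Making this precise requires knowing that the cotangent complex machinery (Definition \ref{Tdef}, Lemma \ref{cottranslemma} and the surrounding discussion) works in the weighted setting restricted to coherent weighted modules, which is exactly the kind of statement the paper sets up; alternatively, one can avoid cotangent complexes entirely by a direct Reedy/inductive argument: lift a bounded quasi-free resolution of $B$ to one of $A$ using Lemma \ref{wcoffactnlemma}, noting that adjoining the finitely many generators needed to kill the homology of the kernel $I$ (concentrated in degree $m$, finitely many weights) only affects finitely many degrees and weights, so boundedness and the finiteness conditions of Definition \ref{wdgqdaggerdef} are preserved. I would present the direct inductive version, as it stays within the constructions already established and sidesteps the need for a weighted cotangent-complex classification of square-zero extensions.
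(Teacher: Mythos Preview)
Your overall strategy---your ``approach 1'' in the Key steps paragraph---is on the right track and is in spirit what the paper does: transport the square-zero extension along the quasi-isomorphism $B \simeq B^{\flat}$ to obtain a bounded model $A^{\flat}$ of $A$. The paper's concrete implementation is worth knowing: rather than invoking a cotangent-complex classification of square-zero extensions (which, as you note, is not yet set up in the weighted dagger setting at this point), it uses the CDGA $\cone(I \to A)$, which maps quasi-isomorphically to $B$ and also maps to $B \oplus I[1]$, recovering $A$ as the fibre product $\cone(I\to A)\times_{B\oplus I[1]}B$. One then lifts a quasi-free resolution $\tilde{B}$ of the bounded model $\bar{B}$ through to get a map $\tilde{B} \to \H_0B \oplus \H_n(I)[n+1]$ (after replacing $I$ by $\H_n(I)[n]$), and the key step is Lemma \ref{wcompletelemma}: since the target lies in $dg_+\Affd\Alg_K^{f\bG_m,\dagger,\loc,\flat}$, this map factors through some bounded stage $B'$ of the pro-completion $\widehat{\tilde{B}}$, giving $A \simeq B'\times_{\H_0B\oplus \H_n(I)[n+1]}\H_0B$ with $B'$ bounded and finitely weighted. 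This is precisely the ``transport without cotangent complexes'' you were hoping for.

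Your alternative ``approach 2'', which you say you would present, has a genuine gap: objects of $dg_+\Affd\Alg_K^{\bG_m,\dagger,\loc}$ do \emph{not} in general admit quasi-free resolutions that are bounded in chain degree or have finitely many weights. This is exactly the obstruction that motivates the whole apparatus of \S\ref{wqufreesn} (see the paragraph after Definition \ref{wcohdef}, and the Koszul--Tate resolution of something as simple as $K[x]/(x^2)$ with $x$ of weight $1$, which requires generators in unbounded degree and weight). So ``lift a bounded quasi-free resolution of $B$'' is not an available move, and Lemma \ref{wcoffactnlemma} does not help---it produces quasi-free resolutions in the big category, which are unbounded. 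You should abandon approach 2 and present approach 1, using the cone-and-completion argument above as the concrete mechanism.
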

\begin{proof}
 Assume that we have a surjection $A \onto B$ in $dg_+\Affd\Alg_K^{\bG_m,\dagger,\loc}$ such that the kernel $I$, with conditions as above, squares to $0$ and $B$ is quasi-isomorphic to an object $\bar{B}$ of $dg_+\Affd\Alg_K^{f\bG_m,\dagger,\loc,\flat}$. Then we wish to show that $A$ is also quasi-isomorphic to such an object.
 
There is an obvious  $K$-CDGA structure on $\cone(I \to A)$, and this is easily seen to lie in $dg_+\Affd\Alg_K^{\bG_m,\dagger,\loc}$, with the obvious map $\cone(I \to A) \to B$ being a quasi-isomorphism. Since $I$ squares to $0$, we also have a $K$-CDGA morphism $\cone(I \to A) \to \cone(I \xra{0} B)=B \oplus I[1]$, and then $A= \cone(I \to A)\by_{ B \oplus I[1]}B$.
 
 If $\tilde{B}\to \bar{B}$ is a quasi-free replacement for $\bar{B}$, then as in the proof of Lemma \ref{wcompletelemma}, the lifting property of quasi-free morphisms with respect to acyclic surjections  gives us a map (necessarily a quasi-isomorphism) $\tilde{B} \to B$, which then further lifts to give a map $\tilde{B} \to \cone(I \to A)$ and hence $\tilde{B} \to B \oplus I[1]$. Assume that $\H_*I$ is concentrated in degree $n$. Without loss of generality, we may then replace $I$ with $\tau_{\ge n}I$ in the statements above, since $A/\tau_{\ge n}I \to A/I$ is a quasi-isomorphism. Since we are now assuming that $I$ is concentrated in degrees $\ge n$, there is a quasi-isomorphism $I \to \H_n(I)[n]$. In particular, we have $\tilde{B} \to \H_0B \oplus  \H_n(I)[n+1]$; enlarging $\tilde{B}$ if necessary, we may also assume that this map is surjective, so $A \simeq \tilde{B}\by_{ \H_0B \oplus  \H_n(I)[n+1]}\H_0B$.
 
Now, since $\H_0B \oplus  \H_n(I)[n+1] \in dg_+\Affd\Alg_K^{f\bG_m,\dagger,\loc,\flat}$, Lemma \ref{wcompletelemma} gives us a morphism $ f \co \widehat{\tilde{B}} \to   \H_0B \oplus  \H_n(I)[n+1]$ in $\pro(dg_+\Affd\Alg_K^{f\bG_m,\dagger,\loc,\flat})$. We also have a pro-quasi-isomorphism $\widehat{\tilde{B}} \to \widehat{B}$, so we can write $ \widehat{\tilde{B}}$ as an inverse system of objects mapping quasi-isomorphically to $B$. Our morphism $f$ thus factors through some such object $B'$; since $f$ is surjective, our map $f' \co B' \to \H_0B \oplus  \H_n(I)[n+1]$ is also so, and then we have
\[
 A \simeq B'\by_{ \H_0B \oplus  \H_n(I)[n+1]}\H_0B,
\]
which is an object of $dg_+\Affd\Alg_K^{f\bG_m,\dagger,\loc,\flat}$.
%
\end{proof}

 \begin{proposition}\label{waffdsubprop2}
 In the homotopy categories given by localising  at quasi-isomorphisms, the essential image of $\Ho(dg_+\Affd\Alg_K^{f\bG_m,\dagger,\loc,\flat})\to \Ho(dg_+\Affd\Alg_K^{\bG_m,\dagger,\loc})$ consists of objects $A$ for which the homology groups $\H_iA$  exist in only finitely many weights, 
 and vanish for $i\gg 0$.
\end{proposition}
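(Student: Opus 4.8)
The plan is to establish both inclusions. The ``only if'' direction is immediate: any object of $dg_+\Affd\Alg_K^{f\bG_m,\dagger,\loc,\flat}$ is bounded as a chain complex and exists in only finitely many weights, and both properties are invariant under quasi-isomorphism, hence pass to $\H_iA$.

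For the converse, take $A \in dg_+\Affd\Alg_K^{\bG_m,\dagger,\loc}$ with $\H_iA$ concentrated in finitely many weights for all $i$ and $\H_iA = 0$ for $i > N$. First I would replace $A$ by the quotient $A' := A/I$, where $I \subseteq A$ is the dg ideal generated by $A_{>N} := \bigoplus_{i>N}A_i$. A direct computation gives $I_i = A_i$ for $i > N$, $I_N = \delta A_{N+1}$ and $I_i = 0$ for $i < N$, so $I$ is acyclic (using $\H_{>N}A = 0$) and the projection $A \to A'$ is a quasi-isomorphism with $A'$ concentrated in chain degrees $[0,N]$. One checks that $A'$ still lies in $dg_+\Affd\Alg_K^{\bG_m,\dagger,\loc}$: it is degreewise finitely generated over $\cW_0A'_\bt$ as a quotient of $A$, and $\cW_0A'_\bt$ is the corresponding bounded truncation of the localised dagger dg algebra $\cW_0A_\bt$, hence again one, with the localisation/point condition inherited since $\H_0A' = \H_0A$. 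It therefore suffices to prove that $A'$ lies in the essential image of the flat category, and I would argue by induction on $N$.

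In the base case $N = 0$, $A'$ is concentrated in degree $0$, so $A' = \H_0A$. Here $\cW_0A'_\bt = \H_0\cW_0A_\bt$ is a localised dagger algebra; fixing a finite set of degree-zero algebra generators of $A_0$ over $\cW_0A_0$, Gordan's lemma shows that the weight-zero monomials in them generate a finitely generated submonoid, over which the weight-$m$ monomials form a finite module, so each $\cW_mA'$ is finite over $\cW_0A'_\bt$; and by hypothesis only finitely many $\cW_mA'$ are non-zero. Thus $A'$ is a finite algebra extension of a localised dagger algebra, hence itself a localised dagger algebra, so $A' \in dg_+\Affd\Alg_K^{f\bG_m,\dagger,\loc,\flat}$. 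For the inductive step ($N \ge 1$), set $M := \ker(\delta \colon A'_N \to A'_{N-1})$; since $A'_{N+1} = 0$ we have $M = \H_NA'$, which by hypothesis has only finitely many weights. As $A'$ is concentrated in degrees $\le N$, $M$ is a dg ideal of $A'$ with $M^2 = 0$ (its square lies in degree $2N > N$), and as a complex it is $M[N]$, concentrated in the single chain degree $N$. The quotient $B := A'/M$ again lies in $dg_+\Affd\Alg_K^{\bG_m,\dagger,\loc}$ and satisfies $\H_iB = \H_iA'$ for $i < N$ and $\H_iB = 0$ for $i \ge N$; by the inductive hypothesis $B$ is quasi-isomorphic to an object of the flat category. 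Now $A' \onto B$ is a square-zero extension whose kernel $M$ has homology concentrated in a single chain degree and only finitely many weights, so Lemma \ref{wextendlemma} applies and shows that $A'$, hence $A$, is quasi-isomorphic to an object of $dg_+\Affd\Alg_K^{f\bG_m,\dagger,\loc,\flat}$.

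The steps I expect to be routine but fiddly are the verifications that the successive quotients $A'$ and $B$ remain in $dg_+\Affd\Alg_K^{\bG_m,\dagger,\loc}$ — that $\cW_0$ stays a localised dagger dg algebra and the point condition survives — together with the Gordan's-lemma finiteness in the base case. The key conceptual point, and the one that makes the argument go through cleanly, is the observation that after $A$ has been replaced by the carefully chosen bounded model $A'$ the module of top cycles $\ker(\delta\colon A'_N\to A'_{N-1})$ is automatically a square-zero dg ideal with homology in a single chain degree and finitely many weights — precisely the input required by Lemma \ref{wextendlemma} — so no issue with non-nilpotent kernels arises, in contrast with what would happen under a less careful truncation.
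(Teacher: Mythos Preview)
Your proof is correct and follows essentially the same approach as the paper's: both use the Postnikov decomposition together with Lemma \ref{wextendlemma}. The paper phrases the induction bottom-up (showing $B/\tau_{>n+1}B$ is a square-zero extension of something quasi-isomorphic to $B/\tau_{>n}B$), while you phrase it top-down (peeling off the top homology $\H_N A'$ as a square-zero ideal of the bounded model $A'$); these are two descriptions of the same factorisation, and your ideal $I$ is precisely $\tau_{>N}A$.

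The one place where you supply genuinely more detail than the paper is the base case: the paper simply asserts that $\H_0 B$ is a weighted localised dagger algebra, whereas you give a Gordan's-lemma argument to show that each $\cW_m\H_0 B$ is a finite $\cW_0\H_0 B$-module (the key point being that weight-$0$ monomials in the chosen generators already lie in $\cW_0 B_0$, so weight-$m$ monomials form a finite module over them), whence $\H_0 B$ is a finite extension of the dagger algebra $\H_0\cW_0 B$ and therefore itself a dagger algebra. This is a useful clarification of a step the paper leaves implicit.
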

\begin{proof}
 The conditions are clearly satisfied by objects in the image and are invariant under quasi-isomorphism, so it suffices to show that every object $B$ satisfying the conditions lies in the image of the relevant functor. To prove this, we will proceed by induction using the Postnikov tower $\{B/\tau_{>n}B\}_n$.
 
 
 For $n=0$, we have a quasi-isomorphism $B/\tau_{>0}B\to \H_0B$, which is a weighted localised dagger algebra, so lies in $dg_+\Affd\Alg_K^{f\bG_m,\dagger,\loc,\flat} \subset 
 dg_+\Affd\Alg_K^{\bG_m,\dagger,\loc}$. In general, the map $B/\tau_{>n+1}B\to B/\tau_{>n}B$ factorises as the composite of an acyclic surjection and a square-zero extension, so Lemma \ref{wextendlemma} implies that $B/\tau_{>n+1}B$ lies in the essential image of $dg_+\Affd\Alg_K^{f\bG_m,\dagger,\loc,\flat}$ whenever $B/\tau_{>n}B$ does so. This completes the inductive proof, since $\tau_{>nB}$ is acyclic for $n \gg 0$.
\end{proof}

The following is now an immediate consequence of Propositions \ref{waffdsubprop1} and \ref{waffdsubprop2}:

\begin{corollary}\label{waffdsubcor}
 The following simplicial categories are quasi-equivalent:
 \begin{enumerate}
  \item homotopy-preserving functors from $dg_+\Affd\Alg_K^{f\bG_m,\dagger,\loc,\flat}$ to the category of simplicial sets;
  \item  homotopy-preserving nilcomplete functors from $dg_+\Affd\Alg_K^{f\bG_m,\dagger,\loc}$ to the category of simplicial sets;
  \item homotopy-preserving nilcomplete functors from the full subcategory of $dg_+\Affd\Alg_K^{\bG_m,\dagger,\loc}$ on objects with homologically bounded weights to the category of simplicial sets.
 \end{enumerate}
 \end{corollary}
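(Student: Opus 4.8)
The plan is to deduce the corollary formally from Propositions \ref{waffdsubprop1} and \ref{waffdsubprop2} by recognising each of the two equivalences as an instance of the ``nilcomplete extension'' correspondence already used after Corollary \ref{lurierep2}. Abstractly: suppose $\cC$ is a category with functorial Postnikov truncations $A \mapsto A/\tau_{>n}A$ such that each $A/\tau_{>n}A$ lies in a fixed full subcategory $\cC^{\flat} \subseteq \cC$, that the canonical map $A \to \ho\Lim_n A/\tau_{>n}A$ is a weak equivalence for all $A \in \cC$, and that $A/\tau_{>n}A \simeq A$ for $A \in \cC^{\flat}$ and $n \gg 0$. Then restriction along $\cC^{\flat} \into \cC$ and the assignment $F \mapsto \bigl( A \mapsto \ho\Lim_n F(A/\tau_{>n}A)\bigr)$ are mutually quasi-inverse, giving a quasi-equivalence between the simplicial category of homotopy-preserving functors $\cC^{\flat} \to s\Set$ and that of homotopy-preserving nilcomplete functors $\cC \to s\Set$: both composites are identified with the identity by the displayed weak equivalences, naturally enough to promote to a simplicial equivalence.

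First I would apply this with $\cC = dg_+\Affd\Alg_K^{f\bG_m,\dagger,\loc}$ and $\cC^{\flat} = dg_+\Affd\Alg_K^{f\bG_m,\dagger,\loc,\flat}$: each truncation $B/\tau_{>n}B$ of a weighted localised dagger dg algebra inherits the bounded-weights condition and is chain-bounded, so lies in $\cC^{\flat}$, and nilcompleteness is precisely $B \simeq \ho\Lim_n B/\tau_{>n}B$. This gives the quasi-equivalence of families (1) and (2).

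For family (3), write $\cD \subseteq dg_+\Affd\Alg_K^{\bG_m,\dagger,\loc}$ for the full subcategory of objects whose homology groups $\H_iA$ exist in only finitely many weights, and $\cD^{\flat} \subseteq \cD$ for those that moreover vanish for $i \gg 0$. Again $A/\tau_{>n}A \in \cD^{\flat}$ for $A \in \cD$ (each homological degree still sees finitely many weights, and everything vanishes above $n$), and $A \simeq \ho\Lim_n A/\tau_{>n}A$, so the abstract correspondence identifies homotopy-preserving nilcomplete functors on $\cD$ with homotopy-preserving functors on $\cD^{\flat}$. By Proposition \ref{waffdsubprop2} the localisation of $\cD^{\flat}$ at quasi-isomorphisms is exactly the essential image of $\Ho(dg_+\Affd\Alg_K^{f\bG_m,\dagger,\loc,\flat})$, and by Proposition \ref{waffdsubprop1} the inclusion $dg_+\Affd\Alg_K^{f\bG_m,\dagger,\loc,\flat} \into dg_+\Affd\Alg_K^{\bG_m,\dagger,\loc}$ factors through $\cD^{\flat}$ and induces a fully faithful functor on simplicial localisations; being essentially surjective onto $\cD^{\flat}$, it is a quasi-equivalence $dg_+\Affd\Alg_K^{f\bG_m,\dagger,\loc,\flat} \simeq \cD^{\flat}$ after localising. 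Composing the three identifications, the simplicial categories in (1), (2), (3) are all quasi-equivalent.

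The one point that needs genuine care — and the place the argument could go wrong — is matching the indexing category in (3) with what Proposition \ref{waffdsubprop2} actually describes: one must check that Postnikov truncation stays within $\cD$, that the homology of a truncated object satisfies the finiteness hypothesis of Proposition \ref{waffdsubprop2} in each degree, and that no object of $\cD$ is lost by reducing to $\cD^{\flat}$ via the Postnikov tower, so that nilcompleteness together with ``homologically bounded weights'' carves out precisely the functors controlled by $\cD^{\flat}$. All the substantive input — the fully faithfulness of Proposition \ref{waffdsubprop1} and the identification of the essential image in Proposition \ref{waffdsubprop2} — is already in hand, so beyond this bookkeeping the corollary is formal.
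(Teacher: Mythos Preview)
Your proposal is correct and follows exactly the approach the paper intends: the paper states this corollary as ``an immediate consequence of Propositions \ref{waffdsubprop1} and \ref{waffdsubprop2}'' without further argument, and what you have written is a careful unpacking of that immediacy --- the abstract nilcomplete-extension correspondence for (1)$\leftrightarrow$(2) and the combination of fully-faithfulness (Proposition \ref{waffdsubprop1}) with the essential-image description (Proposition \ref{waffdsubprop2}) to identify $\cD^{\flat}$ with $dg_+\Affd\Alg_K^{f\bG_m,\dagger,\loc,\flat}$ after localisation. The bookkeeping you flag at the end (that Postnikov truncation stays in the relevant subcategories and that the homological conditions match) all checks out directly from the definitions.
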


 \begin{definition}\label{wOmegaAffddef}
 Given $A \in dg_+\Affd\Alg_K^{\bG_m,\dagger,\loc}$, we define the $A$-module  $\Omega^1_A$ by the property  that  $d \co A \to \Omega^1_A$ is the universal $K$-linear derivation from $A$ to  
 $A$-modules $M$  in weighted complexes which are levelwise finitely generated. 
\end{definition}
Explicitly, we can calculate $ \Omega^1_A$ in terms of the algebraic $\bG_m$-equivariant cotangent module as  $\cone(  \Omega^1_{\cW_0A^{\alg}}\ten_{\cW_0A}A \to (\Omega^1_{\cW_0A}\ten_{\cW_0A}A) \oplus \Omega^1_{A^{\alg}})$, for the dagger  cotangent module $\Omega^1_{\cW_0A} $ of Definition \ref{OmegaAffddef}.

\begin{remark}[Weighted EFC-DGAs]\label{wEFCDGA}
 We could define a weighted EFC-DGA $A_{\bt}$ to be a $K$-CDGA $A_{\bt}$ equipped with a weight decomposition $\cW$ and a compatible EFC structure on $\cW_0A_0$. Every object of  $dg_+\Affd\Alg_K^{\bG_m,\dagger,\loc}$ has an underlying weighted EFC-DGA $A_{\bt}$, which is localised in the sense that  the natural map 
 $\cW_0A_0 \to (\cW_0A_0/(\H_0A_{\bt}/\bG_m))^{\loc}$ is an isomorphism, where $A_{\bt}/\bG_m$ is the quotient of $A_{\bt}$ by the dg EFC-ideal generated by $\cW_{\ne 0}A_{\bt}$.
 
 We could then develop weighted analogues of the results of \S \ref{cfEFCsn}, allowing us to recast weighted structures in terms of EFC-algebras, but we will instead just formulate our weighted results in the dagger affinoid world.
\end{remark}

\subsubsection{Tangent and cotangent complexes}

\begin{definition}\label{twistweightdef}
 Given $M \in dg\Coh_A^{f\bG_m}$, define $M\{r\}\in dg\Coh_A^{f\bG_m}$ to be the tensor product of $M$ with a rank $1$ $\bG_m$-representation of weight $-r$, so that  $\cW_n(M\{r\}) \cong \cW_{n+r}M$.
\end{definition}

  Generalising Definition \ref{Tdef}, we have:
\begin{definition}\label{wTdef}
Given a homotopy-preserving homogeneous  functor $F\co dg_+\Affd\Alg_K^{f\bG_m,\dagger,\loc} \to s\Set$, an object $A \in dg_+\Affd\Alg_K^{f\bG_m,\dagger,\loc}$   and a point $x \in F(A)$, define the tangent functor $T_xF$
\[
 T_xF \co dg_+\Coh_A^{f\bG_m} \to s\Set,
\]
by
$
T_xF(M):= F(A\oplus M)\by^h_{F(A)}\{x\}.
$

By the argument of \cite[Lemma \ref{drep-adf}]{drep}, the space $T_xF(M[1])$ deloops $T_xF(M)$, so we  then define tangent cohomology groups by  $\DD^{n-i}_x(F,M):= \pi_i (F(A\oplus M[n])\by^h_{F(A)}\{x\})$.
\end{definition}

\begin{definition}\label{wFcotdef}
 In the setting of Definition \ref{wTdef}, we say that $F$ has a coherent cotangent complex $\bL^{F,x}$ at $x$ if there is a weighted  $A$-module $ \bL^{F,x} \in dg\Coh_A^{\hat{f}\bG_m}$ in chain complexes,  
 bounded below in  chain degrees, representing $T_x(F)$ homotopically  in the sense that the simplicial mapping space 
 \[
  \oR\map_{dg\Coh_{A}^{\hat{f}\bG_m}}(\bL^{F,x},-) 
 \]
is weakly equivalent to $T_x(F)$ when restricted to $dg_+\Coh_{A}^{f\bG_m}$. 
\end{definition}
In particular, this means that 
\[
\pi_i T_x(F)(M)\cong \EExt^{-i}_{A}(\bL^{F,x},M)^{\bG_m}= \cW_0\EExt^{-i}_{A}(\bL^{F,x},M),
\]
for all $M \in dg_+\Coh_{A}^{f\bG_m}$. 
 
 \begin{remark}
  It is important to note that we are allowing the cotangent complex to have infinitely many generators in each degree, provided that for each degree, there are finitely many in each weight. Although the more restrictive definition would be easier to work with, for our applications of interest, it would only allow us to handle pro-\'etale sheaves on schemes over local fields, not over global fields. 
 \end{remark}

\begin{example}\label{wqufreecotex}
 Given an object $ C \in dg_+\Affd\Alg_K^{\bG_m,\dagger,\loc}$ which is quasi-free in the sense of Definition \ref{bigqufreedef}, we can look at the functor $F:=\oR\map_{dg_+\Affd\Alg^{\bG_m,\loc,\dagger}_K}(C,-)$  on $dg_+\Affd\Alg_K^{f\bG_m,\dagger,\loc}$  given by taking the mapping space in the simplicial localisation at quasi-isomorphisms.
 
 At a point $x \in F(A)$ given by a morphism $C \to A$, the functor $T_x(F)$ is given by
 \[
  M \mapsto \oR\map_{dg_+\Mod_C^{\bG_m}}(\Omega^1_C,M) \simeq \tau_{\ge 0}\HHom_{dg_+\Mod_C^{\bG_m}}(\Omega^1_C,M),
 \]
since quasi-freeness implies that $\Omega^1_C$ is cofibrant as a weighted $C$-module. Explicitly, this implies that 
\[
 \pi_i T_x(F)(M)\cong \EExt^{-i}_{C}(\Omega^1_C,M)^{\bG_m}= \cW_0\EExt^{-i}_{C}(\Omega^1_C,M),
\]
and that $F$ has a cotangent complex at $x$ given by
\[
 \Omega^1_C\ten_CA.
\]
\end{example}
 
 \begin{lemma}\label{wcotexistslemma}
  If $F\co dg_+\Affd\Alg_K^{f\bG_m,\dagger,\loc} \to s\Set$ is a homotopy-preserving, homogeneous, nilcomplete functor such that for all  dagger  algebras $A$ (regarded as living in weight $0$ and degree $0$) and all points  $x \in F(A)$, the groups $\DD^i_x(F, A\{m\})$ are all  finitely generated $A$-modules and all vanish for $i\ll 0$, then $F$ has coherent cotangent complexes $\bL^{F,y}$ at  all points $y \in F(B)$ for all $B \in dg_+\Affd\Alg_K^{f\bG_m,\dagger,\loc}$. 
  \end{lemma}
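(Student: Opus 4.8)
The plan is to follow the proof of Lemma \ref{cotexistslemma} almost verbatim, replacing ordinary coherent modules by $\bG_m$-equivariant ones with finitely many weights, and keeping careful track of weights throughout. The key new input is the weight bookkeeping: a $\bG_m$-equivariant module with finitely many weights decomposes as $\bigoplus_r M_r\{-r\}$ (up to twists) where each $M_r$ is an ordinary module over the underived ring $\cW_0A$, so finiteness conditions on $\DD^i_x(F, A\{m\})$ for each $m$ translate into finiteness conditions weight by weight.

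First I would reduce, exactly as in Lemma \ref{cotexistslemma}, to showing the hypotheses imply that $\DD^i_y(F, M)$ is a finitely generated $\cW_0\H_0B$-module for all $M \in dg_+\Coh_B^{f\bG_m}$ and that there exists $n$ with $\DD^i_y(F, M)=0$ for $i<-n$ and all such $M$; homogeneity plus nilcompleteness let me replace $B$ by $\H_0B$ and $M$ by an $\H_0B$-module placed in weights only (taking a projective resolution). Since $M \in dg_+\Coh_{\H_0B}^{f\bG_m}$ has finitely many non-zero weights, say in $[-x,y]$, and as a $\bG_m$-equivariant module is built from finitely many $\cW_r(\H_0B)$-modules, a finite filtration argument reduces to the case $M = N\{m\}$ for $N$ a finitely generated $\cW_0\H_0B$-module concentrated in weight $0$ and a single integer $m$. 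For such $M$, homogeneity gives $\DD^i_{\bar x}(F, N\{m\}) \cong \DD^i_{\bar x}(F, \H_0B\{m\})\ten_{\cW_0\H_0B} N$ after taking a finite presentation of $N$, so the hypothesis that $\DD^i_x(F, A\{m\})$ is finitely generated and vanishes for $i\ll 0$ (uniformly in $m$ because only finitely many $m$ occur for a given $M$) gives exactly what is needed.

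For the existence of $\bL^{F,y}$ from these finiteness and boundedness conditions, I would again invoke Lurie's machinery as in Lemma \ref{cotexistslemma}: each $B \in dg_+\Affd\Alg_K^{f\bG_m,\dagger,\loc}$ has $\cW_0B$ almost of finite presentation over a Washnitzer algebra (by Lemma \ref{wcoffactnlemma}, the weighted analogue of Lemma \ref{coffactnlemma}), so $\cW_0B$ has a dualising module by \cite[Theorem 3.6.8]{lurie}, and then $B$ itself, being degreewise finitely generated over $\cW_0B$ with finitely many weights, has a dualising module in the category of $\bG_m$-equivariant modules (one takes the dualising module of $\cW_0B$ and sums over the finitely many weights, twisting appropriately). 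The relevant portion of the proof of \cite[Theorem 3.6.9]{lurie} then constructs the cotangent complex $\bL^{F,y}$ as a homotopy limit of truncations, and one checks that the output lands in $dg\Coh_B^{\hat f\bG_m}$ rather than $dg\Coh_B^{f\bG_m}$ --- this is the point of allowing infinitely many generators overall but finitely many per (degree, weight), and it is why Definition \ref{wFcotdef} is phrased that way.

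The main obstacle I anticipate is precisely this last representability point: unlike the unweighted case, the cotangent complex need not have finitely many weights, so one must verify that Lurie's construction is compatible with the $\bG_m$-grading and produces a module which is levelwise-per-weight finitely generated. Concretely, the inductive construction adds, at each stage and each degree, a finite set of generators per weight to kill the relevant $\DD$-group; since there are finitely many weights to worry about at each finite stage (the weights of $\H_0B$ and of the generators introduced so far being bounded, while new $\DD$-contributions can spread into new weights only slowly), one gets finiteness in each fixed weight but an unbounded total. Making this precise requires the kind of weight-spreading estimate already used in the proof of Lemma \ref{wcompletelemma} (that $(\cW_{\ne 0})^n \subset (\cW_{<-n/2},\cW_{>n/2})$ and the reverse containment), adapted to modules; I would isolate this as the one genuinely new lemma and otherwise cite the unweighted arguments wholesale. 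A minor additional subtlety is condition \ref{shf2}-type étale invariance, which is not assumed here but is also not needed for mere existence of $\bL^{F,y}$ (it enters only in the representability Corollary); so I would not address it in this lemma.
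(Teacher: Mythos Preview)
Your reduction step (showing finiteness and vanishing of $\DD^i_y(F,M)$ for all $M \in dg_+\Coh_B^{f\bG_m}$ via a filtration by twists) is essentially what the paper does, phrased slightly differently: the paper observes that any weighted dagger algebra $C$ is a nilpotent extension of $C/\bG_m$, so the category of weighted coherent $C$-modules is generated by the $(C/\bG_m)\{m\}$, and the hypothesis then propagates. Your tensor formula $\DD^i_{\bar x}(F, N\{m\}) \cong \DD^i_{\bar x}(F, \H_0B\{m\})\ten N$ is not literally correct, but what you actually need (finiteness and bounded-below vanishing) does follow from a finite presentation and the associated long exact sequence, so this is a minor imprecision.

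The real divergence is in the construction of $\bL^{F,y}$. You describe an inductive construction adding generators degree-by-degree and weight-by-weight, and then worry about a weight-spreading estimate of the type in Lemma~\ref{wcompletelemma} to control how many weights appear. The paper avoids this entirely by giving an explicit closed formula weight-by-weight: taking a dualising complex $K_{\cW_0B}$ for $\cW_0B$ and setting $K_B:=\oR\HHom_{\cW_0B}(B,K_{\cW_0B})$ (which is $\bG_m$-equivariant with finitely many weights), one defines
\[
\cW_n\bL^{F,y}:=\oR\HHom_{\cW_0B}(\bT_y(K_B\{-n\}),K_{\cW_0B})
\]
for each $n$ separately, where $\bT_y$ is the delooped tangent functor. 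Since each $\bT_y(K_B\{-n\})$ is coherent over $\cW_0B$ by the finiteness just established, each weight piece is automatically levelwise finitely generated, and there is no spreading to control. The $B$-module structure on $\bigoplus_n\cW_n\bL^{F,y}$ then comes from the maps $\cW_mB\ten\bT_y(K_B\{-m-n\})\to\bT_y(K_B\{-n\})$ induced by the module action. The paper also introduces a $\bG_m$-equivariant duality functor $D_B^{\bG_m}$ (with $\cW_n(D_B^{\bG_m}M):=D_{\cW_0B}(\cW_{-n}M)$) and uses it to verify directly that $\oR\HHom_B(\bL^{F,y},N)^{\bG_m}\simeq\bT_y(N)$ via a chain of duality equivalences; your proposal does not address how to check that the constructed object actually represents the tangent functor.

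So your approach is not wrong in outline, but the weight-spreading lemma you anticipate is unnecessary, and the explicit dualising-complex formula is both the cleaner construction and the missing ingredient that makes the verification of the representing property tractable.
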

\begin{proof}
 This proceeds in much the same way as Lemma \ref{cotexistslemma}. Observe that because $C$ is a nilpotent extension of $C/\bG_m$, our finiteness hypothesis implies that $\DD^i_x(F, M)$ is finitely generated for all weighted dagger algebras $C$, all $z \in F(C)$ and all weighted coherent $C$-modules $M$, since the category of weighted coherent $C$-modules is generated by the modules $(C/\bG_m)\{m\}$.
 
 We now outline the other modifications needed to the argument of \cite[Theorem 3.6.9]{lurie} to work with weighted modules. The basic idea is still that delooping allows us to extend $T_y$ to a functor $\bT_y$ on bounded below complexes in $dg\Coh_{B}^{f\bG_m}$. Moreover, elements of $\cW_nB$ define $\bG_m$-equivariant morphisms $M\{i\} \to M\{i+n\}$ for $M \in dg\Coh_{B}^{f\bG_m}$, sufficiently canonically that $\bigoplus_n \bT_y(M\{n\})$ is naturally a $\bG_m$-equivariant $B$-module, with $ \bT_y(M\{n\})$ given weight $n$.
 
 Given a dualising complex $K_{\cW_0B}$ for $\cW_0B$, we have a dualising complex $K_B:=\oR\HHom_{\cW_0B}(B,K_{\cW_0B})$ which carries a natural $\bG_m$-action, so lies in  $dg\Coh_{B}^{f\bG_m}$ since $B$ has finitely many weights. Then we set
\[
\cW_n\bL^{F,y}:=\oR\HHom_{\cW_0B}(\bT_y(K_B\{-n\}),K_{\cW_0B}); 
\]
this has  multiplication maps $\cW_mB \ten \cW_n\bL^{F,y} \to \cW_{m+n}\bL^{F,y}$ coming from the maps $\cW_mB \ten \bT_y(K_B\{-m-n\}) \to \bT_y(K_B\{-n\})$, and these combine to give $\bL^{F,y}$ the structure of a $\bG_m$-equivariant 
$B$-module.

More generally,  the duality functor $D_{\cW_0B}M:= \oR\HHom_{\cW_0B}(M,K_{\cW_0B})$ on  $\cW_0B$-modules gives rise to a duality functor $D_B^{\bG_m}$ on $\bG_m$-equivariant $B$-modules, given by 
\[
 \cW_n(D_B^{\bG_m}M):= D_{\cW_0B}(\cW_{-n}B),
\]
with the obvious $B$-module structure on   $\bigoplus_n \cW_n(D_B^{\bG_m}M)$. 

For bounded complexes $M \in dg\Coh_{B}^{f\bG_m}$, the argument of \cite[Theorem 3.6.9]{lurie} then adapts to show that $\oR\HHom_B(M, \bigoplus_n \bT_y(K_B\{n\}))^{\bG_m} \simeq \bT_y(\oR\HHom_B(M,K_B))$, for the natural $\bG_m$-action on $\oR\HHom_B(M,K_B)$, since the statement holds when $M$ is of the form $B\{n\}$. For bounded coherent complexes $N \in dg\Coh_{B}^{f\bG_m}$, we thus have equivalences
\begin{align*}
 \oR\HHom_B(\bL^{F,y},N)^{\bG_m} &\simeq \oR\HHom_B(D_B^{\bG_m}N, D_B^{\bG_m} \bL^{F,y})^{\bG_m}\\
 &\simeq \oR\HHom_B(D_B^{\bG_m}N, \bigoplus_n \bT_y(K_B\{n\}))^{\bG_m}\\
 &\simeq \bT_y(\oR\HHom_B(D_B^{\bG_m}N,K_B))\\
 &\simeq \bT_y(\oR\HHom_B(D_B^{\bG_m}N,D_B^{\bG_m}B))\\
 &\simeq \bT_y(\oR\HHom_B(B,N))\simeq    \bT_y(N),
\end{align*}
as required,
noting that $N$ only contains finitely many weights by hypothesis. The statement then extends to arbitrary $N \in dg_+\Coh_{B}^{f\bG_m}$ by nilcompleteness.
%
\end{proof}

\subsection{Weighted shifted symplectic structures on weighted dagger dg algebras}
 
 \subsubsection{Weighted pre-symplectic structures}
 
 The category $dg_+\Affd\Alg_K^{f\bG_m,\dagger,\loc}$ of weighted localised  dagger dg $K$-algebras does not  contain any quasi-free objects with non-zero weights, but the results of \S \ref{wqufreesn} (with a similar argument to Example \ref{wqufreecotex}) imply that the functor $A \mapsto (A,\Omega^1_A)$ and its alternating powers $A \mapsto (A,\Omega^n_A)$ admit left-derived functors taking values in the category of pairs $(A,M)$ for $M \in dg_+\Coh_A^{\hat{f}\bG_m}$. 
 
 On objects, we can construct the derived functors explicitly by taking a quasi-free resolution $C$ of $A$ in the larger category $dg_+\Affd\Alg_K^{\bG_m,\dagger,\loc}$, then setting 
 \[
 \oL\Omega^n_A:= \Omega^n_C\ten_CA \in dg_+\Coh_A^{\hat{f}\bG_m}.
 \]
 In particular, note that although $A$ has finitely many weights, the resolution $C$ can have generators in infinitely many weights as the degree increases, which is why $\oL\Omega^1_A$ lies in $dg_+\Coh_A^{\hat{f}\bG_m}$ rather than $dg_+\Coh_A^{f\bG_m}$.
 
 \begin{definition}\label{wDRdef}
Given  a weighted algebra 
$A \in dg_+\Affd\Alg_K^{\bG_m,\dagger,\loc}$,  define the de Rham complex $\DR(A)$ to be the $\bG_m$-equivariant complex $\DR(A):=\bigoplus_n \cW_n\DR(A)$ given by setting $\cW_n\DR(A)$ to be
the product total cochain complex of the double complex
\[
 \cW_nA \xra{d} \cW_n\Omega^1_{A} \xra{d} \cW_n\Omega^2_{A}\xra{d} \ldots,
\]
so the total differential is $d \pm \delta$.

We define the Hodge filtration $F$ on  $\DR(A)$ by setting $F^p\cW_n\DR(A) \subset \cW_n\DR(A)$ to consist of terms $\cW_n\Omega^i_{A}$ with $i \ge p$.

Define $\oL\DR(A)$ to be $\DR(\tilde{A})$ for any quasi-free replacement of $A$. 
\end{definition}
 
 \begin{definition}\label{wPreSpdef}
 Define the space $\cW_m\PreSp(A,n)$ of $n$-shifted pre-symplectic structures of weight $m$ on an object $A \in dg_+\Affd\Alg_K^{\bG_m,\dagger,\loc}$ to be the simplicial set 
 given by Dold--Kan denormalisation of the chain complex
\[
 \tau_{\ge 0}(\oL F^2\cW_m\DR(A)^{[n+2]}). 
\]

\end{definition}

 \begin{definition}\label{wIsodef}
  Given a morphism $A \to B$ in $dg_+\Affd\Alg_K^{\bG_m,\dagger,\loc}$, define 
  the space $\cW_m\Iso(A,B;n)$
  of  $n$-shifted isotropic structures of weight $m$ on the pair $(A,B)$ to be the simplicial set given  by Dold--Kan denormalisation of the chain complex 
\[
 \tau_{\ge 0}(\cone(\oL F^2\cW_m\DR(A)  \to \oL F^2\cW_m\DR(B))^{[n+1]}).
\]
\end{definition}

 \subsubsection{Weighted symplectic structures}

 \begin{definition}\label{cHomdef}
  Given a $\bG_m$-equivariant CDGA $A_{\bt}=\bigoplus_n\cW_nA_{\bt}$, define the internal $\Hom$ functor $\cHom$ on the category of $\bG_m$-equivariant $A$-modules by setting
  \[
   \cW_n\cHom(M,N):=\HHom_A(M, N\{n\})^{\bG_m},
  \]
the complex of $A$-linear $\bG_m$-equivariant maps from $M$ to $N\{n\}$, 
  for $N\{n\}$ as in Definition \ref{twistweightdef}. 

  Multiplication by elements of $\cW_mA$ gives $\bG_m$-equivariant maps $N\{n\} \to N\{n+m\}$, which combine to give the $\bG_m$-equivariant $A$-module structure on $\cHom(M,N):=\bigoplus_n \cW_n\cHom(M,N)$.
 \end{definition}

 \begin{definition}\label{wSpdef}
 For $A \in dg_+\Affd\Alg_K^{\bG_m,\dagger,\loc}$,  say that an $n$-shifted pre-symplectic structure $\omega \in  \z^{n+2}(F^2\cW_n\DR(\tilde{A})$ of weight $m$ is \emph{symplectic} if the induced map
 \[
  \omega_2^{\sharp} \co \cHom_{\tilde{A}}(\Omega^1_{\tilde{A}},\tilde{A}) \to (\Omega^1_{\tilde{A}})_{[-n]}\{m\}
 \]
is a quasi-isomorphism. 

We then define the space $\cW_m\Sp(A,n)$ of $n$-shifted symplectic structures of weight $m$ to be the subspace of $\cW_m\PreSp(A,n)$ (a union of path components) consisting of symplectic objects.
 \end{definition}

 \begin{definition}\label{wLagdef}
Given a morphism  $f \co A \to B$ in $dg_+\Affd\Alg_K^{\bG_m,\dagger,\loc}$, we say that  an element $(\omega, \lambda)$ of 
\[
 \z^{n+1}\cone(F^2\cW_m\DR(\tilde{A})\to \oL F^2\cW_m\DR(B))
\]
is Lagrangian of weight $m$  if  $\omega \in \cW_m\Sp(A,n)$ (i.e. is symplectic)
 and if contraction with the image $(\omega_2,\lambda_2)$ of $(\omega,\lambda)$ in  $\z^{n-1}\cone(\cW_m\Omega^2_{\tilde{A}} \to \cW_m\Omega^2_{\tilde{B}} )$ 
 induces a quasi-isomorphism
\[
 (f \circ \omega_2^{\sharp}, \lambda_2^{\sharp}) \co 
 \cone(\cHom_{\tilde{B}}(\Omega^1_{\tilde{B}},\tilde{B}) \to \cHom_{\tilde{A}}(\Omega^1_{\tilde{A}},\tilde{B}))  \to (\Omega^1_{\tilde{B}})_{[-n]}\{m\}.
\] 
 
Set $\cW_m\Lag(A,B;n) \subset \cW_m\Iso(A,B;n)$ to consist of the   Lagrangian structures --- this is a union of path-components.
  
 \end{definition}
 
\begin{remark}\label{wNstackrmk2}
 For general homotopy-preserving functors $F \co dg_+\Affd\Alg_K^{f\bG_m,\dagger,\loc} \to s\Set$, we can adapt Remark \ref{Nstackrmk2} and let  $\cW_m\PreSp(F,n)$ be  the space $\oR\map(F,  \cW_m\PreSp(-,n))$ of maps of homotopy-preserving presheaves from $F$ to $\cW_m\PreSp(-,n)$, thus functorially associating an $n$-shifted pre-symplectic structure of weight $m$ on $A$ to each point in $F(A)$. 
 
 In order to define a subspace of shifted symplectic structures of weight $m$, we need $F$ to moreover be homogeneous with a cotangent complex, and then we can let  $\cW_m\Sp(F,n)\subset  \cW_m\PreSp(F,n)$ consist of the objects $\omega$ which are non-degenerate in the sense that the maps 
 \[
 (\omega_x)_2^{\sharp} \co \cHom_{\tilde{A}}(\Omega^1_{\tilde{A}},\tilde{A}) \to (\Omega^1_{\tilde{A}})_{[-n]}\{m\},
\]
for all $x \in F(A)$ are induced by compatible quasi-isomorphisms
\[
 \cHom_{\tilde{A}}(\bL^{F,x},\tilde{A}) \to (\bL^{F,x})_{[-n]}\{m\};
\]
when $F$ has an \'etale cover by objects of $dg_+\Affd\Alg_K^{f\bG_m,\dagger,\loc}$, this amounts to saying that those objects carry compatible shifted symplectic structures of weight $m$.

We can make entirely similar constructions for isotropic and Lagrangian structures, with the former given as a mapping space over the category of arrows in $dg_+\Affd\Alg_K^{f\bG_m,\dagger,\loc}$.
\end{remark}

\subsection{Weighted formal dg dagger spaces}\label{wfspacesn}

Since our functors $F$ will typically have cotangent complexes with infinitely many weights (i.e.  lying in $ dg\Coh_{B}^{\hat{f}\bG_m}$ rather than  $dg\Coh_{B}^{f\bG_m}$), we cannot just take weighted dagger dg algebras (or dually weighted dg dagger affinoid spaces) as building blocks. Instead, we work with formal weighted algebras and spaces, so take inverse systems $\{A(\alpha)\}_{\alpha}$ of localised weighted dagger dg algebras for which the system $\{\H_0A(\alpha)/\bG_m\}_{\alpha}$ is constant. Equivalently, this means looking at  direct systems $X= \{X(\alpha)\}_{\alpha}$ of weighted localised dg dagger affinoid spaces for which the system $\{\pi^0X(\alpha)^{\bG_m}\}_{\alpha}$ of $\bG_m$-invariants in the underived truncation is constant; we can then glue these  using just the topology on $\pi^0X^{\bG_m}$. 

\subsubsection{Weighted formal dagger dg algebras}

\begin{definition}
 Define a weighted formal localised dagger dg algebra to be an object $A=\{A(i)\}_{i \in I}$ of the pro-category $\pro(dg_+\Affd\Alg_K^{f\bG_m,\dagger,\loc, \flat})$ for which the pro-object $\H_0A/\bG_m=\{\H_0A(i)/\bG_m\}_{i \in I} $ is isomorphic to a constant filtered system (i.e.  lies in the essential image of $ \Affd\Alg_K \to \pro(\Affd\Alg_K)$).
\end{definition}

\begin{example}
 If we start with a dagger  algebra $C$ in weight $0$ and introduce free variables $x,y$ of weights $1,-1$, then the free weighted formal  dagger $K$-algebra $D$ over $C$ generated by $x,y$ is given by the CDGA
 \[
  \cW_nD= \begin{cases} 
           x^nC\llb xy \rrb & n \ge 0\\
           y^{-n} C\llb xy \rrb & n \le 0,
          \end{cases}
          \]
regarded as a limit of the weighted  dagger $K$-algebras $D/(\cW_{<-m}D,\cW_{>m}D)$ given by
\[
 \cW_n(D/(\cW_{<-m}D,\cW_{>m}D)) = \begin{cases} 
           x^n(C[ xy]/(xy)^{m+1-n}) & 0 \le n \le m\\
           y^{-n} (C[xy]/(xy)^{m+1+n})  & -m \le  n \le 0\\
           0 & n \notin [-m,m].
          \end{cases}
\]
This has the property that 
\[
 \Hom_{\pro(\Affd\Alg_K^{f\bG_m,\dagger})}(D,E) \cong \Hom_{\Affd\Alg_K^{\dagger}}(C,\cW_0E)\by\cW_1E\by \cW_{-1}E
 \]
 for all weighted  dagger $K$-algebras $E$.
\end{example}

\begin{definition}
 Define simplicial mapping spaces $\oR\map_{ \pro(dg_+\Affd\Alg_K^{f\bG_m,\dagger,\loc, \flat})} $ by localisation at pro-quasi-isomorphisms (i.e. essentially levelwise quasi-isomorphism in the sense of \cite[\S 2.1]{isaksenStrict}). Explicitly, for $A= \{A(i)\}_{i\in I}$ and $B=\{B(j)\}_{j \in J}$, we have
 \[
  \oR\map_{ \pro(dg_+\Affd\Alg_K^{f\bG_m,\dagger,\loc, \flat})}(A,B) \simeq \ho\Lim_j \LLim_i \oR\map_{ dg_+\Affd\Alg_K^{f\bG_m,\dagger,\loc, \flat}}(A(i),B(j)).
 \]
\end{definition}

Note that Corollary \ref{waffdsubcor}  implies that the obvious left adjoint functors from  $dg_+\Affd\Alg_K^{f\bG_m,\dagger,\loc}$  and $dg_+\Affd\Alg_K^{\bG_m,\dagger,\loc}$ to $\pro(dg_+\Affd\Alg_K^{f\bG_m,\dagger,\loc, \flat})$ induce 
fully faithful left-derived simplicial functors on the simplicial localisations. This allows us to regard weighted formal localised dagger dg algebras as giving  a natural enlargement of the categories we have been studying so far.

\begin{definition}\label{whfet}
 Say that a morphism $A \to B$ in $\pro(dg_+\Affd\Alg_K^{f\bG_m,\dagger,\loc, \flat})$ is \emph{homotopy formally \'etale} (resp. \emph{homotopy formally smooth}) if it induces weak equivalences (resp. $\pi_0$-surjections)
 \[
  \oR\map(B,C)\to  \oR\map(A,C)\by^h_{ \oR\map(A,D)} \oR\map(B,D)
 \]
for all nilpotent surjections $C \to D$ in $dg_+\Affd\Alg_K^{f\bG_m,\dagger,\loc, \flat}$, where $\oR\map$ denotes $ \oR\map_{ \pro(dg_+\Affd\Alg_K^{f\bG_m,\dagger,\loc, \flat})}$.
\end{definition}

\begin{remark}\label{weightedcot}
There are many equivalent characterisations of Definition \ref{whfet}. 
 Filtering by powers of the kernel, it suffices to know that the condition holds for surjections $C \to D$ with square-zero kernel. 
 
 The constructions of \cite{Q} are sufficiently general to give a theory of cotangent complexes $\bL^A$ for objects $A$ of $\pro(dg_+\Affd\Alg_K^{f\bG_m,\dagger,\loc, \flat})$, existing as Beck modules, meaning that $A \oplus \bL^A$ is a group object in the slice category $ \pro(dg_+\Affd\Alg_K^{f\bG_m,\dagger,\loc, \flat})$; in particular, we can regard $\bL^A$ is an $A$-module in pro-(bounded chain complexes over $K$). Standard arguments then show that for $A \to B$ to be homotopy formally \'etale amounts to saying that $\bL^{B/A}$ is pro-quasi-isomorphic to $0$, while being homotopy formally smooth amounts to saying that $\bL^{B/A}$ satisfies the left lifting property with respect to surjections.
 
 There is a form of base change for Beck modules, and we can rewrite the last characterisation of homotopy formally \'etale maps as saying that $\bL^B \simeq \bL^A\ten_AB$. We can reduce this further to say that  $\bL^B\ten_BC \simeq  \bL^A\ten_AC$ for all $ C \in B \da dg_+\Affd\Alg_K^{f\bG_m,\dagger,\loc, \flat}$, or even just for $C \in \H_0B \da \Affd\Alg_K^{f\bG_m,\dagger,\loc, \flat}$, via a Postnikov induction argument.  
 These characterisations tie in with Definition \ref{wFcotdef} because for $F:=  \oR\map_{ \pro(dg_+\Affd\Alg_K^{f\bG_m,\dagger,\loc, \flat})}(A,-)$ and $x \in F(C)$, we have $\bL^{F,x} \simeq\bL^A\ten_AC $.
\end{remark}




\subsubsection{Weighted formal dagger spaces}

 \begin{definition}\label{wdaggerdgspacedef}
  Define a weighted formal $K$-dagger dg space $X$ to be a pair $(\pi^0X^{\bG_m},\sO_X)$ where $\pi^0X^{\bG_m}$ is a $K$-dagger space  and $\sO_X$ is a presheaf of weighted formal localised dagger dg algebras 
  on the site of open affinoid subdomains of $\pi^0X^{\bG_m}$, 
    such that the homology presheaf $\H_0\sO_X/\bG_m$ is just $\sO_{\pi^0X^{\bG_m}}$,
    and such that for an inclusion $U \subset V$ of open affinoid subdomains of $\pi^0X^{\bG_m}$, the map
\[
 \sO_X(V) \to \sO_X(U) 
\]
is homotopy formally \'etale.
     \end{definition}

 \begin{remark}\label{wNstackrmk}
 There are entirely similar definitions for weighted formal $K$-dagger dg Deligne--Mumford and Artin stacks, using the \'etale and smooth sites, and for  $N$-stacks, proceeding as in Remark \ref{Nstackrmk}.
   \end{remark}

 \begin{definition}
  A morphism $f \co X \to Y$ of   weighted formal $K$-dagger dg spaces is said to be a quasi-isomorphism if it induces an isomorphism $\pi^0f^{\bG_m} \co \pi^0X^{\bG_m} \to \pi^0Y^{\bG_m}$ of formal weighted dagger spaces and  a pro-quasi-isomorphism $f^{-1}\sO_Y \to \sO_X$.
 \end{definition}

\subsection{Representability}\label{wrepsn}

Although not strictly necessary for the study of shifted symplectic structures on analytic moduli functors, we now include a representability result which applies to most examples of interest.

\begin{proposition}\label{wlurierep2}
A homotopy-preserving functor  $F\co dg_+\Affd\Alg_K^{f\bG_m,\dagger,\loc, \flat} \to s\Set$ is a weighted formal  dagger-analytic Artin derived $n$-stack 
with a coherent cotangent complex
if and only if  
 the following conditions hold
\begin{enumerate}
 
\item The restriction $\pi^0F^{\bG_m} \co \Affd\Alg_K^{\dagger} \to s\Set$ to underived dagger  algebras is represented by a dagger-analytic Artin $n$-stack.

\item
$F$ is homogeneous.


%

\item \label{wshf2} 
for all weighted dagger  algebras  $A\in \Affd\Alg_K^{f\bG_m,\dagger}$, 
all $x \in F(A)_0$ and all \'etale morphisms $f:A \to A'$, the maps
\[
\DD_x^*(F, A)\ten_AA' \to \DD_{fx}^*(F, A')
\]
are isomorphisms.

\item for all dagger  algebras $A$ (regarded as living in weight $0$ and degree $0$) and all $x \in F(A)$, 
the groups $\DD^i_x(F, A\{m\})$ are all  finitely generated $A$-modules. 

\end{enumerate}
\end{proposition}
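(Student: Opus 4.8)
The plan is to follow the proof of Corollary~\ref{lurierep2}, substituting the weighted analogues of its ingredients and invoking \S\ref{wqufreesn} and \S\ref{wfspacesn} to cope with the fact that, in the weighted setting, cotangent complexes live in $dg\Coh_A^{\hat{f}\bG_m}$ rather than $dg\Coh_A^{f\bG_m}$ and the representing objects are only pro-objects in $dg_+\Affd\Alg_K^{f\bG_m,\dagger,\loc,\flat}$. The ``only if'' direction is routine: if $F$ is such a stack with coherent cotangent complex $\bL^F$, then restricting to weight-$0$ test algebras --- equivalently taking $\bG_m$-invariants of a smooth atlas --- exhibits $\pi^0F^{\bG_m}$ as representable by an Artin $n$-stack, giving~(1); homogeneity~(2) holds because the weighted analogue of the fibre-product lemma preceding Definition~\ref{hhgsdef} guarantees the relevant fibre products of weighted formal localised dagger dg algebras exist, and mapping spaces out of representables carry them to homotopy fibre products; and~(3),~(4) follow from $\bL^{F,x}\in dg\Coh_A^{\hat{f}\bG_m}$ being levelwise and weightwise finitely generated, using the description of the tangent functor by $\bL^{F,x}$ in Definition~\ref{wFcotdef} together with flat base change for $\EExt$ along \'etale morphisms.

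For the ``if'' direction, observe first that since $F$ is a functor on the bounded category $dg_+\Affd\Alg_K^{f\bG_m,\dagger,\loc,\flat}$, Corollary~\ref{waffdsubcor} identifies it with a homotopy-preserving \emph{nilcomplete} functor on $dg_+\Affd\Alg_K^{f\bG_m,\dagger,\loc}$, so nilcompleteness is automatic. Since~(1) makes $\pi^0F^{\bG_m}$ an Artin $n$-stack, its cotangent complex is perfect and hence bounded below, and homogeneity together with nilcompleteness propagates this bound to give $\DD^i_x(F,A\{m\})=0$ for $i\ll 0$, uniformly in $m$ (as in the last paragraph of the proof of Lemma~\ref{cotexistslemma}). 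With the finiteness in~(4) this verifies the hypotheses of Lemma~\ref{wcotexistslemma}, which produces coherent cotangent complexes $\bL^{F,y}\in dg\Coh_B^{\hat{f}\bG_m}$ at all points $y\in F(B)$. I would then adapt the proof of Lemma~\ref{cottranslemma} to obtain the base-change quasi-isomorphisms $\bL^{F,f_*x}\simeq\bL^{F,x}\ten^{\oL}_AB$ for morphisms $f\co A\to B$: working one weight at a time, reducing to finitely many weights via the truncations $M/(\cW_{<r}M,\cW_{>s}M)$ of \S\ref{wqufreesn} and using Noetherianity of the weight-$0$ quasi-dagger algebra and relative quasi-Washnitzer algebras, so that the support-of-homology argument in the proof of Lemma~\ref{cottranslemma} goes through in each weight; equivalently this is the Beck-module base-change statement recorded in Remark~\ref{weightedcot}.

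Granted a coherent cotangent complex compatible with base change, conditions~(2) and~(3) allow the argument of \cite[Corollary~\ref{drep-lurierep2}]{drep}, via \cite[Proposition~\ref{drep-sheafresult}]{drep}, to show that $F$ is an \'etale hypersheaf on $dg_+\Affd\Alg_K^{f\bG_m,\dagger,\loc,\flat}$. Representability is then the Artin--Lurie criterion: following the final stage of the proof of \cite[Theorem~7.1]{PortaYuRep} (based on \cite[Theorem~C0.9]{hag2} and \cite{lurie}), one constructs the derived thickening of $\pi^0F^{\bG_m}$ inductively along the Postnikov tower, at each stage using homogeneity and the obstruction theory supplied by $\bL^F$ --- together with the weighted dualising-complex constructions in the proof of Lemma~\ref{wcotexistslemma} --- to represent the functor by a weighted formal dagger-analytic Artin $n$-stack in the sense of \S\ref{wfspacesn}, with atlas homotopy formally smooth as in Definition~\ref{whfet}; taking $\bG_m$-invariants recovers $\pi^0F^{\bG_m}$, and the cotangent complex of the result is $\bL^F$ by construction.

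The main obstacle is this last step, specifically the passage to \emph{formal} weighted objects. In the unweighted Corollary~\ref{lurierep2} the smooth atlas is an honest dagger dg affinoid, but here $\bL^{F,x}$ typically has generators in infinitely many weights --- finitely many per weight by~(4), but in unboundedly many weights as the chain degree grows --- so the atlas is only pro-representable, and one must check that the Lurie-style inductive construction never leaves $\pro(dg_+\Affd\Alg_K^{f\bG_m,\dagger,\loc,\flat})$ and that homotopy formally \'etale and smooth morphisms there (Definition~\ref{whfet}, Remark~\ref{weightedcot}) interact with the obstruction theory exactly as their classical counterparts do. Controlling these inverse systems is what forces the use of condition~(4) in tandem with the weight-truncation and completion arguments of \S\ref{wqufreesn} --- Lemmas~\ref{wcompletelemma}, \ref{waffdsubprop1} and~\ref{waffdsubprop2} --- so that each Postnikov stage is genuinely an object of $dg_+\Affd\Alg_K^{f\bG_m,\dagger,\loc,\flat}$ after localisation and the limit assembles into a weighted formal dg dagger Artin stack.
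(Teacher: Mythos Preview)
Your overall strategy matches the paper's: invoke Lemma~\ref{wcotexistslemma} for cotangent complexes, deduce the \'etale hypersheaf property as in Corollary~\ref{lurierep2}, then construct atlases. You also correctly identify the passage to pro-objects as the main obstacle. However, your atlas construction diverges from the paper's in a way that leaves a genuine gap.

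The paper does \emph{not} induct along the Postnikov tower. Instead, starting from a smooth atlas $U_{[0]}\to F^{\bG_m}$ for the weight-$0$ restriction (which exists by Corollary~\ref{lurierep2} applied to $F^{\bG_m}$), it inductively enlarges the weight interval through the sequence $[0]\subset[0,1]\subset[-1,1]\subset[-1,2]\subset\cdots$, building $U_I=\oR\Spec A_I$ with $A_I\in dg_+\Affd\Alg_K^{\bG_m,\dagger,\loc}$ such that $U_I\to F$ is formally smooth on test objects concentrated in weights~$I$. The key observation is that when passing from $I$ to $I'=I\cup\{n\}$, the ideal generated by $\cW_nB$ squares to zero in any test object $B$ concentrated in weights~$I'$, so the extension $U_I\leadsto U_{I'}$ is a deformation problem controlled by the weight-$I'$ part of $\bL^{U_I/F}$, which by condition~(4) has finitely many generators in each degree. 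The iterative cotangent-killing argument of \cite[Theorem~7.1]{PortaYuRep} is then applied \emph{within each weight step} to achieve formal smoothness on weights~$I'$. The pro-object $\{A_I\}_I$ is the formal atlas, via the completion functor of \S\ref{wqufreesn}.

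Your Postnikov induction cannot work as stated: at a fixed chain degree $i$, the complex $\bL^{F,x}$ may be nonzero in infinitely many weights (condition~(4) only bounds each weight separately), so a single Postnikov stage of the atlas need not lie in $dg_+\Affd\Alg_K^{f\bG_m,\dagger,\loc,\flat}$, contrary to your final-paragraph claim. You correctly sense that weight-truncation is needed, but the paper's induction on weight intervals is precisely the organising principle that makes this work; in effect you have inverted the nesting, putting the PortaYuRep-style induction on the outside and hoping to absorb weights on the inside, when the argument requires the opposite.
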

\begin{proof}
By Lemma \ref{wcotexistslemma}, the conditions ensure that 
 $F$ has coherent cotangent complexes $\bL^{F,y}$ at  all points $y \in F(B)$ for all $B \in dg_+\Affd\Alg_K^{f\bG_m,\dagger,\loc, \flat}$. 
 Reasoning as in the proof of Corollary \ref{lurierep2}, it follows that $F$ is an \'etale hypersheaf. It then only remains to establish the existence of smooth atlases. 
 
 Corollary \ref{lurierep2} implies that the functor $F^{\bG_m}$ given by the restriction of  $F$ to $dg_+\Affd\Alg_K^{\dagger,\loc}$ (regarded as objects concentrated in weight $0$) is representable. Given a smooth atlas $U_{[0]} \to F^{\bG_m}$, we now inductively construct  a sequence 
 \[
U_{[0]} \to U_{[0,1]} \to U_{[-1,1]} \to U_{[-1,2]} \to U_{[-2,2]} \to \ldots \to F
 \]
 of morphisms of presheaves $U_I =\oR\Spec A_I$ on $dg_+\Affd\Alg_K^{f\bG_m,\dagger,\loc, \flat}$, for $A_I \in  dg_+\Affd\Alg_K^{\bG_m,\dagger,\loc}$ (via \S \ref{wqufreesn}),  such that:
 \begin{itemize}
  \item each morphism $U_I \to U_J$ of presheaves is an equivalence when restricted to objects concentrated in weights $I$, and
\item  each morphism $U_I \to F$ of presheaves is formally smooth  when restricted to objects concentrated in weights $I$, in the sense that for each surjection $B \to C$ of such objects, the map
\[
 U_I(B)\to F(B)\by_{F(C)}^hU_I(C)
\]
is surjective on $\pi_0$.
\end{itemize}

Given $I$, let $I'=I \cup \{n\}$ be the next interval in the sequence, noting that $n$ might be negative. Observe that for any weighted dagger  algebra $B$ concentrated in weights $I'$, the ideal generated by $\cW_nB$ squares to $0$. We can then construct $U_{I'}$ from $U_I$ as the solution to a deformation problem.
One approach is to proceed along similar lines to \cite[Theorem 7.1]{PortaYuRep}, iteratively eradicating unwanted terms in the cotangent complex $(\bL^{U_I/F}/\cW_{\notin I'}\bL^{U_I/F})\ten^{\oL}_{A_I}\H_0A_{[0]}$ by taking homotopy fibres of universal  derivations; this eventually yields an object $U_{I'}$ with $(\bL^{U_{I'}/F}/\cW_{\notin I'}\bL^{U_{I'}/F})\ten^{\oL}_{A_I}\H_0A_{[0]}$ a complex of weighted projective $\H_0A_{[0]}$-modules in non-positive chain degrees, ensuring formal smoothness.
  
The pro-object $\{A_I\}_I \in \pro(dg_+\Affd\Alg_K^{\bG_m,\dagger,\loc})$ then gives the required atlas as an object $\pro(dg_+\Affd\Alg_K^{f\bG_m,\dagger,\loc,\flat})$, via the derived completion functor $dg_+\Affd\Alg_K^{\bG_m,\dagger,\loc} \to \pro(dg_+\Affd\Alg_K^{f\bG_m,\dagger,\loc,\flat})$ of \S \ref{wqufreesn}.
 %
 %
\end{proof}

\section{Weighted shifted  symplectic structures associated to pro-\'etale sheaves}\label{wproetsn}

We now return to the questions outlined at the start of \S \ref{weightedsn}, 
applying the techniques of that section 
to our examples of interest.

As in \S \ref{proetdaggersn},  assume that the valuation on our base field $K$ is discretely valued, so the ring $\cO_K$ is a DVR with maximal ideal $\m_K$. 
Let $\ell$ be the unique integral prime in $\m_K$.

%
%

\subsection{Weighted pre-symplectic structures}

\begin{definition}

Given a scheme $X$, a locally free rank $1$ $\ell$-adic lisse sheaf $\bE$ on $X$,  and a graded  topological  $K$-vector space $V= \bigoplus_{r \in \Z} \cW_rV$, we let $\uline{V}_X(\bE)$ be the  pro-\'etale sheaf $\bigoplus_{n \in \Z} \uline{\cW_nA}_X\hten_{\Zl}\bE^{\ten n}$ on $X$.
\end{definition}

This allows  the following modification of Definition \ref{FXproetdef}:
 \begin{definition}\label{wFXproetdef}
  For $(X,\bE)$ as above,
and a  functor $F \co dg_+\CAlg_K \to s\Set$ from differential graded-commutative $K$-algebras in non-negative chain degrees to simplicial sets, define the functor
  \[
   F(X_{\pro\et},\bE,-) \co dg_+\Affd\Alg^{f\bG_m\loc,\dagger}_K \to s\Set
  \]
from weighted  localised dagger dg algebras to simplicial sets by
\[
A \mapsto \oR\Gamma(X_{\pro\et},F(\uline{A}_X(\bE)),
\]  
where $\oR\Gamma$ is the right-derived functor of the global sections functor $\Gamma$ in simplicial sets. 
Note that the hypotheses on $A$ imply that the direct sum  in the definition of $\uline{A}_X(\bE)$   is finite.
 \end{definition}

 \begin{example}[$G$-torsors]\label{wBGXex}
  If $G$ is an algebraic group over $K$, then we can let $F$ be the derived stack $BG$, parametrising $G$-torsors. The functor $ BG(X_{\pro\et},\bE,-)$ can then be thought of as parametrising $G$-torsors on  $\bE\setminus \{0\}$ over $X_{\pro\et}$.
  In particular, when $A$ is a weighted dagger algebra (concentrated in chain degree $0$), the simplicial set  $ BG(X_{\pro\et},\bE, A)$ is the nerve of the groupoid of $G(\uline{A}_X(\bE))$-torsors on $X_{\pro\et}$. 
  
If $X$ is locally topologically Noetherian and connected, with a geometric point $x$, then $\bE$ corresponds to a continuous $\Zl$-representation $E$ of  the pro-\'etale fundamental group  $ \pi_1^{\pro\et}(X,x)$  of \cite[\S 7]{BhattScholzeProEtale}.  For $A \in \Affd\Alg_K^{\dagger,f\bG_m}$, the simplicial set  $BG(X_{\pro\et},\bE, A)$ is then equivalent to the nerve of the groupoid of continuous sections of the group homomorphism
\[
    \pi_1^{\pro\et}(X,x) \ltimes G(\bigoplus_{r\in \Z} \cW_rA\hten_{\Zl}E^{\ten r})  \to     \pi_1^{\pro\et}(X,x)                                                                                                                                                                                                                                                                                                                                                                                                                                                                                                                                                                                                                                                                                                                                                                                                                                              \]
 given by projection to the first factor.
 \end{example}
 
 \begin{example}[Commutative groups]\label{wBGmXex}

 As a special case of Example \ref{wBGXex}, if $G$ is commutative with associated (abelian) Lie algebra $\g$, then for any $A \in dg_+\Affd\Alg_K^{\dagger,f\bG_m}$ we have a group isomorphism 
 \begin{align*}
 G( \bigoplus_r \cW_rA\hten_{\Zl}E^{\ten r}) &\simeq G(\cW_0A)  \by N^{-1} (\g\ten_K(\bigoplus_{r\ne 0} \cW_rA\hten_{\Zl}E^{\ten r}))
 \end{align*}
 given by deriving the map $(g,v) \mapsto g\exp(v)$ on weighted affinoid algebras, where $N^{-1}$ is Dold--Kan denormalisation. To see that this is an isomorphism, observe that for the  nilpotent ideal $I= \cW_{\ne 0}A \oplus \cW_0I$ generated by $\cW_{\ne 0}A$, we have isomorphisms $G(A)/\exp(\g\ten_KI) \cong G(A/I)= G(\cW_0A/\cW_0I) \cong G(\cW_0A)/\exp(\g\ten_K\cW_0I)$.
 
 The weighted moduli functor then decouples as
 \[
  BG(X_{\pro\et},\bE, A) \simeq BG(X_{\pro\et}, \cW_0A) \by N^{-1}\tau_{\ge 0}( \oR\Gamma(X_{\pro\et}, \bigoplus_{r\ne 0} \g\ten_K \uline{\cW_rA}\hten_{\Zl}\bE^{\ten r})_{[-1]}),
 \]
where $N^{-1}$ is Dold--Kan denormalisation and $\tau_{\ge 0}$ denotes good truncation of a chain complex. 

 For instance, if $k$ is a finite field prime to $\ell$ then the weighted derived moduli stack of $G$-torsors (for $G$ commutative) is concentrated in weight $0$, in the sense that the natural map $ BG((\Spec k)_{\pro\et}, \cW_0A) \to  BG((\Spec k)_{\pro\et},\Zl(1), A) $ is an equivalence, since tangent cohomology $\H^*(k, \cW_nA(n))$ vanishes in all  non-zero weights $n$. 

 The next simplest example is for a non-Archimedean local field $K$ of residue characteristic 
 prime to $\ell$, where for $G$ commutative  we have  
 \[
 BG((\Spec K)_{\pro\et},\Zl(1), A) \simeq  BG((\Spec K )_{\pro\et}, \cW_0A) \by  
 N^{-1}\tau_{\ge 0}(\g \ten_K (\cW_1A \oplus \cW_1A_{[1]} )),   
 \]
 since cohomology vanishes in all other weights. 

  \end{example}

Corollary \ref{tracecor} now generalises to give us the following corollary of Proposition \ref{traceprop}, applicable to all of the cases in Examples \ref{traceex} (taking $\bE=\uline{\Zl(1)}$):
 
\begin{corollary}\label{wtracecor}
If $X$ is  a topologically Noetherian scheme and $\bE$ a locally free rank $1$ $\ell$-adic lisse sheaf on $X$, equipped with compatible trace maps
\[
\tr \co \H^d (X_{\et}, \bE^{\ten m}/\ell^n)\to \Z/\ell^n.
\]
satisfying the conditions of Proposition \ref{traceprop},
then for any 
 $n$-shifted pre-symplectic (in the terminology of \cite{poisson}) derived $\infty$-geometric Artin stack $F \co dg_+\CAlg_K \to s\Set$,
the functor
  \[
   F(X_{\pro\et},\bE,-) \co dg_+\Affd\Alg^{\loc,\dagger}_K \to s\Set
  \]
of Definition \ref{FXproetdef} carries a functorial $(n-d)$-shifted pre-symplectic structure of weight $m$ at all points; in particular,  this implies that any formally \'etale 
map  $Y \to F(X_{\pro\et},-)$ from a dg dagger-analytic Artin $\infty$-stack $Y$ induces an $(n-d)$-shifted pre-symplectic structure of weight $m$ on $Y$.
\end{corollary}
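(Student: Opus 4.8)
The plan is to mimic the proof of Corollary \ref{tracecor} exactly, but carrying the $\bG_m$-weight through every step, using the weighted de Rham complex of Definition \ref{wDRdef} and the weighted pre-symplectic spaces of Definition \ref{wPreSpdef}. The underived truncation $\pi^0 X^{\bG_m}$ plays no role here since we are only producing a \emph{pre}-symplectic structure, so the argument is almost formal. First I would record the weighted analogue of Lemma \ref{anDRlemma}: for a quasi-free object $C \in dg_+\Affd\Alg_K^{\bG_m,\dagger,\loc}$ there are natural maps $F^p\oL\DR(\uline{C}(S)^{\alg}) \to \uline{F^p\DR(C)}(S)$ of $\bG_m$-equivariant complexes, functorial in pro-finite sets $S$, obtained by the same universality argument applied weight-by-weight (the derivation $d\co C \to \Omega^1_C$ of Definition \ref{wOmegaAffddef} is continuous in each weight, hence induces $\uline{C}(S) \to \uline{\Omega^1_C}(S)$, and one passes to alternating powers and product total complexes). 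The direct sum in $\uline{A}_X(\bE) = \bigoplus_n \uline{\cW_n A}_X \hten_{\Zl}\bE^{\ten n}$ being finite, all these constructions are well-behaved.

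Next I would invoke Proposition \ref{traceprop} in its twisted form: the trace $\tr\co \H^d(X_{\et},\bE^{\ten m}/\ell^n) \to \Z/\ell^n$ gives, for each finite module $\cW_n A$-module summand, a map $\oR\Gamma(X_{\pro\et}, \uline{\cW_n A}_X \hten_{\Zl}\bE^{\ten n}) \to \cW_n A[-d]$ when $n=m$, landing in weight $m$ exactly. Summing over weights and using Lemma \ref{anDRlemma} (weighted version) gives, for quasi-free $C$,
\[
 \oR\Gamma(X_{\pro\et}, F^2\oL\DR(\uline{C}_X(\bE)^{\alg})) \to \oR\Gamma(X_{\pro\et}, \uline{F^2\DR(C)}_X) \to \cW_m F^2\DR(C)[-d],
\]
hence $\oR\Gamma(X_{\pro\et},\cW_0\PreSp^{\alg}(\uline{C}_X(\bE)^{\alg},n)) \to \cW_m\PreSp(C,n-d)$, where $\cW_0$ on the algebraic side picks out the $\bG_m$-invariant (weight-$0$) part of the equivariant de Rham complex of the graded CDGA $\uline{C}_X(\bE)^{\alg}$, and the trace shifts this into weight $m$. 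Since $F$ maps quasi-isomorphisms to weak equivalences, Proposition \ref{waffdsubprop1} (together with Corollary \ref{waffdsubcor}) reduces everything to quasi-free objects of $dg_+\Affd\Alg_K^{\bG_m,\dagger,\loc}$, so these maps assemble — by adjunction, as in Corollary \ref{tracecor} — into a morphism
\[
 \PreSp^{\alg}(F,n) \to \oR\map(F(X_{\pro\et},\bE,-),\ \cW_m\PreSp(-,n-d))
\]
in the $\infty$-category of homotopy-preserving functors on $dg_+\Affd\Alg_K^{f\bG_m,\dagger,\loc}$. The final clause about formally \'etale maps $Y \to F(X_{\pro\et},\bE,-)$ then follows by pullback, using that $\cW_m\PreSp(-,n-d)$ is defined on all of $dg_+\Affd\Alg_K^{f\bG_m,\dagger,\loc}$ and pre-symplectic structures pull back freely (no non-degeneracy to check).

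The main obstacle is bookkeeping rather than substance: one must check that the weight grading is respected at every stage — in particular that the algebraic derived de Rham cohomology of the \emph{graded} CDGA $\uline{A}_X(\bE)$ decomposes compatibly with $\DR$ of the underlying weighted dagger dg algebra (the point being that $\bE^{\ten n}$ contributes weight $n$ and $\cW_n A$ contributes weight $n$, so the diagonal copy $\cW_n A \otimes \bE^{\ten n}$ sits in total weight... — here one must be careful that the relevant $\bG_m$-action on the moduli side is the one making $\cW_n$ have weight $n$, matched against the trace's Tate twist $\bE^{\ten m}$ which forces the output weight to be exactly $m$). One should also confirm that the reduction to quasi-free objects via Proposition \ref{waffdsubprop1} is compatible with forming $\uline{(-)}_X(\bE)$, i.e. that a quasi-free resolution $C \to A$ in $dg_+\Affd\Alg_K^{\bG_m,\dagger,\loc}$ induces a quasi-isomorphism $\uline{C}_X(\bE) \to \uline{A}_X(\bE)$ of sheaves — this follows from Corollary \ref{preservehtpycor} applied weight-by-weight, the sum being finite. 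Once these compatibilities are in place the proof is a routine transcription of Corollary \ref{tracecor}.
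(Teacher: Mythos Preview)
Your proposal is correct and follows the same route as the paper's proof: combine Lemma \ref{anDRlemma} with Proposition \ref{traceprop} to produce maps into $\cW_m F^2\DR(A)[-d]$, then pass through adjunction exactly as in Corollary \ref{tracecor}. The only slip is that your intermediate term should be the twisted sheaf $\uline{F^2\DR(C)}_X(\bE)$ rather than $\uline{F^2\DR(C)}_X$ (one then projects to the summand $\uline{\cW_m F^2\DR(C)}_X\hten_{\Zl}\bE^{\ten m}$ and applies the trace), and the $\cW_0$ you insert on the algebraic $\PreSp$ side is superfluous since $\uline{A}_X(\bE)$ is just a plain CDGA with no $\bG_m$-grading.
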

 \begin{proof}
 For any   $A \in dg_+\Affd\Alg^{f\bG_m\loc,\dagger}_K $, Lemma \ref{anDRlemma} combines with Proposition \ref{traceprop} to give us maps 
 \[
\oR\Gamma(X_{\pro\et},  F^2\oL\DR(\uline{A}_X(\bE))) 
\to \oR\Gamma(X_{\pro\et}, \uline{F^2\oL\DR(A) }_X(\bE))
\xra{\tr} \cW_mF^2\oL\DR(A)[-d],
\] 
via the projection $ \uline{F^2\oL\DR(A) }_X(\bE) \to \uline{  \cW_m F^2\oL\DR(A) }_X\hten_{\Zl}\bE^{\ten m}$.
Hence we have a map
\[
 \oR\Gamma(X_{\pro\et},\PreSp^{\alg}(\uline{A}_X(\bE)^{\alg} ,n) \to \cW_m\PreSp(A,n-d).
\]

Similarly to Corollary \ref{tracecor}, because $ \PreSp^{\alg}(F,n) \simeq \oR\map(F,\PreSp^{\alg}(-,n))$, the maps above combine to give a composite  transformation
\[
 \PreSp^{\alg}(F,n)   \by  F(X_{\pro\et},\bE,A) \to 
 \oR\Gamma(X_{\pro\et},\PreSp^{\alg}( \uline{A}_X(\bE)^{\alg} ,n)
 \to  \cW_m\PreSp(-,n-d),
\]
natural in $A$.
By adjunction, we can rephrase this as a morphism
\[
 \PreSp^{\alg}(F,n) \to \oR\map(F(X_{\pro\et},\bE,-), \cW_m\PreSp(-,n-d)).
\]
as required.
\end{proof}

\subsection{Weighted symplectic structures}

In order to establish non-degeneracy, we need a weighted version of the notion of weak duality from Definition \ref{weakdualdef}. This involves looking at a weighted  quasi-dagger dg algebra $A$ and a presheaf $N$ of  $\uline{A}_X(\bE)$-modules in chain complexes  on $X_{\pro\et}$. The complex $\oR\Gamma(X,N)$ is not then an $A$-module, but the complex $\bigoplus_{r \in \Z}  \oR\Gamma(X,N\hten_{\Zl}\bE^{\ten - r} )$ is, via the map  $\uline{A}_X \to \bigoplus_r \uline{A}_X(\bE)\hten_{\Zl}\bE^{\ten - r}$ given by 
\[
\uline{\cW_rA}_X \to (\uline{\cW_rA}_X(\bE)\hten_{\Zl}\bE^{\ten r}) \hten_{\Zl}\bE^{\ten -r} \subset \uline{A}_X(\bE)\hten_{\Zl}\bE^{\ten -r}.
\]
Accordingly, we regard $\bigoplus_{r \in \Z} N\hten_{\Zl}\bE^{\ten - r}$ as a $\bG_m$-equivariant module, with the term $ N\hten_{\Zl}\bE^{\ten - r}$ having weight $r$.

\begin{definition}\label{wweakdualdef}
 Given a weighted  quasi-dagger dg algebra $A$ and a presheaf $N$ of  $\uline{A}_X(\bE)$-modules in chain complexes  on $X_{\pro\et}$, we say that $N$ satisfies weak duality with respect to the trace $\tr$  if for all morphisms $A \to C$ of weighted quasi-dagger dg algebras, 
 the map 
 \[
  \oR\HHom_{\uline{A}_X(\bE)}( N,\uline{C}_X(\bE) )\to \oR\HHom_A(\bigoplus_r \oR\Gamma(X,N\hten_{\Zl}\bD\hten_{\Zl}\bE^{\ten - r} )[d],\bigoplus_r \cW_rC)^{\bG_m}
 \]
induced by the pairing
\begin{align*}
(\bigoplus_r \oR\Gamma(X, N\hten_{\Zl}\bD\hten_{\Zl}\bE^{\ten -r}))\ten_A^{\oL} \oR\HHom_{\uline{A}_X(\bE)}( N, \uline{C}_X(\bE)) \\
\to \oR\Gamma(X,\bigoplus_{r} \uline{C}_X(\bE)\hten_{\Zl}\bD\hten_{\Zl}\bE^{\ten -r})\to C[-d]
\end{align*}
is a quasi-isomorphism.  
Here, our pairing factors through the obvious  projection maps  $\bigoplus_{r} \uline{C}_X(\bE)\hten_{\Zl}\bE^{\ten -r} \to \bigoplus_{r} \uline{\cW_rC}_X$.
\end{definition}
Note that by taking $C=A \oplus M$, we can deduce a similar quasi-isomorphism for all $M\in dg_+\Coh_A^{f\bG_m}$ in place of $C$.

\begin{remark}\label{wweakrmk}
 When $A$ lives in weight  $0$,  we have $\uline{A}_X(\bE)= \uline{A}_X$, but beware that Definition \ref{wweakdualdef} is then still a more general statement than Definition \ref{weakdualdef}, because the algebras $C$ can still have non-zero weights. Explicitly, the condition reduces to
 \[
   \bigoplus_r \oR\HHom_{\uline{A}_X}( N,\uline{\cW_rC}_X\hten_{\Zl}\bE^{\ten r} )\simeq \bigoplus_r\oR\HHom_A(\oR\Gamma(X,N\hten_{\Zl}\bD\hten_{\Zl}\bE^{\ten - r} )[d], \cW_rC)
 \]
for all $C$, so it amounts to saying that the $\uline{A}_X$-modules $N\hten_{\Zl}\bE^{\ten r}$ all satisfy weak duality in the sense of  Definition \ref{weakdualdef}.
 
\end{remark}

There are then  immediate weighted analogues of Lemmas  \ref{wdopenlemma} and \ref{wdnilcompletelemma}; the latter becomes:
 \begin{lemma}\label{wwdnilcompletelemma}
Given a weighted quasi-dagger dg algebra $A$, a  module $N \in dg_+\Mod_{\uline{A}_X(\bE)}$ 
satisfies weak duality  in the sense of Definition \ref{wweakdualdef} if and only if the presheaves  $ \uline{(\H_0A/\bG_m)}_X\ten^{\oL}_{\uline{A}_X(\bE)}N\hten_{\Zl}\bE^{\ten r}$ of $\uline{\H_0A/\bG_m}_X$-modules all satisfy weak duality in the sense of Definition \ref{weakdualdef}.
 \end{lemma}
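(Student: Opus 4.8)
The plan is to run the argument of Lemma~\ref{wdnilcompletelemma} in the weighted setting, using Remark~\ref{wweakrmk} to interpret the weight-zero instances of Definition~\ref{wweakdualdef} in terms of the unweighted Definition~\ref{weakdualdef}, twist by twist. Write $\bar N_0 := \uline{(\H_0A/\bG_m)}_X \ten^{\oL}_{\uline{A}_X(\bE)} N$, so that $\bar N_0\hten_{\Zl}\bE^{\ten r}$ is the presheaf appearing in the statement; since $\H_0A/\bG_m$ lives in weight $0$ we have $\uline{(\H_0A/\bG_m)}_X(\bE) = \uline{(\H_0A/\bG_m)}_X$, and Remark~\ref{wweakrmk} identifies ``$\bar N_0$ satisfies weak duality over $\H_0A/\bG_m$ in the sense of Definition~\ref{wweakdualdef}'' with ``each $\bar N_0\hten_{\Zl}\bE^{\ten r}$ satisfies weak duality over $\H_0A/\bG_m$ in the sense of Definition~\ref{weakdualdef}''. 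Thus it suffices to prove that $N$ satisfies weak duality over $A$ if and only if $\bar N_0$ does so over $\H_0A/\bG_m$.

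For the forward implication, Definition~\ref{wweakdualdef} quantifies over all morphisms out of $A$, so if $N$ satisfies weak duality over $A$ then so does $N\ten^{\oL}_{\uline{A}_X(\bE)}\uline{C}_X(\bE)$ over $C$ for every morphism $A \to C$; taking $C = \H_0A/\bG_m$ gives the claim. For the converse, assume $\bar N_0$ satisfies Definition~\ref{wweakdualdef} over $\H_0A/\bG_m$, and climb the tower joining $\H_0A/\bG_m$ to $A$: first the map $\H_0A \onto \H_0A/\bG_m$, which is surjective with nilpotent kernel (the ideal generated by $\bigoplus_{n\neq 0}\cW_n\H_0A$, nilpotent because the weights are bounded), hence a finite composite of square-zero extensions; then the Postnikov tower $A = \ho\Lim_k A/\tau_{>k}A$ with $A/\tau_{>0}A = \H_0A$, each stage $A/\tau_{>k+1}A \to A/\tau_{>k}A$ being a homotopy square-zero extension. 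The weighted analogue of Lemma~\ref{wdopenlemma} (whose proof goes through verbatim, Proposition~\ref{exactmodsaffdprop} again furnishing quasi-isomorphism invariance so that homotopy square-zero extensions can be treated) shows that, along a square-zero extension $B \onto B'$, a $\uline{B}_X(\bE)$-module satisfies weak duality over $B$ iff its base change to $B'$ does; applying this inductively along the finite tower over $\H_0A/\bG_m$ and then up the Postnikov tower yields that $N\ten^{\oL}_{\uline{A}_X(\bE)}\uline{A/\tau_{>k}A}_X(\bE)$ satisfies weak duality over $A/\tau_{>k}A$ for every $k$. One then transfers this to $A$ itself by writing each target algebra $C$ of Definition~\ref{wweakdualdef} as $\ho\Lim_k C/\tau_{>k}C$, using tensor--hom adjunction and base change (as in Lemma~\ref{constrlemma}) to rewrite the maps of Definition~\ref{wweakdualdef} for the pair $(A,C)$ as homotopy limits over $k$ of the already-established maps for $(A/\tau_{>k}A, C/\tau_{>k}C)$.

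The only point that needs care — and where I expect the real bookkeeping to lie — is that the homotopy limits over the Postnikov index $k$ and the infinite direct sums $\bigoplus_{r\in\Z}$ occurring in Definition~\ref{wweakdualdef} commute with the functors $\uline{-}_X(\bE)$, $\oR\Gamma(X_{\pro\et},-)$, $\cHom$ and $(-)^{\bG_m}$ that assemble its two sides. This is harmless: $A$ and each truncation $A/\tau_{>k}A$ have only finitely many non-zero weights, so for each fixed $r$ the inverse systems involved stabilise after finitely many steps, exactly as in the discussion following Definition~\ref{wcohdef}; and the perfectness clause persists because perfect complexes form a thick subcategory of the derived category, closed under the finite and stabilising operations in play. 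No idea beyond the adaptation of Lemmas~\ref{wdopenlemma} and~\ref{wdnilcompletelemma} is required, so the write-up should be short.
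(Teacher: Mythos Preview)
Your proposal is correct and follows exactly the approach the paper intends: the paper does not give a proof but simply declares this an ``immediate weighted analogue of Lemmas \ref{wdopenlemma} and \ref{wdnilcompletelemma}'', and your write-up is precisely that adaptation --- use Remark~\ref{wweakrmk} to reduce to the weighted-to-weighted statement over $\H_0A/\bG_m$, then climb the nilpotent tower $\H_0A/\bG_m \leftarrow \H_0A$ followed by the Postnikov tower, invoking the weighted analogue of Lemma~\ref{wdopenlemma} at each stage and finishing with the homotopy-limit argument. The only cosmetic issue is the parenthetical citation of Lemma~\ref{constrlemma} for the final homotopy-limit step, which is not the relevant reference (that lemma concerns constructible complexes); the actual manoeuvre is just the Postnikov-tower argument already present in the proof of Lemma~\ref{wdnilcompletelemma}.
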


\begin{examples}\label{wwdex}
  Roughly speaking, a sufficient condition for an $\uline{A}_X(\bE)$-module $N$ on an $\ell$-coprime proper scheme $X$ to satisfy weak duality is that  its sheafification  is constructible in an appropriate sense;  by the analogue of Lemma \ref{wdnilcompletelemma}, we can reduce to looking at the $\uline{(\H_0A/\bG_m)}_X$-module $N\ten^{\oL}_{\uline{A}_X(\bE)} \uline{(\H_0A/\bG_m)}_X$ (noting that $\uline{(\H_0A/\bG_m)}_X=  \uline{(\H_0A/\bG_m)}_X(\bE)$ since $\H_0A/\bG_m$ lives in weight $0$). Also note that the category of modules satisfying weak duality is triangulated and idempotent-complete.
 
In the setting of Lemma \ref{constrlemma}, with a constructible $\hat{R}_X$-complex $L$ and a homomorphism $R \to \cW_0A$, consider the $\uline{A}_X(\bE)$-complex $N:=L\ten^{\oL}_{\hat{R}_X}\uline{A\{m\}}_X(\bE)$ (for $A\{m\}$ as in Definition \ref{twistweightdef}). This (and hence any object of the triangulated category generated by such objects) satisfies weak duality, with the following reasoning. 
Since
\[
 \uline{A\{m\}}_X(\bE)= \bigoplus_r \uline{\cW_{m+r}A}\hten_{\Zl}\bE^{\ten r}= \uline{A}_X(\bE)\hten_{\Zl}\bE^{\ten -m}, 
\]
Lemma \ref{constrlemma} implies that 
\begin{align*}
 \oR\Gamma(X_{\pro\et}, N 
 \hten_{\Zl}\bE^{\ten -r} ) 
 &\simeq \bigoplus_s \oR\Gamma(X_{\pro\et},L\hten_{\Zl}\bE^{\ten s-r-m}  )\ten^{\oL}_R \cW_{s}A,
\end{align*}
so 
\begin{align*}
 &\oR\HHom_A(\bigoplus_r  \oR\Gamma(X,N\hten_{\Zl}\bD\hten_{\Zl}\bE^{\ten -r} ), \bigoplus_r \cW_rC)^{\bG_m}\\
 &\simeq   \oR\HHom_A( \bigoplus_r\oR\Gamma(X_{\pro\et},L\hten_{\Zl}\bD\hten_{\Zl}\bE^{\ten -r-m}  )\ten^{\oL}_R A, \bigoplus_r \cW_rC)^{\bG_m}\\
 &\simeq\bigoplus_r  \oR\HHom_R( \oR\Gamma(X_{\pro\et},L\hten_{\Zl}\bD\hten_{\Zl}\bE^{\ten -r-m}  ),  \cW_rC), 
\end{align*}
noting that the sum is finite by the hypothesis on $C$.

Similarly, Lemma \ref{constrlemma} gives
\begin{align*}
 \oR\HHom_{\uline{A}_X(\bE)}(N ,
 \uline{C}_X(\bE))
& \simeq  \oR\HHom_{\uline{A}_X(\bE)}( L\ten^{\oL}_{\hat{R}_X}\uline{A}_X(\bE), \uline{C}_X(\bE) \hten_{\Zl}\bE^{\ten m})\\
& \simeq \bigoplus_r \oR\HHom_{\hat{R}_X}( L, \hat{R}_X\hten_{\Zl}\bE^{\ten m+r})\ten^{\oL}_R \cW_rC,
 \end{align*}
The natural map 
\[
  \oR\HHom_{\uline{A}_X(\bE)}( N,\uline{C}_X(\bE) )\to \oR\HHom_A(\bigoplus_r \oR\Gamma(X,N\hten_{\Zl}\bD\hten_{\Zl}\bE^{\ten - r} )[d],\bigoplus_r \cW_rC)^{\bG_m}
 \]
is thus a quasi-isomorphism, since it reduces to the Verdier duality quasi-isomorphisms
 \[
  \oR\HHom_{\hat{R}_X}( L, \hat{R}_X\hten_{\Zl}\bE^{\ten m+r})\to \oR\HHom_R( \oR\Gamma(X_{\pro\et},L\hten_{\Zl}\bD\hten_{\Zl}\bE^{\ten -r-m}  ),R).
 \]
\end{examples}

\begin{lemma}\label{wFXcotlemma}
Let $X$ be  a topologically Noetherian scheme satisfying the conditions of Proposition \ref{traceprop}, and 
  $F \co dg_+\CAlg_K \to s\Set$ a derived $\infty$-geometric Artin stack.
At any point $\phi \in F(X_{\pro\et},\bE,A)$ at which the presheaf $\bL^{F,\uline{A}_X(\bE), \phi} $  of $\uline{A}_X(\bE)$-modules satisfies weak duality in the sense of Definition \ref{weakdualdef}, the functor 
\[
T_{\phi}(F(X_{\pro\et},\bE,-),-) \co M \mapsto F(X_{\pro\et},\bE, A\oplus M)\by^h_{F(X_{\pro\et}, \bE,A)}\{\phi\}
\]
on weighted coherent $A$-modules $M$ is represented by the weighted complex
\[
\bigoplus_r \oR\Gamma(X, \bL^{F,\uline{A}_X(\bE), \phi}\hten_{\Zl}\bD\hten_{\Zl}\bE^{\ten - r} )[d].
\]
\end{lemma}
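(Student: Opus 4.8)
The plan is to transcribe the proof of Lemma \ref{FXcotlemma}, threading the $\bG_m$-weights through each step. First, since homotopy limits commute and $\uline{-}_X(\bE)$ is additive (so $\uline{(A\oplus M)}_X(\bE)\cong \uline{A}_X(\bE)\oplus\uline{M}_X(\bE)$), I would write
\[
 T_{\phi}(F(X_{\pro\et},\bE,-))(M)\simeq \oR\Gamma\bigl(X_{\pro\et},\, F(\uline{A}_X(\bE)\oplus\uline{M}_X(\bE))\by^h_{F(\uline{A}_X(\bE))}\{\phi\}\bigr).
\]
Because $F$ is a derived $\infty$-geometric Artin stack it carries a cotangent complex, so the inner term is $N^{-1}\tau_{\ge 0}\oR\sHom_{\uline{A}_X(\bE)}(\bL^{F,\uline{A}_X(\bE),\phi},\uline{M}_X(\bE))$ objectwise on $X_{\pro\et}$; taking $\oR\Gamma$ then gives, exactly as in Lemma \ref{FXcotlemma},
\[
 T_{\phi}(F(X_{\pro\et},\bE,-))(M)\simeq N^{-1}\tau_{\ge 0}\oR\HHom_{\uline{A}_X(\bE)}(\bL^{F,\uline{A}_X(\bE),\phi},\uline{M}_X(\bE)).
\]

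Next I would apply the weak-duality hypothesis. Taking $C=A\oplus M$ in Definition \ref{wweakdualdef} — legitimate, as noted after that definition, since $A\oplus M$ is again a weighted localised dagger dg algebra with $\uline{(A\oplus M)}_X(\bE)=\uline{A}_X(\bE)\oplus\uline{M}_X(\bE)$ — rewrites the right-hand side as the $M$-summand of $N^{-1}\tau_{\ge 0}\oR\HHom_A\bigl(P,\bigoplus_r\cW_r(A\oplus M)\bigr)^{\bG_m}$, where
\[
 P:=\bigoplus_r \oR\Gamma(X,\bL^{F,\uline{A}_X(\bE),\phi}\hten_{\Zl}\bD\hten_{\Zl}\bE^{\ten -r})[d]
\]
is the $\bG_m$-equivariant $A$-module whose $r$-th summand has weight $r$, as in the paragraph preceding Definition \ref{wweakdualdef}. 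Since $\bigoplus_r\cW_r(A\oplus M)=A\oplus M$ as $\bG_m$-equivariant modules and $\oR\HHom_A(P,-)^{\bG_m}$ is additive, the $M$-summand is precisely $N^{-1}\tau_{\ge 0}\oR\HHom_A(P,M)^{\bG_m}$, which for $M\in dg_+\Coh_A^{f\bG_m}$ is the derived mapping space $\oR\map_{dg\Coh_A^{\hat f\bG_m}}(P,M)$. Thus $P$ represents $T_\phi(F(X_{\pro\et},\bE,-))$ in the sense of Definition \ref{wFcotdef}.

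It remains to check that $P$ is an admissible cotangent complex: it lies in $dg\Coh_A^{\hat f\bG_m}$ because each weight component $\oR\Gamma(X,\bL^{F,\uline{A}_X(\bE),\phi}\hten_{\Zl}\bD\hten_{\Zl}\bE^{\ten -r})[d]$ is a perfect $A$-module (cf. Remark \ref{wweakrmk} and the perfectness clause in the weak-duality condition), and it is bounded below uniformly in $r$ since $\bL^F$ is bounded below and $X$ has finite cohomological dimension. The one point needing care — the main obstacle — is the $\bG_m$-bookkeeping: one must verify that the weak-duality isomorphism of Definition \ref{wweakdualdef} is weight-graded, so that specialising to $C=A\oplus M$ and extracting the $M$-summand yields the \emph{$\bG_m$-equivariant} $\oR\HHom_A(P,M)^{\bG_m}$ (equivalently $\cW_0\cHom_A(P,M)$ in the notation of Definition \ref{cHomdef}) and not the ungraded one, and likewise that $\uline{M}_X(\bE)$ for weighted $M$ carries the module structure over $\uline{A}_X(\bE)$ matching the equivariant structure on $P$. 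Both are built into the formulations of Definitions \ref{wFXproetdef} and \ref{wweakdualdef}, and everything else is a routine copy of the unweighted argument.
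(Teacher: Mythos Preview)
Your proposal is correct and follows essentially the same route as the paper's proof: express the tangent space as $N^{-1}\tau_{\ge 0}\oR\HHom_{\uline{A}_X(\bE)}(\bL^{F,\uline{A}_X(\bE),\phi},\uline{M}_X(\bE))$ via the cotangent complex of $F$, then invoke weighted weak duality (Definition~\ref{wweakdualdef}) with $C=A\oplus M$ to identify this with $N^{-1}\tau_{\ge 0}\oR\HHom_A(P,M)^{\bG_m}$. The paper's proof is terser and omits the verification that $P\in dg\Coh_A^{\hat f\bG_m}$, but your additional bookkeeping on the $\bG_m$-equivariance and the coherence of $P$ is a helpful elaboration rather than a different argument.
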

\begin{proof}
 This follows in much the same way as Lemma \ref{FXcotlemma}. Explicitly,   we have
  \begin{align*}
  &F(X_{\pro\et},\bE,A\oplus M)\by^h_{F(X_{\pro\et},\bE,A)}\{\phi\} \\
  &\simeq N^{-1}\tau_{\ge 0}\oR\HHom_{\uline{A}_X(\bE)}(\bL^{F,\uline{A}_X(\bE),\phi},  \uline{M}_X(\bE)  ).
\end{align*}

By weak duality, this in turn is equivalent to 
\[
 N^{-1}\tau_{\ge 0}\oR\HHom_A(\bigoplus_r\oR\Gamma(X,\bL^{F,\uline{A}_X(\bE), \phi}\hten_{\Zl}\bD\hten_{\Zl}\bE^{\ten - r})[d],M)^{\bG_m}
\]
so is represented by $\bigoplus_r \oR\Gamma(X,\bL^{F,\uline{A}_X, \phi}\hten_{\Zl}\bD\hten_{\Zl}\bE^{\ten - r})[d] $.
 \end{proof}

Corollaries \ref{tracecor2a}, \ref{tracecor2b} now have the following immediate generalisations:
\begin{corollary}\label{wtracecor2a}
Let $X$ be  a topologically Noetherian scheme satisfying the conditions of Proposition \ref{traceprop} with $\bD=\bE^{\ten m}$, 
and  $F \co dg_+\CAlg_K \to s\Set$ an
 $n$-shifted symplectic  derived Artin $\infty$-stack. 
 Then there is a natural $(n-d)$-shifted symplectic structure of weight $m$ (in the sense of Remark \ref{wNstackrmk2}) on  the full subfunctor $ F(X_{\pro\et},\bE,-)^{wd} \subset F(X_{\pro\et},\bE,-) $ consisting of points $\phi$ at which the presheaf $\bL^{F,\uline{A}_X(\bE), \phi} $   satisfies weak duality.
\end{corollary}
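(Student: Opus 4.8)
The plan is to follow the proof of Corollary \ref{tracecor2a} essentially verbatim, inserting the weight $m$ and the twists by $\bE$ throughout. First, Corollary \ref{wtracecor} applied with $\bD = \bE^{\ten m}$ already furnishes a functorial $(n-d)$-shifted pre-symplectic structure $\omega$ of weight $m$ on $F(X_{\pro\et},\bE,-)$, and hence on the full subfunctor $F(X_{\pro\et},\bE,-)^{wd}$ by restriction; the only thing left to prove is that this restriction is non-degenerate in the sense of Definition \ref{wSpdef} (in the form of Remark \ref{wNstackrmk2}).

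Next I would use the weighted analogue of Lemma \ref{wdopenlemma} together with Lemma \ref{wwdnilcompletelemma} (which, as noted in the text preceding Corollary \ref{tracecor2Laga}, hold verbatim in this setting, reducing to the unweighted statements via Remark \ref{wweakrmk}) to see that the inclusion $F(X_{\pro\et},\bE,-)^{wd} \subset F(X_{\pro\et},\bE,-)$ is formally \'etale, so that at any point $\phi$ the cotangent complex of the subfunctor agrees with that of the ambient functor. Then I would invoke Lemma \ref{wFXcotlemma} and its proof, which identify the weighted $A$-module
\[
 \bL^{F(X_{\pro\et},\bE,-),\phi} \simeq \bigoplus_r \oR\Gamma\bigl(X, \bL^{F,\uline{A}_X(\bE),\phi}\hten_{\Zl}\bD\hten_{\Zl}\bE^{\ten -r}\bigr)[d]
\]
together with its dual description
\[
 \cHom_A\bigl(\bL^{F(X_{\pro\et},\bE,-),\phi},A\bigr) \simeq \oR\Gamma\bigl(X_{\pro\et}, \oR\sHom_{\uline{A}_X(\bE)}(\bL^{F,\uline{A}_X(\bE),\phi},\uline{A}_X(\bE))\bigr),
\]
where the $\bG_m$-grading on the right is inherited from the splitting $\uline{A}_X(\bE) = \bigoplus_s \uline{\cW_sA}_X\hten_{\Zl}\bE^{\ten s}$.

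Finally, since $\omega$ is induced by an honest $n$-shifted symplectic structure on $F$, its quadratic component $\omega_2$ induces a pro-\'etale local quasi-isomorphism $\oR\sHom_{\uline{A}_X(\bE)}(\bL^{F,\uline{A}_X(\bE),\phi},\uline{A}_X(\bE)) \to \bL^{F,\uline{A}_X(\bE),\phi}[n]$; the point is that, under the identifications above, this shifts the $\bE$-grading (equivalently the $\bG_m$-weight) by exactly $m$, because the trace pairing of Corollary \ref{wtracecor} lands in the weight-$m$ summand $\bE^{\ten m}$. Taking derived global sections then shows that
\[
 \omega_2^{\sharp}\co \cHom_A\bigl(\bL^{F(X_{\pro\et},\bE,-),\phi},A\bigr) \to \bigl(\bL^{F(X_{\pro\et},\bE,-),\phi}\bigr)_{[d-n]}\{m\}
\]
is a quasi-isomorphism, which is precisely the weighted non-degeneracy condition. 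I expect the only real subtlety to be bookkeeping the $\bG_m$-weights through this chain of identifications --- in particular verifying that the twist $\{m\}$ in the symplectic condition is exactly the grading shift introduced by the $\bE^{\ten m}$-valued trace, and not an artefact of the conventions on $\uline{A}_X(\bE)$ and $\cHom$ --- but no argument beyond Lemma \ref{wFXcotlemma} is needed.
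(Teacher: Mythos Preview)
Your proposal is correct and matches the paper's approach exactly: the paper presents this corollary as an ``immediate generalisation'' of Corollary \ref{tracecor2a} with no separate proof, and your outline is precisely the proof of Corollary \ref{tracecor2a} with Corollary \ref{wtracecor} and Lemma \ref{wFXcotlemma} substituted for their unweighted counterparts. The weight-bookkeeping concern you flag is the only nontrivial point, and you have identified it correctly.
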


\begin{corollary}\label{wtracecor2b}
Under the conditions of Corollary \ref{wtracecor2a}, take an
 (underived)  dagger Artin analytic $\infty$-stack $Y$ equipped with a 
formally \'etale morphism 
\[
 \eta \co Y \to  \pi^0F(X_{\pro\et},-)
\]
of functors $\Affd\Alg^{\loc,\dagger}_K \to s\Set $, such that at all points $\phi$ in the image of $\eta$,  the $\uline{A}_X$-modules $\bL^{F,\uline{A}_X, \phi}\hten_{\Zl}\bE^{\ten r} $   satisfy weak duality for all $r$.
 
  Then  the functor $\tilde{Y} \co A \mapsto Y(\H_0A/\bG_m)\by^h_{F(X_{\pro\et},\H_0A/\bG_m)}F(X_{\pro\et},\bE,A) $ on $dg_+\Affd\Alg^{f\bG_m,\loc,\dagger}_K$
 is a formal weighted dg dagger  Artin analytic $\infty$-stack carrying a natural $(n-d)$-shifted symplectic structure of weight $m$.
 \end{corollary}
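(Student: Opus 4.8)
The strategy is to run the proof of Corollary \ref{tracecor2b} verbatim in the weighted setting, replacing every object by its $\bG_m$-equivariant analogue and every cited lemma by the weighted version noted earlier in the section. First I would record that $F(X_{\pro\et},\bE,-)$ is homotopy-preserving, nilcomplete and homogeneous on $dg_+\Affd\Alg^{f\bG_m,\loc,\dagger}_K$. Indeed, since each $A$ here has only finitely many non-zero weights, $\uline{A}_X(\bE)=\bigoplus_n \uline{\cW_nA}_X\hten_{\Zl}\bE^{\ten n}$ is a \emph{finite} direct sum, so Corollary \ref{preservehtpycor} gives that $A\mapsto F(\uline{A}_X(\bE)(U))$ is homotopy-preserving for every $U$; nilcompleteness and homogeneity are inherited from $F$ since $A\mapsto\uline{A}_X(\bE)$ preserves the relevant fibre products weight by weight (using Proposition \ref{exactmodsaffdprop} in each weight, as in Lemma \ref{proethtpylemma}), and all three properties pass to $F(X_{\pro\et},\bE,-)$ by taking homotopy limits over $X_{\pro\et}$. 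By the weighted analogues of Lemmas \ref{wdopenlemma} and \ref{wwdnilcompletelemma} (noted to hold verbatim after Examples \ref{wwdex}), the same is true of the full subfunctor $F(X_{\pro\et},\bE,-)^{wd}$, the inclusion $F(X_{\pro\et},\bE,-)^{wd}\subset F(X_{\pro\et},\bE,-)$ is formally \'etale, and
\[
 F(X_{\pro\et},\bE,A)^{wd}\simeq F(X_{\pro\et},\H_0A/\bG_m)^{wd}\by^h_{F(X_{\pro\et},\H_0A/\bG_m)}F(X_{\pro\et},\bE,A);
\]
this last equivalence supplies a natural map $\tilde{Y}\to F(X_{\pro\et},\bE,-)^{wd}$, using that $Y$ lives in weight $0$ so the hypothesis on $\eta$ is exactly weak duality in the sense of Definition \ref{wweakdualdef} by Remark \ref{wweakrmk}.

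Next I would invoke Lemma \ref{wFXcotlemma}: at any $\phi$ satisfying weak duality, $F(X_{\pro\et},\bE,-)$ has a perfect cotangent complex, represented by the weighted complex $\bigoplus_r \oR\Gamma(X,\bL^{F,\uline{A}_X(\bE),\phi}\hten_{\Zl}\bE^{\ten -r})[d]$, which lies in $dg\Coh^{\hat{f}\bG_m}_A$ rather than $dg\Coh^{f\bG_m}_A$ — this is precisely the point where infinitely many weights genuinely appear, since over a global field the twisted cohomology can be non-zero in every weight. Perfection gives that the groups $\cW_0\EExt^i_A(\bL^{F(X_{\pro\et},\bE,-),\phi},A\{k\})$ are finitely generated, and the universal nature of Definition \ref{wweakdualdef} gives $\bL^{F(X_{\pro\et},\bE,-),\phi'}\simeq\bL^{F(X_{\pro\et},\bE,-),\phi}\ten^{\oL}_AA'$ for $\phi'$ the image of $\phi$, whence flat base change yields the \'etale base-change isomorphisms for $\DD^*$. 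These are exactly the input conditions for the weighted analogue of Corollary \ref{opencutcor}, which follows from the representability result Proposition \ref{wlurierep2} in the same way that Corollary \ref{opencutcor} follows from Corollary \ref{lurierep2} (both checked on objects concentrated in a fixed finite range of weights, then assembled as a pro-object via \S\ref{wfspacesn}); applying it makes $\tilde{Y}$ a formal weighted dg dagger Artin analytic $\infty$-stack with $\bL^{\tilde{Y},\phi}\simeq\bL^{F(X_{\pro\et},\bE,-),\phi}$.

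Finally, the $(n-d)$-shifted pre-symplectic structure of weight $m$ on $F(X_{\pro\et},\bE,-)$ from Corollary \ref{wtracecor} pulls back along the composite formally \'etale map $\tilde{Y}\to F(X_{\pro\et},\bE,-)^{wd}\subset F(X_{\pro\et},\bE,-)$, and its non-degeneracy on $\tilde{Y}$ is inherited from Corollary \ref{wtracecor2a}, whose proof identifies $\omega_2^{\sharp}$ with derived global sections of the weighted Verdier-duality quasi-isomorphism $\cHom_{\uline{A}_X(\bE)}(\bL^{F,\uline{A}_X(\bE),\phi},\uline{A}_X(\bE))\to (\bL^{F,\uline{A}_X(\bE),\phi})_{[-n]}\{m\}$. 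The main obstacle is entirely bookkeeping rather than new ideas: one must track the weight grading with care, in particular verifying that the cotangent complex lands in $dg\Coh^{\hat{f}\bG_m}$ (finitely many generators in each (weight, degree) pair, but possibly infinitely many weights) so that the formal-pro-object machinery of \S\ref{wfspacesn} and the weighted version of Corollary \ref{opencutcor} apply, and checking that the homotopy-limit and $\bG_m$-invariants operations commute as needed; the geometric content is inherited from the unweighted case.
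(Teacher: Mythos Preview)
Your proposal is correct and follows exactly the approach the paper intends: the paper presents Corollary \ref{wtracecor2b} as an ``immediate generalisation'' of Corollary \ref{tracecor2b} without giving a separate proof, and your plan of running that proof with each ingredient replaced by its weighted analogue (Lemmas \ref{wdopenlemma}/\ref{wwdnilcompletelemma} by their weighted versions, Lemma \ref{FXcotlemma} by Lemma \ref{wFXcotlemma}, Corollary \ref{opencutcor} by its weighted analogue based on Proposition \ref{wlurierep2}) is precisely what is required. You have in fact supplied more detail than the paper does, correctly flagging the key subtlety that the cotangent complex now lands in $dg\Coh^{\hat{f}\bG_m}_A$ rather than $dg\Coh^{f\bG_m}_A$, which is why the output is only a \emph{formal} weighted stack.
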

 
  \begin{examples}\label{wSpex}
 Examples \ref{traceex} and \ref{wwdex} now lead to many instances of weighted $(n-d)$-shifted symplectic moduli stacks when substituted into Corollary \ref{wtracecor2b}, taking in most of the cases we had to exclude from  Examples \ref{Spex}. If we take the derived stack $F$ to be $B\GL_n$, or $BG$ for any other affine algebraic group equipped with a $G$-equivariant inner product on its Lie algebra, or to be the moduli stack of perfect complexes, then $n=2$, so the corollary produces $(2-d)$-shifted symplectic structures on moduli of $G$-torsors or of complexes of pro-\'etale sheaves on $X$, provided we impose some constructibility constraints.
 
 These all exist on suitable open substacks (given by the formula of Corollary \ref{wtracecor2b}) of the formal weighted derived moduli stack $F(X_{\pro\et},\Zl(1),-)$, which we can think of as parametrising $F$-valued sheaves on the $\bG_m$-torsor on $X_{\pro\et}$ given by the Tate motive. 
 
 In particular: 
 \begin{enumerate}[itemsep=0pt, parsep=5pt]
  \item\label{wlocalex} If $X$ is a smooth proper scheme  of dimension $m$  over a non-Archimedean local field $k$ containing $\ell^{-1}$, 
  we have $(n-2m-2)$-shifted symplectic structures of weight $m+1$ on suitable open substacks $\tilde{Y}$ of $F(X_{\pro\et},\Zl(1),-)$.

 \item If $X$ is a smooth proper scheme  of dimension $m$   over a finite field $k$ prime to $\ell$, we  have $(n-2m-1)$-shifted symplectic structures of weight $m$ on suitable open substacks of $F(X_{\pro\et},\Zl(1),-)$.
 
 \item If $U$ is a smooth scheme  of dimension $m$ over one of the local (resp. finite) fields $k$ above, then we have  $(n-2m-1)$-shifted  (resp. $(n-2m)$-shifted) symplectic structures of weight $m+1$ (resp. weight $m$) 
 on suitable open substacks of the derived moduli stack 
 \[
 A \mapsto \oR\Gamma(Z_{\pro\et}, i^*\oR j_*F(\uline{A}_X(\bE)))
\]  
 of $F$-valued sheaves on the deleted tubular neighbourhood $ Z \overleftarrow{\by}_{\bar{U}}U$ (thought of as the  boundary of $U$), where $i \co Z \to \bar{U}$ is the complement of $U$ in a compactification $j \co U \to \bar{U}$.
 \end{enumerate}
\end{examples}

 \begin{remark}\label{locformrmk}
 In the terminology of \cite{BouazizGrojnowski}, we can regard our notion of $n$-shifted symplectic structure of weight $m$ on a space $Y$  as being an analytic analogue of a $\sP$-shifted  symplectic structure on $[Y/\bG_m]$ relative to the morphism $f \co [Y/\bG_m] \to B\bG_m$, taking $\sP= f^*\sO(m)[n]$. The proof of \cite{BouazizGrojnowski} adapts to analytic settings, so for negative shifts gives  local models  for these $\sP$-shifted symplectic derived spaces as products of  $\sP$-shifted twisted cotangent bundles with quadratic bundles, the latter being trivial unless $n \equiv 2 \mod 4$; their results refine and generalise the $(-1)$-shifted case addressed in \cite{BBBJdarboux}).  
 
 In our setting, an affinoid $\sP$-shifted twisted cotangent bundle is given by taking a formal weighted localised dg dagger algebra $B$, then forming  the formal  weighted dg algebra
 \[
(  \Symm_B( \oR\cHom_B(\oL\Omega^1_B,B)\{-m\}[-n] ), \delta + df \lrcorner-),
 \]
where $f \in \z_{1+n}\cW_mB$ is twisting the differential. A quadratic bundle is a formally  weighted coherent  $B$-module $M$ with a symmetric pairing $M\ten^{\oL}_BM \simeq B\{-m\}[-n]$ (so \'etale locally, $M$ can be taken to be  trivial unless $n \equiv 2 \mod 4$ and $m$ is even); 
the general local model for an  $n$-shifted symplectic structure of weight $m$  is then given by tensoring the twisted cotangent algebra above with $\Symm_BM$.
 \end{remark}

\subsection{Weighted Lagrangian structures}

We now adapt the constructions of \S \ref{Lagsn} to incorporate weights and twists. As in that section, we take a morphism
$f \co \pd U \to U$ of topologically Noetherian schemes (where the notation $\pd U$ is intended to be suggestive, but does not indicate a specific construction at this point)  and a constructible complex $\bD$ on $U$,  equipped with a system of trace maps 
 \[
 \cone\big(\oR\Gamma (U_{\et}, \bD/\ell^n) \to  \oR\Gamma (\pd U_{\et}, f^{-1}\bD/\ell^n)\big)[d-1]\to \Z/\ell^n
 \]
inducing a perfect pairing between  $\cone(\oR\Gamma (U_{\et},  -) \to  \oR\Gamma (\pd U_{\et},f^{-1}-))$ and  $\oR\Gamma (U_{\et}, -)$ for constructible sheaves and their duals.

\begin{definition}\label{wweakdualLagdef}
 Given a quasi-dagger dg algebra $A$ and a presheaf $N$ of  $\uline{A}_U(\bE)$-modules in chain complexes  on $U_{\pro\et}$, we say that $N$ satisfies weak duality with respect to the trace $\tr$ above if the $\uline{A}_{\pd U}(\bE)$-module $\oL f^*N:= f^{-1}N\ten^{\oL}_{f^{-1}\uline{A}_U(\bE)}\uline{A}_{\pd U}(\bE)$ satisfies weak duality in the sense of Definition \ref{wweakdualdef}, and if  for all morphisms $A \to C$ of quasi-dagger dg algebras,  
 the map
 \begin{align*}
&\cone\big( \oR\HHom_{\uline{A}_U(\bE)}( N, \uline{C}_U(\bE)) \to  \oR\HHom_{\uline{A}_{\pd U}(\bE)}( \oL f^* N, \uline{C}_{\pd U}(\bE))\big)\\
&\to
 \oR\HHom_A(\bigoplus_r \oR\Gamma(U,N\hten_{\Zl}\bD\hten_{\Zl}\bE^{\ten - r} )[d-1],\bigoplus_r \cW_rC)^{\bG_m}
 \end{align*}
induced by the composite pairing
\begin{align*}
 &\oR\Gamma(U,N\hten_{\Zl}\bD\hten_{\Zl}\bE^{\ten -r})\ten_A^{\oL} 
 \cone\big( \oR\HHom_{\uline{A}_U(\bE)}( N, \uline{C}_U(\bE)) \to \oR\HHom_{\uline{A}_{\pd U}(\bE)}( \oL f^* N, \uline{C}_{\pd U}(\bE))\big)\\ 
 &\to \bigoplus_{r}\cone\big(\oR\Gamma(U, \uline{C}_U\hten_{\Zl}\bD\hten_{\Zl}\bE^{\ten -r})\to \oR\Gamma({\pd U}, \uline{C}_{\pd U}\hten_{\Zl}f^{-1}\bD\hten_{\Zl}\bE^{\ten -r})\big)\\
 &\xra{\tr} C[1-d]
\end{align*}
is a quasi-isomorphism.
\end{definition}

Corollaries \ref{tracecor2Laga} and \ref{tracecor2Lagb} then adapt along the lines of Corollaries \ref{wtracecor2a} and \ref{wtracecor2b} to give:
 
 \begin{corollary}\label{wtracecor2Laga}
Take     a morphism $\pd U\to U$ of topologically Noetherian schemes which has a trace pairing as above with $\bD=\bE^{\ten m}$, 
and  take $F \co dg_+\CAlg_K \to s\Set$ an
 $n$-shifted symplectic  derived Artin $\infty$-stack. Then for the full subfunctors   $F({\pd U}_{\pro\et},\bE,-)^{wd}  \subset F({\pd U}_{\pro\et},\bE,-)$ and $F( U_{\pro\et},\bE,-)^{wd}\subset F( U_{\pro\et},\bE,-)$ consisting of points $\phi$ at which   the presheaf $\bL^{F,\uline{A}_{\pd U}(\bE), \phi} $ (resp. $\bL^{F,\uline{A}_{U}(\bE), \phi} $)  satisfies weak duality in the sense of Definition \ref{wweakdualdef} (resp. Definition \ref{wweakdualLagdef}), the natural map
\[ 
 F( U_{\pro\et},\bE,-)^{wd} \to F({\pd U}_{\pro\et},\bE,-)^{wd}
 \]
 carries a natural Lagrangian structure with respect to the $(n-d+1)$-shifted symplectic structure of weight $m$ on $F({\pd U}_{\pro\et},-)^{wd}$ given by Corollary \ref{wtracecor2a}.
 \end{corollary}

 \begin{corollary}\label{wtracecor2Lagb}
In the setting of Corollary \ref{wtracecor2Laga},  take a morphism $W \to Y$ of (underived)  dagger Artin analytic $\infty$-stacks, equipped with a 
commutative diagram
\[
 \begin{CD}
 W @>{\eta'}>>  \pi^0F( U_{\pro\et},-)\\
 @VVV @VV{f^*}V\\
   Y @>{\eta}>>  \pi^0F({\pd U}_{\pro\et},-)
 \end{CD}
\]
of functors $\Affd\Alg^{\loc,\dagger}_K \to s\Set $.
Assume moreover that  the horizontal maps $\eta$ are formally \'etale and  that at all points $\phi$ in the image of $\eta'$ (resp. $\eta$),  the presheaves $\bL^{F,\uline{A}_{\pd U}, \phi} \hten_{\Zl}\bE^{\ten r} $  (resp. $\bL^{F,\uline{A}_{U}, \phi}  \hten_{\Zl}\bE^{\ten r}$)  satisfy weak duality in the sense of Definition \ref{weakdualdef} (resp. Definition \ref{weakdualLagdef}) for all $r$.
 
  Then  the functor $\tilde{W} \co A \mapsto W(\H_0A/\bG_m)\by^h_{F({\pd U}_{\pro\et},\H_0A/\bG_m)}F({\pd U}_{\pro\et},A) $ on $dg_+\Affd\Alg^{\loc,\dagger}_K$
 is a formal weighted dg dagger  Artin analytic $\infty$-stack. It  carries a natural Lagrangian structure with respect to the $(n-d+1)$-shifted symplectic structure of weight $m$ on $\tilde{Y}$ given by Corollary \ref{wtracecor2b}.
 \end{corollary}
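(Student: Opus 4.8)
The plan is to mimic the proof of Corollary \ref{tracecor2Lagb}, incorporating the weighted modifications already set up for Corollaries \ref{wtracecor2a} and \ref{wtracecor2b}.

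First I would verify that $\tilde{W}$ is a formal weighted dg dagger Artin analytic $\infty$-stack; the analogous statement for $\tilde{Y}$ is Corollary \ref{wtracecor2b}. Since $F$ is homotopy-preserving, nilcomplete and homogeneous, Corollary \ref{preservehtpycor} shows that $A\mapsto F(\uline{A}_U(\bE))$ shares these properties, and hence so does $F(U_{\pro\et},\bE,-)$ after passing to homotopy limits. The weighted versions of Lemmas \ref{wdopenlemma} and \ref{wwdnilcompletelemma} (which, as noted before Corollary \ref{wtracecor2Laga}, hold verbatim) transfer these properties to the subfunctor $F(U_{\pro\et},\bE,-)^{wd}$ and supply the base-change identity $F(U_{\pro\et},\bE,A)^{wd}\simeq F(U_{\pro\et},\bE,\H_0A/\bG_m)^{wd}\by^h_{F(U_{\pro\et},\bE,\H_0A/\bG_m)}F(U_{\pro\et},\bE,A)$, which in particular produces the map $\tilde{W}\to F(U_{\pro\et},\bE,-)^{wd}$. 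Then Lemma \ref{wFXcotlemma} gives a perfect weighted cotangent complex with finitely generated tangent cohomology over dagger algebras for all twists, the universality in Definition \ref{wweakdualdef} gives flat base change along \'etale morphisms, and the weighted analogue of Corollary \ref{opencutcor} (deduced from Proposition \ref{wlurierep2} exactly as Corollary \ref{opencutcor} is deduced from Corollary \ref{lurierep2}) yields representability, together with $\bL^{\tilde{W},\phi}\simeq \bL^{F(U_{\pro\et},\bE,-),\phi}$.

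Next I would build the isotropic datum. Corollary \ref{wtracecor2Laga} equips $F(U_{\pro\et},\bE,-)^{wd}\to F(\pd U_{\pro\et},\bE,-)^{wd}$ with a Lagrangian structure of weight $m$ over the $(n-d+1)$-shifted symplectic structure from Corollary \ref{wtracecor2a}. Pulling it back along the given commutative square, using that $\eta,\eta'$ and the maps $\tilde{W}\to F(U_{\pro\et},\bE,-)^{wd}$, $\tilde{Y}\to F(\pd U_{\pro\et},\bE,-)^{wd}$ are formally \'etale (the last two by the weighted version of Lemma \ref{wdopenlemma}), produces an isotropic structure $(\omega,\lambda)$ on $\tilde{W}$ over $\tilde{Y}$; naturality of the trace construction of Corollary \ref{wtracecor} identifies $\omega$ with the symplectic structure on $\tilde{Y}$ from Corollary \ref{wtracecor2b}. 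For non-degeneracy I would argue as in Corollaries \ref{tracecor2a} and \ref{tracecor2Laga}: the formally \'etale maps induce quasi-isomorphisms on weighted cotangent complexes, so the condition of Definition \ref{wLagdef} for $\tilde{W}\to\tilde{Y}$ reduces to the one for $F(U_{\pro\et},\bE,-)^{wd}\to F(\pd U_{\pro\et},\bE,-)^{wd}$; by Lemma \ref{wFXcotlemma} the relevant complexes are computed as $\bigoplus_r\oR\Gamma$ of the sheaf-level cotangent complex twisted by $\bE^{\ten -r}$, and contraction with $(\omega_2,\lambda_2)$ is derived global sections of the sheaf-level quasi-isomorphism induced by the trace pairing, hence itself a quasi-isomorphism.

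The hard part will be the weight bookkeeping: checking that the cone defining the relative weighted de Rham complex and the isotropic datum are compatible with the $\bG_m$-equivariant grading, and that the finiteness and perfectness hypotheses of Proposition \ref{wlurierep2} genuinely hold for weighted coherent modules --- where the cotangent complex may carry generators in infinitely many weights but only finitely many in each (weight, degree) pair, as in Definition \ref{wFcotdef}. Everything else is a direct transcription of the unweighted arguments.
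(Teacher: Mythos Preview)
Your proposal is correct and follows exactly the route the paper intends. The paper gives no explicit proof of this corollary: it simply states that Corollaries \ref{tracecor2Laga} and \ref{tracecor2Lagb} ``adapt along the lines of Corollaries \ref{wtracecor2a} and \ref{wtracecor2b}'', and you have faithfully unpacked what that adaptation entails --- transferring homogeneity and nilcompleteness through the weighted analogues of Lemmas \ref{wdopenlemma} and \ref{wdnilcompletelemma}, invoking Lemma \ref{wFXcotlemma} for the cotangent complex, appealing to a weighted form of Corollary \ref{opencutcor} via Proposition \ref{wlurierep2} for representability, and then pulling back the Lagrangian datum from Corollary \ref{wtracecor2Laga} along the formally \'etale maps.
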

 
 \begin{examples}\label{wSpexLag}
 Examples \ref{traceLagex} and \ref{wdexLag} now lead to many instances of Lagrangians in weighted shifted symplectic moduli stacks when substituted into Corollary \ref{tracecor2Lagb}. If we take the $n$-shifted symplectic derived stack $F$ to be $B\GL_n$, or $BG$ for any other affine algebraic group equipped with a $G$-equivariant inner product on its Lie algebra, or to be the moduli stack of perfect complexes, then $n=2$, so the corollary produces $(3-d)$-shifted 
 Lagrangian structures on moduli of $G$-torsors or of complexes of pro-\'etale sheaves on $U$, provided we impose some constructibility constraints.

\begin{enumerate}[itemsep=5pt, parsep=0pt]
 \item\label{localex} If $K$ is a non-Archimedean local field with finite residue field prime to $\ell$, and $\cO_K$ its ring of integers, then we can take $\pd U \to U$ to be the morphism $\Spec K \to \Spec \cO_K$, giving us   $(n-2)$-shifted Lagrangian structures of weight $1$ on suitable open substacks of the derived moduli stack  $F ((\Spec \cO_K)_{\pro\et},\Zl(1),-)$ over $ F ((\Spec K)_{\pro\et},\Zl(1),-)$, coming from local Tate duality.  If we overlook weighted aspects, this morphism of derived stacks essentially maps from moduli of unramified local Galois representations to moduli of all local Galois representations.

 \item\label{localZex} More generally, if $Z$ is a smooth proper scheme of dimension $m$ over $\cO_K$, we have $(n-2-2m)$-shifted Lagrangian structures of weight $m+1$ on suitable open substacks of the derived moduli stack  $F(Z_{\pro\et},\Zl(1),-)$ over $ F ((Z\ten_{\cO_K}K)_{\pro\et},\Zl(1),-)$.

\item\label{globalex} If $K$ is a number field and $S$ a finite set of primes including all those dividing $\ell$, with $\cO_{K,S}$ the localisation of $\cO_K$ at $S$, then we have $(n-2)$-shifted Lagrangian structures of weight $1$ on suitable open substacks of the derived moduli stack  $F( (\Spec \cO_{K,S})_{\pro\et},\Zl(1),-)$ over $ \prod_{v \in S}F ((\Spec  K_v  )_{\pro\et},\Zl(1),-)$, coming from Poitou--Tate duality.

 \item\label{globalZex} More generally, if $Z$ is a smooth proper scheme of dimension $m$ over $\cO_{K,S}$, then we have $(n-2-2m)$-shifted Lagrangian structures of weight $m+1$ on suitable open substacks of the derived moduli stack  $F( Z_{\pro\et},\Zl(1),-)$ over $ \prod_{v \in S}F ((Z\ten_{\cO_{K,S}}K_v  )_{\pro\et},\Zl(1),-)$.

 \item If $U$ is a smooth proper scheme of dimension $m$ over a local (resp. finite) field  prime to $\ell$, we have $(n-1-2m)$-shifted (resp. $n-2m$-shifted)  Lagrangian structures of weight $m+1$ (resp. $m$)  on suitable open substacks of the derived moduli stack  $F(U_{\pro\et},\Zl(1),-)$. These stacks are Lagrangian over the derived moduli stack 
\[
  A \mapsto \oR\Gamma(Z_{\pro\et}, i^*\oR j_*F(\bigoplus_r  \uline{\cW_rA}_U(r))) 
  \]  
 of $F$-valued sheaves on the deleted tubular neighbourhood of $U$ in its compactification, as in Examples \ref{Spex}.
 
 \item\label{globalopenZex} We can also combine these as in Example \ref{traceLagex2}. Take $U$ to be a smooth separated scheme of dimension $m$  over a either the ring of integers $\cO_K$ of a non-Archimedean local field, or over a localisation $\cO_{K,S}$ of the ring of integers of a number field. For a compactification $j \co U \into \bar{U}$ of $U$ over the same base, with complement $i \co Z \to \bar{U}$, we then have 
 $(n-2-2m)$-shifted Lagrangian structures of weight $m+1$ on suitable open substacks of the derived moduli stack  
 \[
 F(U_{\pro\et},\Zl(1),-) \co  A \mapsto \oR\Gamma(U_{\pro\et}, F(\bigoplus_r  \uline{\cW_rA}_U(r))).
\]
These are Lagrangian over 
the  derived moduli stack sending $A$ to the homotopy fibre product of the diagram
\[
  %
  \xymatrix@R=0ex{
  \oR\Gamma(Z_{\pro\et}, i^*\oR j_*F(\bigoplus_r  \uline{\cW_rA}_U(r))) \ar[dr]\\
 & \prod_{v \in S}\oR\Gamma( Z_{K_v,\pro\et}, i^*\oR j_*F(\bigoplus_r  \uline{\cW_rA}_U(r)))\\
  \ar[ur]
  \prod_{v \in S}\oR\Gamma(U_{ K_v,\pro\et}, F(\bigoplus_r  \uline{\cW_rA}_U(r)))
  }
  \]  
 of $F$-valued sheaves on the boundary $\pd U$ constructed as in Example \ref{traceLagex2}, where we take  $\{K_v\}_{v \in S}=\{K\}$ in the local case.
 
  \item\label{intersectGLCZex}  
  We also use these examples to give more shifted symplectic structures, as in Example \ref{IwintersectGLCex}. Given $Z$ smooth and proper of dimension $m$ over a ring of integers $\cO_K$ and a Lagrangian $G \to F$,   we  can take a derived intersection  of the Lagrangians of  (\ref{globalZex}) and  (\ref{localZex}), together with the Lagrangian induced by $G$ at places dividing $\ell$,
    to give an $(n-3-2m)$-shifted symplectic  structure of weight $m+1$ on the derived fibre product   of the diagram
  \[
%
\xymatrix@R=0ex{    \prod_{\substack{v \in S\\ v \nmid \ell}} F((Z\ten_{\cO_{K}}\cO_v  )_{\pro\et},\Zl(1),-) \ar[r] & \prod_{\substack{v \in S\\ v \nmid \ell}}  F((Z\ten_{\cO_{K}}K_v  )_{\pro\et},\Zl(1),-) \\
F( (Z\ten_{\cO_K}\cO_{K,S})_{\pro\et},\Zl(1),-)\ar[ur] \ar[dr] \\
\prod_{\substack{v \in S\\ v \mid \ell}} G((Z\ten_{\cO_{K}}K_v  )_{\pro\et},\Zl(1),-) \ar[r] & \prod_{\substack{v \in S\\ v \mid \ell}}  F((Z\ten_{\cO_{K}}K_v  )_{\pro\et},\Zl(1),-).
}
\]
Instead of using a Lagrangian $G$ to provide local conditions at places dividing $\ell$, in \S \ref{wSpexLagcrissn} we will incorporate crystalline data  to give a more generally applicable construction.
 \end{enumerate}
 
\end{examples}

\begin{example}[Moduli of $\bG_m$-torsors]\label{wBGmXex2}
 In the toy example $F=B\bG_m$, the description of Example \ref{wBGmXex} combines with local Tate duality  to say that for a non-Archimedean local field $K_v$ with finite residue field prime to $\ell$, 
 the functor $B\bG_m((\Spec K_v)_{\pro\et},\Zl(1),-)$ is the cotangent stack of $B\bG_m((\Spec K_v)_{\pro\et},-)$ with cotangent space having weight $1$, equipped with its natural $0$-shifted symplectic structure. 
 Then the Lagrangian $B\bG_m((\Spec \cO_v)_{\pro\et},\Zl(1),-)$ is identified with the open derived substack $B\bG_m((\Spec \cO_v)_{\pro\et},-)$ of the canonical Lagrangian $B\bG_m((\Spec K_v)_{\pro\et},-)$.

 \smallskip
 
 For $\ell$-adic local fields $K_v$ the functor is significantly larger, since $\H^1_{\pro\et}(K_v,\Ql(r))$ is non-zero (in fact isomorphic to $K_v$) for all $r$ (see e.g. \cite[Proposition 7.3.10]{NeukirchSchmidtWingberg}). Explicitly, $B\bG_m((\Spec K_v)_{\pro\et},\Zl(1),-)$ is  the product of the  cotangent stack of $B\bG_m((\Spec K_v)_{\pro\et},-)$ as above with  the cotangent spaces  of the weight $r$ affine spaces $K_v$ (i.e. $\bA^{[K_v:\Ql]}$) for $r<0$ (where the  cotangent bundle is placed in weight $1-r$). 
 
 \smallskip
 
Similarly, for  a localisation $\cO_{K,S}$ of the ring of integers of a number field, 
Example \ref{wBGmXex}  combines with Poitou--Tate duality to express $B\bG_m((\Spec \cO_{K,S})_{\pro\et},\Zl(1),-)$ as a product  of the derived  conormal stack $\cN^*_0$ of $ B\bG_m((\Spec \cO_{K,S})_{\pro\et},-) \to \prod_{v \in S}B\bG_m((\Spec  K_v  )_{\pro\et},-)$ (where the conormal space is placed in weight $1$)  with weight $r$ affine derived spaces $\cV_r:=N^{-1}\tau_{\ge 0}(\oR\Gamma((\Spec \cO_{K,S})_{\pro\et},-(r))_{[-1]})$ for $r \ne 0,1$. Roughly speaking, $\cV_r \by \cV_{1-r}$ is the derived conormal space of the affine space  $\H^1(G_{K,S}, \Ql(r) \oplus \Ql(1-r))$ over $\prod_{v \mid \ell} \H^1_{\pro\et}(K_v, \Ql(r) \oplus \Ql(1-r))$, where the conormal bundle of the weight $r$ term is placed in weight $1-r$ (and vice versa).
\end{example}

\section{Crystalline constructions and Selmer functors}\label{crissn} 

The Lagrangians coming from local Tate duality in Examples \ref{SpexLag}.(\ref{Iwlocalex}) only exist when the residue characteristic $p$ is prime to $\ell$. In order to incorporate  $\ell$-adic local fields, and thus combine this duality with Poitou--Tate duality to form Lagrangian intersections, we must instead incorporate crystalline structure. 

Specifically, the cohomology theory providing Lagrangians in $p$-adic Galois cohomology of a finite extension $K$ of $\Q_p$ is the $f$-cohomology of \cite{BlochKatoTamagawa}.  
\begin{definition}\label{Cfbtdef}
 Define the complex $C^{\bt}_f$ of $\Q_p$-vector spaces to be the cocone
 \begin{align*}
F^0B_{\dR} \oplus  B_{\cris} &\to B_{\dR} \oplus B_{\cris}\\
 (x,y)  &\mapsto (x-y, (\phi-1)y),
\end{align*}
for period rings $B_{\cris}$ and $B_{\dR}$ as in \cite{Bcrisillusie}.   
\end{definition}

As explained in \cite[1.19]{nekovarPadicHeight} and \cite[\S 6.1]{bettsMotAnab}, 
the key property is that $C^{\bt}_f$ is  a resolution of the Galois module $\Q_p$.
For any Galois representation $V$, we thus have  $\bH^*(K, C^{\bt}_f\ten V) \cong \H^*(K,V)$, but when $V$ is a de Rham representation, the cohomology of the subcomplex $(C^{\bt}_f\ten V)^{G_K}$ of $G_K$ invariants corresponds  \cite[1.19]{nekovarPadicHeight} to $\H^*_f(K,V)$, which exactly annihilates its dual  in $\H^*(K,V)$ by \cite[Proposition 3.8]{BlochKatoTamagawa}. 

Instead of looking at moduli constructions over $\cO_K$ as in  Examples \ref{SpexLag}.(\ref{Iwlocalex}), which are not Lagrangian for $\ell=p$, we will thus form constructions modelled on $(C^{\bt}_f\ten_{\Q_p} V)^{G_K}$ via the crystalline and de Rham correspondences. 

\subsection{Analogues of filtered \tps{$\Phi$}{Phi}-modules}\label{MFsn}
  
 Writing $K_0=W(k)[p^{-1}]$ where $k$ is the residue field of $K$, 
  the category of crystalline Galois representations of $K$  is equivalent to a full subcategory of Fontaine's category $\mathrm{MF}_{K}(\Phi)$ of filtered $\Phi$-modules.  The latter consists of  triples $(M,\Phi,\Fil)$, where $M$ is a $K_0$-vector space, $\Phi$ is a  Frobenius-semilinear injective endomorphism of $M$, 
and $\Fil$ is a decreasing filtration  on $M\ten_{K_0}K$ which is exhaustive and Hausdorff.  The natural cohomology theory for this category associates to such data the cochain complex
\[
\Fil^0(M\ten_{K_0}K)\oplus M \xra[ (x-y,\Phi(y)-y)]{ (x,y) \mapsto  } (M\ten_{K_0}K)\oplus M;
\]
in particular,  note that $\H^0$ of the complex is $\Hom_{\mathrm{MF}_{K}(\Phi)}(K_0,M)$, where $K_0$ is given trivial  Frobenius action and filtration.

We can generalise this to give the following notion of filtered $\Phi$-objects taking values in any derived stack $F$, along similar lines to  the  constructions $U(B_f^{\bt})^{G_K}$  of \cite[\S 6.1]{bettsMotAnab}. First, via Rees constructions as in \cite[\S 5]{Simfil}, a  filtered $K$-vector space is equivalent to a vector bundle on the stack $[\bA^1_K/\bG_m]$, which generalise to $F_{\Hod}$ below, then incorporating Frobenius gives rise to $F_{\mathrm{MF}(\Phi)}$.

\begin{definition}\label{MFPhidef}
 Given  a non-Archimedean local field $K$ of residue characteristic $p$, with ring of integers $\cO_K$ and residue field $k$, and a derived $\infty$-stack  $F \co dg_+\CAlg_{\Q_p} \to s\Set$ over $\Q_p$, define $F_{\mathrm{MF}(\Phi)}(K,-)$ as follows.
 
 First, for any $\Q_p$-CDGA  $A$ we simply define crystalline and de Rham functors associated to $\Spec K$ by  $F_{\cris}(K,A):= F(A\ten_{\Q_p}K_0)$ and $F_{\dR}(A):= F(A\ten_{\Q_p}K)$; these generalise the moduli of $K_0$- and $K$-vector spaces, respectively.
 
Next, we  define a Hodge functor $F_{\Hod}(K,-)\co dg_+\CAlg_{\Q_p} \to s\Set $ parametrising 
filtered $K$-linear objects of $F$ by
\[
 F_{\Hod}(K,A):= \oR\map( [\bA^1_{K\ten_{\Q_p}A}/\bG_m],F) = \ho\Lim_{n \in \Delta}F(A\ten_{\Q_p}K[s, u_1^{\pm}, \ldots, u_n^{\pm}]).
\]

We then define $F_{\mathrm{MF}(\Phi)}(K,-) \co dg_+\CAlg_{\Q_p} \to s\Set $ by 
setting $F_{\mathrm{MF}(\Phi)}(K,A)$ to be the homotopy equaliser
\[
\xymatrix@1{
 F_{\Hod}(K,A) \by F_{\cris}(K,A) \ar@<0.5ex>[rr]^{ (x,y) \mapsto (x,\sigma^*y) } \ar@<-0.5ex>[rr]_{ (x,y) \mapsto (y,y) } &&  F_{\dR}(K,A) \by F_{\cris}(K,A) }
\]
where $\sigma$ denotes Frobenius. 
 \end{definition}
  
  Taking $F$ to be   the derived  stack $B\GL_n$, it follows that the space  $(B\GL_n)_{\mathrm{MF}(\Phi)}(K,\Q_p) $ of $\Q_p$-valued points  is equivalent to the  nerve of the maximal subgroupoid of $\mathrm{MF}_{K}(\Phi) $ on objects which have rank $n$ and for which the map $\Phi \co \sigma^*M \to M $ is an isomorphism.
  
We will also need a weighted version:
\begin{definition}
 Given a $\bG_m$-equivariant CDGA $A$ and any of the constructions $F_?$ of Definition \ref{MFPhidef},  define   $F_?(K, \Z_p(1),A)$ to be given by the same formula as $F_?(K,A)$, except that $\bG_m$ and $\Phi$ are now taken to act non-trivially on $A$ in the obvious way.
\end{definition}
For stacks of modules, this corresponds to  taking a $(\bigoplus_r \cW_rA(r)) \ten_{\Q_p}K_0$-module $M$ with a Frobenius semilinear endomorphism $\Phi\co M \to M$ inducing an isomorphism $\sigma^*M \to M$, together with   a filtration on  $M\ten_{K_0}K$ compatible with the grading on $A$; note that Frobenius $\sigma$ is  here acting on $(\bigoplus_r \cW_rA(r))$ non-trivially  via Tate twists.

The functors $F_{\mathrm{MF}(\Phi)}(K,-)$ and $F_{\mathrm{MF}(\Phi)}(K, \Z_p(1),-)$ are manifestly homogeneous whenever $F$ is so, and 
we can then describe tangent complexes of these functors.
Given $x \in F_{\mathrm{MF}(\Phi)}(K, \Z_p(1),A)$, we have an underlying point $x_{\cris} \in F_{\cris}(K,A)$, and hence a cotangent complex $\bL^{F,A\ten K_0,x_{\cris}}$, which is a complex of $A\ten K_0$-modules equipped with a quasi-automorphism $\Phi$. We also have a point $x_{\Hod} \in F_{\Hod}(K,A)$ and hence a quasi-coherent complex $\bL^{F,x_{\Hod}}:=\bL^{F,[\bA^1_{K\ten_{\Q_p}A}/\bG_m]  ,x_{\Hod}}$  over $[\bA^1_{K\ten_{\Q_p}A}/\bG_m]$, whose restriction to the generic point $[(\bG_m)_{K\ten_{\Q_p}A}/\bG_m] \simeq \Spec (K\ten_{\Q_p}A)$ is quasi-isomorphic to  $\bL^{F,x_{\cris}}:=\bL^{F,A\ten K_0,x_{\cris}}$. We then immediately have:
\begin{lemma}\label{MFtgtlemma}
The tangent complex  of $F_{\mathrm{MF}(\Phi)}(K, \Z_p(1),-)$ at $x$ sends a graded $A$-module $M$ to the cocone of
 \begin{align*}
 &\oR\cW_0\HHom_{A\ten K_0}(\bL^{F,x_{\cris}},M\ten K_0) \oplus\oR\cW_0\HHom_{[\bA^1_{K\ten_{\Q_p}A}/\bG_m] }(\bL^{F,x_{\Hod}},M\ten \sO_{[\bA^1_K/\bG_m]})\\
&\to \oR\cW_0\HHom_{A\ten K_0}(\bL^{F,x_{\cris}},M\ten K_0) \oplus  \oR\cW_0\HHom_{A\ten K}(\bL^{F,x_{\cris}}\ten_{K_0}K,M\ten K),
 \end{align*}
 where as usual the map is $(y,z) \mapsto (\sigma^*y-y, y-z)$.
\end{lemma}

Note that when the tangent complexes are associated as filtered $\phi$-modules to crystalline $G_K$-representations, the expression of Lemma \ref{MFtgtlemma} is the same as the complex $(V_{\bt}\ten_{\Q_p}C^{\bt}_f)^{G_K}$ calculating the  cohomology $\H^*_f(K,-)$ from \cite{BlochKatoTamagawa}. More precisely, if we have a complex $V_{\bt}$ of  crystalline $G_K$-representations, with $K_0[\phi]$-linear complex $D_{\cris}(V_{\bt})= (V_{\bt}\ten_{\Q_p}B_{\cris})^{G_K}$, and   filtered $K$-linear complex  $D_{\dR}(V_{\bt})= (V_{\bt}\ten_{\Q_p}B_{\dR})^{G_K}\cong D_{\cris}(V_{\bt})\ten_{K_0}K$, then $(V_{\bt}\ten_{\Q_p}C^{\bt}_f)^{G_K}$ is the cocone of
\[
D_{\cris}(V_{\bt}) \oplus \oR\Gamma([\bA^1/\bG_m], \xi( D_{\dR}(V_{\bt}) ,\Fil)) \to D_{\cris}(V_{\bt}) \oplus D_{\dR}(V_{\bt}),
\]
for the map $(y,z) \mapsto (\sigma^*y-y, y-z)$, where $\xi$ denotes the Rees construction, i.e. the $\bG_m$-equivariant $K[s]$-module  $\xi(D,\Fil)= \bigoplus_i \Fil^iD s^{-i}$.

  \subsection{Analogues of \tps{$B$}{B}-pairs} \label{Bpairsn}
  
The functor $F_{\mathrm{MF}(\Phi)}(K,-)$ does not map directly to $F ((\Spec \cO_K)_{\pro\et},-)$, so we now need to introduce an intermediate functor which receives maps from both functors, the second \'etale   and the  first Lagrangian, at least on suitable subfunctors.

When applied to moduli functors such as $B\GL_n$, this intermediate functor parametrises objects of a slight generalisation of Berger's category of $B$-pairs from \cite{bergerBpairsPhiGamma}, which is equivalent to the category of $(\Phi,\Gamma)$-modules.

Since the period ring $B_{\dR}$ carries a filtration, we can form its Rees construction $\xi(B_{\dR},\Fil)$, which is a graded $K[s]$-algebra, leading to the following definitions.

\begin{definition}
 Given a derived $\infty$-stack  $F \co dg_+\CAlg_{\Q_p} \to s\Set$ over $\Q_p$, define $F^{\mathsf{B}}(K,\Z_p(1),-)\co dg_+\Affd\Alg_{\Q_p}^{f\bG_m,\dagger,\loc} \to s\Set$ as follows.

For $\bE:=\Z_p(1)$, we first define
\begin{align*}
F^{\mathsf{B}}_{\Hod}(K,\bE,A)&:=   \oR\Gamma( (\Spec K)_{\pro\et},\oR\map( [  \Spec ( \uline{\xi(B_{\dR}  ,\Fil) \hten_{\Q_p}A}
(\bE))/\bG_m],F) ),\\
F^{\mathsf{B}}_{\cris}(K,\bE,A)&:=   \oR\Gamma( (\Spec K)_{\pro\et}, F(\uline{ B_{\cris} \hten_{\Q_p}A}
(\bE)  ) ),\\
F^{\mathsf{B}}_{\dR}(K,\bE,A)&:=   \oR\Gamma( (\Spec K)_{\pro\et}, F(\uline{ B_{\dR}\hten_{\Q_p}A}
(\bE)  ) ),
\end{align*}
and then define 
$F^{\mathsf{B}}(K,\bE, A)$ to be the homotopy equaliser
\[
\xymatrix@1{
 F^{\Phi,\Gamma}_{\Hod}(K,\bE,A)  \by F_{\cris}^{\Phi,\Gamma}(K,\bE,A) \ar@<0.5ex>[rr]^{ (x,y) \mapsto (x,\sigma^*y) } \ar@<-0.5ex>[rr]_{ (x,y) \mapsto (y,y) } &&  F_{\dR}^{\Phi,\Gamma}(K,\bE,A) \by F_{\cris}^{\Phi,\Gamma}(K,\bE,A). }
\]

\end{definition}

%
%
In other words, $F^{\mathsf{B}}$ parametrises data consisting of a $\phi$-equivariant $\uline{B_{\cris}}$-object, a filtered $\uline{B_{\dR}}$-object and an equivalence between the underlying $\uline{B_{\dR}}$-objects.
%

\begin{definition} \label{MFtoBdef}
 For $A \in dg_+\CAlg_{\Q_p}$, define natural maps 
\begin{align*}
F_{\cris}(K,\Z_p(1),A) &\to F_{\cris}^{\mathsf{B}}(K, \Z_p(1), A),\\
F_{\dR}(K, \Z_p(1), A) &\to     F_{\dR}^{\mathsf{B}}(K, \Z_p(1), A),\\ 
 F_{\Hod}(K, \Z_p(1), A) &\to F^{\mathsf{B}}_{\Hod}(K, \Z_p(1), A),\text{ and hence}\\
F_{\mathrm{MF}(\Phi)}(K, \Z_p(1), A) &\to F^{\mathsf{B}}(K, \Z_p(1),  A)
\end{align*}
to be those induced by the isomorphisms $K_0 \cong \Gamma( (\Spec K)_{\pro\et}, \uline{ B_{\cris}})$, $K \cong \Gamma( (\Spec K)_{\pro\et}, \uline{ B_{\dR}})$  and $K[s] \cong \Gamma( (\Spec K)_{\pro\et},\uline{\xi(B_{\dR}  ,\Fil)})$ and their Tate twists.
 \end{definition}

\begin{definition} 
Define the natural map
\[
 F ((\Spec K)_{\pro\et},\Z_p(1),-) \to F^{\mathsf{B}}(K, \Z_p(1),  -)
\]
to be that induced by the maps from $\Q_p$ to the diagram 
\[
\xymatrix@1{ 
\Fil^0B_{\dR}\oplus B_{\cris} \ar@<0.5ex>[rr]^{ (x,y) \mapsto (x,\sigma^*y) } \ar@<-0.5ex>[rr]_{ (x,y) \mapsto (y,y) } &&  B_{\dR} \oplus B_{\cris}. 
}
\]
\end{definition}

\begin{lemma}\label{crisetalelemma}
 If $F$ has perfect cotangent complex, then the map  $F ((\Spec K)_{\pro\et},\Z_p(1),-) \to F^{\mathsf{B}}(K,\Z_p(1), -)$ is formally \'etale.
\end{lemma}
\begin{proof}
 Given a point $x \in F ((\Spec K)_{\pro\et},\bE,A)$ and an $A$-module $M$, the tangent complex $T_xF_{\Hod}(K, \bE, -)(M)$ is given by applying $\oR\Gamma((\Spec K)_{\pro\et},-)$ to the pro-\'etale sheaf
 \[
  \oR\Gamma\big([\bA^1/\bG_m], T_x\big(F, \uline{M}(\bE)\ten_{\Q_p}\uline{\xi(B_{\dR}  ,\Fil)}\big)\big).
 \]
Since the cotangent complex of $F$ is perfect, this is just
\[
 \oR\Gamma\big([\bA^1/\bG_m], T_x\big(F, \uline{M}(\bE)\big)\ten_{\Q_p}\uline{\xi(B_{\dR}  ,\Fil)}\big)= T_x(F, \uline{M}(\bE))\ten_{\Q_p}\uline{\Fil^0B_{\dR}}.
\]

Replacing $\Fil^0B_{\dR}$ with $B_{\dR}$ and $B_{\cris}$ gives the tangent complexes $T_xF_{\dR}$ and $T_xF_{\cris}$, so the result follows from the resolution for $\Q_p$ above.
\end{proof}

\begin{definition}\label{moduliassocdef}
 Define $F^{\cris}((\Spec K)_{\pro\et},\Z_p(1),-)$ to be the homotopy pullback 
 $F_{\mathrm{MF}(\Phi)}(K, \Z_p(1), -)\by^h_{F^{\mathsf{B}}(K, \Z_p(1),  -)}F ((\Spec K)_{\pro\et},\Z_p(1),-)$. 
\end{definition}
We can think of as derived moduli of associations, i.e. associated Galois and crystalline data, or simply as moduli of $F$-valued crystalline representations.

Lemma \ref{crisetalelemma} implies that $F^{\cris}((\Spec K)_{\pro\et},\Z_p(1),-)$ is formally \'etale over $F_{\mathrm{MF}(\Phi)}(K, \Z_p(1),-)  $.

\subsection{Trace, duality and Lagrangians} 

The relation between the $\mathrm{MF}(\Phi)$ and $B$-pair constructions is very similar to that between $U$ and $\pd U$ in \S \ref{Lagsn}.

For the complex $C_f^{\bt}$ from Definition \ref{Cfbtdef} and the quasi-isomorphism  $\Q_p \to C_f^{\bt}$, we have
the following analogue of Example \ref{traceLagex}.(\ref{traceLaglocex}):
\begin{lemma}\label{cristracelemma}
The trace map  of Example \ref{traceex}.(\ref{tracelocex}) factors through a trace
\[
 \tr \co \cone\big( (C_f^{\bt}(1))^{G_K} \to \oR\Gamma((\Spec K)_{\pro\et}, \Q_p(1))\big) \to \Q_p[-2].
\]

Given a quasi-dagger dg $K$-algebra $A$ and a finite $A$-module $M$ in complexes, this induces a natural map
\[
  \cone\big( (C_f^{\bt}\hten_{\Q_p}M(1))^{G_K} \to \oR\Gamma((\Spec K)_{\pro\et},\uline{M}\hten_{\Zl}\Zl(1))\big) \to M[-2] 
\]
in the $\infty$-category of cochain complexes, depending only on the structure of $M$ as a  
complex of topological abelian groups.
\end{lemma}
\begin{proof}
 The first statement follows immediately because $C_f^{\bt}(1)$ is concentrated in cochain degrees $[0,1]$, with  $\oR\Gamma((\Spec K)_{\pro\et}, \Q_p(1)) \simeq \oR\Gamma((\Spec K)_{\pro\et}, C_f^{\bt}(1))$. The second statement then follows exactly as in Proposition \ref{traceprop}.
\end{proof}
Explicitly, observe that $(C_f^{\bt}\hten_{\Q_p}M(1))^{G_K}$ is just the complex $K_0\ten_{\Q_p}M \xra{(\id,p^{-1}\phi -1)\ten \id} (K\oplus K_0)\ten_{\Q_p}M$. 

\begin{corollary}\label{crisLagcor}
If   $F \co dg_+\CAlg_{\Q_p} \to s\Set$ is an
  $n$-shifted symplectic  derived Artin $\infty$-stack, then the morphism $F^{\cris}((\Spec K)_{\pro\et},\Z_p(1),-) \to F((\Spec K)_{\pro\et},\Z_p(1),-)$ from Definition \ref{moduliassocdef} is isotropic with respect to the $(n-2)$-shifted symplectic structure of 
 Example \ref{wSpex}.(\ref{wlocalex}).
  
  For the full subfunctor   $F^{\cris}((\Spec K)_{\pro\et},\Z_p(1),-)^{wd}  \subset F^{\cris}((\Spec K)_{\pro\et},\Z_p(1),-)$  consisting of points $x$ at which the trace of Lemma \ref{cristracelemma} induces a perfect pairing on cotangent complexes similar to Definition \ref{wweakdualLagdef}, the natural map
\[ 
 F^{\cris}((\Spec K)_{\pro\et},\Z_p(1),-)^{wd} \to F((\Spec K)_{\pro\et},\Z_p(1),-)^{wd} 
 \]
 carries a natural Lagrangian structure with respect to the $(n-2)$-shifted symplectic structure of weight $1$ given by Corollary \ref{wtracecor2a} and Example \ref{wSpex}.(\ref{wlocalex}).

 In particular, this includes all points valued in weighted dgas $A$ which are finite-dimensional $\Q_p$-vector spaces.
\end{corollary}
\begin{proof}
 The first part follows by exactly the same argument as Corollary \ref{wtracecor2Laga}. 
 
 At any $A$-valued point $x \in F^{\cris}((\Spec K)_{\pro\et},\Z_p(1),A)^{wd}$, we have a constructible pro-\'etale $\uline{A}$-module $L_{\bt}^{\et}:=\bL^{F,\uline{A}_{\Spec K}(\uline{\Z}_p(1)), x_{\et}}$ and a perfect $A\ten_{\Q_p}K_0$-module $L_{\bt}^{\cris}:= \bL^{F_{\cris},A(\Z_p(1)), x_{\cris}}$ equipped with a Frobenius semilinear endomorphism $\phi$ and a homotopy filtration $\Fil$ on $L_{\bt}^{\cris}\ten_{K_0}K $. 
 Since $F$ has perfect cotangent complexes, the sheaves $\H_i(L_{\bt}^{\et})$ are finite $A$-modules. The hypothesis on $A$ then implies that the  $A$-linear objects $(\H_i(L_{\bt}^{\cris}),\phi, \Fil)$ are associated via Fontaine's functors to finite-dimensional  Galois representations $V_i$, which are thus crystalline. 
 
 The duality required for the final part now follows from the exact triangle
 \[
  (C_f^{\bt}\ten_{\Q_p}V)^{G_K} \to \oR\Gamma((\Spec K)_{\pro\et}, V) \to ((C_f^{\bt}\ten_{\Q_p}V^*(1))^{G_K})^* [-2]\to 
 \]
from \cite[Proposition 3.8]{BlochKatoTamagawa}. 
\end{proof}

\subsection{Selmer constructions}\label{wSpexLagcrissn}
 
  We also use these examples to give more shifted symplectic structures, taking derived intersection  of the Lagrangians of  Examples \ref{wSpexLag}.(\ref{globalex}) and  \ref{wSpexLag}.(\ref{localex}) and Corollary \ref{crisLagcor} to give an $(n-3)$-shifted symplectic  structure of weight $1$ on the derived fibre product $F^{\mathrm{Sel}}(\cO_{K,S},\Zl(1),-)^{wd}$   of the diagram
  \[
  \xymatrix@C=-20ex{
    &  \prod_{\substack{v \in S \\ v \nmid p}} F (( \cO_v  )_{\pro\et},\Z_p(1),-)^{wd}  \by  \prod_{\substack{v \in S \\ v \mid p}} F^{\cris}(( K_v)_{\pro\et},\Z_p(1),-)^{wd} \ar[dr]\\ 
   F( ( \cO_{K,S})_{\pro\et},\Z_p(1),-)^{wd} \ar[rr]  & &\prod_{v \in S}F ( (K_v  )_{\pro\et},\Z_p(1),-)^{wd}
  }
  \]
    whenever $F$ is $n$-shifted symplectic, where we write $\cO_{\pro\et}:= (\Spec \cO)_{\pro\et}$.
This leads to many more applications than the variant with analogues of Greenberg's local conditions in  Example \ref{wSpexLag}.(\ref{intersectGLCZex}), since it does not rely on a Lagrangian in $F$ to provide restrictions at the primes dividing $p$.

When  $F$ is $B\GL_n$ or  the stack of perfect complexes, the resulting stack is equipped with  a $(-1)$-shifted symplectic structure, and parametrises derived moduli of $G_{K,S}$ representations which are unramified at primes in $S$ not dividing $\ell$, and crystalline at primes dividing $\ell$. These functors might be thought of as  Selmer complexes with non-abelian  coefficients.

\begin{example}\label{wBGmXex3}
  In the toy example $F=B\bG_m$, the description of Example \ref{wBGmXex} allows us to describe $(B\bG_m)^{\mathrm{Sel}}(\cO_{K,S},\Zl(1),-)$ as the  $(-1)$-shifted cotangent stack of the weighted derived stack given by the product 
\[
  (B\bG_m)^{\mathrm{Sel}}(\cO_{K,S},-) \by \prod_{r>1} \H^1_{\pro\et}(\cO_{K,S}, \Ql(r)),
 \]
where the cotangent of a term of weight $r$ is placed in weight $1-r$ and $(B\bG_m)^{\mathrm{Sel}}(\cO_{K,S},-)$ denotes the 
 corresponding unweighted derived moduli stack. This follows by Poitou--Tate and local Tate duality, together with the vanishing of $\H^2_{\pro\et}(\cO_{K,S}, \Ql(r))$ for $r>1$ \cite{souleKthCorpsNomCohoet} and the calculation that  when $v$ divides $\ell$, $\H^1_f(K_v, \Ql(r))$ is $\H^1(K_v,\Ql(r))$ for $r>1$ and $0$ for $r<0$.

\end{example}

\subsection{Higher-dimensional analogues} \label{highercrissn}

Finally, there exist versions of these functors for a  smooth proper variety $Z$ over a $p$-adic local field $K$ admitting a model $\cZ$ over the Witt vectors $W(k)$ of the residue field, given by combining the duality of Lemma \ref{cristracelemma} with the Poincar\'e duality of \cite[\S VII.2]{Be} and the \'etale-crystalline comparison theorem of \cite[5.6]{Hop}.
For these functors, we can set (for  $\cZ_k:=\cZ\ten_{W(k)}k$)
\[
 F_{\cris}(Z,A):= \oR\Gamma(\cZ_{k,\ind\cris}/W(k),  F(\sO_{\cris}\ten_{\Z_p}A)), 
 \quad  F_{\dR}(Z,A):=\oR\Gamma( Z_{\dR}^{\an}, F(\sO_{Z,\dR}\ten_{\Q_p})),
\]
where $\cZ_{k,\ind\cris}$ is the variant of the crystalline site allowing formal schemes as thickenings (so the tensor product $\sO_{\cris}\ten_{\Z_p}A$ is non-trivial), and  where   $Z_{\dR}^{\an}$ is  the analytification of the de Rham stack of \cite{Simfil}. We also use  Simpson's Hodge stack $Z_{\Hod}$ in place of $\bA^1_K$, so take   
\[
 F_{\Hod}(Z,A):=\oR\Gamma([Z_{\Hod}/\bG_m], F(\sO\ten_{\Q_p} A)).
\]
The other formulae of \S\S \ref{MFsn}, \ref{Bpairsn} then adapt to this generality. 

In particular, this (or rather the obvious generalisation to weighted algebras) allows us to replace the local conditions at primes dividing $\ell$ in Example \ref{wSpexLag}.\ref{intersectGLCZex} with crystalline terms instead of using a Lagrangian in $F$.

\section{Weighted shifted Poisson structures}\label{poisssn}

In this section, we introduce shifted Poisson structures on weighted dg algebras, twisted by a weight, and establish a comparison with weighted symplectic structures. For these purposes, there is nothing special about the dagger affinoid setting in which we are working, and obvious analogues of these results hold in particular in derived algebraic geometry.

\subsection{Polyvectors}

In this section, we fix an object $A$ of the  pro-category $\pro(dg_+\Affd\Alg_K^{\bG_m,\dagger,\loc})$, which in applications will be quasi-free. The following is adapted from \cite[Definition \ref{poisson-poldef}]{poisson}:
 

%
\begin{definition}\label{poldef} 
Define the graded cochain complex of $m$-twisted $n$-shifted multiderivations 
on $A$ by
\[
 \widehat{\Pol}(A,n,m):=  \prod_p \cHom_A(\CoS_A^p((\Omega^1_{A} \{m\})_{[-n-1]}),A), 
\]
for the internal $\Hom$-complex $\cHom$ of Definition \ref{cHomdef} and twist $\{m\}$ as in Definition \ref{twistweightdef}; note that the only effect of the twisting is to affect the weights, but that the product has to be taken in each weight separately.
This has a $\bG_m$-equivariant graded-commutative  multiplication following the usual conventions for symmetric powers.  (Here, $\CoS_A^p(M) =\Co\Symm^p_A(M)= (M^{\ten_A p})^{\Sigma_p}$.) 

The Lie bracket on $\cHom_A(\Omega^1_{A/R},A) = \cHom_A(\Omega^1_{A/R}\{m\},A)\{m\}$ then extends to give a a $\bG_m$-equivariant bracket (the Schouten--Nijenhuis bracket)
\[
[-,-] \co \widehat{\Pol}(A,n,m)\{m\} \by \widehat{\Pol}(A,n,m)\{m\}\to \widehat{\Pol}(A,n)^{[-1-n]}\{m\},
\]
determined by the property that it is a bi-derivation with respect to the multiplication operation. 


Note that the cochain differential $\delta$ on $\widehat{\Pol}(A,n,m)$ can be written as $[\delta,-]$, where $\delta \in \cW_m\widehat{\Pol}(A,n,m)^{n+2}$ 
is the element defined by the derivation $\delta$ on $A$.
\end{definition}

\begin{definition}\label{Fdef}
Define a decreasing filtration $F$ on  $\widehat{\Pol}(A,n,m)$ by 
\[
 F^i\widehat{\Pol}(A,n,m):= \prod_{j \ge i}  \cHom_A( \CoS_A^j((\Omega^1_{A/R}\{m\})_{[-n-1]}),A);
\]
this has the properties that $\widehat{\Pol}(A,n,m)= \Lim_i \widehat{\Pol}(A,n,m)/F^i$, with $[F^i,F^j] \subset F^{i+j-1}$, $\delta F^i \subset F^i$, and that $F^i F^j \subset F^{i+j}$.
\end{definition}

Observe that this filtration makes $F^2\widehat{\Pol}(A,n,m)^{[n+1]}\{m\}$ into a $\bG_m$-equivariant pro-nilpotent DGLA, and hence makes $ \cW_mF^2\widehat{\Pol}(A,n,m)^{[n+1]}$ into a pro-nilpotent DGLA.

\subsubsection{Poisson structures}

\begin{definition}\label{mcPLdef}
 Given a   DGLA $L$, define the the Maurer--Cartan set by 
\[
\mc(L):= \{\omega \in  L^{1}\ \,|\, \delta\omega + \half[\omega,\omega]=0 \in  \bigoplus_n L^{2}\}.
\]

Following \cite{hinstack}, define the Maurer--Cartan space $\mmc(L)$ (a simplicial set) of a nilpotent  DGLA $L$ by
\[
 \mmc(L)_n:= \mc(L\ten_{\Q} \Omega^{\bt}(\Delta^n)),
\]
where 
\[
\Omega^{\bt}(\Delta^n)=\Q[t_0, t_1, \ldots, t_n,\delta t_0, \delta t_1, \ldots, \delta t_n ]/(\sum t_i -1, \sum \delta t_i)
\]
is the commutative dg algebra of de Rham polynomial forms on the $n$-simplex, with the $t_i$ of degree $0$.
\end{definition}
%

\begin{definition}\label{poissdef}
If $A$ is quasi-free, define an  $n$-shifted Poisson structure of weight $m$ on $A$ to be an element of
\[
 \mc( \cW_mF^2 \widehat{\Pol}(A,n,m)^{[n+1]}), 
\]
 and the space $\cP(A,n,m)$ of   $n$-shifted Poisson structures of weight $m$ on $A$ to be given by the simplicial 
set
\[
 \cP(A,n,m):= \Lim_p \mmc( \cW_mF^2 \widehat{\Pol}(A,n,m)^{[n+1]}/F^{p+2}).
\]

\end{definition}


\begin{remark}
Observe that elements of $\cP_0(A,n,m)= \mc(\cW_mF^2 \widehat{\Pol}(A,n,m)^{[n+1]})$ consist of infinite sums $\pi = \sum_{i \ge 2}\pi_i$ with $\bG_m$-equivariant maps
\[
 \pi_i \co \CoS_A^i((\Omega^1_{A/R})_{[-n-1]}\{m\}) \to A_{[-n-2]}\{m\}
\]
 satisfying $\delta(\pi_i) + \half \sum_{j+k=i+1} [\pi_j,\pi_k]=0$. This is precisely the condition which ensures that $\pi$ defines an $L_{\infty}$-algebra structure on $A_{[-n]}\{m\}$. 

\end{remark}

\begin{definition}
We say that an  $n$-shifted Poisson structure $\pi = \sum_{i \ge 2}\pi_i $ of weight $m$ on $A$ is non-degenerate if $\pi_2 \co \CoS_A^2((\Omega^1_{A}\{m\})_{[-n-1]}) \to A_{[-n-2]}\{m\}$ induces a quasi-isomorphism 
\[
\pi_2^{\sharp}\co  (\Omega^1_{A})_{[-n]}\{m\} \to \cHom_A(\Omega^1_{A},A).
\]

Define $\cP(A,n,m)^{\nondeg}\subset \cP(A,n,m)$ to consist of non-degenerate elements --- this is a union of path-components.
\end{definition}

\begin{remark}
 Beware that this non-degeneracy condition can only be satisfied if $n \le 0$. The same phenomenon arises for symplectic structures, with derived dagger affinoids (and more generally derived dagger DM stacks) only capable of carrying non-positively shifted structures. For derived Artin stacks, the formulation of shifted Poisson structures is more subtle, allowing for positive shifts --- see \S \ref{hgsSpsn} for details. 
\end{remark}

\subsection{Comparison of weighted Poisson and symplectic structures}

%
%
%
%
%


\begin{theorem}\label{compatthmLie}
For a quasi-free object $A$ of the pro-category $\pro(dg_+\Affd\Alg_K^{\bG_m,\dagger,\loc})$, there are canonical weak equivalences
\[
 \cW_m\Sp(A,n) \simeq \cP(A,n,m)^{\nondeg}
\]
of simplicial sets. 

In particular,  the sets of equivalence classes of $n$-shifted symplectic structures of weight $m$ and of  non-degenerate $n$-shifted Poisson structures of weight $m$ $A$ are isomorphic. 
 \end{theorem}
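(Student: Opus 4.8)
The plan is to reduce the statement to the unweighted comparison theorem of \cite{poisson} (the analogue of Theorem \ref{compatthmLie} without weights, i.e. the equivalence $\Sp(A,n)\simeq \cP(A,n)^{\nondeg}$ for ordinary CDGAs) by systematically tracking $\bG_m$-equivariance through the proof. The key observation is that all the constructions involved --- the de Rham complex $\DR(A)$, the polyvectors $\widehat{\Pol}(A,n)$, the Hodge filtration $F$, the Schouten--Nijenhuis bracket, and the various comparison maps --- are natural with respect to the $\bG_m$-action, so they all decompose into weight-graded pieces. Concretely, I would first note that for a quasi-free object $A$ of $\pro(dg_+\Affd\Alg_K^{\bG_m,\dagger,\loc})$, the module $\Omega^1_A$ is cofibrant as a weighted $A$-module (by the argument of Example \ref{wqufreecotex}), so that $\oL\DR(A)\simeq \DR(A)$ and the derived polyvectors agree with $\widehat{\Pol}(A,n,m)$; thus both sides of the desired equivalence are already computed by strict (non-derived) constructions, exactly as in the unweighted case.

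The core of the proof is the construction of a map of simplicial sets $\cW_m\Sp(A,n)\to \cP(A,n,m)^{\nondeg}$ (and its homotopy inverse), and here I would follow the strategy of \cite{poisson} verbatim: a pre-symplectic structure $\omega$ of weight $m$ is an element of $\z^{n+2}F^2\cW_m\DR(A)$, and one builds from $\omega$ a compatible family of ``compatible pairs'' $(\omega,\pi)$ living in a mapping cone complex interpolating between $F^2\cW_m\DR$ and $\cW_mF^2\widehat{\Pol}$. The non-degeneracy condition on $\omega$ --- that $\omega_2^\sharp\co \cHom_A(\Omega^1_A,A)\to(\Omega^1_A)_{[-n]}\{m\}$ is a quasi-isomorphism --- is precisely what allows one to invert the construction, because a quasi-isomorphism of weighted complexes is the same thing as a quasi-isomorphism in each weight. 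The combinatorial heart (the ``compatibility'' spectral sequence / obstruction argument showing the space of compatible structures projects equivalently to both the symplectic and Poisson sides) is identical to \cite{poisson} once one everywhere replaces $\Hom_A$ by $\cW_0\cHom_A(-,-\{\bullet\})$ and $\DR$ by $\cW_m\DR$; no new homotopical input is needed, only bookkeeping of the extra grading, and the twist $\{m\}$ only shifts which weight-graded piece one extracts.

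Passing to the statement about equivalence classes is then immediate: $\pi_0$ of an equivalence of simplicial sets is a bijection, so $\pi_0\cW_m\Sp(A,n)\cong \pi_0\cP(A,n,m)^{\nondeg}$, and these are by definition the sets of equivalence classes of $n$-shifted symplectic structures of weight $m$ and of non-degenerate $n$-shifted Poisson structures of weight $m$ on $A$.

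The main obstacle I anticipate is not conceptual but structural: one must check that the weight-by-weight decomposition genuinely commutes with all the limits and homotopy limits appearing in the definitions --- in particular the product $\prod_p$ in $\widehat{\Pol}$ (which, as the paper emphasises, ``has to be taken in each weight separately''), the inverse limit $\Lim_p$ defining $\cP(A,n,m)$, and the pro-system structure on $A$ itself coming from $\pro(dg_+\Affd\Alg_K^{\bG_m,\dagger,\loc})$. Since $A$ has only homologically bounded weights but its quasi-free resolution $C\in dg_+\Affd\Alg_K^{\bG_m,\dagger,\loc}$ may have generators in infinitely many weights (so $\Omega^1_C$ lies in $dg_+\Coh_C^{\hat f\bG_m}$, not $dg_+\Coh_C^{f\bG_m}$), one has to verify that the relevant $\cHom$-complexes and their $\bG_m$-invariants are computed correctly and that the weight-graded pieces of the cone complex are each bounded enough for the Maurer--Cartan/denormalisation formalism of \cite{hinstack} to apply unchanged. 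Granting the care taken in \S\ref{wqufreesn} and the behaviour of $\cHom$ recorded in Definition \ref{cHomdef}, this is routine, and the argument of \cite{poisson} then transfers directly.
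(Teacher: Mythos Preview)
Your proposal is correct and follows essentially the same approach as the paper: both argue that the proof of the unweighted comparison from \cite{poisson} adapts \emph{mutatis mutandis} once one tracks $\bG_m$-equivariance throughout, with the paper being slightly more explicit about the key compatibility map $\mu(-,\pi)\co \DR(A)\to T_\pi\widehat{\Pol}(A,n,m)$ and the canonical element $\sigma(\pi)=\sum_{i\ge 2}(i-1)\pi_i$ that drive the obstruction-theoretic inductive construction. Your anticipated concerns about weight-by-weight decomposition commuting with the various limits are well-placed but, as you note, routine given the care already taken in \S\ref{wqufreesn}.
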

\begin{proof}
The proof of \cite[Corollary \ref{poisson-compatcor2}]{poisson} adapts, \emph{mutatis mutandis}. We now outline the main steps. 

Each Poisson structure $\pi \in \cP(A,n,m)$  gives rise to a Poisson cohomology complex
\[
T_{\pi} \widehat{\Pol}(A,n,m),
\]
defined as the $\bG_m$-equivariant cochain complex  given by the derivation  $\delta+[\pi,-]$ acting on $\widehat{\Pol}(A,n,m) $. There is also a canonical element $\sigma(\pi) \in\z^{n+2} \cW_mT_{\pi}\widehat{\Pol}(A,n)$  given by 
\[
 \sigma(\pi)= \sum_{i \ge 2}  (i-1)\pi_i.
\]

The key construction is then given by the compatibility map  
\begin{align*}
 \mu(-,\pi) \co \DR(A) &\to T_{\pi} \widehat{\Pol}_{\pi}(X,n,m) \\
a df_1 \wedge \ldots df_p &\mapsto a[\pi,f_1]\ldots [\pi,f_p]
\end{align*}
of filtered $\bG_m$-equivariant cochain complexes. When $\pi$ is non-degenerate, this map is necessarily a $\bG_m$-equivariant quasi-isomorphism, and the symplectic structure associated to $\pi$ is given by 
\[
 \mu(\pi,-)^{-1}\sigma(\pi) \in \H^{n+2}F^2\cW_m\DR(A).
\]
By analogy with \cite{KhudaverdianVoronov}, we can regard  the inverse map $\mu(\pi,-)^{-1}$ as  a homotopy Legendre transform. 

Establishing that this gives an equivalence between symplectic and Poisson structures relies on obstruction theory associated to filtered DGLAs, building the Poisson form $\pi=\pi_2+\pi_3 + \ldots$ inductively from the  symplectic form $\omega=\omega_2+\omega_3 + \ldots$ by solving the equation $\mu(\omega, \pi)\simeq \sigma(\pi)$ up to coherent homotopy; for a readable summary of the argument from \cite{poisson}, see \cite[\S 2.5]{safronovPoissonLectures}.
\end{proof}

\begin{remark}[$\sP$-shifted Poisson structures]\label{PshiftedPoissonrmk}
The $\sP$-shifted symplectic structures of \cite{BouazizGrojnowski} are much more general than we consider here, since they take $\sP$ to be a shifted line bundle with a flat connection, then define shifted symplectic structures in terms of the de Rham complex of $\sP$. Under additional finiteness conditions, our weighted shifted symplectic structures on $X$ fit into this picture by looking at $\sP$-shifted  symplectic structures relative to the morphism $f \co [X/\bG_m] \to B\bG_m$, taking $\sP= f^*\sO(m)[n]$.

Our results so far easily adapt to $\sP$-shifted symplectic structures whenever $\sP$ is the pullback of a shifted line bundle on the base, but here seems a suitable place to indicate how to formulate $\sP$-shifted symplectic structures for more general $\sP$.
We work with the filtered DGLA
\[
\widehat{\Pol}(A,\sP):=\prod_{p} \HHom_A(\CoS_A^p(\Omega^1_{A}\ten_A\sP),\sP),
\]
with Lie bracket given in terms of the connection $\nabla \co \sP \to \Omega^1_{A}\ten_A\sP$  by $[f,g]= f\circ (\nabla \circ g) \mp g \circ (\nabla \circ f)$, where the first $\circ$ is interpreted as a sum over all substitutions. This is a module over the graded algebra $ \prod_{p} \HHom_A(\CoS_A^p(\Omega^1_{A}\ten_A\sP),A)$, and the bracket is a biderivation with respect to this multiplication.

The generalisation of Theorem \ref{compatthmLie} then holds to give
\[
 \mmc(\DR(\sP))^{\nondeg} \simeq \mmc(\widehat{\Pol}(A,\sP))^{\nondeg}
\]
so we regard Maurer--Cartan elements $\pi \in F^2 \widehat{\Pol}(A,\sP)$ as $\sP$-shifted Poisson structures.
In particular a $\sP$-shifted Poisson structure $\pi$  gives rise to an element $\sigma(\pi)\in T_{\pi}F^2 \widehat{\Pol}(A,\sP)$ and a filtered compatibility map
\[
 \mu(-,\pi) \co \DR(\sP) \to T_{\pi}\widehat{\Pol}(A,\sP), 
\]
analogous to the map $\cW_m\DR(A) \to T_{\pi} \cW_m\widehat{\Pol}_{\pi}(X,n,m)$; these give rise to the comparison. 

Finally, composition with the connection $\nabla \co \sP \to \Omega^1_{A}\ten_A\sP$ allows us to interpret elements of $\HHom_A(\CoS_A^j(\Omega^1_{A}\ten_A\sP),\sP)$ as $j$-ary operations on $\sP$. Thus a $\sP$-shifted Poisson structure is the same as an $L_{\infty}$-algebra structure $\{[-]_i\}$ on $\sP$ for which each operation $[-]_i$ is a $\nabla$-multiderivation in the sense that each operation  $[p_1,p_2, \ldots, p_{i-1},-]_i \co \sP \to \sP$ acts as $v \lrcorner \nabla$ for some tangent vector $v \in T_A$.  

\end{remark}

\subsection{Co-isotropic structures}\label{coisosn}
In the algebraic setting of \cite{MelaniSafronovI}, 
an $n$-shifted co-isotropic structure on a morphism $f \co A \to B$ is defined to consist of an $n$-shifted Poisson structure on $A$, an $(n-1)$-shifted Poisson  structure $\pi$ on $B$ and a lift of $f$ to a strong homotopy $P_{n+1}$-algebra morphism $A \to (\widehat{\Pol}(B,n-1),\delta +[\pi,-])$ to the complex of polyvectors with differential twisted by $[\pi,-]$. There is an interpretation in terms of additivity showing that the latter data are equivalent to a strongly homotopy associative action of $A$ on the $P_n$-algebra $B$.
 
Unfortunately, the usual formulation of s.h. $P_{n+1}$-algebra morphisms involves cofibrant replacement of $A$ as a $P_{n+1}$-algebra, which does not have a natural analogue in analytic settings. However,  suitable spaces of morphisms should be constructible by applying  operadic Koszul duality and working with the coloured operad of polydifferential operators, 
incorporating weights in the weighted case.
 
Given such a setup, the equivalence between (weighted) shifted Lagrangians and non-degenerate (weighted) shifted co-isotropic structures established in 
\cite{MelaniSafronovII} should have a fairly straightforward analytic analogue, phrased in terms of structures on (weighted) EFC-algebras. 

%

\subsection{Global Poisson structures}

Functoriality for shifted Poisson structures is fairly subtle, but for homotopy formally  \'etale morphisms $A \to B$, the constructions of \cite[\S \ref{poisson-DMsn}]{poisson} adapt to our setting. As in \cite[\S \ref{poisson-DMdiagramsn}]{poisson}, 
we have a natural notion of a space $ \cP(A \to B,n,m)$ of $n$-shifted Poisson structures of weight $m$ on the diagram $A \to B$, with a natural equivalence $ \cP(A \to B,n,m) \to\cP(A,n,m) $ and a natural map $\cP(A \to B,n,m) \to\cP(B,n,m) $; this leads to an $\infty$-functor on the category of homotopy formally \'etale morphisms of formal weighted dg dagger affinoids. 

Passing to homotopy limits as in \cite[Definition \ref{poisson-inftyFXdef}]{poisson}, we can then define the space of $n$-shifted symplectic structures on a weighted formal dagger dg space (or DM $\infty$-stack) by setting
\[
 \cP(X,n,m):= \oR\Gamma((\pi^0X^{\bG_m})_{\et}, \cP(\tilde{\sO}_X,n,m)).
\]

The equivalence generalising Theorem \ref{compatthmLie} is then sufficiently canonical to give us an equivalence
\[
 \Sp(X,n,m) \simeq \cP(X,n,m)^{\nondeg}.
\]

For Artin stacks, we encounter the obstacle that Poisson structures are not functorial with respect to smooth morphisms, which we will address in \S \ref{hgsSpsn}.

\begin{examples}\label{wPoissex}
Whenever $Y$ is a $K$-dagger space (or even a $K$-dagger DM $\infty$-stack),  
 Theorem \ref{compatthmLie}  gives us $r$-shifted Poisson structures of weight $m$ associated to each  $r$-shifted symplectic structure of weight $m$ on the functors
 \[
 \tilde{Y} \co A \mapsto Y(\H_0A/\bG_m)\by^h_{F(X_{\pro\et},\H_0A/\bG_m)}F(X_{\pro\et},\Zl(1),A) 
 \]
in Examples \ref{wSpex}.  

Moreover, the comparison outlined in \S \ref{coisosn} will give  $(r-1)$-shifted Poisson structures of weight $m$ on $\tilde{W}$ associated to each  $r$-shifted Lagrangian structure of weight $m$ 
on $F( U_{\pro\et},\bE,-)^{wd}$
in Examples \ref{wSpexLag}, whenever $W$ is a  $K$-dagger DM $\infty$-stack over $\pi^0F(X_{\pro\et},-)$.

These results will also extend to cases where $Y$ is a $K$-dagger Artin $\infty$-stack, relying on the formulation of shifted Poisson structures on derived Artin stacks in \S \ref{hgsSpsn} below, as in Examples \ref{wPoissexArtin}.
\end{examples}

\subsection{Quantisations} \label{quantsn}

As outlined in  \cite[\S 4.4]{DStein} and \cite[Remarks \ref{DQDG-htpyfdrmkd1} and \ref{DQDG-htpyfdrmkd2}]{DQDG}, we can formulate shifted quantisations in  derived analytic settings using polydifferential operators, and algebraic quantisation results will then all adapt.

To quantise  Poisson structures of weight $m$, the results also adapt, with the same proofs. The only modification we have to make is to work in a $\bG_m$-equivariant setting and to set the formal variable $\hbar$ to have weight $m$.    For positively shifted structures, quantisations take the form of $E_{n+1}$-algebra deformations, whose existence can be inferred directly from Kontsevich formality. Our examples  of interest tend to carry non-positively shifted structures, which are more subtle:

\begin{itemize}[itemsep=5pt, parsep=0pt]
 \item Given a $0$-shifted Poisson structure of weight $m$ on $Z$, the proofs of \cite{DQnonneg,DQpoisson} adapt to give us a quantisation in the form of a curved almost commutative $\bG_m$-equivariant $A_{\infty}$-deformation $\tilde{\sO}_Z$ of $\{\sO_Z[\hbar]/\hbar^k\}_k$ over $\{K[\hbar]/\hbar^k\}_k$, with $\hbar$ of weight $m$. The curvature can be interpreted as giving an algebroid quantisation, and leads to a deformation (over $\{K[\hbar]/\hbar^k\}_k$) of the category of $\bG_m$-equivariant line bundles on $X$.
 
 \item Given a $(-1)$-shifted symplectic structure of weight $m$ on $W$,   the proof of \cite{DQvanish} adapts to give us a quantisation of the square root $\sL$ of the canonical bundle on $W$ (or of any line bundle with a right $\sD$-module structure on its square)
  in the form of a 
  $\bG_m$-equivariant differential operator $\Delta$ on $\{\sL[\hbar]/\hbar^k\}_k$ over $\{K[\hbar]/\hbar^k\}_k$, with $\hbar$ of weight $m$. This operator has constraints on the orders of its coefficients, and corresponds to a homotopy $BV$-algebra structure when $\sL=\sO_W$ and $\Delta(1)=0$. 
    On inverting $\hbar$, $\Delta$ gives a complex related to vanishing cycles as in Example \ref{vanishex} below. 
  Euler characteristics of such complexes are used in complex-analytic settings to recover the Behrend function used in enumerative questions such as Donaldson--Thomas theory. 
  
  \item Given a $0$-shifted Lagrangian structure of weight $m$ on $W$ over $Z$, the previous two examples combine compatibly as in \cite{DQLag}, with the $\sO_Z$-module structure on $\sL$ extending to give an  $\tilde{\sO}_Z$-module structure on $(\{\sL[\hbar]/\hbar^k\}_k, \delta +\Delta)$. On taking the limit over $k$ and inverting $\hbar$, the resulting structures resemble objects of an algebraic Fukaya category.

  \item If the structure sheaf carries a right $\sD$-module structure, then there is also a notion of quantisation for $(-2)$-shifted symplectic structures, given by solutions of a quantum master equation as in \cite{DQ-2}. The only modification necessary in the weighted case is to stipulate that the solution has weight $m$ with respect to the $\bG_m$-action.  
  \end{itemize}

\begin{example}\label{vanishex} 
 The prototypical example of a $(-1)$-shifted symplectic structure is given by a derived critical locus, and there is a similar construction for weighted structures  
 as a special case of Remark \ref{locformrmk}. Let  $Y=(Y^{\bG_m},\sO_Y)$ be a formally smooth weighted formal $K$-dagger space, and $f \in \Gamma(Y,\cW_m\sO_Y)$ a function of weight $m$. For the tangent sheaf $\sT_Y$ on $Y$, the derived critical locus $W$ of $f$ is then a weighted formal  $K$-dagger dg space with $\pi^0W^{\bG_m}$ the vanishing locus of $df$ on $Y^{\bG_m}$, and $\sO_W=( \Symm_{\sO_Y}(\sT_Y\{-m\}_{[-1]}),\delta)$, where the differential $\delta$ given on generators as contraction with $df$ (a map  $\sT_Y\{-m\}\to \sO_Y$).
 
 The $(-1)$-shifted symplectic structure of weight $m$ on $W$ is then given by the closed $2$-form arising from the identification of $\sT_Y$ with the dual of $\Omega^1_{Y}$. If $\sO_Y$ has local co-ordinates $y_i$, then $\sO_W$ has co-ordinates $y_i,\eta_i=\pd_{y_i}$, and the $2$-form on $W$ is $\omega =\sum_i dy_i \wedge d\eta_i \in \ker d \cap \ker \delta$. If $y_i \in \cW_{n_i}\sO_Y$, then $\eta_i \in \cW_{m-n_i}\sO_W$,  
 so 
 \[
  \omega \in \cW_m(\Omega^2_W\cap \ker \delta \cap \ker d)_1 \subset \cW_m\z^1F^2\DR(W). 
 \]
Since this is non-degenerate, it is thus a $(-1)$-shifted symplectic structure of weight $m$.
 
Now, the square root $\sL$ of the dualising bundle on $W$ is simply given by the pullback of $\Omega^{\dim Y}_Y$ to $W$, which is the complex
\[
 (\Omega^{\#}_Y\{m\#\}, df\wedge{})\{-m\dim Y\}[\dim Y].
\]
Reasoning as in \cite[\S 4.2
]{DQvanish}, a differential operator quantising  $\{\sL[\hbar]/\hbar^k\}_k$ is just the rescaled de Rham differential $\hbar d$, giving us the twisted de Rham complex
\[
 (\Omega^{\#}_Y\{m\#\}[\hbar]/\hbar^k, \hbar d +df\wedge{})\{-m\dim Y\}[\dim Y]. 
\]

When $m\ne 0$, the $\bG_m$-equivariant limit over all $k$ is
\[
   (\Omega^{\#}_Y\{m\#\}[\hbar], \hbar d +df\wedge{})\{-m\dim Y\}[\dim Y] \cong  \hbar^{\dim Y}(\hbar^{-\#}\Omega^{\#}_Y[\hbar],d+ \hbar^{-1}df \wedge{})[\dim Y].
\]
In particular, if $m=\pm 1$ then
on inverting $\hbar$ we just have a copy of the twisted de Rham complex $(\Omega^{\#}_Y, d+df\wedge{})$ in every weight. 
Because $f$ has non-zero weight, its exponential does not lie in $\sO_Y$, so this is  not isomorphic to the de Rham complex of $Y$.
It behaves more like an algebraic twisted de Rham complex, computing rapid decay cohomology.  

When $m=0$, the complex given by inverting $\hbar$ is $(\Omega^{\#}_Y(\!(\hbar)\!), d+\hbar^{-1}df\wedge{}) $, which computes vanishing cycles.
\end{example}

\begin{remarks}
 There are similar approaches to quantisation for shifted weighted symplectic structures on the derived Artin analytic stacks of the next section. 
  We can use the toy example of derived moduli of $\bG_m$-torsors as described in Examples \ref{wBGmXex}, \ref{wBGmXex2}, \ref{wBGmXex3} as a test case for the programme outlined in 
 \cite[\S 10]{KimArithGauge}. 
 
 For derived $0$-shifted cotangent stacks, there is a canonical  quantisation given by rings of differential operators. The (unweighted)  moduli stack of $\bG_m$-representations of a local Galois group is just the product of a smooth one-dimensional analytic space with $B\bG_m$, so we can describe the dg category of  perfect complexes  over the  quantisation of the  weighted derived moduli stack by substituting in  
 \cite[Definition 3.12 and Example 3.15]{DerCotMain}, but  taking $\hbar$ to have weight $1$ so power series do not appear.
 
 For the derived intersection of  \S \ref{wSpexLagcrissn}, corresponding to the major focus of  \cite[\S 10]{KimArithGauge},  Example \ref{wBGmXex3} 
 gives a description as a weighted $(-1)$-shifted derived cotangent stack, so we can describe the quantisation as in Example \ref{vanishex} (with $f=0$). However, this has the complication that there are infinitely many terms in the product of stacks, so na\"ively inverting $\hbar$ will not simply give us the  de Rham complex of the unweighted moduli stack shifted by its virtual dimension; although the other terms in the product are derived affine spaces, so have trivial de Rham cohomology, their combined contribution will annihilate  this localised  complex by shifting it to infinite degree. This suggests that extracting useful invariants will be a delicate process, since in general it will not be possible simply to quotient out the higher weights. 
 
 
 \end{remarks}

\subsection{Poisson structures for derived Artin stacks}\label{hgsSpsn}

\subsubsection{Stacky derived affines}\label{stackysn}

The lack of functoriality of tangent spaces and Poisson structures with respect to smooth morphisms makes their definition for derived Artin stacks fairly subtle. Instead of working with functors on CDGAs, the solution is to work with algebras in double complexes, where the chain direction encodes the derived structure and the cochain direction encodes the stacky structure. For details, see  \cite{smallet2} (and in particular  \S \ref{smallet2-Fermatsn}). We can then incorporate weights in much the same way as the incorporation of $\Z/2\Z$-gradings in \cite{DQDG}.

The following is adapted from \cite[Definition 3.2]{poisson}:
\begin{definition}
We define a stacky  quasi-dagger  dg algebra  to be a chain cochain complex $A= A^{\ge 0}_{\ge 0}$ of $K$-vector spaces equipped with a commutative product $A\ten A \to A$ and unit $K \to A$, such that  $A^0_0$ is a quasi-dagger algebra, and the $A^0_0$-modules $A^i_j$ are all finite. 

A morphism $A \to B$ of  stacky  quasi-dagger  dg algebra is then said to be a weak equivalence if the morphisms $A^i \to B^i$ are all quasi-isomorphisms.

There are obvious formal and weighted generalisations, which we will not write down.
\end{definition}

On double complexes $V^{\bt}_{\bt}$ combining both chain and cochain gradings, we denote the chain and cochain differentials  by $\delta$ and $\pd$ respectively,  regarding the cochain differential $\pd$ as stacky structure and the chain differential $\delta$ as derived structure.

\begin{definition}
 Given a chain cochain complex $V$, define the cochain complex $\hat{\Tot} V \subset \Tot^{\Pi}V$ as a subset of the product total complex by
\[
(\hat{\Tot} V)^m := (\bigoplus_{i < 0} V^i_{i-m}) \oplus (\prod_{i\ge 0}   V^i_{i-m})
\]
with differential $\pd \pm \delta$. This is sometimes referred to as the Tate realisation.
\end{definition}

In order to pass from double complexes to complexes in a fashion which behaves well with respect to weak equivalences and tensor operations, we use the following, which appear as \cite[Definitions 3.7 and 3.8]{poisson}. 
\begin{definition}
 Given a chain cochain complex $V$, define the cochain complex $\hat{\Tot} V \subset \Tot^{\Pi}V$ by
\[
(\hat{\Tot} V)^m := (\bigoplus_{i < 0} V^i_{i-m}) \oplus (\prod_{i\ge 0}   V^i_{i-m})
\]
with differential $\pd \pm \delta$.
\end{definition}

\begin{definition}
 Given $A$-modules $M,N$ in chain cochain complexes, we define  internal $\Hom$ spaces
$\cHom_A(M,N)$  by
\[
 \cHom_A(M,N)^i_j=  \Hom_{A^{\#}_{\#}}(M^{\#}_{\#},N^{\#[i]}_{\#[j]}),
\]
with differentials  $\pd f:= \pd_N \circ f \pm f \circ \pd_M$ and  $\delta f:= \delta_N \circ f \pm f \circ \delta_M$,
where $V^{\#}_{\#}$ denotes the bigraded vector space underlying a chain cochain complex $V$. 

We then define the  $\Hom$ complex $\hat{\HHom}_A(M,N)$ by
\[
 \hat{\HHom}_A(M,N):= \hat{\Tot} \cHom_A(M,N).
\]
\end{definition}

\subsubsection{Poisson structures}\label{Artinpoisssn}

We can then extend the definition of shifted Poisson structures to stacky   quasi-dagger  dg algebras, by using $\hat{\HHom}$ in place of $\HHom$.  
Unwinding the definitions, this means  that a Poisson structure on a stacky dg algebra $A$ gives rise to a sequence $\pi_2, \pi_3, \ldots$ of multiderivations on the product total complex $\hat{\Tot} A =\Tot^{\Pi}A$, defining a shifted $L_{\infty}$-algebra structure. However, since the multiderivations lie in the spaces defined using $\hat{\HHom}$, they come with boundedness restrictions from the original cochain direction: if we filter $\hat{\Tot} A$ by setting  $\Fil^p \hat{\Tot}A:= \Tot^{\Pi} A^{\ge p}$, then each component $\pi_k$  must be $\Fil$-bounded in the sense that for some integer $r$,  each $\Fil^p$ is mapped to $\Fil^{p+r}$. 

There is a notion of homotopy \'etale for  morphisms $A \to B$ of stacky dg algebras, which essentially amounts to saying that $\hat{\Tot}(\oL\Omega^1_A\ten^{\oL}_AB^0) \to \hat{\Tot}(\oL\Omega^1_B\ten^{\oL}_AB^0)$ is a quasi-isomorphism. This gives sufficient flexibility to ensure functoriality of shifted Poisson structures with respect to such morphisms, as in \cite[\S \ref{poisson-Artindiagramsn}]{poisson}.

In particular, the analogue of Theorem \ref{compatthmLie} still holds, giving a correspondence between $n$-shifted symplectic structures of weight $m$ and non-degenerate $n$-shifted Poisson structures of weight $m$ on formal weighted stacky   localised dagger  dg algebras. The comparison arguments outlined in \S \ref{coisosn} between $n$-shifted Lagrangian structures of weight $m$ and non-degenerate $n$-shifted co-isotropic structures of weight $m$ also extend to stacky algebras.

\subsubsection{Denormalisation}

The denormalisation functor $D$ from cochain complexes to cosimplicial vector spaces combines with the Eilenberg--Zilber shuffle product to give a functor from stacky dg algebras to cosimplicial dg algebras; for explicit formulae, see \cite[\S \ref{smallet2-denormsn}]{smallet2}. The iterated  codegeneracy maps $D^nA \to A^0$ are  $n$-nilpotent, which in particular implies that this functor  sends stacky  (quasi-)dagger  dg algebras $A$ to cosimplicial   (quasi-)dagger  dg algebras $DA$. 

\begin{definition}\label{Dlowerdef}
 Given a functor $F$ from   (formal weighted)  (quasi-)dagger  dg algebras   simplicial sets, we define a functor $D_*F$ on   (formal weighted)   stacky  (quasi-)dagger  dg algebras as the homotopy limit
\[
 D_*F(B):= \ho\Lim_{n \in \Delta} F(D^nB).
\]
\end{definition}

Thus $D_*$ naturally extends all of our moduli functors to give functors on suitable stacky dg algebras. The real power of this construction is that derived Artin stacks, and more generally homogeneous functors $F$, admit  homotopy \'etale atlases $\oR \Spec A \to D_*F$ by stacky affine objects; see \cite[\S \ref{poisson-bidescentsn}]{poisson} for derived Artin stacks and \cite[\S \ref{NCpoisson-ArtinPoisssn}]{NCpoisson} for the generalisation to homogeneous functors (the arguments there are phrased in the non-commutative setting, but other settings work in the same way, and in fact more easily).

The following is adapted from \cite[Definition \ref{NCstacks-Frigdef}]{NCstacks}:
\begin{definition}\label{Frigdefhere}
 Given a (weighted)   stacky  (quasi-)dagger  dg algebra $B \in DG^+dg_+\Alg(R)$  for which the chain complexes
$
( \oL\Omega^1_B\ten^{\oL}_{B}B^0)^i
$
are acyclic for all $i > q$, and  a simplicial functor $F$ on  (weighted)   (quasi-)dagger  dg algebra  which is homogeneous with a cotangent complex $\bL^{F,x}$  at a point $x \in F(B^0)$, we say that a point $y \in D_*F(B)$ lifting $x \in F(B^0)$ is \emph{rigid} if the induced morphism
 \[
  \bL^{F,x}\to \Tot \sigma^{\le q} (\oL\Omega^1_B\ten^{\oL}_{B}B^0)
 \]
is a quasi-isomorphism of $B^0$-modules. We denote by $(D_*F)_{\rig}(B) \subset D_*F(B)$ the space of rigid points (a union of path components).
\end{definition}

In other words, a point $y \in (D_*F)_{\rig}(B)$ corresponds to a homotopy \'etale morphism $\oR\Spec B \to D_*F$. The reason we  think of the pair $(B,y)$ as being rigid is that
  it does not deform: for any nilpotent surjection $e \co C \to B$ with a point $z \in D_*F(C)$ lifting $y$, the map $e$  has an essentially unique section $s$ with $s(y) \simeq z$. 

\subsubsection{Global Poisson structures}

On any homogeneous functor $F$, we can now define the space of $n$-shifted Poisson structures of weight $m$ by
\begin{align*}
\cP(F,n,m):= \oR\map((D_*F)_{\rig}, \cP(-,n,m)),
 \end{align*} 
where the mapping space is taken in the category of simplicial homotopy preserving functors on a  category of (weighted) stacky  (quasi-)dagger  dg algebras and homotopy \'etale morphisms.

The arguments of \cite{poisson,smallet2,NCpoisson} ensure this is consistent with our earlier definitions when $\pi^0F^{\bG_m}$ is a dagger analytic space or even  DM $\infty$-stack.
 
 The generalisation of Theorem \ref{compatthmLie} to stacky algebras gives us an equivalence
 \[
  \cP(F,n,m)^{\nondeg} \simeq \oR\map((D_*F)_{\rig}, \cW_m\Sp(-,n))
 \]
between non-degenerate Poisson structures and symplectic structures. Moreover, there is a natural map
\[
 \oR\map(F, \cW_m\PreSp(-,n))\to \oR\map((D_*F)_{\rig}, \cW_m\PreSp(-,n)),
\]
where the first mapping space is taken in the $\infty$-category of functors on localised weighted dagger dg algebras. Thus pre-symplectic structures on $F$ give rise to pre-symplectic structures on $D_*F$ for all functors $F$. In particular, we can apply to all of our examples from \S \ref{proetsn} without having to impose any representability conditions.

 There is a similar global definition of co-isotropic structures, and global comparisons between Lagrangians and non-degenerate co-isotropic structures. 
 
 \subsubsection{Global Poisson structures associated to pro-\'etale sheaves}
 
 Combining the Poisson/symplectic correspondence for stacky algebras outlined above into Corollary \ref{wtracecor2a} gives a
 non-degenerate $(n-d)$-shifted Poisson structure of weight $m$  on   $ F(X_{\pro\et},\bE,-)^{wd}$ whenever $F$ is an   $n$-shifted symplectic  derived Artin $\infty$-stack, and $X$  a topologically Noetherian scheme with dualising complex $\bE^{\ten m}[d]$.

\begin{examples}\label{wPoissexArtin}  
Generalising Examples \ref{wPoissex},
 Theorem \ref{compatthmLie} now gives us $r$-shifted Poisson structures of weight $m$ associated to each  $r$-shifted symplectic structure of weight $m$ on
\[
 F(X_{\pro\et},\Zl(1),-)^{wd} 
\]
in Examples \ref{wSpex} and \S\S \ref{wSpexLagcrissn}--\ref{highercrissn}.

Moreover, the comparison outlined in \S \ref{coisosn} will give  $(r-1)$-shifted Poisson structures of weight $m$  associated to each  $r$-shifted Lagrangian structure of weight $m$ 
on $F( U_{\pro\et},\bE,-)^{wd}$
in Examples \ref{wSpexLag} and \S\S \ref{wSpexLagcrissn}--\ref{highercrissn}.

Note that in this form, these results do not assume that the functors $F^{wd}$ are representable, because the use of stacky dg algebras allows us to formulate Poisson structures for any homogeneous functor.
\end{examples}


\bibliographystyle{alphanum}
\bibliography{references.bib}

\newcommand{\etalchar}[1]{$^{#1}$}
 \newcommand{\noop}[1]{} \def\cprime{$'$}
\begin{thebibliography}{BBBBJ}

\bibitem[AGK{\etalchar{+}}]{ArinkinGaitsgoryKazhdanRaskinRozenblyumYarshavsky}
D.~Arinkin, D.~Gaitsgory, D.~Kazhdan, S.~Raskin, N.~Rozenblyum, and
  Y.~Varshavsky.
\newblock The stack of local systems with restricted variation and geometric
  langlands theory with nilpotent singular support.
\newblock arXiv:2010.01906 [math.AG], 2022.

\bibitem[Art]{Artin}
M.~Artin.
\newblock Versal deformations and algebraic stacks.
\newblock {\em Invent. Math.}, 27:165--189, 1974.

\bibitem[BBBBJ]{BBBJdarboux}
O.~Ben-Bassat, C.~Brav, V.~Bussi, and D.~Joyce.
\newblock A ``{D}arboux theorem'' for shifted symplectic structures on derived
  {A}rtin stacks, with applications.
\newblock {\em Geom. Topol.}, 19:1287--1359, 2015.
\newblock arXiv:1312.0090 [math.AG].

\bibitem[Ber1]{bergerBpairsPhiGamma}
Laurent Berger.
\newblock Construction de {$(\phi,\Gamma)$}-modules: repr\'esentations
  {$p$}-adiques et {$B$}-paires.
\newblock {\em Algebra Number Theory}, 2(1):91--120, 2008.

\bibitem[Ber2]{Be}
Pierre Berthelot.
\newblock {\em Cohomologie cristalline des sch{\'e}mas de caract{\'e}ristique
  $p>0$}.
\newblock Springer-Verlag, Berlin, 1974.
\newblock Lecture Notes in Mathematics, Vol. 407.

\bibitem[Bet]{bettsMotAnab}
L.~Alexander Betts.
\newblock The motivic anabelian geometry of local heights on abelian varieties.
\newblock arXiv:1706.04850, 2017.

\bibitem[BG]{BouazizGrojnowski}
E.~Bouaziz and I.~Grojnowski.
\newblock A {$d$}-shifted {D}arboux theorem.
\newblock arXiv:1309.2197v1 [math.AG], 2013.

\bibitem[BGR]{BoschGuentzerRemmertNonArchanalysis}
S~Bosch, U~G{\"u}ntzer, and R~Remmert.
\newblock {\em {Non-Archimedean} analysis}.
\newblock Grundlehren der mathematischen Wissenschaften. Springer, Berlin,
  Germany, 1984 edition, May 1984.

\bibitem[BGW]{BraunlingGroechenigWolfsonTate}
Oliver Braunling, Michael Groechenig, and Jesse Wolfson.
\newblock Tate objects in exact categories.
\newblock {\em Mosc. Math. J.}, 16(3):433--504, 2016.
\newblock With an appendix by Jan {\v{S}}{\v{t}}ov{\'{\i}}{\v{c}}ek and Jan
  Trlifaj.

\bibitem[BK]{BlochKatoTamagawa}
Spencer Bloch and Kazuya Kato.
\newblock {$L$}-functions and {T}amagawa numbers of motives.
\newblock In {\em The Grothendieck Festschrift}, pages 333--400. Springer,
  2007.

\bibitem[BPS]{DerCotMain}
Marco Benini, Jonathan~P. Pridham, and Alexander Schenkel.
\newblock Quantization of derived cotangent stacks and gauge theory on directed
  graphs.
\newblock {\em Advances in Theoretical and Mathematical Physics},
  27(5):1275–1332, 2023.
\newblock arXiv:2201.10225 [math-ph].

\bibitem[BS]{BhattScholzeProEtale}
Bhargav Bhatt and Peter Scholze.
\newblock The pro-{\'e}tale topology for schemes.
\newblock {\em Ast{\'e}risque}, 369:99--201, 2015.

\bibitem[BZSV]{BenZviSakellaridisVenkatesh}
David Ben-Zvi, Yiannis Sakellaridis, and Akshay Venkatesh.
\newblock Relative langlands duality.
\newblock Draft available at
  \href{https://www.math.ias.edu/~akshay/research/BZSVpaperV1.pdf}{https://www.math.ias.edu/$\sim$akshay/research/BZSVpaperV1.pdf},
  In preparation.

\bibitem[CFK]{Quot}
Ionu{\c{t}} Ciocan-Fontanine and Mikhail Kapranov.
\newblock Derived {Q}uot schemes.
\newblock {\em Ann. Sci. {\'E}cole Norm. Sup. (4)}, 34(3):403--440,
  2001\noop{1999}.
\newblock arXiv:math/9905174v2 [math.AG].

\bibitem[CR]{CarchediRoytenbergHomological}
D.~{Carchedi} and D.~{Roytenberg}.
\newblock {Homological Algebra for Superalgebras of Differentiable Functions}.
\newblock arXiv:1212.3745 [math.AG], 2012.

\bibitem[Del]{poids}
Pierre Deligne.
\newblock Poids dans la cohomologie des vari{\'e}t{\'e}s alg{\'e}briques.
\newblock In {\em Proceedings of the International Congress of Mathematicians
  (Vancouver, B. C., 1974), Vol. 1}, pages 79--85. Canad. Math. Congress,
  Montreal, Que., 1975.

\bibitem[DK1]{DKEquivsHtpyDiagrams}
W.~G. Dwyer and D.~M. Kan.
\newblock {VIII}. equivalences between homotopy theories of diagrams.
\newblock In {\em Algebraic Topology and Algebraic K-Theory ({AM}-113)}, pages
  180--205. Princeton University Press, 1987.

\bibitem[DK2]{DKfunction}
W.G. Dwyer and D.M. Kan.
\newblock Function complexes in homotopical algebra.
\newblock {\em Topology}, 19(4):427--440, 1980.

\bibitem[EP]{2021lect}
J.~Eugster and J.~P. Pridham.
\newblock An introduction to derived (algebraic) geometry.
\newblock {\em Rend. Mat. Appl. (7)}, to appear.
\newblock arXiv:2109.14594v5.

\bibitem[Fal]{Hop}
Gerd Faltings.
\newblock Crystalline cohomology and {$p$}-adic {G}alois-representations.
\newblock In {\em Algebraic analysis, geometry, and number theory (Baltimore,
  MD, 1988)}, pages 25--80. Johns Hopkins Univ. Press, Baltimore, MD, 1989.

\bibitem[GK]{GrosseKloenne}
Elmar Grosse-Kl\"onne.
\newblock Rigid analytic spaces with overconvergent structure sheaf.
\newblock {\em J. Reine Angew. Math.}, 519:73--95, 2000.

\bibitem[GM]{GM}
William~M. Goldman and John~J. Millson.
\newblock The deformation theory of representations of fundamental groups of
  compact {K}{\"a}hler manifolds.
\newblock {\em Inst. Hautes {\'E}tudes Sci. Publ. Math.}, (67):43--96, 1988.

\bibitem[G{\"u}n]{guentzer}
Ulrich G{\"u}ntzer.
\newblock Modellringe in der nichtarchimedischen funktionentheorie.
\newblock {\em Indagationes Mathematicae (Proceedings)}, 70:334--342, 1967.

\bibitem[Hin]{hinstack}
Vladimir Hinich.
\newblock D{G} coalgebras as formal stacks.
\newblock {\em J. Pure Appl. Algebra}, 162(2-3):209--250, 2001\noop{1998}.
\newblock https://arxiv.org/abs/math/9812034.

\bibitem[Hov]{hovey}
Mark Hovey.
\newblock {\em Model categories}, volume~63 of {\em Mathematical Surveys and
  Monographs}.
\newblock American Mathematical Society, Providence, RI, 1999.

\bibitem[Ill1]{Bcrisillusie}
Luc Illusie.
\newblock Cohomologie de de {R}ham et cohomologie {\'e}tale {$p$}-adique
  (d'apr{\`e}s {G}. {F}altings, {J}.-{M}. {F}ontaine et al.).
\newblock {\em Ast{\'e}risque}, (189-190):Exp.\ No.\ 726, 325--374, 1990.
\newblock S{\'e}minaire Bourbaki, Vol. 1989/90.

\bibitem[Ill2]{illusieVanishing}
Luc Illusie.
\newblock Vanishing cycles over general bases after {P}. {D}eligne, {O}.
  {G}abber, {G}. {L}aumon and {F}. {O}rgogozo.
\newblock 2006.

\bibitem[Isa]{isaksenStrict}
Daniel~C. Isaksen.
\newblock Strict model structures for pro-categories.
\newblock In {\em Categorical decomposition techniques in algebraic topology
  ({I}sle of {S}kye, 2001)}, volume 215 of {\em Progr. Math.}, pages 179--198.
  Birkh\"auser, Basel, 2004.
\newblock arXiv:0108189[math.AT].

\bibitem[Kim]{KimArithGauge}
Minhyong Kim.
\newblock Arithmetic gauge theory: A brief introduction.
\newblock {\em Modern Physics Letters A}, 33(29):1830012, September 2018.

\bibitem[Kon]{Kon}
Maxim Kontsevich.
\newblock Topics in algebra --- deformation theory.
\newblock Lecture Notes, available at
  \href{http://www.math.brown.edu/~abrmovic/kontsdef.ps}{http://www.math.brown.edu/$\sim$abrmovic/kontsdef.ps},
  1994.

\bibitem[KV]{KhudaverdianVoronov}
H.~M. Khudaverdian and Th.~Th. Voronov.
\newblock Higher {P}oisson brackets and differential forms.
\newblock In {\em Geometric methods in physics}, volume 1079 of {\em AIP Conf.
  Proc.}, pages 203--215. Amer. Inst. Phys., Melville, NY, 2008.
\newblock arXiv:0808.3406v2 [math-ph].

\bibitem[Lur1]{lurie}
J.~Lurie.
\newblock {\em Derived Algebraic Geometry}.
\newblock PhD thesis, M.I.T., 2004.
\newblock
  \href{http://hdl.handle.net/1721.1/30144}{http://hdl.handle.net/1721.1/30144}
  or
  \href{https://www.math.ias.edu/~lurie/papers/DAG.pdf}{https://www.math.ias.edu/$\sim$lurie/papers/DAG.pdf}.

\bibitem[Lur2]{lurieDAG5}
Jacob Lurie.
\newblock Derived algebraic geometry {V}: Structured spaces.
\newblock arXiv:0905.0459v1 [math.CT], 2009.

\bibitem[Man]{Man2}
Marco Manetti.
\newblock Extended deformation functors.
\newblock {\em Int. Math. Res. Not.}, (14):719--756, 2002\noop{1999}.
\newblock arXiv:math/9910071v2 [math.AG].

\bibitem[Mil1]{milneArithDuality}
J.~S. Milne.
\newblock {\em Arithmetic duality theorems}.
\newblock BookSurge, LLC, Charleston, SC, second edition, 2006.

\bibitem[Mil2]{Mi}
James~S. Milne.
\newblock {\em {\'E}tale cohomology}.
\newblock Princeton University Press, Princeton, N.J., 1980.

\bibitem[MS1]{MelaniSafronovI}
Valerio Melani and Pavel Safronov.
\newblock Derived coisotropic structures {I}: affine case.
\newblock {\em Selecta Math. (N.S.)}, 24(4):3061--3118, 2018.
\newblock arXiv:1608.01482 [math.AG].

\bibitem[MS2]{MelaniSafronovII}
Valerio Melani and Pavel Safronov.
\newblock Derived coisotropic structures {II}: stacks and quantization.
\newblock {\em Selecta Math. (N.S.)}, 24(4):3119--3173, 2018.
\newblock arXiv:1704.03201 [math.AG].

\bibitem[Nek1]{nekovarPadicHeight}
Jan Nekov{\'a}{\v{r}}.
\newblock On {$p$}-adic height pairings.
\newblock In {\em S\'eminaire de {T}h\'eorie des {N}ombres, {P}aris, 1990--91},
  volume 108 of {\em Progr. Math.}, pages 127--202. Birkh\"auser Boston,
  Boston, MA, 1993.

\bibitem[Nek2]{nekovarSelmerComplexes}
Jan Nekov{\'a}{\v{r}}.
\newblock Selmer complexes.
\newblock {\em Ast\'erisque}, (310):viii+559, 2006.

\bibitem[NSW]{NeukirchSchmidtWingberg}
J\"urgen Neukirch, Alexander Schmidt, and Kay Wingberg.
\newblock {\em Cohomology of number fields}, volume 323 of {\em Grundlehren der
  mathematischen Wissenschaften [Fundamental Principles of Mathematical
  Sciences]}.
\newblock Springer-Verlag, Berlin, second edition, 2008.

\bibitem[Pap]{pappasVolSymplAdicLocSys}
Georgios Pappas.
\newblock Volume and symplectic structure for $\ell$-adic local systems.
\newblock {\em Advances in Mathematics}, 387:107836, August 2021.

\bibitem[Pri1]{paper1}
J.~P. Pridham.
\newblock Deforming {$l$}-adic representations of the fundamental group of a
  smooth variety.
\newblock {\em J. Algebraic Geom.}, 15(3):415--442, 2006.

\bibitem[Pri2]{ddt1}
J.~P. Pridham.
\newblock Unifying derived deformation theories.
\newblock {\em Adv. Math.}, 224(3):772--826, 2010\noop{2007}.
\newblock corrigendum 228 (2011), no. 4, 2554--2556, arXiv:0705.0344v6
  [math.AG].

\bibitem[Pri3]{weiln}
J.~P. Pridham.
\newblock Galois actions on homotopy groups of algebraic varieties.
\newblock {\em Geom. Topol.}, 15(1):501--607, 2011.
\newblock arXiv:0712.0928v4 [math.AG].

\bibitem[Pri4]{drep}
J.~P. Pridham.
\newblock Representability of derived stacks.
\newblock {\em J. K-Theory}, 10(2):413--453, 2012\noop{2010}.
\newblock arXiv:1011.2742v4 [math.AG].

\bibitem[Pri5]{stacks2}
J.~P. Pridham.
\newblock Presenting higher stacks as simplicial schemes.
\newblock {\em Adv. Math.}, 238:184--245, 2013\noop{2009}.
\newblock arXiv:0905.4044v4 [math.AG].

\bibitem[Pri6]{dmc}
J.~P. Pridham.
\newblock Constructing derived moduli stacks.
\newblock {\em Geom. Topol.}, 17(3):1417--1495, 2013\noop{2011}.
\newblock arXiv:1101.3300v2 [math.AG].

\bibitem[Pri7]{poisson}
J.~P. Pridham.
\newblock Shifted {P}oisson and symplectic structures on derived {$N$}-stacks.
\newblock {\em J. Topol.}, 10(1):178--210, 2017\noop{2015}.
\newblock arXiv:1504.01940v5 [math.AG].

\bibitem[Pri8]{DQ-2}
J.~P. Pridham.
\newblock Deformation quantisation for {$(-2)$}-shifted symplectic structures.
\newblock arXiv: 1809.11028v2 [math.AG], 2018.

\bibitem[Pri9]{DQnonneg}
J.~P. Pridham.
\newblock Deformation quantisation for unshifted symplectic structures on
  derived {A}rtin stacks.
\newblock {\em Selecta Math. (N.S.)}, 24(4):3027--3059, 2018.
\newblock arXiv: 1604.04458v4 [math.AG].

\bibitem[Pri10]{DQDG}
J.~P. Pridham.
\newblock An outline of shifted {P}oisson structures and deformation
  quantisation in derived differential geometry.
\newblock arXiv: 1804.07622v3 [math.DG], 2018.

\bibitem[Pri11]{DQvanish}
J.~P. Pridham.
\newblock Deformation quantisation for {$(-1)$}-shifted symplectic structures
  and vanishing cycles.
\newblock {\em Algebr. Geom.}, 6(6):747--779, 2019.
\newblock arXiv:1508.07936v5 [math.AG].

\bibitem[Pri12]{DQpoisson}
J.~P. Pridham.
\newblock Quantisation of derived {P}oisson structures.
\newblock arXiv: 1708.00496v5 [math.AG], 2019.

\bibitem[Pri13]{NCpoisson}
J.~P. Pridham.
\newblock Shifted bisymplectic and double {P}oisson structures on
  non-commutative derived prestacks.
\newblock arXiv:2008.11698 [math.AG], 2020.

\bibitem[Pri14]{DStein}
J.~P. Pridham.
\newblock A differential graded model for derived analytic geometry.
\newblock {\em Advances in Mathematics}, 360:106922, 2020\noop{2018}.
\newblock arXiv: 1805.08538v1 [math.AG].

\bibitem[Pri15]{NCstacks}
J.~P. Pridham.
\newblock Non-commutative derived moduli prestacks.
\newblock {\em Adv. Math.}, to appear\noop{2020}.
\newblock arXiv:2008.11684 [math.AG].

\bibitem[Pri16]{DQLag}
Jonathan~P Pridham.
\newblock Quantisation of derived {L}agrangians.
\newblock {\em Geom. Topol.}, 26(6):2405–2489, December 2022.
\newblock arXiv: 1607.01000v4 [math.AG].

\bibitem[Pri17]{smallet2}
J.P. Pridham.
\newblock A note on \'etale atlases for {A}rtin stacks and {L}ie groupoids,
  {P}oisson structures and quantisation.
\newblock {\em Journal of Geometry and Physics}, 203:105266, 2024\noop{2019}.
\newblock arXiv:1905.09255.

\bibitem[PT]{PantevToenLocSys}
Tony Pantev and Bertrand To{\"e}n.
\newblock Poisson geometry of the moduli of local systems on smooth varieties.
\newblock arXiv:1809.03536, 2018.

\bibitem[PTVV]{PTVV}
T.~Pantev, B.~To{\"e}n, M.~Vaqui{\'e}, and G.~Vezzosi.
\newblock Shifted symplectic structures.
\newblock {\em Publ. Math. Inst. Hautes \'Etudes Sci.}, 117:271--328,
  2013\noop{2011}.
\newblock arXiv: 1111.3209v4 [math.AG].

\bibitem[PY1]{PortaYuNonArch}
Mauro Porta and Tony~Yu Yue.
\newblock Derived non-{A}rchimedean analytic spaces.
\newblock arXiv:1601.008592v2 [math.AG], 2017.

\bibitem[PY2]{PortaYuRep}
Mauro Porta and Tony~Yu Yue.
\newblock Representability theorem in derived analytic geometry.
\newblock arXiv:1704.01683v2 [math.AG], 2017.

\bibitem[Qui]{Q}
Daniel Quillen.
\newblock On the (co-) homology of commutative rings.
\newblock In {\em Applications of Categorical Algebra (Proc. Sympos. Pure
  Math., Vol. XVII, New York, 1968)}, pages 65--87. Amer. Math. Soc.,
  Providence, R.I., 1970.

\bibitem[Saf]{safronovPoissonLectures}
Pavel Safronov.
\newblock {Lectures on shifted Poisson geometry}.
\newblock arXiv:1709.07698v1[math.AG], 2017.

\bibitem[Sch]{Sch}
Michael Schlessinger.
\newblock Functors of {A}rtin rings.
\newblock {\em Trans. Amer. Math. Soc.}, 130:208--222, 1968.

\bibitem[Ser]{galoisienne}
Jean-Pierre Serre.
\newblock {\em Cohomologie galoisienne}, volume~5 of {\em Lecture Notes in
  Mathematics}.
\newblock Springer-Verlag, Berlin, fifth edition, 1994.

\bibitem[Sim1]{Simfil}
Carlos Simpson.
\newblock The {H}odge filtration on nonabelian cohomology.
\newblock In {\em Algebraic geometry---Santa Cruz 1995}, volume~62 of {\em
  Proc. Sympos. Pure Math.}, pages 217--281. Amer. Math. Soc., Providence, RI,
  1997.
\newblock arXiv:alg-geom/9604005v1.

\bibitem[Sim2]{Simpson}
Carlos~T. Simpson.
\newblock Higgs bundles and local systems.
\newblock {\em Inst. Hautes {\'E}tudes Sci. Publ. Math.}, (75):5--95, 1992.

\bibitem[Sou]{souleKthCorpsNomCohoet}
C.~Soul\'e.
\newblock K-th\'eorie des anneaux d’entiers de corps de nombres et
  cohomologie \'etale.
\newblock {\em Inventiones Mathematicae}, 55(3):251–295, October 1979.

\bibitem[TV1]{hag1}
Bertrand To{\"e}n and Gabriele Vezzosi.
\newblock Homotopical algebraic geometry. {I}. {T}opos theory.
\newblock {\em Adv. Math.}, 193(2):257--372, 2005\noop{2002}.
\newblock arXiv:math/0207028 v4.

\bibitem[TV2]{hag2}
Bertrand To{\"e}n and Gabriele Vezzosi.
\newblock Homotopical algebraic geometry. {II}. {G}eometric stacks and
  applications.
\newblock {\em Mem. Amer. Math. Soc.}, 193(902):x+224, 2008\noop{2004}.
\newblock arXiv math.AG/0404373 v7.

\bibitem[Zhu]{zhukovHigherDimLocalFields}
Igor Zhukov.
\newblock Higher dimensional local fields.
\newblock In {\em Invitation to higher local fields ({M}\"{u}nster, 1999)},
  volume~3 of {\em Geom. Topol. Monogr.}, pages 5--18. Geom. Topol. Publ.,
  Coventry, 2000.
\newblock arXiv:math/0012132v1 [math.NT].

\end{thebibliography}

\end{document}